\newtheorem{theo}{Theorem}[section]
\newtheorem{lemm}[theo]{Lemma}
\newtheorem{prop}[theo]{Proposition}
\newtheorem{coro}[theo]{Corollary}
\newtheorem{rema}[theo]{Remark}
\newtheorem{defi}[theo]{Definition}
\newtheorem{nota}[theo]{Notation}
\DeclareMathOperator{\cnx}{div}
\DeclareMathOperator{\RE}{Re}
\DeclareMathOperator{\IM}{Im}
\DeclareMathOperator{\curl}{curl}
\DeclareMathOperator{\supp}{supp}
\DeclareSymbolFont{pletters}{OT1}{cmr}{m}{sl}
\DeclareMathSymbol{s}{\mathalpha}{pletters}{`s}
\def\B{B }
\def\blA{\bigl\Vert}
\def\brA{\bigr\Vert}
\def\defn{\mathrel{:=}}
\def\eps{\varepsilon}
\def\la{\left\lvert}
\def\lA{\left\lVert}
\def\le{\leq}
\def\les{\lesssim}
\def\mez{\frac{1}{2}}
\def\ra{\right\rvert}
\def\rA{\right\rVert}
\def\tdm{\frac{3}{2}}
\def\xC{\mathbf{C}}
\def\xN{\mathbf{N}}
\def\xR{\mathbf{R}}
\def\xS{\mathbf{S}}
\def\xT{\mathbf{T}}
\def\xZ{\mathbf{Z}}
\def\ba{\begin{align}}
\def\bad{\begin{aligned}}
\def\be{\begin{equation}}
\def\ea{\end{align}}
\def\ead{\end{aligned}}
\def\ee{\end{equation}}
\def\e{\eqref}
\numberwithin{equation}{section}
\begin{document}
 
\begin{abstract}
In  this paper we consider the Cauchy problem for the gravity 
water-wave equations, in a domain with flat bottom and in arbitrary space dimension. 
We prove that if the data are of size $\eps$ in a space 
of analytic functions which have a holomorphic 
extension in a strip of size $\sigma$, then the 
solution exists up to a time of size $C/\eps$ 
in a space of analytic functions having at time $t$ a 
holomorphic extension in a strip of size $\sigma - C'\eps t$. 
\end{abstract}

\title[Analytic Cauchy theory for water-waves]{Cauchy theory for the water waves system in an analytic framework}

\author{Thomas Alazard, Nicolas Burq and Claude Zuily}

\date{}


\maketitle

\section{Introduction}

The water-wave problem consists in describing, by means of the Euler equations, 
the dynamics of the free surface of a fluid. 
There are many different equations associated with this problem. 
Indeed, there are many different factors that dictate the dynamics of water-waves: 
The equations may be incompressible or not, irrotationnal or not, the fluid may have a fixed or 
moving bottom, and the restoring forces may be determined by gravitation or surface tension. 
The study of these equations has received a lot of attention during the last decades and 
there are now many cases in which the mathematical analysis is well developed. 
In particular, there are many recent results 
concerning the well-posedness of the water-waves equations 
in Sobolev spaces in large time, including global existence results 
(see \cite{Wu09,Wu10,GMS,AlDe,IonescuPusateri,HIT,XuechengWang-FD2,Zheng2019longterm} for the equations 
without surface tension). 

In addition to the analysis of the Cauchy problem, another line of research is the 
mathematical justification of the derivation of approximate equations describing 
water-waves dynamics in asymptotic regimes. 
The most famous examples are the equations introduced by Boussinesq and 
Korteweg-de Vries (see~\cite{Schneider-Wayne-2002,LannesLivre,Saut-IMPA} and references there-in). 
Kano and Nishida \cite{KanoNishida0,Kano-Nishida1} gave, in the two 
dimensional case, 
the first justification of the Friedrichs expansion for the water-waves equations in terms 
of the shallowness parameter (by definition this is the ratio of the 
mean depth to the wavelength). 
In order to guarantee the existence of the 
solution for the full equations, they used an abstract Cauchy-Kowalevski 
theorem in a scale of Banach spaces, so that analyticity of the initial data is required 
(see also Kano~\cite{Kano1986} and Kano-Nishida~\cite{Kano-Nishida2}). These results have been extended to include initial data belonging to usual Sobolev spaces, 
by Craig~\cite{Craig1985}, Iguchi~\cite{IguchiCPDE,Iguchi-JDE2018}, Bona, Lannes and Saut~\cite{BLS} or Alvarez-Samaniego and Lannes~\cite{ASL} (see \cite{LannesLivre} for more references).

The study of various nonlinear partial differential equations 
in spaces of analytic functions has also received a great   attention. 
We can mention the 
well-posedness results in analytic spaces by 
Kato and Masuda~\cite{Kato-Masuda-1986} which apply to many equations in fluid dynamics, the study of 
the Rayleigh-Taylor instability by 
Sulem-Sulem~\cite{SulemSulem-1985}, 
the study of the Cauchy problem for the semi-linear one dimensional Schr\" odinger equations 
by Bona-Grujic and Kalisch \cite{Bona-al}, 
Selberg-D.O.da Silva \cite{Sel-Da},  
the work on the KdV equation by Hayashi \cite{Haya}, Tesfahun \cite{Tesfa} 
and on the periodic BBM equation by Himonas-Petronilho \cite{Him-Petr}, 
the work by Kucavica-Vicol \cite{Kuca-Vic} on the Euler equation, 
the work 
on quasilinear wave equations and other quasilinear systems by Alinhac 
and M\'etivier \cite{Alinhac-Metivier-1984} and Kuksin-Nadirashvili~\cite{KuksinNadirashvili}, 
the work by Matsuyama and Ruzhansky~\cite{Matsuyama-2019} on the Kirchhoff equation, 
Gancedo-Granero-Belinch{\'o}n-Scrobogna~\cite{GanBelScr-2019} for the Muskat problem 
and the one of 
Pierre~\cite{Pierre-2018} for the MHD equations. 
We should also mention the recent works by Mouhot-Villani~\cite{MouhotVillani}, 
Bedrossian-Masmoudi-Mouhot~\cite{BeMaMo} 
and by Grenier-Nguyen-Rodnianski~\cite{GrNgRo} on the Landau damping for analytic and Gevrey data.

Inspired by the  pioneering works of Kano-Nishida, our goal is to revisit the analysis of the water-problem 
with analytic data, using tools and methods that we developed previously 
to study the Cauchy problem with rough initial data. Our main result in 
this direction states that the solutions remain analytic for large time intervals.

Let us now state our problem more precisely. 
We are mainly interested in the study of the Cauchy problem for 
the gravity wave system, in any space dimension. 
There are many possible formulation for this problem. 
Here we use the classical 
Eulerian formulation and work with the so-called Craig-Sulem-Zakharov formulation, following \cite{CrSu,Zakharov1968}. 
In this formulation, there are two unknowns: (i) the free surface elevation $\eta$ and (ii) the trace $\psi$ 
of the velocity potential on the free surface. These two unknowns depend on 
the time variable $t$ and the horizontal space variable $x$. 
Motivated by possible applications to control theory (\cite{ABHK,Zhu1}), we 
assume below that $x$ belongs to the $d$ dimensional torus $\xT^d=(\xR/2\pi\xZ)^d$, 
which means that the solutions are $2\pi$-periodic in each variable $x_j$, $1\le j\le d$.

We consider initial data in spaces 
of functions having a holomorphic extension to a fixed strip in the complex plane. Furthermore, in view of applications to control theory, we assume 
that the fluid domain has a flat bottom and consider a source term on the bottom which belongs 
merely to a classical Sobolev space.
This problem can be written as follows. 
Given functions $\eta_0, \psi_0 $ on $\xT^d$, and $b$ on $\xR \times \xT^d$, solve the system
\begin{equation}\label{system}
\left\{
\begin{aligned}
&\partial_{t}\eta-G(\eta)(\psi,b)=0,\\
&\partial_{t}\psi+g \eta    + \frac{1}{2}\la \nabla_x \psi\ra^2  -\frac{1}{2}
\frac{\bigl(\nabla_x  \eta\cdot \nabla_x  \psi +G(\eta)(\psi,b) \bigr)^2}{1+|\nabla_x  \eta|^2}
= 0,\\
&\eta\arrowvert_{t=0}= \eta_0, \quad \psi \arrowvert_{t=0}= \psi_0.
 \end{aligned}
\right.
\end{equation}  
Here $G(\eta)$ denotes the Dirichlet-Neuman operator, which is defined as follows. Given 
$h>0$ and some fixed time $t$, let us introduce the fluid domain
$$
\Omega(t) = \{(x,y) \in \xT^d\times \xR: -h < y< \eta(t,x)\}.
$$
Then, define the potential $\phi= \phi(t,x,y)$ as the unique  solution of the problem
\begin{equation}\label{dphi}
\Delta \phi = 0 \quad\text{in } \Omega(t), \quad \phi\arrowvert_{y = \eta(t,x)} = \psi (t,x),\quad \partial_y \phi\arrowvert_{y= -h} = b(t,x). 
\end{equation}
Then the  Dirichlet-Neumann operator is defined by
\begin{equation}\label{def-DN}
\begin{aligned}
G(\eta)(\psi,b) (t,x)&=
\sqrt{1+|\nabla_x  \eta|^2}\,
\partial _n \phi\arrowvert_{y=\eta(t,x)}=(\partial_y \phi-\nabla_x  \eta \cdot \nabla_x \phi) \big\arrowvert _{y=\eta(t,x)}.
\end{aligned}
\end{equation}
Recall that the equations \eqref{system} are derived from the following Euler equations in a set with moving boundary:
\begin{equation}\label{Euler}
\left\{
\begin{aligned}
&\partial_t v + (v \cdot\nabla_{x,y})v = - \nabla_{x,y} P - gy\quad \text{in } \Omega = \{(t,x,y): (x,y)\in \Omega(t)\},\\
&\cnx_{x,y}\, v = 0, \quad \text{in } \Omega,\\
& \curl_{x,y} v = 0, \quad \text{in } \Omega,\\
&P\arrowvert_{y= \eta(t,x)} = 0,
\end{aligned}
\right.
\end{equation}
where we set $v(t,x,y) = \nabla_{x,y} \phi(t,x,y)$ 
and $\psi(t,x) = \phi(t,x, \eta(t,x)).$ We refer to~\cite{Bertinoro} for the proof that from solutions of~\e{system} one may define solutions 
of the   Euler system \eqref{Euler}.
As in~\cite{Kano-Nishida1}, 
we shall work in the spaces defined as follows. 
Given $d \geq 1$, $\sigma\ge 0$ and  $s\ge 0$, we define
\[
\mathcal{H}^{\sigma,s}(\xT^d) 
= \Big\{ u \in L^2(\xT^d): \Vert u \Vert_{ \mathcal{H}^{\sigma,s}}^2
\defn\sum_{\xi\in\xZ^d}e^{2\sigma \vert \xi \vert} 
\langle \xi \rangle^{2s} \vert \widehat{u}(\xi)\vert^2 <+ \infty \Big\}
\]
with
\[
\widehat{u}(\xi)=\int_{\xT^d}e^{-ix\cdot\xi}u(x)\, dx,\quad \langle \xi\rangle=\big(1+\la \xi\ra^2\big)^{1/2}.
\]
Several properties of these spaces are gathered in Appendix~\ref{AppendixA}.

Roughly speaking,  the main result of this paper asserts 
that if the norms of the data $\eta_0, \psi_0$ in such spaces 
and 
the norm of $b$  in some Sobolev space are of size $\eps>0$,  
then our system has a unique solution in these spaces up to the time $c_*/\eps$ for some  $c_*>0$. 
It is classical since the work of Kato and Masuda~\cite{Kato-Masuda-1986} that, 
for solutions with analytic initial data, the width of the strip of analyticity might decrease with time. 
The main novelty here is that we show that for small data of size $\eps$, 
the decrease is at most linear in $\eps$. 
To prove this result, we cannot rely  
on an abstract Cauchy-Kowalevski theorem (as the ones introduced by 
Nirenberg~\cite{Nirenberg-1972}, Ovsjannikov~\cite{Ovsjannikov-1973}, 
Nishida~\cite{Nishida-1977} or Baouendi-Goulaouic~\cite{BaouendiGoulaouic-1977}; used 
by Ovsjannikov~\cite{Ovsjannikov-1974,Ovsjannikov-1976} 
and Castro-C\'ordoba-Fefferman-Gancedo-G\'omez-Serrano~\cite{CCFGS-Annals} to study the Cauchy problem for the 
water-waves equations). A key difference between our work and previous ones is that 
we shall use energy estimates in the spaces defined above, using  the methods introduced in \cite{AM,ABZ1,ABZ3} to study 
the water-waves equations. To achieve these estimates we begin  
by a precise analysis of the Dirichlet-Neumann 
operator in the spaces of analytic functions. This requires a careful 
study of elliptic equations with variable coefficients, which is of independent interest.

\subsection{The spaces of analytic functions and their characterizations.}
It is well known that functions in $\mathcal{H}^{\sigma,s}(\xT^d)$ can be expressed as the traces 
on the real of functions which are holomorphic in a strip of the form
$$S_\sigma= \{z\in \xC^d: \RE z\in \xT^d, \vert \IM z \vert <\sigma\}.$$ More precisely, for $U \in \mathcal{H}ol(S_\sigma)$ 
and  $\vert y \vert <\sigma$ we shall 
denote by $U_y$ the function from $\xT^d$ to $\xC$ defined by $x \mapsto  U(x+iy)$ (here $y\in \xR^d$ and 
$| y|$ denotes its Euclidean norm). 
Then, for any $u \in \mathcal{H}^{\sigma,s}(\xT^d)$, 
there exists $U\in \mathcal{H}ol(S_\sigma)$  such that $U_0 = u$ and
$$
\sup _{\vert y \vert<\sigma} \Vert U_y\Vert_{H_x^s(\xT^d)} \leq C\Vert u \Vert_{ \mathcal{H}^{\sigma,s}}.
$$
In Appendix~\ref{AppendixA} we prove a result which clarifies the converse. 

\begin{theo}\label{Gsigmas}
Let $\sigma>0$ and $s\geq 0$. 
\begin{itemize}
\item[\rm{(1)}] Let $U\in \mathcal{H}ol(S_\sigma)$ be 
such that $$M_0:= \sup_{\vert y \vert<\sigma} \Vert U_y\Vert_{H_x^s(\xT^d)}<+ \infty$$ and set $u = U_0$.
Then 
\begin{itemize}
\item[\rm{(i)}]  If $d=1$, then $u$ 
belongs to $\mathcal{H}^{\sigma,s}(\xT^d)$ and  $\Vert u \Vert_{\mathcal{H}^{\sigma,s}} \leq 2M_0.$

\item[\rm{(ii)}]  If $d\geq 2$, 
then $u$ belongs to $\mathcal{H}^{\delta,s}(\xT^d)$ for any $\delta<\sigma$ and there exists a constant 
$C_\delta>0 $ such 
that $\Vert u \Vert_{\mathcal{H}^{\delta,s}} \leq C_\delta M_0$.
\end{itemize}
\item[\rm{(2)}] Let $U\in \mathcal{H}ol(S_\sigma)$ be such that
$$
M_1:= \sup_{\vert y \vert<\sigma} \Vert U_y\Vert_{H_x^{s'}(\xT^d)}<+ \infty \quad\text{with}\quad 
s'>s + \frac{d-1}{4}.
$$
Then the function $u = U_0$ belongs to $\mathcal{H}^{\sigma,s}(\xT^d)$ 
and there exists a constant $C>0$ such that 
$\Vert u \Vert_{\mathcal{H}^{\sigma,s}} \leq C   M_1$.
\end{itemize}
\end{theo}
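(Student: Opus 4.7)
The plan rests on the Fourier-side identity
\[
\widehat{U_y}(\xi) \;=\; e^{-y\cdot\xi}\,\widehat u(\xi), \qquad \xi\in\xZ^d,\ |y|<\sigma,
\]
which I would obtain either by Cauchy's theorem (shifting, coordinate by coordinate, the contour $\{\IM x_j=0\}$ to $\{\IM x_j=y_j\}$, the vertical pieces cancelling by periodicity), or equivalently by checking that $f(y):=\widehat{U_y}(\xi)$ satisfies $\partial_{y_j}f=-\xi_j f$ through one integration by parts in $x_j$ combined with the Cauchy--Riemann equations. Once this identity is in hand, the hypothesis $\|U_y\|_{H^s}\le M_0$ controls $\bigl(\langle\xi\rangle^s e^{-y\cdot\xi}\widehat u(\xi)\bigr)_\xi$ in $\ell^2(\xZ^d)$ for every $|y|<\sigma$, and the only remaining task is to combine these one-direction bounds into an estimate on $\sum_\xi\langle\xi\rangle^{2s}e^{2\sigma|\xi|}|\widehat u(\xi)|^2$.

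For (1)(i), $\xZ$ splits into two rays, so two choices of $y$ suffice: $y\to-\sigma$ handles $\xi\ge 0$ and $y\to+\sigma$ handles $\xi<0$, with Fatou yielding $\|u\|_{\mathcal{H}^{\sigma,s}}^2\le 2M_0^2$. For (1)(ii), a single choice of $y$ can no longer capture all modes, so I would cover $\xZ^d\setminus\{0\}$ by $N$ cones $\mathcal{C}_k$ around a $\theta$-net $\{e_k\}\subset S^{d-1}$ and set $y_k=-(\sigma-\kappa)e_k$; on $\mathcal{C}_k$ the identity gives $e^{2(\sigma-\kappa)\cos\theta\,|\xi|}|\widehat u(\xi)|^2\le|\widehat{U_{y_k}}(\xi)|^2$, and summing over $k$ produces $\|u\|_{\mathcal{H}^{\delta,s}}^2\le NM_0^2$ as soon as $(\sigma-\kappa)\cos\theta\ge\delta$. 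The constant $C_\delta$ then degenerates as $\delta\uparrow\sigma$ because $N\sim\theta^{-(d-1)}\to\infty$.

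To eliminate this loss in (2), I would replace the discrete $\theta$-net by an average over the whole sphere of radius $r<\sigma$:
\[
\int_{S^{d-1}}|\widehat{U_{r\omega}}(\xi)|^2\,d\omega \;=\; |\widehat u(\xi)|^2\int_{S^{d-1}}e^{-2r\omega\cdot\xi}\,d\omega.
\]
By rotational invariance the angular integral depends only on $r|\xi|$, and Laplace's method at the maximizer $\omega=-\xi/|\xi|$ gives the asymptotic $\sim c_d\,e^{2r|\xi|}(r|\xi|)^{-(d-1)/2}$ as $r|\xi|\to\infty$. Inverting pointwise, multiplying by $\langle\xi\rangle^{2s'}$ and using $\|U_{r\omega}\|_{H^{s'}}\le M_1$ produces
\[
\sum_\xi\langle\xi\rangle^{2s'-(d-1)/2}\,e^{2r|\xi|}\,|\widehat u(\xi)|^2 \;\lesssim\; M_1^2
\]
uniformly in $r<\sigma$, with low frequencies absorbed by the trivial bound $|\widehat u(\xi)|\lesssim M_1$. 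Letting $r\uparrow\sigma$ by monotone convergence and using $s'>s+(d-1)/4$, so that $\langle\xi\rangle^{2s}\lesssim\langle\xi\rangle^{2s'-(d-1)/2}$, yields $\|u\|_{\mathcal{H}^{\sigma,s}}\le CM_1$.

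The delicate step is the Laplace asymptotic for $\int_{S^{d-1}}e^{2r|\xi|\omega_1}\,d\omega$: one needs a lower bound $\gtrsim e^{2r|\xi|}(r|\xi|)^{-(d-1)/2}$ whose implicit constant is uniform for $r\in(0,\sigma]$, so that the passage $r\uparrow\sigma$ is legitimate. That same asymptotic is what dictates both the $\theta$-net trade-off in (1)(ii) and the precise Sobolev exponent $(d-1)/4$ that appears in (2).
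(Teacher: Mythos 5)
For parts (1)(i) and (1)(ii) your approach coincides with the paper's (Fourier identity $\widehat{U_y}(\xi)=e^{-y\cdot\xi}\widehat u(\xi)$, then $y=\pm b$ when $d=1$, a finite cone covering when $d\ge 2$, Fatou); the paper merely establishes the Fourier identity more carefully, by mollification and analytic continuation rather than a contour shift, but this is a matter of rigor, not of substance.

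For part (2) your route is genuinely different. The paper's final step discretizes: for each dyadic shell it covers the sphere by $N_\ell\sim(b\,2^\ell)^{(d-1)/2}$ caps of aperture $\sim(b\,2^\ell)^{-1/2}$, averages the hypothesis over those finitely many directions, and inserts a $(\log|\xi|)^{1+\varepsilon}$ factor to make the sum over shells converge; that log factor is exactly what forces the strict inequality $s'>s+\frac{d-1}{4}$. You instead average over the whole sphere at a single radius $r<\sigma$, producing the exact integral $J(r|\xi|)=\int_{S^{d-1}}e^{-2r\omega\cdot\xi}\,d\omega$, and then invoke the uniform lower bound $J(\rho)\gtrsim(1+\rho)^{-\frac{d-1}{2}}e^{2\rho}$, which holds for all $\rho\ge 0$ (restrict the integral to the cap $\{\omega_1\ge 1-\rho^{-1}\}$ when $\rho\ge 1$, and use $J(\rho)\ge J(0)$ by Jensen when $\rho\le 1$). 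Since $1+r|\xi|\le(1+\sigma)\langle\xi\rangle$ for $r<\sigma$, this gives, uniformly in $r<\sigma$,
\begin{equation*}
\sum_{\xi\in\xZ^d}\langle\xi\rangle^{2s'-\frac{d-1}{2}}\,e^{2r|\xi|}\,|\widehat u(\xi)|^2\lesssim M_1^2,
\end{equation*}
and monotone convergence as $r\uparrow\sigma$ finishes. This continuous-average version skips the dyadic shells and the log bookkeeping entirely, and in fact gives the conclusion already under the borderline hypothesis $s'\ge s+\frac{d-1}{4}$, slightly sharper than the paper's statement. The one step requiring care, which you correctly flag, is the uniform lower bound on $J$; once formulated with $(1+\rho)^{-\frac{d-1}{2}}$ rather than $\rho^{-\frac{d-1}{2}}$ it holds across all scales and no separate low-frequency treatment is needed.
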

\begin{rema}
\begin{itemize}
\item [\rm{(i)}] In the case (ii), in general we do not 
 have  $u \in  \mathcal{H}^{\sigma,s}(\xT^d)$.  For instance, if $s =0$ and $d \geq 2,$  a counterexample is provided by  the function $u$ such that $\widehat{u} (\xi) = e^{-\sigma \vert \xi \vert} \langle \xi \rangle^{-\frac{\mu}{2}} $ where  $\mu= d-\mez + \eps, 0<\eps <\frac{1}{10}.$
\item [\rm{(ii)}]If $U_0$ is radial, it suffices to assume in (2) that  $\sup_{\vert y \vert<\sigma} \Vert U_y\Vert_{H_x^{s + \frac{d-1}{4}}(\xT^d)}$ is finite.
\item [\rm{(iii)}]All the properties of these spaces needed in this paper are gathered in the Appendix.
\item [\rm{(iv)}]The same results hold with $\xT^d$ replaced by $\xR^d$.
\end{itemize}
\end{rema}

\subsection{Local in time well-posedness.} 
Our first result states that, 
for data $\eta_0,\psi_0$ of size $\eps$, the Cauchy-problem \e{system} has a unique solution 
in the space of analytic functions  on a time interval   of size $1$. 

\begin{defi}
Given a real number $s$ and continuous time-dependent index $\sigma=\sigma(t)\ge 0$, we denote by 
$C^0\big([0,T], \mathcal{H}^{\sigma,s}\big)$ the subspace of 
$C^0\big([0,T], H^{s}\big)$ which consists of those functions $f$ such that
$$
F\in C^0\big([0,T], H^{s}\big), 
$$
where
$$F(t,\cdot )=e^{\sigma(t)\la D_x\ra}f(t,\cdot).
$$
\end{defi}
\begin{rema}
When $\sigma(t) = \sigma_0$, this definition coincides with the usual definition. In general, it is easy to show that for  any $\sigma_0 \leq \inf_{t\in [0,T]} \sigma(t)$, we have 
$$ C^0\big([0,T], \mathcal{H}^{\sigma,s}\big) \subset C^0\big([0,T], \mathcal{H}^{\sigma_0,s})
$$
(with continuous embedding).
In particular, since $\sigma(t)\geq 0$,  we have 
$$  C^0\big([0,T], \mathcal{H}^{\sigma,s}\big)\subset C^0\big([0,T],{H}^{s}).$$
\end{rema}
\begin{theo}\label{T=1}
Let $d\ge 1, s> 2+\frac{d}{2}, g>0, h>0$ and $0<\lambda_0<1.$ 
Then there exist   positive constants 
$\eps_0,K,M$  such that for all $ 0<\lambda \leq \lambda_0,$ all $\eps < \eps_0$,  for all $(\eta_0, \psi_0)\in \mathcal{H}^{\lambda h, s} \times \mathcal{H}^{\lambda h, s}$,  and all  $b\in L^\infty(\xR, H^{s-1}(\xT^d))\cap L^2(\xR, H^{s-\mez}(\xT^d))$ 
satisfying
$$
\Vert b \Vert_{L^\infty(\xR, H^{s-1})\cap L^2(\xR, H^{s-\mez})} 
+  \Vert \eta_0 \Vert_{\mathcal{H}^{\lambda h, s}}
+\Vert \psi_0 \Vert_{\mathcal{H}^{\lambda h, s}}\leq \eps,
$$
the Cauchy problem \eqref{system} has a  unique solution  
\be\label{result:T=1}
(\eta, \psi)  \in C^0\big([0,T], \mathcal{H}^{\sigma,s}\times \mathcal{H}^{\sigma,s}\big)\cap L^2\big((0,T), \mathcal{H}^{\sigma,s+\mez}\times \mathcal{H}^{\sigma,s+\mez}\big)
\ee
with 
$$T=  (\lambda h)/K \quad\text{and}\quad \sigma(t)=\lambda h-Kt, $$
such that
\begin{multline*}
\sup_{t\in[0,T]}\Vert \eta(t)\Vert^2_{\mathcal{H}^{\sigma,s}} 
+ \sup_{t\in[0,T]}\Vert \psi(t)\Vert^2_{\mathcal{H}^{\sigma,s}}\\
\int_0^T \big(\Vert \eta(t)\Vert^2_{\mathcal{H}^{\sigma,s+\mez}}+\Vert \psi(t)\Vert^2_{\mathcal{H}^{\sigma,s+\mez}}\big)\, dt\leq M\eps^2.
\end{multline*}
\end{theo}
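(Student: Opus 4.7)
The plan is to carry out an energy estimate directly in the moving analytic scale $\mathcal{H}^{\sigma(t),s}$, where $\sigma(t)=\lambda h-Kt$. I would introduce the exponentially weighted unknowns $\tilde\eta(t,\cdot)\defn e^{\sigma(t)\langle D_x\rangle}\eta(t,\cdot)$ and $\tilde\psi(t,\cdot)\defn e^{\sigma(t)\langle D_x\rangle}\psi(t,\cdot)$, so that controlling $(\eta,\psi)$ in $\mathcal{H}^{\sigma,s}$ is equivalent to controlling $(\tilde\eta,\tilde\psi)$ in $H^s$. Because $\dot\sigma=-K<0$, a direct computation yields
\begin{equation*}
\partial_t\tilde\eta+K\langle D_x\rangle\tilde\eta=e^{\sigma(t)\langle D_x\rangle}\partial_t\eta,
\end{equation*}
and likewise for $\tilde\psi$. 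The $H^s$ energy identity therefore comes with a free parabolic smoothing of size $2K\bigl(\Vert\tilde\eta\Vert_{H^{s+\mez}}^2+\Vert\tilde\psi\Vert_{H^{s+\mez}}^2\bigr)$, which is precisely what will pay for the half-derivative loss inherent to the quasilinear structure of \eqref{system}, provided $\eps$ is chosen small compared to $K$.

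To exploit this structure, I would paralinearize and symmetrize the system as in \cite{ABZ1,ABZ3}, using Alinhac's good unknown to remove the apparent loss of $1/2$ derivative. One obtains a first-order paradifferential system of the schematic form $\partial_t U+T_V\cdot\nabla_x U+iT_\gamma U=\mathcal{R}(U,b)$, with $T_\gamma$ a paradifferential realization of the Craig--Sulem symbol $\sqrt{g\vert D_x\vert\tanh(h\vert D_x\vert)}$, $T_V\cdot\nabla_x$ an essentially skew transport, and $\mathcal{R}$ a tame remainder. After conjugation by $e^{\sigma(t)\langle D_x\rangle}$ and commutation with the paradifferential operators, the principal structure is preserved modulo terms that are either bounded on $H^{s+\mez}$ with coefficient $O(\eps)$, and hence absorbable by the parabolic gain, or bounded on $H^s$ and thus harmless in a Gr\"onwall argument.

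The main obstacle, which I expect to consume most of the work, is the analysis of the Dirichlet--Neumann operator $G(\eta)(\psi,b)$ in the analytic scale $\mathcal{H}^{\sigma,s}$, uniformly for $\sigma\in[0,\lambda_0 h]$. This requires flattening the fluid domain, paralinearizing the resulting variable-coefficient elliptic problem in the slab $\xT^d\times(-h,0)$, and proving tame product, commutator, and paracomposition estimates that respect the exponential Fourier weight. The target inequality is of the form
\begin{equation*}
\Vert G(\eta)(\psi,b)\Vert_{\mathcal{H}^{\sigma,s-1}}\le C\bigl(\Vert\psi\Vert_{\mathcal{H}^{\sigma,s}}+\Vert b\Vert_{H^{s-1}}\bigr)+C\Vert\eta\Vert_{\mathcal{H}^{\sigma,s}}\Vert\psi\Vert_{\mathcal{H}^{\sigma,s+\mez}},
\end{equation*}
together with an analogous bound on the shape derivative $\partial_\eta G$. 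Since $e^{\sigma\langle D_x\rangle}$ does not commute with multiplication by $\eta$, all paraproduct decompositions must be redone in the analytic scale; this ultimately rests on the submultiplicativity of the weight $e^{\sigma\vert\xi\vert}$ under convolution, which follows from $\vert\xi+\zeta\vert\le\vert\xi\vert+\vert\zeta\vert$, and on the product and composition lemmas collected in Appendix~\ref{AppendixA}.

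With these estimates at hand, the proof closes by a standard bootstrap. Solving a regularized system, one derives the uniform bound
\begin{equation*}
\sup_{t\in[0,T]}\Vert(\tilde\eta,\tilde\psi)(t)\Vert_{H^s}^2+2K\int_0^T\Vert(\tilde\eta,\tilde\psi)(t)\Vert_{H^{s+\mez}}^2\,dt\le C\eps^2
\end{equation*}
on $[0,T]=[0,\lambda h/K]$, valid as soon as $CM\eps\le K/2$. Passing to the limit in the regularization yields existence in the space \eqref{result:T=1}, while uniqueness follows from the same energy estimate applied to the linearized difference of two solutions (whose principal part also loses only half a derivative, again compensated by the parabolic gain). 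Choosing $K$ large and $\eps_0$ small, both uniformly in $\lambda\in(0,\lambda_0]$, then delivers the statement.
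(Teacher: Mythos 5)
Your proposal correctly identifies the essential mechanism: conjugating by $e^{\sigma(t)\langle D_x\rangle}$ with $\dot\sigma=-K<0$ produces a free parabolic gain $2K\Vert\cdot\Vert^2_{H^{s+\mez}}$ which, for $K$ large, absorbs the half-derivative loss in the Dirichlet--Neumann operator. This is indeed the heart of the paper's proof of Theorem~\ref{T=1}. Where you diverge is in the machinery you stack on top: you propose to paralinearize the system, introduce Alinhac's good unknown, and symmetrize to a first-order paradifferential system as in \cite{ABZ1,ABZ3}. The paper does \emph{none} of this for Theorem~\ref{T=1}. Instead it runs a direct Picard iteration (the sequence $(\eta_\nu,\psi_\nu)$ of \eqref{ww-nu}), treats $G(\eta)(\psi,b)$ as a black box through the tame analytic elliptic estimate of Theorem~\ref{est-DN}, and lets the parabolic smoothing alone close the induction; uniqueness follows from the contraction estimate of Proposition~\ref{Mnu-Mnu+1} rather than from a linearized energy estimate with good unknown. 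The reason the paper can get away with this more elementary route is precisely that, in this short-time regime, one does not need to exploit the fine quasilinear structure (skew symmetry of the transport, cancellation of the Taylor term); a crude half-derivative loss in $G(\eta)$ is acceptable because $K$ is free. The symmetrization you describe is genuinely used in the paper, but for Theorem~\ref{T=2}, where one needs a time interval of length $\eps^{-1}$ and the shrinking-strip term is only of size $K\eps$, so the quadratic structure must be tracked carefully. Your route would likely also close, but it imports the heavier technology of the large-time result into the short-time one, and commits you to redeveloping the paradifferential calculus (commutators of $e^{\sigma\langle D_x\rangle}$ with $T_V\cdot\nabla_x$ and $T_\gamma$) in the analytic scale, a nontrivial overhead the paper avoids here.
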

\begin{rema}
This result complements the analysis by Kano and Nishida \cite{Kano-Nishida1} and Kano~\cite{Kano1986} 
in which 
we allow a non-zero and non-analytic source term $b$. 
\end{rema}

\subsection{Well-posedness on large time intervals.}
Our main result improves Theorem~\ref{T=1} by showing 
that the solution exists and remains analytic on a large time interval whose size 
is proportional to the inverse of the size of the initial data. 
To state this result, we need to introduce two auxiliary functions. Following~\cite{ABZ3}, we set
\begin{equation}\label{def-VB}
B=  \frac{G(\eta)(\psi,b) + \nabla_x \eta \cdot \nabla_x \psi }{  1+ \vert \nabla_x \eta \vert^2},\quad
V =  \nabla_x \psi - B  \nabla_x \eta.
\end{equation}
They are the traces on the free surface of the Eulerian velocity field. Moreover, we shall set
\begin{align*}
 N_s(b) &=   \Vert  b \Vert_{L^\infty(\xR,H^{s+ \mez})} + \Vert  \partial_t  b \Vert_{L^\infty(\xR,H^{s- \mez})} +\Vert b \Vert_{L^1(\xR, H^{s+ \mez})},\\
   a(D_x)f &= G(0)(f,0)= \vert D_x\vert\tanh(h\vert D_x\vert)f.
\end{align*}
\begin{theo}\label{T=2}
Let $d\ge 1, g>0, h>0, 0<\lambda_0<1$ and consider  real-numbers 
$s>3+ \frac{d}{2}$ and $s'\in [s-1,s)$.
Then there exist positive constants $\eps_*, K_*,c_*,$  
such that for all $\eps \leq \eps_*,$ 
 all $0<\lambda \leq \lambda_0$, and all $(\eta_0, \psi_0) \in \mathcal{H}^{\lambda h, s+\mez}\times\mathcal{H}^{\lambda h, s+\mez},$ if
\be\label{assu:T=2}
N_s(b)
+\Vert   \eta_0  \Vert_{\mathcal{H}^{\lambda h,s+ \mez}}
+\Vert   a(D_x)^\mez \psi_0  \Vert_{\mathcal{H}^{\lambda h,s}} 
+\Vert   V_0  \Vert_{\mathcal{H}^{\lambda h,s}}  +  \Vert  B_0 \Vert_{\mathcal{H}^{\lambda h,s}}\leq \eps,
\ee
then the Cauchy problem \eqref{system} has a 
unique solution on the time interval 
$[0,\frac{c_*}{\eps}]$ such that
\be\label{result:T=2}
(\eta, \psi,V,B)\in C^0\Big(\Big[0,\frac{c_*}{\eps}\Big], \mathcal{H}^{\sigma,s'+\mez}
\times \mathcal{H}^{\sigma,s'+\mez}\times\mathcal{H}^{\sigma,s'}\times \mathcal{H}^{\sigma,s'} \Big),
\ee
with $\sigma(t) = \lambda h - K_*\eps t$.
\end{theo}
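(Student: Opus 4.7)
My plan is to combine the short-time existence of Theorem~\ref{T=1} with an a priori energy estimate in the time-dependent analytic spaces $\mathcal{H}^{\sigma(t),s'+\mez}$, where $\sigma(t)=\lambda h-K_*\eps t$. Theorem~\ref{T=1} applied at regularity $s+\mez$ already furnishes a local solution on a time interval of length $O(1)$ with analytic regularity; what must then be shown is an a priori bound
\be
\lA \eta(t)\rA_{\mathcal{H}^{\sigma,s'+\mez}}^2 + \lA \psi(t)\rA_{\mathcal{H}^{\sigma,s'+\mez}}^2 + \lA V(t)\rA_{\mathcal{H}^{\sigma,s'}}^2 + \lA B(t)\rA_{\mathcal{H}^{\sigma,s'}}^2 \leq M\eps^2
\ee
on the full interval $[0,c_*/\eps]$, so that a standard continuation argument delivers the claimed long-time solution. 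The mechanism enabling such a long life-span is the negative derivative $\dot\sigma(t)=-K_*\eps$: at the energy level it produces a dissipation of order $\mez$ with weight $K_*\eps$, and I intend to choose $K_*$ large enough so that this dissipation beats the quasilinear half-derivative loss inherent to the water-waves system.

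\textbf{Paralinearization and symmetrization.} Following the approach used by the authors in the Sobolev framework (\cite{AM,ABZ3}), I would first paralinearize \eqref{system} around the solution. Using the Alinhac good unknown $\psi-T_B\eta$ together with a symmetrizer, the principal part of the equations for $(\eta,\psi-T_B\eta)$ reduces, modulo tame remainders, to a single paradifferential equation of the schematic form
\be
(\partial_t + T_V\cdot\nabla_x + i T_\gamma)\Theta = F,
\ee
where $\Theta$ encodes $(\eta,\psi-T_B\eta)$, the real symbol $\gamma=\gamma(x,\xi)$ arises from the dispersion and the Taylor coefficient, and $F$ is a remainder that is tame in $(\eta,\psi)$ with respect to the $\mathcal{H}^{\sigma,s'+\mez}$ norm. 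Analogous paradifferential transport equations must be derived for the auxiliary unknowns $V$ and $B$, which after paralinearization also satisfy equations of the form $(\partial_t + T_V\cdot\nabla_x)\cdot = \text{tame}$.

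\textbf{Analytic energy estimate.} The heart of the argument is to test the equation for $\Theta$ against $e^{2\sigma(t)\langle D_x\rangle}\langle D_x\rangle^{2s'}\Theta$ in $L^2$. Since $\sigma$ depends on $t$, the time derivative of the $\mathcal{H}^{\sigma,s'}$-norm of $\Theta$ produces an extra term $2\dot\sigma(t)\lA \Theta\rA_{\mathcal{H}^{\sigma,s'+\mez}}^2$, which with $\dot\sigma=-K_*\eps$ is a strictly negative contribution of size $2K_*\eps\lA \Theta\rA_{\mathcal{H}^{\sigma,s'+\mez}}^2$. The remaining quasilinear contributions, coming from the commutator $[e^{\sigma\langle D_x\rangle}\langle D_x\rangle^{s'},T_V\cdot\nabla_x]$ and from the imaginary part of $T_\gamma$, can be bounded by $C\,\lA V\rA_{\mathcal{H}^{\sigma,s'}}\lA \Theta\rA_{\mathcal{H}^{\sigma,s'+\mez}}^2$, and on the bootstrap assumption $\lA V\rA_{\mathcal{H}^{\sigma,s'}}\leq C'\eps$ this is $\leq CC'\eps\lA \Theta\rA_{\mathcal{H}^{\sigma,s'+\mez}}^2$. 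Fixing $K_*>CC'$ makes the full $\mathcal{H}^{\sigma,s'+\mez}$-contribution non-positive. What remains is a differential inequality $\frac{d}{dt}E(t)\leq C''\eps E(t)$ plus a harmless source coming from $b$ (controlled by $N_s(b)$), whose Gr\"onwall integration yields $E(t)\leq M\eps^2$ on $[0,c_*/\eps]$ for $c_*$ small enough, closing the bootstrap.

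\textbf{Main obstacle.} The principal technical difficulty is transferring the sharp paradifferential calculus and, above all, the fine analysis of the Dirichlet-Neumann operator to the analytic spaces $\mathcal{H}^{\sigma,s}$. One must show that paraproducts, remainders, and in particular the map $(\eta,\psi,b)\mapsto G(\eta)(\psi,b)$ send analytic functions to analytic functions with an $O(\eps)$-small loss in strip width when $\lA \eta\rA$ is of size $\eps$; this rests on the careful study of the variable-coefficient Laplace problem \eqref{dphi} in the analytic framework, announced as being of independent interest. In addition, one must prove that $[e^{\sigma\langle D_x\rangle},T_a]$ loses exactly a half-derivative and is controlled by the analytic norm of $a$, so that the commutator term is indeed absorbed by the dissipation from $\dot\sigma<0$. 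These ingredients, together with the structural dissipation mechanism, form in my view the substantive content of the proof.
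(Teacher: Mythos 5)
Your sketch captures the right driving mechanism -- the dissipation $\dot\sigma(t)=-K_*\eps$ produces a term $-2K_*\eps\lA U\rA^2_{H^{1/2}}$ in the energy inequality, to be played off against the quasilinear half-derivative loss -- but it diverges from the paper's actual argument on two substantive points, one of which is a genuine gap.

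\textbf{Different algebraic framework.} You propose paralinearization with the Alinhac good unknown $\psi-T_B\eta$ and a paradifferential symmetrizer $T_\gamma$, as in the Sobolev theory of \cite{ABZ3}. The paper does not take this route. Instead it works directly with the \emph{physical} unknowns $\zeta=\nabla_x\eta$, $B$, $V$ (the traces of the Eulerian velocity), derives evolution equations for them by differentiating the Euler system restricted to the free surface (Proposition~\ref{new-eq}), and diagonalizes via the \emph{explicit} variable $u=\sqrt{g}\,\zeta+i\,a(D_x)^{1/2}V$, where $a(D_x)=\lvert D_x\rvert\tanh(h\lvert D_x\rvert)=G(0)$. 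The quasilinear structure is then encoded through the pressure $P$: the paper shows that $\partial_y P\arrowvert_{y=\eta}+g$ multiplies $\zeta$ in the equation, and obtains the needed tame estimates for the pressure from the divergence form of the transformed Laplacian and the elliptic theory of Sections~2--4 (Lemma~\ref{der-x-y-P}, Proposition~\ref{est-restes}). This avoids the need for a full symbolic paradifferential calculus in analytic spaces; only a boundedness theorem for paradifferential operators (Theorem~\ref{continuite}) and a commutator estimate for Fourier multipliers (Proposition~\ref{commutat}) are used. Your approach would require developing analytic analogues of the paradifferential composition, symmetrization, and commutator results of \cite{ABZ3}, which is a nontrivial additional layer that the paper deliberately sidesteps.

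\textbf{The continuation step is a genuine gap.} You invoke Theorem~\ref{T=1} to obtain a local solution and then claim that an a priori estimate plus ``a standard continuation argument'' finishes the proof. But this is not standard here, precisely because the a priori bound you want is stated in a \emph{stronger} time-dependent topology than the one Theorem~\ref{T=1} produces: the short-time theorem gives $\sigma_1(t)=\lambda h-Kt$ (with $K$ of size $O(1)$), whereas the long-time estimate uses $\sigma(t)=\lambda h-K_*\eps t$. For $t>0$ one has $\sigma(t)>\sigma_1(t)$, so $\mathcal{H}^{\sigma(t),\cdot}\subsetneq\mathcal{H}^{\sigma_1(t),\cdot}$, and the solution provided by Theorem~\ref{T=1} is not a priori known to lie in the space in which you want to run the energy identity. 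Testing the equation against $e^{2\sigma(t)\langle D_x\rangle}\langle D_x\rangle^{2s'}u$ is then a formal manipulation that is not justified at the level of regularity you have. The paper explicitly flags this obstruction (``we were not able to perform a fixed point'') and resolves it by regularizing the \emph{equation}, not just the data: it introduces the mollified systems $\partial_t f=J_n\big(\mathcal{T}(f;b)\big)$, for which solutions are entire in $x$ (Lemma~\ref{lemm-Tn}), proves the a priori bound uniformly in $n$ (Proposition~\ref{Prop:C0}, Corollary~\ref{coro-c*}), and passes to the limit by compactness. Any correct proof along your lines would have to incorporate a comparable regularization step; without it the proposal does not close.

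A smaller technical remark: your characterization of the commutator ``$[e^{\sigma\langle D_x\rangle},T_a]$ loses exactly a half-derivative'' is not the right heuristic. Writing $e^{\sigma\langle\xi\rangle}-e^{\sigma\langle\eta\rangle}=e^{\sigma\langle\eta\rangle}\big(e^{\sigma(\langle\xi\rangle-\langle\eta\rangle)}-1\big)$ and using $\lvert\langle\xi\rangle-\langle\eta\rangle\rvert\le\lvert\xi-\eta\rvert$, one finds that this commutator is of order zero in the acted-upon variable; its size is governed by $\sigma\lA\nabla_x a\rA$ and the analytic norm of $a$, not by a loss of $1/2$. The half-derivative loss in the transport term comes from $[\langle D_x\rangle^{s'},T_V\!\cdot\!\nabla_x]$, which is the same mechanism as in Sobolev spaces. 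This misstatement does not break your strategy (the true bound is, if anything, more favorable), but it suggests the commutator analysis would need to be redone more carefully.
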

\begin{rema}
\begin{itemize}
\item [\rm{(i)}] One can assume without loss of generality that $\lambda h>K_*c_*$, so that $\sigma(t)> 0$ 
for all time $t$ in $[0,c_*/\eps]$.
\item [\rm{(ii)}] A loss in the radius of analyticity of size $\eps$ is optimal. 
\item [\rm{(iii)}] With a little extra work one could prove that the above result holds with $s'=s$.
\item [\rm{(iv)}]  As explained in the introduction some motivation for this work are future applications to control theory. Namely, we would like to understand the states reachable from initial rest ($\eta_0, \Psi_0, V_0, B_0) = (0,0,0,0)$) by means of actions on the bottom (the function $b$). In some sense, this results shows that in this frame-work, the use of analytic regularity spaces is unavoidable. Indeed, subject to the control $b$ in Sobolev spaces, the solution will remain analytic for large times.
\end{itemize}
\end{rema}

\subsection{Organization of the paper.}
In the next three Sections (see Sections \ref{S:Elliptic}, \ref{S:DN} and \ref{S:DNsuite}), 
we prove auxiliary elliptic regularity results and apply them 
to study several different properties of the Dirichlet-Neumann operator $G(\eta)$ when $\eta$ belongs to 
some analytic space. Theorem~\ref{T=1} 
 is proved in Section \ref{S:size1}, 
 by performing a fixed point in a suitable family of analytic spaces.  
 Theorem~\ref{T=2} is proved in Section \ref{S:sizeeps}. 
 Here the analysis is more involved as we need to keep track 
 of the linear terms, and we were not able to perform a fixed 
 point. As a consequence, we have to use a compactness method 
 and prove {\em a priori} estimates on a family of  regularized 
 systems in order to pass to the limit.    
In Appendix~\ref{AppendixA} we gathered several results concerning analytic spaces, 
including the proof of Theorem~\ref{Gsigmas}.

\section{Elliptic regularity}\label{S:Elliptic}
All functions considered here will be real valued. We fix two 
real numbers $s_0,h$ and a function $\eta=\eta(x)$ such that
$$
s_0> \frac{d}{2},\quad h>0,\quad \eta \in \mathcal{H}^{h, s_0+1}(\xT^d),\quad \inf_{x\in\xR^d}\eta(x)>-h.
$$
Set
\begin{align*}
\Omega &= \{ (x, y) \,:\, x \in \xT^d, -h < y< \eta(x)\},\\
\Sigma & = \{(x, y) \,:\, x \in \xT^d,   y=\eta(x)\},\\
\Gamma & = \{(x, y)\,:\, x \in \xT^d, y= -h\}.
\end{align*}
We denote by  $n$  the unit  normal to  $\Sigma$ 
and by  $\partial_n$ the normal derivative:
$$
n=\frac{1}{\sqrt{1+ \vert \nabla_x \eta \vert^2}}\begin{pmatrix}-\nabla \eta \\ 1\end{pmatrix},\quad 
\partial_n = \frac{1}{\sqrt{1+ \vert \nabla_x \eta \vert^2}} \left(\partial_y
- \nabla_x \eta  \cdot \nabla_x\right).
$$
Given two functions $\psi=\psi(x)$ and $b=b(x)$, 
we consider the following elliptic problem:
\begin{equation}\label{Diri1-1}
\Delta_{x,y} u = 0 \text{ in  } \Omega, \quad u\arrowvert_{y=\eta}= \psi, \quad \partial_y u\arrowvert_{y= -h} = b,
\end{equation}
where $\Delta_{x,y}=\partial_y^2+\Delta_x$. Hereafter, given a function $f=f(x,y)$, we use $f\arrowvert_{y=\eta}$ as a short notation for the function $x\mapsto f(x,\eta(x))$.
The goal of this  section  is to obtain elliptic regularity results for 
the solutions of \eqref{Diri1-1} in the spaces of analytic functions. 

\subsection{Preliminaries.}\label{defiCOV}
For $h>0$ we set $I_h = (-h,0).$

\subsubsection{Straightening the free surface.}

We begin by making a change of variables to reduce the problem to 
a fixed domain of the form
\begin{equation*}
\widetilde{\Omega} = \xT^d \times I_h=\{(x,z): x \in \xT^d, -h < z <0\}.
\end{equation*}
This change of variables will take $\Delta_{x,y}$ to a
strictly elliptic operator and the normal derivative $\partial_{n}$ to a vector field which is transverse to the boundary $\{z=0\}$.

We consider a simple change  of variables of the form $(x,z)\mapsto (x,\rho(x,z))$. 
The simplest change of variables reads
$(x,z)\mapsto \left(x, \frac{z+h}{h}\eta(x) + z\right)$. 
For technical reasons, we will consider another choice and introduce a {\em smoothing} change of variables (following Lannes~\cite{LannesJAMS}). 
This means that the function $\rho$ is given by 
\begin{equation*}
\rho(x,z) := \frac{1}{h}(z+h) (e^{z \vert D_{x} \vert}  \eta)(x) + z, \quad x\in \xT^d, \quad -h \leq z\leq 0,
\end{equation*}
where $e^{z\la D_x\ra}$ is the Fourier multiplier with symbol $e^{z\la \xi\ra}$. Since $z\le 0$, this is a smoothing operator, bounded from $H^\mu(\xT^d)$ to $\mathcal{H}^{z,\mu}(\xT^d)$ for any real number $\mu$. Notice that 
$\rho(x,0)= \eta(x)$ and  $ \rho(x,-h) = -h$.
Since
$$
\partial_z \rho(\cdot,z)-1= \frac{1}{h}e^{z\vert D_x\vert} \eta + \frac{1}{h}(z+h)e^{z\vert D_x\vert}\vert D_x \vert  \eta,
$$
for $s_0> d/2$ the Sobolev embedding implies that for all $z \in I_h$ we have
\begin{equation*}
\Vert \partial_z \rho(\cdot,z)-1 \Vert_{L^\infty(\xT^d)} \les  \Vert \partial_z \rho(\cdot,z) -1\Vert_{H^{s_0}(\xT^d)} \les \Vert \eta \Vert_{H^{s_0+1}(\xT^d)}.
\end{equation*}
We refer the reader to Lemma~\ref{est-rho} in Appendix \ref{AppendixA} for the proof of   more 
general estimates. 
Therefore, if $ \Vert \eta \Vert_{H^{s_0+1}}\leq \eps_0$ with $\eps_0$ small enough, 
the map $(x,z) \mapsto (x,\rho(x,z))$ is a  diffeomorphism from $\widetilde{\Omega}$ 
to $\Omega$. By this  change of variables, the derivatives $\partial_y$  and  
$\nabla_x$ become, respectively,  
$$
\Lambda_1 = \frac{1}{\partial_z \rho}  \partial_z , \quad \Lambda_2 = \nabla_{x} - \frac{\nabla_{x}\rho}{\partial_z \rho}  \partial_z.
$$
More precisely, set
\begin{equation*}
\widetilde{u}(x,z) = u(x,\rho(x,z)).
\end{equation*}
Then $\widetilde{u}$ solves
 \begin{equation}\label{eqwideu}
 (\Lambda_1^2 + \Lambda_2^2)\, \widetilde{u}=0, \text{ in } \widetilde{\Omega}, \quad \widetilde{u}\arrowvert_{z=0} = \psi, \quad   (\partial_ z \widetilde{u}) \arrowvert_{z=-h} = (\partial_z \rho\arrowvert_{z=-h}) b.
 \end{equation}
Using the chain rule, one can expand $\Lambda_1^2 + \Lambda_2^2$ as follows:
\[
\Lambda_1^2 + \Lambda_2^2=\frac{1}{\partial_z\rho}\Big(\alpha\partial_z^2+\beta\Delta_x, 
+\gamma\cdot\nabla_x\partial_z -\delta\partial_z \Big),
\]
where 
\[
\left\{
\begin{aligned}
&\alpha=\frac{1+|\nabla_x\rho|^2}{\partial_z\rho},\quad 
\beta=\partial_z\rho,\quad \gamma=-2\nabla_x\rho,\\
&\delta=\frac{1+|\nabla_x\rho|^2}{\partial_z\rho}\partial_z^2\rho+\partial_z\rho \Delta_x \rho -2\nabla_x\rho
\cdot\nabla_x\partial_z\rho.
\end{aligned}
\right.
\]
It will be useful to observe that $\Lambda_1^2 + \Lambda_2^2$ 
is a perturbation of 
$\Delta_{x,z}=\partial_z^2+\Delta_x$, 
which can be written in divergence form. More precisely, 
by a direct computation, one can verify that
\be\label{formediv}
(\partial_z\rho)(\Lambda_1^2 + \Lambda_2^2)\, 
\widetilde{u}=\partial_z\Big(\frac{1+\la\nabla_x \rho\ra^2}{\partial_z \rho}
\partial_z \widetilde{u}-\nabla_x \rho\cdot\nabla_x\widetilde{u}\Big)
+\cnx_x\big(\partial_z \rho \nabla_x \widetilde{u}-\partial_z \widetilde{u}\nabla_x\rho\big).
\ee
Consequently, it follows from~\e{eqwideu} that
 \begin{equation*}
\Delta_{x,z}\widetilde{u} +R\widetilde{u}=0 \text{ in } \widetilde{\Omega},
\end{equation*}
where
\begin{equation}\label{P=}
R\widetilde{u}= \partial_z\Big(\frac{1+\la\nabla_x \rho\ra^2-\partial_z \rho}{\partial_z \rho}\partial_z \widetilde{u}-\nabla_x \rho\cdot\nabla_x\widetilde{u}\Big)+\cnx_x\Big((\partial_z \rho-1) \nabla_x \widetilde{u}- \nabla_x\rho \, \partial_z\widetilde{u}\Big).
\end{equation}

\subsubsection{The lifting of the trace.}
Another standard approach consists in further transforming the problem by simplifying the Dirichlet boundary 
condition on $z=0$. To do so, given a function $\underline{\psi}=\underline{\psi}(x,z)$ satisfying $\underline{\psi}(x,0)=\psi(x)$, we shall set $v=\widetilde{u}-\underline{\psi}$, solution to 
\begin{align*}
 &(\Delta_{x,z} +R)\,v= - (\Delta_{x,z} +R)\,\underline{\psi} \quad \text{ in  } 
 \widetilde{\Omega}=\xT^d \times I_h,\\
 & v \arrowvert_{z=0} = 0, \quad   (\partial_ z v) \arrowvert_{z=-h} = (\partial_z \rho\arrowvert_{z=-h})b -(\partial_z \underline{\psi} \arrowvert_{z=-h}) b.
\end{align*}
Parallel to the choice of the coordinate $\rho$ in the above paragraph, 
a convenient choice for $\underline{\psi}$ is to 
consider the solution of an elliptic problem, to gain some extra 
regularity inside the domain $\widetilde{\Omega}$. Namely, 
we determine $\underline{\psi}$ by solving the problem
\begin{equation}\label{relev}
(\partial_z^2 + \Delta_x) \underline{\psi} = 0\quad\text{in }\widetilde{\Omega}, \quad \underline{\psi}\arrowvert_{z=0}=\psi, \quad \partial_z \underline{\psi}\arrowvert_{z=-h}=0.
\end{equation}
Notice that this problem can be explicitly solved using the  Fourier transform in  $x$. 
More precisely, we have $$\underline{\psi}(x,z)=(2\pi)^{-d}\sum_{\xi \in\xZ^d}e^{ix\cdot\xi}\widehat{\underline{\psi}}(\xi,z),$$ 
where
\be\label{Fourierpsi}
\widehat{\underline{\psi}}(\xi,z)=\frac{e^{z|\xi|}}{1+e^{-2h|\xi|}}
\widehat{\psi}(\xi)+\frac{e^{-2h|\xi|-z|\xi|}}{1+e^{-2h|\xi|}}\widehat{\psi}(\xi).
\ee
Now, we set
\begin{equation}\label{DNpsi}
 G_0(0)\psi = \partial_z \underline{\psi} \arrowvert_{z=0}.
 \end{equation}
This is the Dirichlet-Neumann  operator  associated to the  problem \eqref{relev}. 
By using \e{Fourierpsi}, we find that 
 \begin{equation*}
  G_0(0)  = \vert D_x \vert \tanh  (h \vert D_x \vert).
 \end{equation*}
 To shorten the  notations, we  shall set in the sequel,
 \begin{equation}\label{a(D)=}
 a(D_x) =  G_0(0).
 \end{equation}
 By using the previous notation, we have the following result (see Lemma~\ref{est-relevappendix} in Appendix \ref{AppendixA}).
\begin{lemm}
 For all $\mu \in \xR$, 
 there exists a constant $C>0$ such that for all $\sigma \geq 0$ and all  $\psi$ such that $a(D_x) ^\mez \psi \in \mathcal{H}^{\sigma,\mu}$,  there holds
\begin{align*}
  &\Vert \nabla_{x,z} \underline{\psi}\Vert_{L^2(I_h, \mathcal{H}^{\sigma,\mu})} \leq C \Vert  a(D_x)^\mez \psi\Vert_{ \mathcal{H}^{\sigma,\mu}},\\
  & \Vert \partial_z^2 \underline{\psi}\Vert_{L^2(I_h, \mathcal{H}^{\sigma,\mu-1})} \leq C \Vert  a(D_x)^\mez \psi\Vert_{ \mathcal{H}^{\sigma,\mu}},\\
  &\Vert \nabla_{x,z} \underline{\psi}\Vert_{L^\infty(I_h, \mathcal{H}^{\sigma,\mu-\mez})} \leq C \Vert  a(D_x)^\mez \psi\Vert_{ \mathcal{H}^{\sigma,\mu}}.
  \end{align*}
 \end{lemm}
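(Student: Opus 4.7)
The plan is to work entirely on the Fourier side and reduce to a pointwise scalar computation at each frequency $\xi\in\xZ^d$. First, I would rewrite the explicit formula \e{Fourierpsi} in the compact form
\[
\widehat{\underline{\psi}}(\xi,z)=\frac{\cosh(|\xi|(z+h))}{\cosh(|\xi|h)}\,\widehat{\psi}(\xi),
\]
from which one reads off $\partial_z\widehat{\underline{\psi}}(\xi,z)=|\xi|\sinh(|\xi|(z+h))\widehat{\psi}(\xi)/\cosh(|\xi|h)$ and, using the identity $\cosh^2+\sinh^2=\cosh(2\,\cdot\,)$,
\[
|\xi|^2|\widehat{\underline{\psi}}(\xi,z)|^2+|\partial_z\widehat{\underline{\psi}}(\xi,z)|^2=|\xi|^2\,\frac{\cosh(2|\xi|(z+h))}{\cosh^2(|\xi|h)}\,|\widehat{\psi}(\xi)|^2.
\]

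For the first estimate I would integrate in $z\in(-h,0)$, using $\int_{-h}^{0}\cosh(2|\xi|(z+h))\,dz=\sinh(|\xi|h)\cosh(|\xi|h)/|\xi|$, to find
\[
\int_{-h}^{0}\bigl(|\xi|^2|\widehat{\underline{\psi}}(\xi,z)|^2+|\partial_z\widehat{\underline{\psi}}(\xi,z)|^2\bigr)\,dz=|\xi|\tanh(|\xi|h)\,|\widehat{\psi}(\xi)|^2=a(\xi)|\widehat{\psi}(\xi)|^2.
\]
Multiplying by $e^{2\sigma|\xi|}\langle\xi\rangle^{2\mu}$ and summing over $\xi$ produces the first inequality, in fact as an equality with constant $1$. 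The second estimate follows at once from harmonicity: $\partial_z^2\underline{\psi}=-\Delta_x\underline{\psi}$ implies $|\partial_z^2\widehat{\underline{\psi}}|^2=|\xi|^4|\widehat{\underline{\psi}}|^2\le|\xi|^2(|\xi|^2|\widehat{\underline{\psi}}|^2+|\partial_z\widehat{\underline{\psi}}|^2)$, and since $|\xi|^2\langle\xi\rangle^{-2}\le 1$ the bound reduces to the first one with the weight $\langle\xi\rangle^{\mu-1}$.

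For the $L^\infty_z$ estimate, I would note that for $z\in[-h,0]$ one has $\cosh(2|\xi|(z+h))\le\cosh(2|\xi|h)=2\cosh^2(|\xi|h)-1\le 2\cosh^2(|\xi|h)$, whence
\[
|\xi|^2|\widehat{\underline{\psi}}(\xi,z)|^2+|\partial_z\widehat{\underline{\psi}}(\xi,z)|^2\le 2|\xi|^2|\widehat{\psi}(\xi)|^2
\]
uniformly in $z\in[-h,0]$. It then suffices to verify the elementary comparison $|\xi|^2\le C(h)\langle\xi\rangle\,a(\xi)$, which one checks by cases: for $|\xi|\le 1$, concavity of $\tanh$ gives $\tanh(h|\xi|)\ge\tanh(h)\,|\xi|$ so $a(\xi)\ge\tanh(h)|\xi|^2$ and $\langle\xi\rangle\ge 1$; for $|\xi|\ge 1$, $\tanh(h|\xi|)\ge\tanh(h)$ and $\langle\xi\rangle\ge|\xi|$. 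In both cases $|\xi|^2\le(1/\tanh(h))\,\langle\xi\rangle\,a(\xi)$, which closes the argument with the loss of a half-derivative $\langle\xi\rangle^{-\mez}$.

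No step presents a genuine obstacle: the whole proof is a direct Fourier-side computation, and the exponential weight $e^{\sigma|\xi|}$ plays no role since every inequality is pointwise in $\xi$. The only point worth underlining is the comparison $|\xi|^2\les\langle\xi\rangle\,a(\xi)$, which encodes that $a(\xi)=|\xi|\tanh(h|\xi|)$ is elliptic of order $2$ at the origin (due to the finite depth) and of order $1$ at infinity --- this is precisely why $a(D_x)^{\mez}$ appears on the right-hand side of all three estimates rather than $|D_x|^{\mez}$ or $\langle D_x\rangle^{\mez}$ alone.
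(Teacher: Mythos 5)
Your proof is correct. For the first two estimates you follow essentially the paper's route: an exact Fourier computation via \eqref{Fourierpsi} for the $L^2_z$ bound (the paper points to the analogous calculation in Lemma~\ref{est-rho}; you get the sharper equality by noticing that $\int_{-h}^0\cosh(2|\xi|(z+h))\,dz/\cosh^2(|\xi|h)$ telescopes exactly to $\tanh(h|\xi|)/|\xi|$), and harmonicity $\partial_z^2\underline{\psi}=-\Delta_x\underline{\psi}$ together with $|\xi|\le\langle\xi\rangle$ for the second.

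For the third estimate your route genuinely differs from the paper's. The paper deduces the $L^\infty_z$ bound from the two $L^2_z$ bounds via the interpolation Lemma~\ref{lions} (a Lions--Magenes-type trace theorem: $\nabla_{x,z}\underline{\psi}\in L^2\mathcal{H}^{\sigma,\mu}$ and $\partial_z\nabla_{x,z}\underline{\psi}\in L^2\mathcal{H}^{\sigma,\mu-1}$ give $\nabla_{x,z}\underline{\psi}\in C^0\mathcal{H}^{\sigma,\mu-\mez}$). You instead bound $\cosh(2|\xi|(z+h))\le 2\cosh^2(|\xi|h)$ uniformly in $z\in[-h,0]$ and then absorb the resulting $|\xi|^2$ against $\langle\xi\rangle\,a(\xi)$ by the elementary dichotomy $|\xi|\lessgtr1$. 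Your version is more self-contained (no appeal to the interpolation lemma), has explicit constants, and makes visible where the half-derivative loss comes from, namely the comparison $|\xi|^2\le\tanh(h)^{-1}\langle\xi\rangle a(\xi)$; the paper's version has the advantage of being a one-line consequence of a lemma already used elsewhere and of not requiring the closed-form $\cosh$ representation. Both are valid, and your final remark about the order-$2$/order-$1$ ellipticity of $a(\xi)$ is exactly the structural reason that $a(D_x)^{\mez}\psi$ rather than $|D_x|^{\mez}\psi$ is the natural quantity throughout the paper.
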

\begin{rema}
There exists a constant $C>0$ such that for all  $\sigma\geq 0$ and $\mu \in \xR$ we have
\begin{equation}\label{nabla-G}
\Vert \vert D_x \vert \psi \Vert_{\mathcal{H}^{\sigma,\mu - \mez}} + \Vert \nabla_x  \psi \Vert_{\mathcal{H}^{\sigma,\mu- \mez}} \leq C \Vert  a(D_x)^\mez \psi \Vert_{\mathcal{H}^{\sigma,\mu}}. 
\end{equation}
This follows from the inequality $\vert \xi \vert \leq C \langle \xi \rangle \tanh\, (h \vert \xi \vert)$ for all $\xi \in \xR^d.$
  \end{rema}

\subsection{Elliptic regularity in analytic spaces.}

In this paragraph, we specify the spaces in which we shall work to study the 
elliptic regularity theory. Recall the notation $I_h=(-h,0)$ and 
consider the Dirichlet problem in a half-space:
\begin{equation*}
(\partial_z^2 + \Delta_x) w = 0\quad\text{in }\xT^d\times I_h, 
\quad w\arrowvert_{z=0}=\psi,\quad 
\partial_z w\arrowvert_{z=-h}=\theta.
\end{equation*}
Then, by using a Fourier calculation analogous to~\e{Fourierpsi}, one verifies that if 
$\psi\in \mathcal{H}^{h,\mu}(\xT^d)$ and $\theta\in H^{\mu-1}(\xT^d)$, then 
$$
e^{(h+z)\la D_x\ra}w\in C^0([-h,0], H^{\mu}(\xT^d)),
$$
which is equivalent to
$$
e^{z\la D_x\ra}w\in C^0([-h,0], \mathcal{H}^{h,\mu}(\xT^d)).
$$
Our aim is to obtain a similar result for solutions to 
the general problem with variable coefficients. However 
for the latter problem  
we will loose on the radius of analyticity. Namely, 
we will replace $e^{z\la D_x\ra}$ 
(resp.\ $\mathcal{H}^{h,\mu}(\xT^d)$) by $e^{\lambda z\la D_x\ra}$ (resp.\ $\mathcal{H}^{\lambda h,\mu}(\xT^d)$) for some 
$\lambda\in [0,1)$. This leads us to introduce the following spaces.
 \begin{defi}\label{def-EF}
Let $\lambda\in [0,1]$. For $\mu \in \xR$, 
we introduce the spaces
\begin{equation}\label{E-F}
 \begin{aligned}
 E^{\lambda,\mu} &= \{u: e^{\lambda z\vert D_x \vert} 
 u \in C^0([-h,0], \mathcal{H}^{\lambda h,\mu}(\xT^d))\},\\
 F^{\lambda,\mu} &=  \{u: e^{\lambda  z\vert D_x \vert} u \in 
 L^2(I_h, \mathcal{H}^{\lambda h,\mu}(\xT^d))\},\\
 \mathcal{X}^{\lambda ,\mu} &= E^{\lambda,\mu} \cap F^{\lambda,\mu+ \mez}.
  \end{aligned}
  \end{equation}
\end{defi}
\begin{rema}
  Lemma~\ref{lions} shows that  $\nabla_{x,z} u \in F^{\sigma,\mu+ \mez}$ and $  D^\alpha_{x,z} u \in F^{\sigma,\mu- \mez}$ for $ \vert \alpha \vert = 2$  imply that $\nabla_{x,z} u \in E^{\sigma,\mu}.$
\end{rema}

We are now in position to state our first two results concerning elliptic regularity in 
analytic Sobolev spaces. 

\begin{prop}\label{regul0}
Consider    real numbers $ \lambda_0, s, \mu$ such that
$$
0\le \lambda_0 <1, \quad s>\frac{d}{2}+1,\quad 0\le \mu\le s-1.
$$
Then there exist two constants $\overline{\eps}>0$ and $C>0$ such that for all $0\leq \lambda\leq \lambda_0$,    all $\eta \in \mathcal{H}^{\lambda h, s}(\xT^d)$  satisfying $\Vert \eta \Vert_{\mathcal{H}^{\lambda h,s}} \leq \overline{\eps}$,  all  $ F \in F^{\lambda, \mu-1}$, all $\theta \in H^{\mu-\mez}(\xT^d)$ and all $w \in L^2(I_h, H^{\mu+1}(\xT^d))$ solutions of the problem  
   \begin{equation}\label{eqv1}
 (\Delta_{x,z}+R)w=  F   \text{ in } \xT^d \times I_h,\quad
 w \arrowvert_{z=0} = 0, \quad   (\partial_ z w) \arrowvert_{z=-h} = \theta, 
 \end{equation}
the function $w$ belongs to $F^{\lambda, \mu}$ and satisfies 
\begin{equation}\label{est-wideu}
\Vert  \nabla_{x,z}  w \Vert _{F^{\lambda,\mu}}\leq
C\Big(\Vert   F \Vert_{F^{\lambda,\mu-1}}  + \Vert \theta \Vert_{H^{\mu- \mez}(\xT^d)}\Big).
\end{equation}
\end{prop}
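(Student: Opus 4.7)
The strategy is to reduce the weighted analytic estimate to a classical Sobolev one by conjugating the equation with the Fourier multiplier $e^{\lambda(z+h)|D_x|}$, and then to treat $R$ as a small perturbation using the hypothesis $\|\eta\|_{\mathcal{H}^{\lambda h,s}}\le\overline{\eps}$. Set $W(x,z):=e^{\lambda(z+h)|D_x|}w(x,z)$ and $\Phi(x,z):=e^{\lambda(z+h)|D_x|}F(x,z)$. A direct Fourier computation gives the isometries $\|w\|_{F^{\lambda,\nu}}=\|W\|_{L^2(I_h,H^\nu)}$ and $\|F\|_{F^{\lambda,\mu-1}}=\|\Phi\|_{L^2(I_h,H^{\mu-1})}$, together with the intertwining identity $e^{\lambda(z+h)|D_x|}\partial_z=(\partial_z-\lambda|D_x|)e^{\lambda(z+h)|D_x|}$; consequently, controlling $\|\nabla_{x,z}w\|_{F^{\lambda,\mu}}$ reduces to bounding $\|\nabla_{x,z}W\|_{L^2(I_h,H^\mu)}+\|W\|_{L^2(I_h,H^{\mu+1})}$. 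Then $W$ solves $\mathcal{L}_\lambda W+\widetilde{R}W=\Phi$ in $\xT^d\times I_h$, with $W|_{z=0}=0$ and $(\partial_z-\lambda|D_x|)W|_{z=-h}=\theta$, where $\mathcal{L}_\lambda:=(\partial_z-\lambda|D_x|)^2+\Delta_x=\partial_z^2-2\lambda|D_x|\partial_z-(1-\lambda^2)|D_x|^2$ and $\widetilde{R}:=e^{\lambda(z+h)|D_x|}R\,e^{-\lambda(z+h)|D_x|}$.

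Next I would prove the elliptic estimate for the principal operator $\mathcal{L}_\lambda$. It factors as $(\partial_z-(\lambda+1)|D_x|)(\partial_z+(1-\lambda)|D_x|)$, so its characteristic roots $(\lambda+1)|\xi|$ and $-(1-\lambda)|\xi|$ are of opposite sign and uniformly bounded away from $0$ in the regime $\lambda\in[0,\lambda_0]$ with $\lambda_0<1$ (this is where that hypothesis enters). Solving the mode-by-mode ODE explicitly by a Green's function (Dirichlet at the top, Robin-type condition at the bottom, whose Lopatinskii determinant $|\xi|(e^{-(\lambda+1)|\xi|h}+e^{(1-\lambda)|\xi|h})$ is uniformly non-degenerate) yields
$\|W\|_{L^2(I_h,H^{\mu+1})}+\|\nabla_{x,z}W\|_{L^2(I_h,H^\mu)}\le C\bigl(\|\Phi\|_{L^2(I_h,H^{\mu-1})}+\|\theta\|_{H^{\mu-1/2}}+\|\widetilde{R}W\|_{L^2(I_h,H^{\mu-1})}\bigr),$
with $C$ depending only on $\lambda_0$, $h$, $\mu$.

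To absorb the last term, recall that $R$ of \eqref{P=} is in divergence form with coefficients built from $\rho$, $\nabla_x\rho$ and $\partial_z\rho$. Thanks to the smoothing choice $\rho(x,z)=h^{-1}(z+h)(e^{z|D_x|}\eta)(x)+z$, each of these coefficients is analytic in $x$ in a strip of width strictly larger than $\lambda(z+h)$ uniformly in $z\in I_h$, with norms controlled by $C\|\eta\|_{\mathcal{H}^{\lambda h,s}}$. Combining this with the product estimates in analytic Sobolev spaces of Appendix~A, one may commute the conjugating weight past the coefficient multiplications and obtain
$\|\widetilde{R}W\|_{L^2(I_h,H^{\mu-1})}\le C\|\eta\|_{\mathcal{H}^{\lambda h,s}}\bigl(\|W\|_{L^2(I_h,H^{\mu+1})}+\|\nabla_{x,z}W\|_{L^2(I_h,H^\mu)}\bigr).$
Choosing $\overline{\eps}$ so that $C\overline{\eps}<1/2$ lets this term be absorbed into the left-hand side of the elliptic estimate. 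Undoing the conjugation then gives \eqref{est-wideu}.

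The main obstacle is this last step: the control of the conjugated perturbation $\widetilde{R}$. The subtlety is that $e^{\lambda(z+h)|D_x|}$ does not commute with multiplication by the nonconstant coefficients of $R$, so one must carefully estimate commutators. This is made possible by the special smoothing choice of $\rho$ (following Lannes), which ensures that the coefficients of $R$ gain analytic regularity as $z$ decreases, at a rate precisely matching the decay of the conjugating weight. Without this gain, the commutator errors would overpower the smallness of $\eta$ and one could not close the perturbative estimate.
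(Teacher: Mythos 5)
Your high-level strategy -- conjugate by an exponential weight to reduce to a Sobolev estimate, then treat $R$ as a small perturbation absorbed by smallness of $\eta$ -- is exactly the paper's. The point where you replace the paper's Lax--Milgram/coercivity argument for the conjugated operator $P_\lambda = (\partial_z-\lambda|D_x|)^2+\Delta_x$ by an explicit mode-by-mode Green's function (using the factorization $(\partial_z-(\lambda+1)|D_x|)(\partial_z+(1-\lambda)|D_x|)$ and the Lopatinskii determinant $|\xi|(e^{-(\lambda+1)h|\xi|}+e^{(1-\lambda)h|\xi|})$) is a genuinely different and in principle valid route; the paper's variational form has the advantage that the divergence-form right-hand side $Rw = \partial_z F_1 + \operatorname{div}_x F_2$ can be paired directly against $w_\eps$ and integrated by parts (avoiding any reference to $\partial_z^2 w$), which your Green's function would also have to exploit explicitly but you only allude to.

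There is, however, a genuine gap at the very first step. You define $W := e^{\lambda(z+h)|D_x|}w$ and then manipulate it as if it were an element of $L^2(I_h,H^{\mu+1})$, but the hypothesis only gives $w\in L^2(I_h,H^{\mu+1})$, \emph{not} membership in any analytic space. Applying the unbounded operator $e^{\lambda(z+h)|D_x|}$ to such a $w$ need not produce anything, not even a distribution: the very content of the proposition is that $w$ belongs to $F^{\lambda,\mu}$, so using this as a starting point is circular. The stated isometry $\|w\|_{F^{\lambda,\nu}}=\|W\|_{L^2(I_h,H^\nu)}$ is only meaningful once one side is known finite. This is precisely why the paper replaces the symbol $\lambda(h+z)|\xi|$ by the \emph{truncated} symbol $q_\eps(z,\xi)=\lambda\big(\tfrac{h|\xi|}{1+\eps|\xi|}+z|\xi|\big)$, which is bounded so that $w_\eps=e^{q_\eps(z,D_x)}\langle D_x\rangle^\mu w$ makes classical sense; it then proves $\eps$-uniform estimates (using the crucial inequality $q_\eps(z,\xi)-q_\eps(z,\zeta)\le\lambda h|\xi-\zeta|$ for the commutator step) and passes to the limit by Fatou. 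Without such a regularization device your a priori estimate is an estimate on an object whose existence has not been established.

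A second, smaller point is that you misattribute the reason the commutator terms close. You claim that the perturbative step would fail without the special smoothing choice of $\rho$ (``Without this gain, the commutator errors would overpower...''). That is not accurate for this proposition: the commutator estimate (the paper's Lemma~\ref{lemme1}) needs the coefficients of $R$ to lie in $\mathcal{H}^{\lambda h,\cdot}$ \emph{uniformly in $z$}, because the governing inequality is $q(z,\xi)-q(z,\zeta)\le\lambda h|\xi-\zeta|$ with fixed width $\lambda h$; it does not exploit any $z$-dependent gain in the strip. Both the simple lift $\rho(x,z)=\tfrac{z+h}{h}\eta(x)+z$ and the smoothing one produce coefficients satisfying $\|\cdot\|_{L^\infty(I_h,\mathcal{H}^{\lambda h,s-1})}\lesssim\|\eta\|_{\mathcal{H}^{\lambda h,s}}$, and that is all that is used here. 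The smoothing choice of $\rho$ is exploited later (for example in the tame estimates of Theorem~\ref{regul-fin'} where one trades $L^\infty_z$ for $L^2_z$ with a half-derivative gain), not in Proposition~\ref{regul0}.
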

\begin{rema}
For our purposes  the estimate~\e{est-wideu} 
is interesting for $\lambda$ close to $1$. 
\end{rema}

Before proving this result, we pause to show how to deduce 
a variant of Proposition~\ref{regul0} with a non-vanishing trace on $z=0$, assuming that the index $\mu$ is equal to $s-1$. 
\begin{coro}\label{regul12}
Consider two  real numbers $ \lambda_0, s$ such that
$$
0\le \lambda_0 <1, \quad s>\frac{d}{2}+1.
$$
Then there exists two constants $\overline{\eps}>0$ and $C>0$ such that for all $ 0 \leq \lambda \leq \lambda_0$, 
  all $\eta \in \mathcal{H}^{\lambda h, s}(\xT^d)$ 
satisfying $\Vert \eta \Vert_{\mathcal{H}^{\lambda h,s}} \leq \overline{\eps}$, 
all $\psi  \in   \mathcal{H}^{\lambda h, s}(\xT^d)$, 
all $F \in F^{\lambda, s-2}$, all $\theta \in H^{s-\tdm}(\xT^d)$ and all 
$\widetilde{w} \in L^2(I_h, H^{s}(\xT^d))$ solutions of the problem 
$$
(\Delta_{x,z}+R)\widetilde{w}=  F   \text{ in } \xT^d \times I_h,\quad
\widetilde{w}  \arrowvert_{z=0} = \psi, \quad   (\partial_ z \widetilde{w}) \arrowvert_{z=-h} = \theta, 
$$
the function $\nabla_{x,z}\widetilde{w}$ belongs to $F^{\lambda,s-1}$ and satisfies 
\begin{equation}\label{est-wideu12}
\Vert  \nabla_{x,z}  \widetilde{w} \Vert _{F^{\lambda,s-1}}\leq
C\Big(\Vert   F \Vert_{F^{\lambda,s-2}} +\blA a(D_x)^\mez \psi\brA_{\mathcal{H}^{\lambda h,s-1}} 
+ \Vert \theta \Vert_{H^{s- \tdm}(\xT^d)}\Big).
\end{equation}
\end{coro}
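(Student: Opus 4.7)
My plan is to reduce Corollary~\ref{regul12} to Proposition~\ref{regul0} by lifting the Dirichlet data on $\{z=0\}$, following exactly the construction discussed just after~\e{P=}.

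\textbf{Step 1: Lifting.} Define $\underline{\psi}$ by solving the constant-coefficient problem \e{relev} with datum $\psi$, for which we have the explicit Fourier formula \e{Fourierpsi}. Set $w \defn \widetilde{w} - \underline{\psi}$. Since $\Delta_{x,z}\underline{\psi}=0$, the function $w$ satisfies
\[
(\Delta_{x,z}+R)w = F - R\underline{\psi} \quad\text{in } \xT^d\times I_h,\qquad w\arrowvert_{z=0}=0,\qquad \partial_z w\arrowvert_{z=-h}=\theta,
\]
because the Neumann trace of $\underline\psi$ at the bottom vanishes by construction. Moreover $w \in L^2(I_h,H^s(\xT^d))$ since $\widetilde w$ is and $\underline\psi$ is smooth in $z$ with the explicit expression \e{Fourierpsi}.

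\textbf{Step 2: Applying Proposition~\ref{regul0}.} Take $\mu = s-1$ (which is compatible with the constraint $0\le \mu\le s-1$ and requires $s>d/2+1$). Under the smallness hypothesis on $\eta$ (pick the same $\overline\eps$), Proposition~\ref{regul0} yields
\[
\Vert \nabla_{x,z} w\Vert_{F^{\lambda,s-1}} \le C\Big(\Vert F\Vert_{F^{\lambda,s-2}} + \Vert R\underline{\psi}\Vert_{F^{\lambda,s-2}} + \Vert \theta\Vert_{H^{s-\tdm}(\xT^d)}\Big).
\]
Writing $\nabla_{x,z}\widetilde w = \nabla_{x,z}w + \nabla_{x,z}\underline\psi$ and using the triangle inequality, it remains to control $\Vert R\underline\psi\Vert_{F^{\lambda,s-2}}$ and $\Vert \nabla_{x,z}\underline\psi\Vert_{F^{\lambda,s-1}}$ by $\blA a(D_x)^\mez \psi\brA_{\mathcal{H}^{\lambda h,s-1}}$.

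\textbf{Step 3: Bounding the lifted terms.} The estimate of $\nabla_{x,z}\underline\psi$ is immediate: because $e^{\lambda z\vert D_x\vert}$ has symbol bounded by $1$ for $z\le 0$, $\lambda\ge 0$, the lemma recalled just before Definition~\ref{def-EF} gives
\[
\Vert \nabla_{x,z}\underline\psi\Vert_{F^{\lambda,s-1}}\le \Vert \nabla_{x,z}\underline\psi\Vert_{L^2(I_h,\mathcal{H}^{\lambda h,s-1})}\le C\blA a(D_x)^\mez \psi\brA_{\mathcal{H}^{\lambda h,s-1}}.
\]
For $R\underline\psi$, one inspects the explicit form \e{P=}: the coefficients are polynomial expressions in $\partial_z\rho-1$, $\nabla_x\rho$, $(\partial_z\rho)^{-1}$, each of which is small in $\mathcal{H}^{\lambda h,s-1}$ whenever $\Vert \eta\Vert_{\mathcal{H}^{\lambda h,s}}\le \overline\eps$ (by Lemma~\ref{est-rho} in Appendix~\ref{AppendixA}, referenced in Section~\ref{defiCOV}). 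The two outer $\partial_z$ and $\cnx_x$ derivatives only cost one power of the analytic Sobolev index, so combining product estimates in $\mathcal{H}^{\lambda h,s-2}$ with the three estimates from the lifting lemma yields
\[
\Vert R\underline\psi\Vert_{F^{\lambda,s-2}}\le C\Vert \eta\Vert_{\mathcal{H}^{\lambda h,s}}\,\blA a(D_x)^\mez\psi\brA_{\mathcal{H}^{\lambda h,s-1}}.
\]
Adding the three bounds gives \e{est-wideu12}.

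\textbf{Expected difficulty.} The bookkeeping of Step~3 is the only delicate point: the term $R\underline\psi$ mixes one normal and one horizontal derivative with coefficients depending on $\eta$, and one must route each product through the correct analytic space so that the only loss is one power of $\la D_x\ra$, matched by the drop from $\mu=s-1$ to $\mu-1=s-2$ on the right-hand side. Everything else is a direct application of Proposition~\ref{regul0} and the lifting lemma.
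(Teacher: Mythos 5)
Your proposal is correct and follows exactly the paper's argument: lift the Dirichlet trace with the harmonic extension $\underline{\psi}$, reduce to Proposition~\ref{regul0} applied with $\mu=s-1$ to $w=\widetilde w-\underline\psi$, and control the extra forcing term $R\underline\psi$ in $F^{\lambda,s-2}$ together with $\nabla_{x,z}\underline\psi$ via the lifting lemma. The only detail worth flagging is that the paper routes the bound on $R\underline\psi$ through the estimate \eqref{HF2(s-2)}, which in Proposition~\ref{lemm:HFI0} is stated under the stronger hypothesis $s>d/2+2$, so your observation that the requisite product estimates (via Lemma~\ref{uvF}(i) rather than (ii)) close under $s>d/2+1$ is a welcome point of care, though it deserves a line of verification rather than the implicit appeal in Step~3.
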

\begin{proof}
Since by \eqref{relev} we have, $\Delta_{x,z} \underline{\psi} =0,$ 
the function $w=\widetilde{w}-\underline{\psi}$ satisfies 
$$
(\Delta_{x,z}+R)w=  F -R\underline{\psi}  \text{ in } \xT^d \times I_h,
\quad w  \arrowvert_{z=0} = 0, \quad   (\partial_ z w) \arrowvert_{z=-h} = \theta.
$$
Consequently, \e{est-wideu12} follows from the estimate~\e{est-wideu} given 
by Proposition~\ref{regul0}, applied with $\mu=s-1$, together 
with the estimate~\e{HF2(s-2)} below for the remainder 
$R\underline{\psi}$.
\end{proof}
\begin{proof}[Proof of Proposition~\ref{regul0}]
As we have seen in the previous paragraph, one can use the Fourier 
transform to study the analytic regularity of the solutions to the 
linearized problem
\begin{equation*}
\Delta_{x,z}w=  F   \text{ in } \xT^d \times I_h,\quad
w\arrowvert_{z=0} = 0, \quad   (\partial_ z w) \arrowvert_{z=-h} = \theta.
\end{equation*}
However, since the operator $R$ is a differential operator 
with variable coefficients, to study the regularity of the 
solution to \e{eqv1}, we must proceed differently. We will use the multiplier method. 
More precisely, our strategy consists in conjugating the operator $\Delta_{x,z}+R$ 
by the weight $e^{\lambda(h+z)\la D_x\ra}$. The trick here is that, 
when $\lambda<1$, we obtain another coercive operator and then the 
desired estimate \e{est-wideu} will follow from an energy estimate. 
The proof thus consists in estimating the function 
$e^{\lambda(h+z)\la D_x\ra} w$. 
To rigorously justify the computations, we shall truncate 
the symbol $e^{\lambda(h+z)\la \xi\ra}$, using the following lemma.

\begin{lemm}\label{qeps}
Given $\eps>0$, $\lambda \in [0,1)$ and  $z \in I_h$, define  $q_\eps(z,\cdot)\colon\xR^d\to\xR$ by
$$
q_\eps(z, \xi) = \lambda \Big( \frac{ h \vert \xi \vert}{1+ \eps \vert \xi \vert} + z \vert \xi \vert \Big).
$$ 
Then, for all  $\xi, \zeta \in \xR^d$ we have
\begin{align*}
&  q_\eps(z, \xi) \leq \lambda(h+z) \vert \xi\vert , \quad q_\eps(-h,\xi) = - \frac{\lambda h \eps \vert \xi \vert^2}{1+ \eps \vert \xi \vert}  \leq 0, \\
& q_\eps (z,  \xi) - q_\eps (z,   \zeta) \leq \lambda h \vert \xi- \zeta\vert.
\end{align*}
\end{lemm}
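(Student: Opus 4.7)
The lemma collects three elementary pointwise inequalities for the truncated symbol
$$q_\eps(z,\xi)=\lambda\Bigl(\frac{h|\xi|}{1+\eps|\xi|}+z|\xi|\Bigr),$$
all of which I would prove by direct computation from the formula; no machinery is needed.

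For the first inequality, since $\eps|\xi|\ge 0$, one has $\tfrac{h|\xi|}{1+\eps|\xi|}\le h|\xi|$, and adding $\lambda z|\xi|$ to $\lambda$ times this bound gives $q_\eps(z,\xi)\le\lambda(h+z)|\xi|$. For the second, I would substitute $z=-h$ and combine the two terms over a common denominator:
$$q_\eps(-h,\xi)=\lambda h|\xi|\Bigl(\frac{1}{1+\eps|\xi|}-1\Bigr)=-\frac{\lambda h\eps|\xi|^2}{1+\eps|\xi|},$$
which is manifestly nonpositive for every $\xi\in\xR^d$.

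For the third inequality I would introduce the one-variable function $f(t)=\tfrac{ht}{1+\eps t}+zt$ for $t\ge 0$, so that $q_\eps(z,\xi)=\lambda f(|\xi|)$, and compute
$$f'(t)=\frac{h}{(1+\eps t)^{2}}+z.$$
Since $z\le 0$ and $0<\tfrac{h}{(1+\eps t)^2}\le h$, we have $f'(t)\le h$; since $z\ge -h$ and the first term is nonnegative, we have $f'(t)\ge -h$. Hence $|f(t_1)-f(t_2)|\le h|t_1-t_2|$ for all $t_1,t_2\ge 0$, and in particular $f(|\xi|)-f(|\zeta|)\le h\bigl||\xi|-|\zeta|\bigr|\le h|\xi-\zeta|$ by the reverse triangle inequality. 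Multiplying by $\lambda$ yields the desired bound $q_\eps(z,\xi)-q_\eps(z,\zeta)\le\lambda h|\xi-\zeta|$.

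There is no real obstacle here; the only point to watch is that the Lipschitz constant of $f$ depends on both bounds $-h\le z\le 0$, not just on $z\le 0$, so that the estimate must be established in both directions $|\xi|\ge|\zeta|$ and $|\xi|<|\zeta|$. The interest of the lemma lies not in its proof but in its use later on: $q_\eps$ is a regularized, subadditive substitute for the weight $\lambda(h+z)|\xi|$ appearing in the exponential $e^{\lambda(h+z)\la D_x\ra}$ that will be used to conjugate the operator $\Delta_{x,z}+R$ in the proof of Proposition~\ref{regul0}.
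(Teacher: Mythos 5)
Your proof is correct, and for the third inequality it takes a genuinely different route from the paper. The paper computes the exact algebraic identity
\[
q_\eps(z,\xi)-q_\eps(z,\zeta)=\lambda\bigl(|\xi|-|\zeta|\bigr)\Bigl(\frac{h}{(1+\eps|\xi|)(1+\eps|\zeta|)}+z\Bigr),
\]
observes that the second factor lies in $[-h,h]$, and then splits into the cases $|\xi|\ge|\zeta|$ and $|\xi|<|\zeta|$ to combine the sign of the first factor with the appropriate one-sided bound on the second. You instead view $q_\eps(z,\xi)=\lambda f(|\xi|)$ for a scalar function $f$, bound $|f'|\le h$ using both $z\le 0$ and $z\ge -h$, and invoke the mean value theorem to get the two-sided Lipschitz bound $|f(t_1)-f(t_2)|\le h|t_1-t_2|$, from which the claim follows via the reverse triangle inequality. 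Both arguments are short and rely on exactly the same two structural facts (the regularization factor $(1+\eps t)^{-1}$ lies in $(0,1]$, and $z\in[-h,0]$); yours is slightly more conceptual and avoids the case distinction at the cost of introducing calculus, while the paper's is purely algebraic. Your remark that the constant depends on both endpoint bounds $-h\le z\le 0$ is exactly the point that makes the paper's case split necessary and your derivative bound two-sided, so the two proofs are really encoding the same observation in different language.
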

\begin{proof} 
The two first claims are  obvious. Then set  $\langle \eps \xi \rangle =  1+ \eps \vert \xi \vert$. We have
$$
 q_\eps (z,  \xi) - q_\eps (z, \zeta)= 
\lambda(\vert \xi \vert - \vert \zeta \vert) \left(\frac{h}{\langle \eps \xi \rangle\langle \eps \zeta \rangle} + z \right).
$$
%
If $\vert \xi \vert - \vert \zeta \vert \geq 0$, we use the fact that  
$\frac{h}{\langle \eps \xi \rangle\langle \eps \zeta \rangle}\leq h$,   
$z \leq 0$, together with the inequality $\vert \xi \vert - \vert \zeta \vert \leq \vert \xi - 
\zeta \vert$. 
If  $\vert \xi \vert - \vert \zeta \vert \leq 0$ we use the fact that $\frac{h}{\langle \eps \xi \rangle\langle \eps \zeta \rangle} \geq 0$ and $ 0 \leq -z \leq h.$
 \end{proof}

Now we fix $s,\mu$ satisfying $s>d/2+1$ and $0\le \mu\le s$ and set
$$
\Lambda_{\eps}(z)=e^{q_\eps(z,D_x)}\langle D_x\rangle^\mu,\quad \langle D_x\rangle^\mu
=(I-\Delta_x)^{\mu/2},
$$
where  $q_\eps$ is defined  in  Lemma~\ref{qeps}. 
Given a function $f=f(x,z)$ defined for $x\in \xT^d$ and $z\in I_h$, 
we define $\Lambda_\eps f$ as usual by $(\Lambda_\eps f)(\cdot,z)=\Lambda_\eps(z)f(\cdot,z)$. Then we set 
\begin{equation*}
w_\eps  = \Lambda_{\eps}w. 
\end{equation*}
Notice that this definition is meaningful, since  the symbol  $ e^{q_\eps(x,\xi)} $ 
is bounded and  $w$ belongs to $L^2(I_h, H^1(\xT^d))$. 
Our goal is to estimate the $H^1(\widetilde{\Omega})$ norm of $w_\eps$ uniformly in $\eps$, 
and this will imply the desired result by means of   Fatou's lemma.

To form an equation on $w_\eps$, we notice that, for any function $f=f(x,z)$, 
$$
(\partial_ z  - \lambda \vert D_x \vert  )e^{q_\eps(z,D_x)} f
=e^{q_\eps(z,D_x)} \partial_z f,\quad \nabla_x\big( e^{q_\eps(z,D_x)}f\big)=e^{q_\eps(z,D_x)} \nabla_x f.
$$
Therefore, setting
\be\label{defiP}
P_\lambda=(\partial_ z  - \lambda \vert D_x \vert  )^2+\Delta_x,
\ee
we obtain
$$
P_\lambda w_\eps=\Lambda_\eps  (\partial_z^2+\Delta_x) w.
$$
Since $v$ solves \e{eqv1}, we conclude that $w_\eps$ is  solution of the problem
 \begin{equation}\label{EQ3}
 \left\{
 \begin{aligned}
 &P_\lambda w_\eps = 
 \Lambda_{\eps}(-Rw+F),\\
 & w_\eps \arrowvert_{z=0}   = 0, \quad  
 (\partial_ z w_\eps - \lambda \vert D_x \vert w_\eps ) \arrowvert_{z=-h} =
 e^{q_\eps(-h,D_x)}\langle D_x\rangle^\mu\theta:=\theta_\eps.
 \end{aligned}
 \right.
  \end{equation}
Now, the rest of the proof is divided into three steps:
\begin{itemize}
\item First, we will prove that the operator $P_\lambda$ is elliptic.
\item The second step is elementary. We check that the contributions of the Cauchy data $F$ and $\theta$ 
are estimated by the right-hand side of \eqref{est-wideu}.
\item In the third step, we prove a commutator estimate in analytic spaces 
and use it to deduce that the 
contribution of 
$e^{q_\eps(z,D_x)}Rv$ can be absorbed by the elliptic regularity, 
under a smallness assumption on the coefficients in the operator $R$.
\end{itemize}

\textbf{Step 1: The conjugated operator.} We begin by studying 
the operator $P_\lambda$ introduced in~\e{defiP}. We will see that it is an elliptic operator and prove some elementary elliptic estimates. 

Recall that, by notation,  $\widetilde{\Omega} = \xT^d \times I_h$. We denote by $\mathcal{H}^1_0(\widetilde{\Omega})$ 
the subspace of  $H^1(\widetilde{\Omega})$ which consists of those functions whose trace on $z=0$ vanishes, 
equipped with the~$H^1(\widetilde{\Omega})$-norm.  Poincar{\'e}'s inequality applies in this setting  and 
there is a positive constant $C_{\widetilde{\Omega}}$ such that
\be\label{Poincaretilde}
\lA u\rA_{L^2(\widetilde{\Omega})}\le C_{\widetilde{\Omega}}\lA \nabla_{x,z}u\rA_{L^2(\widetilde{\Omega})},\quad 
\forall u\in \mathcal{H}^1_0(\widetilde{\Omega}).
\ee
Now, consider the bilinear form
\begin{align*} 
 a(u,v) &= \big( \partial_z u, \partial_z v\big)_{L^2(\widetilde{\Omega})} 
 +  (1- \lambda^2) \big( \nabla_x u, \nabla_x v\big)_{L^2(\widetilde{\Omega})} \\
 &\quad + \lambda \big( \partial_z u, \vert D_x \vert v\big)_{L^2(\widetilde{\Omega})} 
 - \lambda \big(  \vert D_x \vert u, \partial_z v\big)_{L^2(\widetilde{\Omega})}.
 \end{align*}
This is a continuous bilinear  form on $\mathcal{H}^1_0(\widetilde{\Omega})  
\times \mathcal{H}^1_0(\widetilde{\Omega})$. Moreover,
if  $u \in \mathcal{H}_0^1(\widetilde{\Omega}) \cap H^2(\widetilde{\Omega}) $, 
we can make the following computations:
\begin{align*}
&\big( \partial_z u, \partial_z v\big)_{L^2(\widetilde{\Omega})} 
= - \big( \partial^2_z u,  v\big)_{L^2(\widetilde{\Omega})} 
- \int_{\xT^d} (\partial_z u) v\arrowvert_{z= -h}\, dx,\\
& \big( \nabla_x u, \nabla_x v\big)_{L^2(\widetilde{\Omega})} 
= - \big(\Delta_x u,v)_{L^2(\widetilde{\Omega})},\\
& \big( \partial_z u, \vert D_x \vert v\big)_{L^2(\widetilde{\Omega})} 
= \big( \vert D_x \vert\partial_z u,   v\big)_{L^2(\widetilde{\Omega})}, \\
  -& \big(  \vert D_x \vert u, \partial_z v\big)_{L^2(\widetilde{\Omega})} 
  =  \big( \vert D_x \vert\partial_z u,   v\big)_{L^2(\widetilde{\Omega})} 
  + \int_{\xT^d} (\vert  D_x \vert u) v\arrowvert_{z=-h} \, dx.
 \end{align*}
It follows that 
\begin{equation}\label{a-P}
a(u,v) = \big(-P_\lambda u,v \big)_{L^2(\widetilde{\Omega})} 
- \int_{\xT^d} \  \big[(\partial_z u  - \lambda \vert D_x \vert  u)   v \big]\arrowvert_{z=-h} \, dx.
 \end{equation}
On the other hand, by using the assumption that $\lambda<1$, 
and remembering that we are considering real-valued functions, we have
\begin{equation}\label{coerc}
a(u,u) = \Vert \partial_z u \Vert^2_{L^2(\widetilde{\Omega})} + (1- \lambda^2) \Vert \nabla_x u \Vert^2_{L^2(\widetilde{\Omega})} \geq C(1- \lambda^2) \Vert u \Vert^2_{H^1(\widetilde{\Omega})},
\end{equation}
where we used the classical Poincar\'e 
inequality~\e{Poincaretilde}. Here $C>0$ is independent of $\lambda.$
 
With the  notations in \eqref{EQ3} consider the linear form on  $\mathcal{H}_0^1(\widetilde{\Omega}),$
\begin{equation*}
L(f)= -\big\langle \Lambda_\eps(-Rw+F) ,f\big\rangle - (( \theta_\eps, f\arrowvert_{z=-h})),
\end{equation*}
where 
$\big\langle \cdot, \cdot \big\rangle$ denotes the 
duality between    $L^2_z\big( I_h, H^{-1}(\xT^d)\big)$ 
and $L^2_z\big( I_h, H^{1}(\xT^d)\big)$ and  $(( \cdot, \cdot))$ 
denotes the duality between $H^{-\mez}(\xT^d)$ 
and  $H^{\mez}(\xT^d)$. 
We deduce from   \eqref{a-P} that 
$w_\eps$ is solution of the  problem,
\be\label{LaxMilgram}
a(w_\eps,f) = L(f), \quad 
\forall f \in \mathcal{H}^1_0(\widetilde{\Omega}).
\ee

Recall from \e{P=} that $Rw$ is given by,
\begin{equation*}
Rv= \partial_z F_1 +\cnx_x F_2,
\end{equation*}
where
$$
F_1=\frac{1+\la\nabla_x \rho\ra^2-\partial_z \rho}{\partial_z \rho}\partial_z w
-\nabla_x \rho\cdot\nabla_xw,\quad F_2=(\partial_z \rho-1) \nabla_x w-\partial_z w\nabla_x\rho.
$$
Parallel to the computations above  we immediately verify that
\begin{align*}
\Lambda_{\eps}\partial_z F_1&=(\partial_z-\lambda \la D_x\ra)\big(\Lambda_{\eps} F_1\big),\qquad
\Lambda_{\eps}\cnx_x F_2=\cnx_x \big(\Lambda_{\eps}F_2\big),
\end{align*}
so, 
$$
\Lambda_{\eps}Rw =(\partial_z-\lambda \la D_x\ra)\big(\Lambda_{\eps} F_1\big)   
+\cnx_x \big(\Lambda_{\eps}F_2\big).
$$
Integrating by parts with respect to $z$ or $x$, 
we find that
\begin{align*}
\big\langle \Lambda_{\eps}Rw  ,f\big\rangle 
   &=\big\langle \Lambda_{\eps}F_1,(\partial_z  -\lambda \la D_x\ra)f\big\rangle 
    - ((e^{q_\eps(-h,D_x)}\langle D_x\rangle^\mu 
    F_1\arrowvert_{z=-h},f\arrowvert_{z=-h}
))\\
&\quad  +\big\langle \Lambda_{\eps} F_2,\nabla_x f\big\rangle. 
\end{align*}
The absolute value of the first and last term in the right-hand 
side above are estimated by means of the Cauchy-Schwarz inequality by
$$
\Big(\blA \Lambda_{\eps}F_1\brA_{L^2(\widetilde{\Omega})}
+\blA \Lambda_{\eps}F_2\brA_{L^2(\widetilde{\Omega})}\Big)
\lA \nabla_{x,z}f\rA_{L^2(\widetilde{\Omega})}.
$$
To estimate the second term, we use the fact that 
$q_\eps(-h,\xi)\le 0$ and 
the trace theorem to write
\begin{align*}
&\la (( e^{q_\eps(-h,D_x)}\langle D_x\rangle ^\mu F_1\arrowvert_{z=-h},f\arrowvert_{z=-h}))\ra\\
\le &\ \lA e^{q_\eps(-h,D_x)}\langle D_x\rangle^\mu F_1\arrowvert_{z=-h}\rA_{H^{-\mez}}\lA f\arrowvert_{z=-h}\rA_{H^\mez(\xT^d)}\\
\le &\,  \lA \langle D_x\rangle^\mu F_1\arrowvert_{z=-h}\rA_{H^{-\mez}}
\lA f\rA_{H^1(\widetilde{\Omega})}\\
\les &\, \lA \langle D_x\rangle^\mu F_1\arrowvert_{z=-h}\rA_{H^{-\mez}}
\lA \nabla_{x,z}f\rA_{L^2(\widetilde{\Omega})},
\end{align*}
where we used Poincar\'e's inequality. 
Similarly, we have
$$
\la (( \theta_\eps, f\arrowvert_{z=-h}))\ra \les \lA \theta_\eps\rA_{H^{-\mez}}\lA \nabla_{x,z}f\rA_{L^2(\widetilde{\Omega})}.
$$
Hence we conclude that
\begin{equation}\label{LaxMilgram3}
\la L(f)\ra 
\le \big( A+B+C+D\big)
\lA \nabla_{x,z}f\rA_{L^2(\widetilde{\Omega})}, 
\end{equation}
where
\[
\begin{aligned}
&A=\blA \Lambda_{\eps}F\brA_{L^2( I_h, H^{-1}(\xT^d))}, \\
& B=\lA \theta_\eps\rA_{H^{-\mez}},\\
&C=\blA \Lambda_{\eps}F_1\brA_{L^2(\widetilde{\Omega})}+
\blA \Lambda_{\eps}F_2\brA_{L^2(\widetilde{\Omega})},
\\
&D=\blA \langle D_x\rangle^\mu F_1\arrowvert_{z=-h}\brA_{H^{-\mez}}.
\end{aligned}
\]
Thus, by combining \e{LaxMilgram} and \e{LaxMilgram3} 
(applied with $f=w_\eps$) 
together with the 
coercive inequality~\e{coerc}, we conclude that
\be\label{LaxMilgram4}
\Vert \nabla_{x,z} w_\eps\Vert_{L^2_z (I_h, L^2)}  
\leq C_\lambda(A +B+C+D),
\ee
where $C_\lambda = \frac{C}{1-\lambda}.$ \\

\textbf{Step 2: Estimates of $A$ and $B$}. 
Directly from Lemma~\ref{qeps} and \eqref{EQ3}  we deduce that
\begin{equation*}
\Vert  \theta_\eps \Vert_{H^{-\mez}(\xT^d)} \leq   \Vert \theta \Vert_{H^{\mu- \mez}(\xT^d)}.
\end{equation*}
Similarly, using the property $q_\eps(z, \xi) \leq \lambda(h+z) \vert \xi \vert$, we obtain
$$
\Vert \Lambda_\eps F \Vert_{L^2_z(I_h, H^{-1}(\xT^d))}
\leq  
\Vert e^{\lambda z \vert D_x \vert}\langle D_x\rangle^\mu F
\Vert_{L^2_z(I_h, \mathcal{H}^{\lambda h, -1})}\le \lA F\rA_{F^{\lambda,\mu-1}}.
$$
Then the terms $A$ and $B$ in \e{LaxMilgram4} are estimated by the 
right-hand side of the desired inequality~\e{est-wideu}. \\

\textbf{Step 3: estimate of $C$}. We shall use the following lemma.
\begin{lemm}\label{lemme1}
Consider two real numbers 
$$
\nu >\frac{d}{2},\quad 0\le \mu\le \nu.
$$
Given two functions $r,u\colon \xT^d\times I_h\to \xR$, set
$$
U(\cdot,z) = e^{q_\eps(z,D_x)} \langle D_x\rangle^\mu (r(\cdot,z) u(\cdot,z)).
$$
Then  there exists a constant $C=C(d,\nu,\mu,h)>0$ such that
$$
\Vert U \Vert_{L^2_z(I_h, L^2(\xT^d))}
\leq C \Vert r \Vert_{L^\infty_z(I_h,\mathcal{H}^{\lambda h, \nu})}
\Vert e^{q_\eps(z,D_x)}\langle D_x\rangle^\mu u \Vert_{L^2_z(I_h, L^{2}(\xT^d))}.
$$
\end{lemm}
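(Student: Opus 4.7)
The plan is to exploit the subadditivity property of the Fourier weight $q_\eps$ provided by Lemma~\ref{qeps}, namely its third inequality $q_\eps(z,\xi) \leq q_\eps(z,\zeta) + \lambda h |\xi-\zeta|$, in order to reduce the estimate to a classical tame product law in ordinary Sobolev spaces. The argument is carried out pointwise in $z \in I_h$, and the $z$-integration is performed only at the very end.

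For each fixed $z$, I would introduce auxiliary functions $R(\cdot,z), V(\cdot,z) \colon \xT^d \to \xR$ defined by their nonnegative Fourier coefficients
\[
\widehat{R}(\xi,z) = e^{\lambda h |\xi|} |\widehat{r}(\xi,z)|, \qquad \widehat{V}(\xi,z) = e^{q_\eps(z,\xi)} |\widehat{u}(\xi,z)|,
\]
which by Plancherel satisfy $\Vert R(\cdot,z)\Vert_{H^\nu(\xT^d)} = \Vert r(\cdot,z)\Vert_{\mathcal{H}^{\lambda h,\nu}}$ and $\Vert \langle D_x\rangle^\mu V(\cdot,z)\Vert_{L^2(\xT^d)} = \Vert e^{q_\eps(z,D_x)}\langle D_x\rangle^\mu u(\cdot,z)\Vert_{L^2(\xT^d)}$. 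Combining the identity $\widehat{ru} = \widehat{r} \ast \widehat{u}$ with the triangle inequality and the subadditivity of $q_\eps$ yields the pointwise bound
\[
e^{q_\eps(z,\xi)}|\widehat{ru}(\xi,z)| \leq \widehat{RV}(\xi,z).
\]
Multiplying by $\langle\xi\rangle^\mu$, taking the $\ell^2_\xi$ norm, and applying Plancherel once more, one obtains $\Vert U(\cdot,z)\Vert_{L^2_x} \leq C\, \Vert \langle D_x\rangle^\mu (RV)(\cdot,z)\Vert_{L^2_x}$.

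The final step is to invoke the classical tame Sobolev product law, valid precisely in the assumed range $\nu > d/2$ and $0 \leq \mu \leq \nu$:
\[
\Vert \langle D_x\rangle^\mu (RV)\Vert_{L^2(\xT^d)} \leq C \Vert R\Vert_{H^\nu(\xT^d)} \Vert \langle D_x\rangle^\mu V\Vert_{L^2(\xT^d)}.
\]
Inserting the norm identifications above, estimating $\Vert r(\cdot,z)\Vert_{\mathcal{H}^{\lambda h,\nu}}$ by its supremum over $z \in I_h$, and integrating the square in $z$ delivers the announced inequality.

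The main technical difficulty is entirely concentrated in the Sobolev product law above: in the regime $\nu - \mu \leq d/2$, a naive approach via the Young inequality $\ell^1 \ast \ell^2 \hookrightarrow \ell^2$ combined with Cauchy--Schwarz fails, because the sequence $\langle\cdot\rangle^\mu \widehat{R}$ need not lie in $\ell^1$; the standard remedy is a Littlewood--Paley or Bony paraproduct decomposition, which treats the high--high frequency interactions separately from the low--high ones. Apart from this well-known input, the whole argument reduces to careful bookkeeping of the analytic Fourier weights.
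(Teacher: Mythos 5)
Your proof is correct and follows essentially the same route as the paper: both exploit the subadditivity $q_\eps(z,\xi)-q_\eps(z,\zeta)\le \lambda h\vert\xi-\zeta\vert$ to dominate the weighted convolution and then reduce, pointwise in $z$, to the classical Sobolev product estimate $\Vert RV\Vert_{H^\mu}\lesssim\Vert R\Vert_{H^\nu}\Vert V\Vert_{H^\mu}$ (which the paper realizes via the H\"ormander bilinear kernel bound in Lemma~\ref{Theo:Hormander}, rather than via a Littlewood--Paley decomposition as you suggest). The only cosmetic difference is that you package the majorization through auxiliary functions $R,V$ with nonnegative Fourier coefficients, while the paper keeps everything at the level of Fourier coefficients and a kernel operator $T_F$.
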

\begin{proof}
By Lemma~\ref{qeps} we have
$$
e^{q_\eps(z, \xi)}=e^{q_\eps(z, \xi)-q_\eps(z,\zeta)}
e^{q_\eps(z, \zeta)}\le e^{\lambda  h \la \xi-\zeta\ra}e^{q_\eps(z, \zeta)}.
$$
So writing $\widehat{U}(\xi, z)$ under the form
$$
 \frac{1}{(2\pi)^d}\int \frac{\langle \xi \rangle^\mu }
{\langle \xi- \zeta\rangle^{\nu}\langle \zeta \rangle^\mu}
e^{q_\eps(z, \xi)- q_\eps(z, \zeta)}  \langle \xi- \zeta \rangle^{\nu} 
\widehat{r}(\xi- \zeta,z)  e^{ q_\eps(z, \zeta)}  \langle \zeta \rangle^\mu\widehat{u}(\zeta,z) \, d\zeta,
$$
we find that
$$
\big\vert \widehat{U}(\xi, z)\big\vert\le  \frac{1}{(2\pi)^d}\int F(\xi,\zeta)f_1(\xi-\zeta,z)f_2(\zeta,z)\, d\zeta,
$$
where
$$
F(\xi,\zeta)=\langle \xi \rangle^\mu \langle \xi- \zeta\rangle^{-\nu}
\langle \zeta \rangle^{-\mu},
$$
and
$$
f_1(\theta,z)=e^{\lambda h\la \theta\ra}\langle \theta\rangle^\nu\la \widehat{r}(\theta,z)\ra,\quad 
f_2(\zeta,z)=e^{ q_\eps(z, \zeta)}  \langle \zeta \rangle^\mu \la\widehat{u}(\zeta,z)\ra.
$$
Now, we are exactly in the setting introduced 
by H\"ormander  to 
study the continuity of the product in 
Sobolev spaces (see Lemma~\ref{Theo:Hormander} 
in Appendix~\ref{AppendixA} and take $s_3 = \mu$, $s_1= \nu$ 
and $s_2= \mu$). We infer that, for any 
fixed $z\in I_h$,
$$
\lA U(\cdot,z)\rA_{L^2}^2\le C\lA f_1(\cdot,z)\rA_{L^2}^2\lA f_2(\cdot,z)\rA_{L^2}^2.
$$
Using the Plancherel's identity and then integrating in $z$, we obtain the desired result. 
This completes the proof of Lemma \ref{lemme1}. 
\end{proof}

We are now in position to estimate $\blA \Lambda_{\eps}F_1\brA_{L^2(\widetilde{\Omega})}+
\blA \Lambda_{\eps}F_2\brA_{L^2(\widetilde{\Omega})}$. 
Remembering that
$$
F_1=\frac{1+\la\nabla_x \rho\ra^2-\partial_z \rho}{\partial_z \rho}
\partial_z w-\nabla_x \rho\cdot\nabla_x w,\quad 
F_2=(\partial_z \rho-1) \nabla_x w-\partial_z w\nabla_x\rho,
$$
we see that it is sufficient to estimate the $L^2(\widetilde{\Omega})$-norm of a term of the form $\Lambda_\eps(\alpha\beta)$, where the factors of the product are
$$
\alpha\in \left\{\frac{1+\la\nabla_x \rho\ra^2-\partial_z \rho}{\partial_z \rho},\nabla_x\rho, \, \partial_z\rho-1\right\},\qquad 
\beta\in \left\{\nabla_x w,\, \partial_z w\right\}.
$$ 
Since $s>d/2+1$ and $0\le \mu\le s-1$ by assumptions, 
we can apply Lemma~\ref{lemme1} with $\nu=s-1$, 
to obtain
\begin{align*}
\lA \Lambda_{\eps}(\alpha \beta)\rA_{L^2(\widetilde{\Omega})} 
\les 
\lA \alpha\rA_{L^\infty(I_h,\mathcal{H}^{\lambda h,s-1})}\lA \Lambda_\eps \beta\rA_{L^2(\widetilde{\Omega})}.
\end{align*}
Now we claim that there exists $C>0$ such that
\begin{align}
&\lA \Lambda_\eps \beta\rA_{L^2(\widetilde{\Omega})}\le C \lA \nabla_{x,z}w_\eps\rA_{L^2(\widetilde{\Omega})},\label{claimbeta}\\
&\lA \alpha\rA_{L^\infty(I_h,\mathcal{H}^{\lambda h,s-1})}\le C\overline{\eps}\quad\text{provided }\lA 
\eta\rA_{\mathcal{H}^{\lambda h,s}}\le \overline{\eps}\label{claimalpha}.
\end{align}
The proof of~\e{claimbeta} is straightforward; indeed
$$
\Lambda_\eps \partial_{x_j}w=\partial_{x_j}\Lambda_\eps w=\partial_{x_j}w_\eps,
\quad 
\Lambda_\eps \partial_z w=(\partial_z-\lambda \la D_x\ra)\Lambda_\eps w=(\partial_z-\lambda \la D_x\ra)w_\eps.
$$
The second estimate follows from Lemma~\ref{algebre0}. 
Now, by using \e{claimalpha} and \e{claimbeta}, we obtain that, if 
$\lA 
\eta\rA_{\mathcal{H}^{\lambda h,s}}\le \overline{\eps}$, then we have
$$
\blA \Lambda_{\eps}F_1\brA_{L^2(\widetilde{\Omega})}+
\blA \Lambda_{\eps}F_2\brA_{L^2(\widetilde{\Omega})}\les \overline{\eps}\lA \nabla_{x,z}w_\eps\rA_{L^2(\widetilde{\Omega})}.
$$

\vspace{5mm}

\textbf{Step 4: estimate of $D$}. It remains only to estimate 
$$
D=\blA \langle D_x\rangle^\mu F_1\arrowvert_{z=-h}\brA_{H^{-\mez}},
$$
where recall that
$$
F_1=\frac{1+\la\nabla_x \rho\ra^2-\partial_z \rho}{\partial_z \rho}\partial_z w-\nabla_x \rho\cdot\nabla_x w.
$$
By definition of $\rho$ (see~\e{drho})  we have
$$
\partial_z \rho\arrowvert_{z=-h}  = 1 + \frac{1}{h} e^{-h \vert D_{x} \vert} \eta(x) ,\quad
\nabla_{x} \rho\arrowvert_{z=-h}  = 0.
$$
Since $\partial_z v \arrowvert_{z=-h}=\theta$ we obtain
$$
F_1\arrowvert_{z=-h}=-\frac{e^{-h \vert D_{x} \vert} \eta}{h+e^{-h\la D_x\ra}\eta}\theta.
$$
Then $D\leq C \lA \theta\rA_{H^{\mu -\mez}}$, where $C$   depends only on $h$. \\

\textbf{Step 5: End of the proof.}
By plugging the previous estimates in \e{LaxMilgram4}, we conclude that
$$
\Vert \nabla_{x,z} w_\eps\Vert_{L^2_z (I_h, L^2)}  
\leq C_\lambda \big( \Vert \theta \Vert_{H^{\mu- \mez}(\xT^d)}+ \lA F\rA_{F^{\lambda,\mu-1}}\big)
+C_\lambda' \overline{\eps}\blA \nabla_{x,z} w_\eps\brA_{L^2_z (I_h, L^2)}.
$$
Now, taking $\overline{\eps}$ such that $C_\lambda' \overline{\eps}<1$, we obtain the uniform estimate
$$
\sup_{\eps\in (0,\overline{\eps}]}\Vert \nabla_{x,z} w_\eps\Vert_{L^2_z (I_h, L^2)}  
\leq C''_\lambda \big( \Vert \theta \Vert_{H^{\mu- \mez}(\xT^d)}+ \lA F\rA_{F^{\lambda,\mu-1}}\big).
$$
It follows from Fatou's lemma that
$$
\Vert \nabla_{x,z} w\Vert_{L^2_z (I_h, L^2)}  
\leq C''_\lambda \big( \Vert \theta \Vert_{H^{\mu- \mez}(\xT^d)}+ \lA F\rA_{F^{\lambda,\mu-1}}\big).
$$
This completes the proof of Proposition~\ref{regul0}.
\end{proof}

\subsection{Sharp elliptic estimates.}

We consider again the  problem,
\begin{equation}\label{eq-U}
(\Delta_{x,z}+R)\widetilde{w}=  F \,  \text{ in } \xT^d \times I_h,\quad
\widetilde{w}\arrowvert_{z=0} = \psi, \quad    \partial_ z \widetilde{w} \arrowvert_{z=-h} = \theta,
\end{equation}
and our purpose is to refine the result proved in Corollary~\ref{regul12}.

Let us recall some elliptic estimates proved in \cite{ABZ3,AM} for this problem:
$$
\Vert   \nabla_{x,z}  \widetilde{w} \Vert _{L^2(I_h;H^{s})}
\leq  \mathcal{F}\left(\lA \eta\rA_{H^{s+\mez}}\right)
\left(\Vert  F\Vert_{L^2_z(I_h;H^{s- 1})} + \Vert \psi \Vert_{H^{s+\mez}}
+\Vert \theta \Vert_{H^{s-\mez}}\right)
$$
for some non-decreasing function $\mathcal{F}\colon \xR_+\to\xR_+$. 
Here we will prove an analogue of this estimate in the analytic setting. 
As in the previous paragraph, we will make a smallness assumption on $\eta$. 
But we will only assume that the $\mathcal{H}^{\lambda h,s}$-norm of $\eta$ 
is small, and not the $\mathcal{H}^{\lambda h,s+\mez}$-norm. 
Also, for later purposes, we will prove a sharp estimate which is tame 
in the sense that the $\mathcal{H}^{\lambda h,s+\mez}$-norm of $\eta$ 
will multiply only the $\mathcal{H}^{\lambda h,s}$-norm of $\psi$. 

\begin{theo}\label{regul-fin'} 
Consider two real numbers $ \lambda_0, s$ such that
$$
0\le \lambda_0 <1, \quad s>\frac{d}{2}+2.
$$
Then there exist two constants $\overline{\eps}>0$ and $C>0$ 
such that for all $ 0\leq \lambda \leq \lambda_0$,   all   $\eta \in \mathcal{H}^{\lambda  h,s+ \mez }(\xT^d)$  
satisfying $\Vert \eta \Vert_{\mathcal{H}^{\lambda h,s}}\leq \overline{\eps}$, 
all $\psi  \in   \mathcal{H}^{\lambda h, s+ \mez}(\xT^d), $ all $ \theta \in H^{s-\mez}(\xT^d)$ 
and all solutions $\widetilde{w}$ of  problem \eqref{eq-U}, 
if we set
$$
I_1= \Vert   \nabla_{x,z}  \widetilde{w} \Vert _{F^{\lambda, s}}, 
\quad I_2= \Vert   \partial_z^2  \widetilde{w} \Vert_{F^{\lambda, s-1}}, 
\quad I_3=   \Vert   \nabla_{x,z}  \widetilde{w} \Vert _{E^{\lambda, s-\mez}},
$$  
then
\be\label{HF20}
\begin{aligned}
\sum_{k=1}^3 I_k &\leq  C
\Big(\Vert  F\Vert_{F^{\lambda,s- 1}}  + \Vert a(D_x)^\mez \psi \Vert_{\mathcal{H}^{\lambda h,s}}+\Vert \theta \Vert_{H^{s-\mez}}\Big)
\\
&\quad +C \Vert \eta \Vert_{  \mathcal{H}^{\lambda h, s+ \mez}}
\Big(\Vert  F\Vert_{F^{\lambda,s-2}}+ \Vert a(D_x)^\mez \psi \Vert_{\mathcal{H}^{\lambda h,s-\mez}}+\Vert \theta \Vert_{H^{s-\tdm}}\Big),
\end{aligned}
\ee
where recall that $a(D_x) =  G_0(0)$ as introduced in \eqref{a(D)=}.
\end{theo}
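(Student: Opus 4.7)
The plan is to extend the energy method of Proposition~\ref{regul0} up to the target regularity level $\mu = s$, bridging the half-derivative gap between what a direct application of that proposition would require (namely $\eta\in \mathcal{H}^{\lambda h, s+1}$) and what is actually available ($\eta\in \mathcal{H}^{\lambda h, s+\mez}$) by means of a sharp tame paraproduct estimate. As in the proof of Corollary~\ref{regul12}, we first set $v = \widetilde{w} - \underline{\psi}$, where $\underline{\psi}$ is the harmonic extension defined by~\e{relev}; then $v$ vanishes on $z=0$, has Neumann trace $\theta$ at $z=-h$, and $(\Delta_{x,z} + R)v = F - R\underline{\psi}$. A tame product bound applied to $R\underline{\psi}$, combined with the elliptic estimates of the lemma following~\e{DNpsi}, produces the two $\blA a(D_x)^\mez \psi\brA$ contributions to~\e{HF20}.

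\textbf{Energy estimate at level $s$.} Following verbatim the argument of Proposition~\ref{regul0}, we conjugate by $\Lambda_\eps = e^{q_\eps(z,D_x)}\langle D_x\rangle^s$ and set $v_\eps = \Lambda_\eps v$; then $v_\eps$ solves a problem of the form~\e{EQ3} governed by the coercive operator $P_\lambda = (\partial_z - \lambda\vert D_x\vert)^2 + \Delta_x$ of~\e{defiP}. The coercivity~\e{coerc} of the bilinear form~\e{a-P} then delivers the analogue of~\e{LaxMilgram4}, namely
$$
\lA \nabla_{x,z} v_\eps\rA_{L^2(\widetilde{\Omega})} \le C_\lambda\bigl(A + B + C + D\bigr),
$$
where $A$, $B$ and $D$ are controlled by the first line of~\e{HF20} exactly as in Steps~2 and~4 of the proof of Proposition~\ref{regul0}. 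The whole difficulty lies in the control of $C = \blA \Lambda_\eps Rv\brA$, which must be obtained with the sharp tame structure of~\e{HF20}.

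\textbf{The main obstacle: tame paraproduct control of $Rv$.} Writing $Rv = \partial_z F_1 + \cnx_x F_2$ as in~\e{P=} and integrating by parts against the test function, the estimate of $C$ reduces to bounding $\blA \Lambda_\eps (\alpha\beta)\brA_{L^2(\widetilde{\Omega})}$, where $\alpha$ is one of the $\rho$-dependent coefficients and $\beta \in \{\nabla_x v,\, \partial_z v\}$. A naive application of Lemma~\ref{lemme1} at exponent $\nu = s$ would demand a full extra derivative on $\alpha$, which is exactly what we do not have. Instead, we decompose $\alpha\beta$ into a paraproduct-type splitting adapted to the analytic weights through the subadditivity $q_\eps(z,\xi)-q_\eps(z,\zeta) \le \lambda h\vert \xi-\zeta\vert$ of Lemma~\ref{qeps} together with the H\"ormander product Lemma~\ref{Theo:Hormander}. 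The piece in which the output high frequencies are carried by $\beta$ is bounded by $\lA \alpha\rA_{L^\infty(I_h,\,\mathcal{H}^{\lambda h,\frac{d}{2}+1})}\lA \Lambda_\eps \beta\rA_{L^2}$, which by the algebra estimate (Lemma~\ref{algebre0}) and the smallness $\lA \eta\rA_{\mathcal{H}^{\lambda h,s}} \le \overline{\eps}$ is $\les \overline{\eps}\,\lA \nabla_{x,z} v_\eps\rA_{L^2}$ and is therefore absorbed on the left-hand side. The piece in which the output high frequencies are carried by $\alpha$ produces precisely the tame contribution $\lA \eta\rA_{\mathcal{H}^{\lambda h, s+\mez}}\cdot \lA \nabla_{x,z} v\rA_{F^{\lambda,s-1}}$, and the low-order factor is in turn controlled by Corollary~\ref{regul12} applied at the exponent $s-1$ (for which only the already available smallness of $\lA\eta\rA_{\mathcal{H}^{\lambda h, s}}$ is needed). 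Combining these bounds yields the estimate on $I_1$ with the sharp right-hand side of~\e{HF20}.

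\textbf{Conclusion.} With $I_1$ in hand, the bound on $I_2 = \blA \partial_z^2 \widetilde{w}\brA_{F^{\lambda, s-1}}$ follows immediately from rewriting the equation as $\partial_z^2 \widetilde{w} = -\Delta_x \widetilde{w} - R\widetilde{w} + F$ and estimating each term in $F^{\lambda, s-1}$ by the same tame product inequalities used in Step~3. Finally, the bound on $I_3 = \lA \nabla_{x,z} \widetilde{w}\rA_{E^{\lambda, s-\mez}}$ follows from Lemma~\ref{lions} applied at $\mu = s-\mez$, as explicitly recorded in the remark after Definition~\ref{def-EF}, combining the $F^{\lambda, s}$-control of $\nabla_{x,z}\widetilde{w}$ coming from $I_1$ with the $F^{\lambda, s-1}$-control of its second derivatives coming from $I_1$ and $I_2$.
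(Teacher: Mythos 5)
Your proposal is, in outline, a correct proof, but it takes a genuinely different route from the paper's. You propose to run the Lax--Milgram energy argument of Proposition~\ref{regul0} directly at the top exponent $\mu = s$ and to handle the coefficient term $\blA \Lambda_\eps(\alpha\beta)\brA_{L^2}$ with a \emph{tame} version of Lemma~\ref{lemme1}, obtained via a Bony paraproduct splitting adapted to the analytic weight through the subadditivity $q_\eps(z,\xi)-q_\eps(z,\zeta)\le \lambda h|\xi-\zeta|$. This does work: the low--high and high--high blocks carry $\mathcal{H}^{\lambda h,\nu_0}$ norms of $\alpha$ with $\nu_0\le s-1$ and are therefore $O(\overline{\eps})\lA\nabla_{x,z}v_\eps\rA_{L^2}$ and absorbable; the high--low block couples $\lA\alpha\rA_{L^2(I_h,\mathcal{H}^{\lambda h,s})}$ (which the algebra Lemma~\ref{algebre0} bounds by $\lA\eta\rA_{\mathcal{H}^{\lambda h,s+\mez}}$) with a low-order analytic norm of $\nabla_{x,z}v$, which in turn is controlled through Lemma~\ref{L316} and Corollary~\ref{regul12}. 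The paper takes a different path: it first estimates $I_3$ and $I_2$ in terms of $I_1$ (via interpolation and the tame remainder bounds of Proposition~\ref{lemm:HFI0}); then, for $I_1$, instead of re-running the energy argument one level higher, it \emph{differentiates the equation}, applies the already-proved Corollary~\ref{regul12} to $\partial_j\widetilde{w}$, and isolates the tame structure in the commutator estimate~\eqref{HF30} for $[\partial_j,R]\widetilde{w}$, exploiting the $\mathcal{E}_1$ versus $\mathcal{E}_2$ dichotomy among the coefficients (the $a,b,c$ terms contribute the absorbable part, while the lower-order coefficient $d$ carries the $\lA\eta\rA_{\mathcal{H}^{\lambda h,s+\mez}}$ contribution). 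The two approaches thus trade off differently: yours requires proving a ``tame conjugated Lemma~\ref{lemme1}'' that is \emph{not} literally available in the paper's appendix (you would need to carry out the three-region Bony analysis with the $q_\eps$-weight in detail, whereas the paper only proves the non-tame Lemma~\ref{lemme1}); the paper's route re-uses the already-established Proposition~\ref{regul0}/Corollary~\ref{regul12} at exponent $s-1$ and relegates all the delicacy to a commutator estimate handled through the $\mathcal{E}_k$, $\mathcal{F}_k$ algebras. One small imprecision in your write-up: the low-order factor in the high--low block naturally appears as an $E^{\lambda,\nu_0}$ norm ($L^\infty_z$ at low tangential order), not as $\lA\nabla_{x,z}v\rA_{F^{\lambda,s-1}}$ directly; the passage between the two requires the interpolation Lemma~\ref{uvF}~(iv) or Lemma~\ref{L316}, which you should make explicit.
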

\begin{rema}
Compared to the statement of Corollary~\ref{regul12} we notice that, 
under the same smallness assumption on $\eta$, 
we are considering smoother data $\psi$, $F$ and $\theta$ as well as 
smoother solutions. Indeed, now $\theta$ and $F$ belongs to $H^{s-1/2}(\xT^d)$ and $F^{\lambda,s-1}$ respectively, while 
we considered before the case where they belong to $H^{\mu-1/2}(\xT^d)$ and $F^{\lambda,\mu-1}$ 
for some $\mu \le s-1$. We will reduce the analysis to the previous case by considering the equation satisfied by~$\nabla_x \widetilde{w}$. 
 \end{rema}
 
The rest of this section is devoted to the proof of this result.

\begin{proof}[Proof of Theorem~\ref{regul-fin'}]
We divide the proof into three steps. \\

{\bf Step 1.} Notice that by interpolation (see Lemma~\ref{uvF}, (iv) with $\mu = s - \mez) $, we have
\begin{equation}\label{I3-I2-I1}
I_3 \leq C(I_1+ I_2).
\end{equation}
Therefore, it is enough to estimate $I_1$ and $I_2$. \\

{\bf Step 2.} We now prove that  
\begin{equation}\label{I2-I1}
\begin{aligned}
 I_2  \leq C\big( I_1& +   \Vert F\Vert_{F^{\lambda, s-1}} \\
&+ \Vert \eta \Vert_{\mathcal{H}^{\lambda h, s+\mez}}\big(\Vert   F \Vert_{F^{\lambda,s-2}} +\blA a(D_x)^\mez \psi\brA_{\mathcal{H}^{\lambda h,s-\mez}} + \Vert \theta \Vert_{H^{s- \tdm}(\xT^d)}\big)\big).
\end{aligned}
\end{equation}
Since $(\Delta_{x,z}+R)w=F$, we have $\partial_z^2w=F-\Delta_xw-Rw$. 
Now, we use the technical estimate~\eqref{estR:s-2} in Appendix \ref{AppendixA}
to handle the contribution of $Rw$. It follows that
$$
\Vert \partial_z^2 \widetilde{w}\Vert_{F^{\lambda, s-2}} \leq \Vert F\Vert_{F^{\lambda, s-2}}+ \Vert \nabla_x \widetilde{w}\Vert_{F^{\lambda, s-1}}
+  C\overline{\eps}\big(\Vert \nabla_{x,z} \widetilde{w}\Vert_{F^{\lambda, s-1}} + \Vert \partial_z^2\widetilde{w}\Vert_{F^{\lambda, s-2}}\big).
$$
Taking $\overline{\eps}$ such that $C \overline{\eps}\leq \mez$, we deduce that
\begin{equation}\label{HF2-u4}
\Vert \partial_z^2 \widetilde{w}\Vert_{F^{\lambda, s-2}} \leq C\big( \Vert F\Vert_{F^{\lambda, s-2}}+ \Vert \nabla_{x,z} \widetilde{w}\Vert_{F^{\lambda, s-1}}\big).
\end{equation}
 By the same way, using \eqref{HF2-u3} below, we get
 \begin{align*}
  \Vert \partial_z^2 \widetilde{w}\Vert_{F^{\lambda, s-1}} &\leq    \Vert F\Vert_{F^{\lambda, s-1}} +   \Vert \nabla_{x} \widetilde{w}\Vert_{F^{\lambda, s}}+ C\overline{\eps} \big( \lA \nabla_{x}\widetilde{w}\rA_{F^{\lambda h,s}}+\lA \partial_z^2\widetilde{w}\rA_{F^{\lambda,s-1}})\\
  & + C \big(\lA   \partial_z \widetilde{w}\rA_{F^{\lambda h,s-1}}+ \Vert \eta \Vert_{\mathcal{H}^{\lambda h, s+\mez}}(\Vert \partial_z^2 \widetilde{w}\Vert_{F^{\lambda, s-\frac{5}{2}}} + \lA   \partial_z \widetilde{w}\rA_{F^{\lambda,s-\frac{3}{2}}}\big).
  \end{align*}
As before, taking $\overline{\eps}$ so small that $C\overline{\eps}\leq \mez$, 
we deduce that
\begin{align*}
\Vert \partial_z^2 \widetilde{w}\Vert_{F^{\lambda, s-1}}
\leq C\big(   &\Vert F\Vert_{F^{\lambda, s-1}}+ \Vert \nabla_{x,z} \widetilde{w}\Vert_{F^{\lambda, s}}\\
&\quad+\Vert \eta \Vert_{\mathcal{H}^{\lambda h, s+\mez}}(\Vert \partial_z^2 \widetilde{w}\Vert_{F^{\lambda, s-\frac{5}{2}}} + \lA   \partial_z \widetilde{w}\rA_{F^{\lambda,s-\frac{3}{2}}}\big).
\end{align*}
Now, since $s -\frac{5}{2} \leq s-2$ and $s -\frac{3}{2} \leq s-1$, we can use \eqref{HF2-u4}. It follows that
\be\label{HF2-u89}
\begin{aligned}
\Vert \partial_z^2 \widetilde{w}\Vert_{F^{\lambda, s-1}} \leq C\big(   \Vert F\Vert_{F^{\lambda, s-1}}&+ \Vert \nabla_{x,z} \widetilde{w}\Vert_{F^{\lambda, s}}\\
&+\Vert \eta \Vert_{\mathcal{H}^{\lambda h, s+\mez}}(\Vert F \Vert_{F^{\lambda, s-2}}+ \Vert \nabla_{x,z} \widetilde{w}\Vert_{F^{\lambda, s-1}})\big).
\end{aligned}
\ee
 Now by Corollary~\ref{regul12} we have
\be\label{HFI-37}
\Vert  \nabla_{x,z}  \widetilde{w} \Vert _{F^{\lambda,s-1}}\leq
C\Big(\Vert   F \Vert_{F^{\lambda,s-2}} +\blA a(D_x)^\mez \psi\brA_{\mathcal{H}^{\lambda h,s-\mez}} + \Vert \theta \Vert_{H^{s- \tdm}(\xT^d)}\Big).
\ee
Since $\Vert \nabla_{x,z} \widetilde{w}\Vert_{F^{\lambda, s}}=I_1$ by definition, 
we obtain the desired estimate~\eqref{I2-I1}  
by plugging the previous inequality in \e{HF2-u89}.\\

{\bf Step 3.} We are left with the estimate of $I_1.$ We shall prove that
\be\label{I1-I2}
\begin{aligned}
I_1 &\leq  C
\Big(\Vert  F\Vert_{F^{\lambda,s- 1}}  + \Vert a(D_x)^\mez \psi \Vert_{\mathcal{H}^{\lambda h,s}}+\Vert \theta \Vert_{H^{s-\mez}}\Big)
\\
&\quad +C \Vert \eta \Vert_{  \mathcal{H}^{\lambda h, s+ \mez}}
\Big(\Vert  F\Vert_{F^{\lambda,s-2}}+ \Vert a(D_x)^\mez \psi \Vert_{\mathcal{H}^{\lambda h,s-\mez}}+\Vert \theta \Vert_{H^{s-\tdm}}\Big)\\
&\quad +C\lA \eta\rA_{\mathcal{H}^{\lambda h,s}}(I_1+I_2).
\end{aligned}
\ee
This will conclude the proof of desired estimate~\e{HF20}; by taking an appropriate linear combination of \e{I3-I2-I1}, \e{I2-I1} and \e{I1-I2} and then taking $\overline{\eps}\ge \Vert \eta \Vert_{  \mathcal{H}^{\lambda h, s}}$ small enough to absorb the contribution of $I_1+I_2$ in the right-hand side 
of \e{I1-I2}. 
Notice that
\be\label{HF0}
I_1=\blA   \nabla_{x,z}\widetilde{w}\brA_{F^{\lambda, s}}
\le \blA\nabla_{x,z}\widetilde{w}\brA_{F^{\lambda,s-1}}
+\blA\nabla_{x,z}\nabla_x \widetilde{w}\brA_{F^{\lambda, s-1}}.
\ee
We will prove that these two terms are bounded by the right-hand side of \e{I1-I2}.
The first term $\Vert\nabla_{x,z}\widetilde{w}\Vert_{F^{\lambda,s-1}}$ 
has   been estimated in Corollary~\ref{regul12}. By  \e{est-wideu12} we have
$$
\Vert  \nabla_{x,z}  \widetilde{w} \Vert _{F^{\lambda,s-1}}\leq
C\Big(\Vert   F \Vert_{F^{\lambda,s-2}} +\blA a(D_x)^\mez \psi\brA_{\mathcal{H}^{\lambda h,s-1}} + \Vert \theta \Vert_{H^{s- \tdm}(\xT^d)}\Big),
$$
which immediately implies that it is bounded by the right-hand side of \e{I1-I2}. 
Hence, it remains only to estimate $\blA\nabla_{x,z}\nabla_x \widetilde{w}\brA_{F^{\lambda, s-1}}$. 
We will estimate the $F^{\lambda, s-1}$-norm of $\nabla_{x,z}\partial_{j}\widetilde{w}$ for $1\le j\le d$. 
Notice that $\partial_{j}\widetilde{w}$ satisfies 
\[
\left\{
\begin{aligned}
& (\Delta_{x,z}+R)\partial_{j}\widetilde{w}=  -[\partial_{j},R]w+\partial_{j}F  \quad\text{ in  } \xT^d \times I_h,\\
&\widetilde{w}\arrowvert_{z=0} = \partial_j\psi, \quad    \partial_ z \widetilde{w} \arrowvert_{z=-h} = \partial_{j}\theta,
\end{aligned}
\right.
\]
where $[\partial_{j},R]\widetilde{w}=\partial_j(R \widetilde{w})-R\partial_j \widetilde{w}$. 
It follows from Corollary~\ref{regul12} that
\be\label{HF40}
\begin{aligned}
\lA \nabla_{x,z}\partial_{j}\widetilde{w}\rA_{F^{\lambda,s-1}}\le C \Big( &
\blA [\partial_{j},R]\widetilde{w}\brA_{F^{\lambda,s-2}}+\lA \partial_{j}F\rA_{F^{\lambda,s-2}} \\
&+\blA a(D_x)^\mez \partial_{j}\psi\brA_{\mathcal{H}^{\lambda h,s-1}}
+\lA \partial_j\theta\rA_{H^{s-\tdm}}\Big). 
\end{aligned}
\ee
The key point is to estimate the 
commutator $[\partial_{j},R]w$. We claim that
\be\label{HF30}
\blA [\partial_{j},R]\widetilde{w}\brA_{F^{\lambda,s-2}}\le 
C \Vert \eta \Vert_{  \mathcal{H}^{\lambda h, s}}\blA \nabla_{x,z}^2\widetilde{w}\brA_{F^{\lambda,s-1}}+
C \Vert \eta \Vert_{  \mathcal{H}^{\lambda h, s+ \mez}}\blA \nabla_{x,z}\widetilde{w}\brA_{F^{\lambda,s-1}}.
\ee
Let us assume this claim and conclude the proof. 
By combining \e{HF40}, \e{HF30} and \e{HFI-37}, we have
\begin{align*}
\lA \nabla_{x,z}\partial_{j}\widetilde{w}\rA_{F^{\lambda,s-1}}
&\leq  C
\Big(\Vert  F\Vert_{F^{\lambda,s- 1}} + \Vert a(D_x)^\mez \psi \Vert_{\mathcal{H}^{\lambda h,s}}+\Vert \theta \Vert_{H^{s-\mez}}\Big)
\\
&+C \Vert \eta \Vert_{  \mathcal{H}^{\lambda h, s+ \mez}}
\Big(\Vert  F\Vert_{F^{\lambda,s-2}} + \Vert a(D_x)^\mez \psi \Vert_{\mathcal{H}^{\lambda h,s-\mez}}+\Vert \theta \Vert_{H^{s-\tdm}}\Big)
\\
& +C\Vert \eta \Vert_{  \mathcal{H}^{\lambda h, s}}\blA \nabla_{x,z}^2 w \brA_{F^{\lambda,s-1}}.
\end{align*}
By summing all these estimates for $1\le j\le d$,  
this will give the required estimate for
$$\blA\nabla_{x,z}\nabla_x w\brA_{F^{\lambda, s-1}},$$
 which will conclude the proof of the theorem as explained after \e{I1-I2}.

We now have to prove the claim~\e{HF30}. Recall that
\begin{equation}\label{rappel-abcd}
R =a\partial_z^2 +b\Delta_x  
+c\cdot\nabla_x\partial_z  -d\partial_z , 
\end{equation}
where 
\[
\left\{
\begin{aligned}
&a=\frac{1+\vert\nabla_x\rho\vert^2}{\partial_z\rho}-1
,\quad 
b=\partial_z\rho-1,\quad c=-2\nabla_x\rho,\\
&d=\frac{1+\vert\nabla_x\rho\vert^2}{\partial_z\rho}\partial_z^2\rho+\partial_z\rho \Delta_x \rho -2\nabla_x\rho
\cdot\nabla_x\partial_z\rho.
\end{aligned}
\right.
\]
We have
$$
[\partial_{j},R] w=(\partial_ja)\partial_z^2 w+(\partial_jb)\Delta_x w
+(\partial_jc)\cdot\nabla_x\partial_z w-(\partial_jd)\partial_z w.
$$
Then we use statement (i) in Lemma~\ref{uvF} (applied with $s_1=s_3=s_1=s-2>d/2$) 
to write that
\begin{align*}
&\blA (\partial_ja)\partial_z^2 \widetilde{w}\brA_{F^{\lambda,s-2}}\les 
\lA \partial_ja\rA_{E^{\lambda,s-2}}\blA \partial_z^2 \widetilde{w}\brA_{F^{\lambda,s-2}},\\
&\blA (\partial_jb)\Delta_x \widetilde{w}\brA_{F^{\lambda,s-2}}
\les \lA \partial_jb\rA_{E^{\lambda,s-2}}\blA \Delta_x \widetilde{w}\brA_{F^{\lambda,s-2}},\\
&\blA (\partial_jc)\cdot\nabla_x\partial_z \widetilde{w}\brA_{F^{\lambda,s-2}}\les 
\lA \partial_jc\rA_{E^{\lambda,s-2}}\blA \nabla_x\partial_z \widetilde{w}\brA_{F^{\lambda,s-2}}.
\end{align*}
By lemma~\ref{algebre0} we have
$$
\lA a\rA_{E^{\lambda,s-1}}+\lA b\rA_{E^{\lambda,s-1}}
+\lA c\rA_{E^{\lambda,s-1}} \les \Vert \eta \Vert_{  \mathcal{H}^{\lambda h, s}}.
$$
Therefore,
\begin{align*}
& \blA (\partial_ja)\partial_z^2 \widetilde{w}\brA_{F^{\lambda,s-2}}+
\blA (\partial_jb)\Delta_x \widetilde{w}\brA_{F^{\lambda,s-2}}+
\blA (\partial_jd)\partial_z \widetilde{w}\brA_{F^{\lambda,s-2}}
\\
\les&\, \Vert \eta \Vert_{  \mathcal{H}^{\lambda h, s}}\blA \nabla_{x,z}^2 \widetilde{w}\brA_{F^{\lambda,s-2}}. 
\end{align*}
It remains to estimate the term $(\partial_jd)\partial_z \widetilde{w}$. 
To do so, again, we begin by applying the product rule given by 
(ii) in Lemma~\ref{uvF} (applied with $s$ replaced by $s-2>d/2$) to write that
$$
\blA (\partial_jd)\partial_z \widetilde{w}\brA_{F^{\lambda,s-2}}
\les \lA \partial_jd\rA_{F^{\lambda,s-2}}\blA \partial_z \widetilde{w}\brA_{E^{\lambda,s-2}}.
$$
Then,  by Lemma~\ref{algebre0} we have
$$
\lA \partial_jd\rA_{F^{\lambda,s-2}}\le \lA d\rA_{F^{\lambda,s-1}}\les \Vert \eta \Vert_{  \mathcal{H}^{\lambda h, s+\mez}}.
$$
By combining the previous estimates, we see that, to complete the proof of the claim \e{HF30}, it remains  
to estimate $\blA \partial_z \widetilde{w}\brA_{E^{\lambda,s-2}}$ and $\blA \nabla_{x,z}^2 \widetilde{w}\brA_{F^{\lambda,s-2}}$ in terms of $\blA \nabla_{x,z}\widetilde{w}\brA_{F^{\lambda,s-1}}$. 
Since we will need to prove a similar result later on, we pause here to 
prove a general result. 
\begin{lemm}\label{L316}
Consider two real numbers
$$
s>\frac{d}{2}+2,\quad \lambda \in [0,1).
$$ 
Then there exist two constants $\overline{\eps}>0$ and $C>0$ such that 
for all $\eta \in \mathcal{H}^{\lambda h, s}(\xT^d)$ 
satisfying $\Vert \eta \Vert_{\mathcal{H}^{\lambda h,s}} \leq \overline{\eps}$, 
if $v$ satisfies $\Delta_{x,z}v+Rv=f$, then
\be\label{ndz2}
\lA \nabla_{x,z} v\rA_{E^{\lambda,s-\tdm}}+\blA \partial_z^2 v\brA_{F^{\lambda,s-2}}
\le C\lA \nabla_{x,z}v\rA_{F^{\lambda,s-1}}+C\lA f\rA_{F^{\lambda,s-2}}.
\ee
\end{lemm}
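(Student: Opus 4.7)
The plan is to derive from the PDE a pointwise-in-$z$ expression for $\partial_z^2 v$, bound it in $F^{\lambda, s-2}$, and then recover the $E^{\lambda, s-\tdm}$ control of $\nabla_{x,z} v$ via the interpolation property of the spaces $E^{\lambda, \mu}$ and $F^{\lambda, \mu}$ recorded in the remark after Definition \ref{def-EF}.

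First I would rewrite the equation as $\partial_z^2 v = f - \Delta_x v - Rv$. The contribution of $f$ is directly controlled by $\lA f\rA_{F^{\lambda,s-2}}$, and $\lA \Delta_x v\rA_{F^{\lambda,s-2}} \le \lA \nabla_{x,z} v\rA_{F^{\lambda,s-1}}$ by the loss of one $x$-derivative. For $Rv$ I would invoke the technical bound \eqref{estR:s-2} from Appendix \ref{AppendixA} (the same ingredient used in Step 2 of the proof of Theorem \ref{regul-fin'} to derive \eqref{HF2-u4}), which yields
$$
\lA Rv\rA_{F^{\lambda,s-2}} \le C\overline{\eps}\,\big(\lA \nabla_{x,z} v\rA_{F^{\lambda,s-1}} + \lA \partial_z^2 v\rA_{F^{\lambda,s-2}}\big).
$$
Choosing $\overline{\eps}$ so that $C\overline{\eps} \le \mez$, the $\partial_z^2 v$-term can be absorbed on the left, and I would conclude
$$
\lA \partial_z^2 v\rA_{F^{\lambda,s-2}} \le C\big(\lA f\rA_{F^{\lambda,s-2}} + \lA \nabla_{x,z} v\rA_{F^{\lambda,s-1}}\big).
$$

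Second, to pass from this $F$-type bound to the $E$-type bound on $\nabla_{x,z} v$, I would apply the interpolation statement in the remark following Definition \ref{def-EF} (a consequence of Lemma \ref{lions}): if $\nabla_{x,z} u \in F^{\lambda, \mu+\mez}$ and $D^\alpha_{x,z} u \in F^{\lambda, \mu-\mez}$ for every $|\alpha|=2$, then $\nabla_{x,z} u \in E^{\lambda, \mu}$. Taking $\mu = s-\tdm$, the assumption $\nabla_{x,z} v \in F^{\lambda, s-1}$ both supplies the first hypothesis and, via loss of one $x$-derivative, controls the pure-$x$ and mixed second derivatives $\partial_{x_i}\partial_{x_j} v$ and $\partial_{x_i}\partial_z v$ in $F^{\lambda, s-2}$, while the bound from the first step handles the remaining component $\partial_z^2 v$. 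Assembling these ingredients gives the $E^{\lambda,s-\tdm}$ estimate on $\nabla_{x,z} v$ with the right-hand side of \eqref{ndz2}, and combining it with the first step yields the full claim.

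The only genuine difficulty is the absorption of $\lA \partial_z^2 v\rA_{F^{\lambda,s-2}}$ in the estimate of $Rv$; this depends crucially on the smallness assumption $\Vert \eta\Vert_{\mathcal{H}^{\lambda h,s}} \le \overline{\eps}$ and on the product/paraproduct machinery in Appendix \ref{AppendixA} underlying \eqref{estR:s-2}. Everything else is just unwinding the definitions of $F^{\lambda,\mu}$ and $E^{\lambda,\mu}$ and the elementary fact that differentiating in $x$ costs exactly one order of regularity in these scales.
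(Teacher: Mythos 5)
Your proposal is correct and follows essentially the same route as the paper's proof: write $\partial_z^2 v = f - \Delta_x v - Rv$, invoke \eqref{estR:s-2} and absorb the $\partial_z^2 v$ term via the smallness of $\overline{\eps}$, then pass to the $E^{\lambda,s-3/2}$ bound by interpolation. The only superficial difference is that you cite the remark after Definition~\ref{def-EF} and spell out the control of all second derivatives $D^\alpha_{x,z} v$, whereas the paper cites Lemma~\ref{uvF}~(iv) and only writes out the $\partial_z v$ component explicitly (leaving the trivial $\nabla_x v$ part implicit); both rest on the same interpolation Lemma~\ref{lions}.
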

\begin{proof}
By interpolation (see statement~(iv) in Lemma~\ref{uvF}), we have
$$
\blA \partial_z v\brA_{E^{\lambda,s-\tdm}}\les 
\blA \partial_z^2 v\brA_{F^{\lambda,s-2}}+\blA \partial_z v\brA_{F^{\lambda,s-1}}.
$$
Therefore, it is sufficient to prove that $\blA \partial_z^2 v\brA_{F^{\lambda,s-2}}$ 
is estimated by the right-hand side of \e{ndz2}. To do so, we repeat the arguments used in Step 2. Namely, we write $\partial_z^2v=-\Delta_x v-Rv+f$, to infer that
$$
\lA \partial_z^2v\rA_{F^{\lambda,s-2}}\le \lA \nabla_xv\rA_{F^{\lambda,s-1}}+\lA Rv\rA_{F^{\lambda,s-2}}+\lA f\rA_{F^{\lambda,s-2}}.
$$
Then we use the estimate~\e{estR:s-2} to estimate the contribution of $Rv$, which 
implies that
$$
\lA \partial_z^2v\rA_{F^{\lambda,s-2}}\le \lA \nabla_xv\rA_{F^{\lambda,s-1}}
+C \overline{\eps} \big( \lA \nabla_{x,z}v\rA_{F^{\lambda,s-1}}+\lA \partial_z^2v\rA_{F^{\lambda,s-2}}\big)+\lA f\rA_{F^{\lambda,s-2}}.
$$
Then we conclude the proof by taking $\overline{\eps}$ so small that $C\overline{\eps}\le 1/2$. 
\end{proof}

By applying the previous lemma to $(v,f)=(\widetilde{w},F)$, we complete the proof of the claim~\e{HF30}, which in turn concludes the proof of Theorem~\ref{regul-fin'}.
\end{proof} 

We consider eventually the  problem
\begin{equation}\label{eq-utilde}
(\Delta_{x,z}+R)\widetilde{u}=  0 \,  \text{ in } \xT^d \times I_h,\quad
\widetilde{u}\arrowvert_{z=0} = \psi, \quad    \partial_ z \widetilde{u} \arrowvert_{z=-h}=(\partial_ z \rho \arrowvert_{z=-h})b.
\end{equation}

\begin{coro}\label{regul-ouf}
Consider two real numbers $ \lambda_0, s$ such that
$$
0\le \lambda_0 <1, \quad s>\frac{d}{2}+2.
$$
Then there exist two constants $\overline{\eps}>0$ and $C>0$  such that for all $0\leq \lambda \leq \lambda_0$,    all $\eta \in \mathcal{H}^{\lambda  h,s+ \mez }(\xT^d)$  
satisfying $\Vert \eta \Vert_{\mathcal{H}^{\lambda h,s}}\leq \overline{\eps}$, 
all $\psi  \in   \mathcal{H}^{\lambda h, s+ \mez}(\xT^d), $ all $ b \in H^{s-\mez}(\xT^d)$ and all solutions $\widetilde{u}$ of  problem \eqref{eq-utilde}, 
if we set
$$I_1= \Vert   \nabla_{x,z}  \widetilde{u} \Vert _{F^{\lambda, s}}, \quad I_2= \Vert   \partial_z^2  \widetilde{u} \Vert_{F^{\lambda, s-1}}, \quad I_3=   \Vert   \nabla_{x,z}  \widetilde{u} \Vert _{E^{\lambda, s-\mez}},
$$
then
\begin{align*}
\sum_{k=1}^3 I_k  \leq  C
\Big(  \Vert a(D_x)^\mez \psi \Vert_{\mathcal{H}^{\lambda h,s}}&+\Vert b \Vert_{H^{s-\mez}}\\
 &+ \Vert \eta \Vert_{  \mathcal{H}^{\lambda h, s+ \mez}}
\Big( \Vert a(D_x)^\mez \psi \Vert_{\mathcal{H}^{\lambda h,s-\mez}}+\Vert b \Vert_{H^{s-\tdm}}\Big)\Big),
\end{align*}
where we recall that $a(D_x) =  G_0(0)$ is introduced in \eqref{a(D)=}.
\end{coro}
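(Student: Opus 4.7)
The plan is to view the problem \eqref{eq-utilde} as a particular case of \eqref{eq-U} with $F\equiv 0$ and Neumann data $\theta=(\partial_z\rho\arrowvert_{z=-h})b$, and then simply apply Theorem~\ref{regul-fin'}. Doing so immediately reduces the corollary to the task of estimating $\theta$ in $H^{s-\mez}$ and $H^{s-\tdm}$ in terms of the Sobolev norms of $b$ and the analytic norms of $\eta$.

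From the definition of $\rho$ one has $\partial_z\rho\arrowvert_{z=-h} = 1 + h^{-1}e^{-h\vert D_x\vert}\eta$, and hence
$$
\theta = b + \frac{1}{h}(e^{-h\vert D_x\vert}\eta)\,b.
$$
Since $e^{-h\vert D_x\vert}$ is a smoothing operator, elementary Fourier bookkeeping gives, for any $\mu \in \xR$,
$$
\Vert e^{-h\vert D_x\vert}\eta\Vert_{H^\mu(\xT^d)} \les \Vert\eta\Vert_{\mathcal{H}^{\lambda h,\mu}},
$$
and since $s>d/2$, Sobolev embedding yields $\Vert e^{-h\vert D_x\vert}\eta\Vert_{L^\infty} \les \Vert\eta\Vert_{\mathcal{H}^{\lambda h,s}} \le \overline{\eps}$.

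The product $(e^{-h\vert D_x\vert}\eta) b$ is then handled by the tame Moser estimate $\Vert fg\Vert_{H^\mu} \les \Vert f\Vert_{L^\infty}\Vert g\Vert_{H^\mu} + \Vert f\Vert_{H^\mu}\Vert g\Vert_{L^\infty}$ (valid for $\mu \ge 0$), combined with the observation that the hypothesis $s>\frac{d}{2}+2$ gives $s-\tdm>\frac{d}{2}$ and hence $\Vert b\Vert_{L^\infty} \les \Vert b\Vert_{H^{s-\tdm}}$. This yields
\begin{align*}
\Vert\theta\Vert_{H^{s-\mez}} &\les (1+\overline{\eps})\Vert b\Vert_{H^{s-\mez}} + \Vert\eta\Vert_{\mathcal{H}^{\lambda h,s+\mez}}\Vert b\Vert_{H^{s-\tdm}},\\
\Vert\theta\Vert_{H^{s-\tdm}} &\les (1+\overline{\eps})\Vert b\Vert_{H^{s-\tdm}},
\end{align*}
where in the second line one uses the weaker bound $\Vert e^{-h\vert D_x\vert}\eta\Vert_{H^{s-\tdm}}\les \Vert\eta\Vert_{\mathcal{H}^{\lambda h,s-\mez}}\les\overline{\eps}$ on the factor carrying the top derivative.

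Substituting these two inequalities into the bound provided by Theorem~\ref{regul-fin'} (and absorbing the harmless $(1+\overline{\eps})$ factors into the constant) gives the claimed estimate. The only subtle point, and indeed what forces us to use the \emph{tame} version of Theorem~\ref{regul-fin'}, is that the top-order analytic norm $\Vert\eta\Vert_{\mathcal{H}^{\lambda h,s+\mez}}$ on the right-hand side is only allowed to multiply $\Vert b\Vert_{H^{s-\tdm}}$ and not $\Vert b\Vert_{H^{s-\mez}}$; this is exactly what the Moser splitting delivers by placing the highest derivative on the smoothing-friendly $\eta$ factor, while the low-regularity factor $\Vert b\Vert_{L^\infty}$ is controlled by $\Vert b\Vert_{H^{s-\tdm}}$ thanks to $s-\tdm>d/2$.
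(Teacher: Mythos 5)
Your proof is correct and takes essentially the same approach as the paper: both identify \eqref{eq-utilde} as a special case of \eqref{eq-U} with $F=0$ and $\theta=(\partial_z\rho\arrowvert_{z=-h})b = (1+h^{-1}e^{-h|D_x|}\eta)b$, then plug into Theorem~\ref{regul-fin'}. The one place you over-engineer is the estimate of $\theta$: the paper simply observes that $e^{-h|D_x|}\eta$ lies in \emph{every} Sobolev space with norm controlled by $\Vert\eta\Vert_{\mathcal{H}^{\lambda h,s}}\le\overline\eps$ (the exponential decay in frequency swamps any polynomial weight), so multiplication by $\partial_z\rho\arrowvert_{z=-h}$ maps $H^\mu$ to $H^\mu$ boundedly, giving directly $\Vert\theta\Vert_{H^\mu}\les\Vert b\Vert_{H^\mu}$ for both $\mu=s-\mez$ and $\mu=s-\tdm$. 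Your Moser splitting produces a spare term $\Vert\eta\Vert_{\mathcal{H}^{\lambda h,s+\mez}}\Vert b\Vert_{H^{s-\tdm}}$ which is harmlessly absorbed, so the argument still closes; but your closing remark that this splitting is ``what forces us to use the tame version'' of Theorem~\ref{regul-fin'} is slightly off --- the tame structure is needed because of the $\psi$-terms in that theorem, not because of the Neumann datum, which in fact admits the clean bound $\Vert\theta\Vert_{H^\mu}\les\Vert b\Vert_{H^\mu}$ with no $\eta$-factor at all.
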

\begin{proof}
This follows from Theorem~\ref{regul-fin'} and the fact that, since 
$$\partial_z \rho\arrowvert_{z=-h} = 1+ \frac{1}{h}e^{-h\vert D_x \vert}\eta$$ 
with $\Vert \eta \Vert_{\mathcal{H}^{\lambda h, s}} \leq 1,$
we have, for  $\mu= s-\mez,$   $\mu = s-\frac{3}{2},$
$$
\Vert (\partial_z \rho\arrowvert_{z=-h})b\Vert_{H^\mu} \leq C \Vert  b\Vert_{H^\mu}.
$$
This completes the proof of Corollary \ref{regul-ouf}. 
\end{proof}

\section{The  Dirichlet Neumann operator}\label{S:DN}
Given  functions $\psi$,   $b$    we consider the  problem,
\begin{equation}\label{Diri1}
\Delta_{x,y} u = 0 \text{ in } \Omega, \quad u\arrowvert_{y=\eta}= \psi, \quad \partial_y u\arrowvert_{y=-h} = b.
\end{equation}
We set
\begin{equation*}
  G(\eta)(\psi, b) = \sqrt{1+ \vert \nabla_x \eta \vert^2} \, \partial_n u\arrowvert_{y=\eta} 
  = \big( \partial_y u - \nabla_x \eta  \cdot \nabla_x u\big) \arrowvert_{y=\eta}.
\end{equation*}
This is the  Dirichlet-Neumann operator  associated to   problem \eqref{Diri1}. Notice that 
using the notations in    \eqref{DNpsi} we have
$$
G_0(0)\psi = G(0)(\psi,0)= a(D_x)\psi=\vert D_x \vert \tanh  (h \vert D_x \vert)\psi.
$$

We have then the following result.
 \begin{theo}\label{est-DN}
Consider a real number $s>2+d/2$. For all $ 0\leq \lambda_0<1$, there exist $\overline{\eps} >0$ and $C>0$ 
such that for all $0\leq \lambda \leq \lambda_0$,  for all $\eta \in \mathcal{H}^{\lambda  h,s+\frac{1}{2}}(\xT^d)$  satisfying $\Vert \eta \Vert_{\mathcal{H}^{\lambda h,s }} \leq \overline{\eps}$, 
all $\psi$ such that $ a(D_x)^\mez \psi  \in   \mathcal{H}^{\lambda h, s}(\xT^d)$,  all $ b \in H^{s-\mez}(\xT^d)$  we have
\be\label{DNn1}
\begin{aligned}
\Vert G(\eta)(\psi,b) \Vert_{ \mathcal{H}^{\lambda h, s- \mez}}
\leq C \big(& \Vert a(D_x)^\mez\psi \Vert_{\mathcal{H}^{\lambda h, s}} + \Vert b \Vert_{H^{s-\mez}}\\
&\quad+ \Vert  \eta\Vert_{\mathcal{H}^{\lambda h, s+ \mez}} \big(  \Vert a(D_x)^\mez\psi \Vert_{\mathcal{H}^{\lambda h, s- \mez}} +\Vert b \Vert_{H^{s-\tdm}}\big)\big).
\end{aligned}
\ee
\end{theo}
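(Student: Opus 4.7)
The plan is to reduce the estimate to the sharp elliptic regularity provided by Corollary~\ref{regul-ouf}, via the straightening diffeomorphism $\rho$ introduced in Section~\ref{S:Elliptic}. Given a solution $u$ of~\eqref{Diri1}, I set $\widetilde{u}(x,z)=u(x,\rho(x,z))$ so that $\widetilde{u}$ solves~\eqref{eq-utilde}. Since $\partial_y u = \Lambda_1\widetilde{u}=(\partial_z\rho)^{-1}\partial_z\widetilde{u}$ and $\nabla_x u = \Lambda_2\widetilde{u}=\nabla_x\widetilde{u}-\frac{\nabla_x\rho}{\partial_z\rho}\partial_z\widetilde{u}$, and because $\rho(x,0)=\eta(x)$ (so $\nabla_x\rho|_{z=0}=\nabla_x\eta$ and $\widetilde{u}|_{z=0}=\psi$), a direct computation gives the identity
\[
G(\eta)(\psi,b)=\frac{1+|\nabla_x\eta|^2}{\partial_z\rho\arrowvert_{z=0}}\,\partial_z\widetilde{u}\arrowvert_{z=0}-\nabla_x\eta\cdot\nabla_x\psi.
\]

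The $\mathcal{H}^{\lambda h,s-\mez}$ norm of $\partial_z\widetilde{u}\arrowvert_{z=0}$ is controlled by $\lA\nabla_{x,z}\widetilde{u}\rA_{E^{\lambda,s-\mez}}$, which by Corollary~\ref{regul-ouf} is bounded by exactly the right-hand side of~\eqref{DNn1}. So the first job is to show that multiplying by the prefactor $m:=\frac{1+|\nabla_x\eta|^2}{\partial_z\rho|_{z=0}}$ is harmless. Writing $m=1+\widetilde{m}(\eta)$ with $\widetilde{m}$ vanishing at $\eta=0$, the contribution of the $1$ is immediate, and for the $\widetilde{m}(\eta)$ term I use the algebra property of $\mathcal{H}^{\lambda h,s-\mez}$ (which applies since $s-\mez>d/2$ and is a consequence of Lemma~\ref{algebre0}) together with the smoothing properties of $e^{z|D_x|}$ at $z=0$, to absorb the prefactor at cost $1+\lA\eta\rA_{\mathcal{H}^{\lambda h,s}}\le 1+\overline{\eps}$.

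To handle the linear term $\nabla_x\eta\cdot\nabla_x\psi$ in a tame manner, I apply a product estimate in $\mathcal{H}^{\lambda h,s-\mez}$ of the form
\[
\lA fg\rA_{\mathcal{H}^{\lambda h,s-\mez}}\les \lA f\rA_{\mathcal{H}^{\lambda h,s-\mez}}\lA g\rA_{\mathcal{H}^{\lambda h,s-1}}+\lA f\rA_{\mathcal{H}^{\lambda h,s-1}}\lA g\rA_{\mathcal{H}^{\lambda h,s-\mez}},
\]
(valid since $s-1>d/2$, again via the H\"ormander-type estimate~\ref{Theo:Hormander} used in the proof of Lemma~\ref{lemme1}). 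Applied to $f=\nabla_x\eta$ and $g=\nabla_x\psi$, and combined with the inequality~\eqref{nabla-G} $\lA\nabla_x\psi\rA_{\mathcal{H}^{\lambda h,\mu-\mez}}\le C\lA a(D_x)^\mez\psi\rA_{\mathcal{H}^{\lambda h,\mu}}$, this yields
\[
\lA \nabla_x\eta\cdot\nabla_x\psi\rA_{\mathcal{H}^{\lambda h,s-\mez}}\les \lA\eta\rA_{\mathcal{H}^{\lambda h,s+\mez}}\lA a(D_x)^\mez\psi\rA_{\mathcal{H}^{\lambda h,s-\mez}}+\overline{\eps}\lA a(D_x)^\mez\psi\rA_{\mathcal{H}^{\lambda h,s}},
\]
both of which fit into the right-hand side of~\eqref{DNn1}.

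The main obstacle is to ensure the estimate is \emph{tame}, i.e.\ that the highest norm $\lA\eta\rA_{\mathcal{H}^{\lambda h,s+\mez}}$ multiplies only the lower norms of $\psi$ and $b$. This is precisely the point of the refined estimate in Theorem~\ref{regul-fin'} (and hence Corollary~\ref{regul-ouf}); for the product $\nabla_x\eta\cdot\nabla_x\psi$ the tameness comes from the two-sided product estimate above, where one factor sits in $\mathcal{H}^{\lambda h,s-\mez}$ while the other is taken at the low regularity $\mathcal{H}^{\lambda h,s-1}$. Combining the three bounds gives~\eqref{DNn1} and completes the proof.
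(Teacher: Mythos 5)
Your overall decomposition---reduce to $\widetilde{u}$ via the straightening diffeomorphism, read off the Dirichlet--Neumann operator from the trace at $z=0$, and invoke Corollary~\ref{regul-ouf}---is sound, and the handling of the $\nabla_x\eta\cdot\nabla_x\psi$ term via the tame product rule is correct. However, there is a genuine gap in the way you absorb the prefactor $m=\frac{1+|\nabla_x\eta|^2}{\partial_z\rho|_{z=0}}$.

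The claim that the prefactor is controlled at cost $1+\|\eta\|_{\mathcal{H}^{\lambda h,s}}$ ``together with the smoothing properties of $e^{z|D_x|}$ at $z=0$'' does not hold. At $z=0$ the multiplier $e^{z|D_x|}$ is the identity; there is no smoothing. Concretely, $\partial_z\rho|_{z=0}=1+\tfrac{1}{h}\eta+|D_x|\eta$, so $\widetilde m=m-1$ involves $|D_x|\eta$ and $\nabla_x\eta$, hence under the hypothesis $\eta\in\mathcal{H}^{\lambda h,s+\mez}$ you only have $\widetilde m\in\mathcal{H}^{\lambda h,s-\mez}$ with $\|\widetilde m\|_{\mathcal{H}^{\lambda h,s-\mez}}\lesssim\|\eta\|_{\mathcal{H}^{\lambda h,s+\mez}}$. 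If you then use the plain algebra property in $\mathcal{H}^{\lambda h,s-\mez}$ and plug in the full bound from Corollary~\ref{regul-ouf} for $\|\partial_z\widetilde u|_{z=0}\|_{\mathcal{H}^{\lambda h,s-\mez}}$, you produce the cross-term $\|\eta\|_{\mathcal{H}^{\lambda h,s+\mez}}\cdot\|a(D_x)^\mez\psi\|_{\mathcal{H}^{\lambda h,s}}$, which is \emph{not} in the right-hand side of~\eqref{DNn1}: the tame structure is destroyed.

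The fix is to use the tame product rule of Proposition~\ref{uv}~(iii), for instance with $s_0=s-\tdm>d/2$, to write
$$\|\widetilde m\cdot\partial_z\widetilde u|_{z=0}\|_{\mathcal{H}^{\lambda h,s-\mez}}\lesssim \|\widetilde m\|_{\mathcal{H}^{\lambda h,s-\tdm}}\|\partial_z\widetilde u|_{z=0}\|_{\mathcal{H}^{\lambda h,s-\mez}}+\|\widetilde m\|_{\mathcal{H}^{\lambda h,s-\mez}}\|\partial_z\widetilde u|_{z=0}\|_{\mathcal{H}^{\lambda h,s-\tdm}},$$
noting $\|\widetilde m\|_{\mathcal{H}^{\lambda h,s-\tdm}}\lesssim\|\eta\|_{\mathcal{H}^{\lambda h,s-\mez}}\le\overline{\eps}$. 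But then you still need a \emph{low}-norm estimate on the trace: $\|\partial_z\widetilde u|_{z=0}\|_{\mathcal{H}^{\lambda h,s-\tdm}}\lesssim\|a(D_x)^\mez\psi\|_{\mathcal{H}^{\lambda h,s-1}}+\|b\|_{H^{s-\tdm}}$, without the big factor $\|\eta\|_{\mathcal{H}^{\lambda h,s+\mez}}$. This is not contained in Corollary~\ref{regul-ouf} as used above; it follows from Corollary~\ref{regul12} (with $\mu=s-1$) together with the interpolation Lemma~\ref{L316}. Supplying that estimate, and dropping the incorrect reference to smoothing at $z=0$, closes the gap. For comparison, the paper sidesteps this by estimating $U(x,z)$ over the whole slab $I_h$ and interpolating with $\partial_zU$ (obtained from the divergence form of the equation); working over $z\in I_h$ is precisely where the $e^{z|D_x|}$ smoothing is available, which your restriction to $z=0$ discards.
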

\begin{rema}\label{G-s-1}
\begin{itemize}
\item [\rm{(i)}]  We insist on the fact that the constants $\overline{\eps}$ and $C$ in the above Theorem depend on $\lambda_0$ but not on $\lambda$ as soon as $0\leq \lambda\leq \lambda_0.$
\item [\rm{(ii)}]  Assume that $s> 2+\delta+ \frac{d}{2}$, $ b \in H^{s-\mez}(\xT^d)$ and for $j=1,2$,
$$
\eta_j \in \mathcal{H}^{\lambda  h,s+\frac{1}{2}}(\xT^d)
\text{ with }\Vert \eta_j \Vert_{\mathcal{H}^{\lambda h,s }} \leq \overline{\eps},\quad 
a(D_x)^\mez \psi_j  \in   \mathcal{H}^{\lambda h, s}(\xT^d) .
$$
Then we may apply the above Theorem with $s'= s-\delta$, 
and we obtain an estimate of $\Vert G(\eta)(\psi,b) \Vert_{\mathcal{H}^{\lambda  h, s-\mez-\delta}}$ by the right hand side of \eqref{DNn1} for $s$ replaced by $s-\delta.$
\end{itemize}
\end{rema}
\begin{proof}[Proof of Theorem \ref{est-DN}] 
We use the notations introduced in Section \ref{defiCOV}. In the 
variable $(x,z)\in \xT^d \times (-h,0)$, we have
\begin{equation*}
G(\eta)(\psi,b) =  \Big(\frac{1+ \vert \nabla_x \rho\vert^2}{\partial_z \rho} \partial_z \widetilde{u} - \nabla_x \rho \cdot \nabla_x \widetilde{u}\Big)\Big\arrowvert_{z=0},
\end{equation*}
where $\widetilde{u}=u(x,\rho(x,z))$ satisfies the following elliptic boundary value problem:
$$
(\Delta_{x,z}+R)\widetilde{u}=0, \quad \widetilde{u}\arrowvert_{z=0} = \psi, \quad   (\partial_ z \widetilde{u}) \arrowvert_{z=-h} = (\partial_z \rho\arrowvert_{z=-h}) b.
$$
Let us introduce the function
$$
U=\frac{1+ \vert \nabla_x \rho\vert^2}{\partial_z \rho} \partial_z \widetilde{u} - \nabla_x \rho \cdot \nabla_x \widetilde{u},
$$
and set $I_h=[-h,0]$. Since $G(\eta)(\psi,b) = U\arrowvert_{z=0}$, by definition of the spaces  $E^{\lambda,\mu}$, we have
$$
\Vert G(\eta)(\psi,b) \Vert_{ \mathcal{H}^{\lambda h, s- \mez}}\le 
\lA U\rA_{E^{\lambda,s-\mez}}.
$$
Now, we use an interpolation argument (see statement~(iv) in Lemma~\ref{uvF}) to infer that
\be\label{DNn2}
\Vert G(\eta)(\psi,b) \Vert_{ \mathcal{H}^{\lambda h, s- \mez}}
\le \lA U\rA_{E^{\lambda,s-\mez}}\le C  \lA \partial_z U\rA_{F^{\lambda,s-1}}
+C\lA  U\rA_{F^{\lambda h,s}}.
\ee
The rest of the proof consists in estimating $U$ and $\partial_z U$ in terms of 
$\nabla_{x,z}\widetilde{u}$, so that the required estimate~\e{DNn1} will be a consequence 
of Corollary~\ref{regul12} and Theorem~\ref{regul-fin'}. 

\begin{lemm}
For all $s>2+d/2$ and all $ 0\leq \lambda<1$, there exist $\overline{\eps} >0$ and $C>0$ 
such that for all $\eta \in \mathcal{H}^{\lambda  h,s+\frac{1}{2}}$  satisfying $\Vert \eta\Vert_{\mathcal{H}^{\lambda h,s }} \leq \overline{\eps}$, 
there holds
\be\label{estU1F}
\lA U\rA_{F^{\lambda ,s}}
\le C (\lA \nabla_{x,z}\widetilde{u}\rA_{F^{\lambda ,s}}
+\lA \eta\rA_{\mathcal{H}^{\lambda h,s+\mez}}
\lA \nabla_{x,z}\widetilde{u}\rA_{F^{\lambda ,s-1}}),
\ee
and
\be\label{estU2F}
\lA \partial_zU\rA_{F^{\lambda ,s-1}}
\le C (\lA \nabla_{x,z}\widetilde{u}\rA_{F^{\lambda ,s}}
+\lA \eta\rA_{\mathcal{H}^{\lambda h,s+\mez}}
\lA \nabla_{x,z}\widetilde{u}\rA_{F^{\lambda ,s-1}}).
\ee
\end{lemm}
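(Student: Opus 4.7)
The strategy is to expand $U$ and $\partial_z U$ as sums of linear contributions plus products of $\rho$-dependent coefficients (which vanish when $\eta = 0$) times first derivatives of $\widetilde u$. Both estimates then follow from a tame product rule in $F^{\lambda,s}$ (in the spirit of Lemma~\ref{uvF}) combined with the bounds on $\rho$ provided by Lemma~\ref{algebre0}. The key feature I must preserve is tameness: the coefficient must appear in the high norm $\mathcal{H}^{\lambda h, s+\mez}$ \emph{only} when multiplied by $\nabla_{x,z}\widetilde u$ in the \emph{lower} norm $F^{\lambda, s-1}$, while whenever $\nabla_{x,z}\widetilde u$ appears with its full $F^{\lambda,s}$ norm the coefficient is measured in a low norm and absorbed into the smallness $\|\eta\|_{\mathcal{H}^{\lambda h,s}} \le \overline\eps$.

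For \e{estU1F}, I would split
\[
U = \partial_z \widetilde u + A\,\partial_z \widetilde u - \nabla_x \rho \cdot \nabla_x \widetilde u, \quad A := \frac{1+|\nabla_x \rho|^2-\partial_z \rho}{\partial_z \rho}.
\]
The first term trivially gives $\|\partial_z \widetilde u\|_{F^{\lambda,s}} \le \|\nabla_{x,z}\widetilde u\|_{F^{\lambda,s}}$. For the products $A\,\partial_z \widetilde u$ and $\nabla_x\rho\cdot\nabla_x \widetilde u$, the tame product estimate in $F^{\lambda,s}$ yields two terms. In one, the coefficient sits in an $E^{\lambda,\sigma_0}$-type norm (for some $\sigma_0>d/2$) bounded by $\|\eta\|_{\mathcal{H}^{\lambda h,s}} \le \overline\eps$ via Lemma~\ref{algebre0}; this multiplies $\|\nabla_{x,z}\widetilde u\|_{F^{\lambda,s}}$ and is absorbed into the constant. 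In the other, the coefficient is bounded in $F^{\lambda,s}$ (and hence by $\|\eta\|_{\mathcal{H}^{\lambda h,s+\mez}}$, again by Lemma~\ref{algebre0}), while $\nabla_{x,z}\widetilde u$ appears in a low norm controlled by $\|\nabla_{x,z}\widetilde u\|_{F^{\lambda,s-1}}$.

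For \e{estU2F}, the key point is to avoid second derivatives of $\widetilde u$. Since $\widetilde u$ solves $(\Lambda_1^2 + \Lambda_2^2)\widetilde u = 0$, the divergence-form identity \e{formediv} rearranges to
\[
\partial_z U = -\cnx_x\!\big(\partial_z \rho\, \nabla_x \widetilde u - \partial_z \widetilde u\, \nabla_x \rho\big).
\]
Because $\cnx_x$ maps $F^{\lambda,s}$ continuously into $F^{\lambda,s-1}$, it suffices to estimate the expression inside the divergence in $F^{\lambda,s}$. Writing $\partial_z \rho\, \nabla_x \widetilde u = \nabla_x \widetilde u + (\partial_z\rho-1)\nabla_x \widetilde u$, the linear piece yields $\|\nabla_{x,z}\widetilde u\|_{F^{\lambda,s}}$, and the remaining products $(\partial_z\rho-1)\nabla_x \widetilde u$ and $\partial_z \widetilde u\,\nabla_x\rho$ are treated by exactly the same tame product argument used for \e{estU1F}.

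The main obstacle is not conceptual but bookkeeping: one must verify that the available tame product rule in the scale $F^{\lambda,s}$ really allows the splitting described above with the right low-regularity index, so that Lemma~\ref{algebre0} delivers a low-norm factor of size $\|\eta\|_{\mathcal{H}^{\lambda h,s}}$ (small) whenever the companion factor is $\|\nabla_{x,z}\widetilde u\|_{F^{\lambda,s}}$, and a high-norm factor $\|\eta\|_{\mathcal{H}^{\lambda h,s+\mez}}$ only against $\|\nabla_{x,z}\widetilde u\|_{F^{\lambda,s-1}}$. This tame balance is what is ultimately exploited by the proof of Theorem~\ref{est-DN} through the interpolation inequality \e{DNn2}.
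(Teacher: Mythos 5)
Your decomposition of $U$ and your use of the divergence identity \e{formediv} for $\partial_z U$ are exactly the ones the paper uses. The gap is in the step you dismiss as bookkeeping: you assert that the tame product rule hands you $\lA \nabla_{x,z}\widetilde u\rA_{F^{\lambda,s-1}}$ directly as the low-norm companion of $\lA\eta\rA_{\mathcal{H}^{\lambda h,s+\mez}}$, but it does not. The rule \e{tame-EF} (Lemma~\ref{uvF}(iii), applied with $s_0=s-2$) produces the mixed norm $\lA \nabla_{x,z}\widetilde u\rA_{E^{\lambda,s-2}}$ on that side, not an $F$-norm. So after the product estimate you sit on
\[
\lA U\rA_{F^{\lambda,s}} \le C\lA \nabla_{x,z}\widetilde u\rA_{F^{\lambda,s}} + C\lA\eta\rA_{\mathcal{H}^{\lambda h,s+\mez}}\lA \nabla_{x,z}\widetilde u\rA_{E^{\lambda,s-2}},
\]
and the stated \e{estU1F} does not yet follow. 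Converting $\lA \nabla_{x,z}\widetilde u\rA_{E^{\lambda,s-2}}$ into $\lA \nabla_{x,z}\widetilde u\rA_{F^{\lambda,s-1}}$ is not a bookkeeping change of index: the interpolation Lemma~\ref{uvF}(iv) bounds it by $\lA\partial_z^2\widetilde u\rA_{F^{\lambda,s-5/2}}+\lA\partial_z\widetilde u\rA_{F^{\lambda,s-3/2}}$, which involves a \emph{second} derivative of $\widetilde u$, and a priori $\lA\partial_z^2\widetilde u\rA$ is not controlled by $\lA\nabla_{x,z}\widetilde u\rA_{F^{\lambda,s-1}}$.

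The missing ingredient is Lemma~\ref{L316}, which uses the fact that $\widetilde u$ solves $(\Delta_{x,z}+R)\widetilde u=0$ to write $\partial_z^2\widetilde u = -\Delta_x\widetilde u - R\widetilde u$ and thereby close the loop, giving $\lA\nabla_{x,z}\widetilde u\rA_{E^{\lambda,s-2}}\le C\lA\nabla_{x,z}\widetilde u\rA_{F^{\lambda,s-1}}$. This is the point where the elliptic equation for $\widetilde u$, not just the structure of $U$, is genuinely used; your proposal never invokes the equation for \e{estU1F} (only for the divergence rewriting in \e{estU2F}). The same additional step is needed in the $\partial_z U$ estimate as well, after the $\cnx_x$ reduction. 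So the plan is correct in outline but incomplete: you must explicitly add the $E^{\lambda,s-2}\to F^{\lambda,s-1}$ conversion via Lemma~\ref{L316} (i.e.\ estimate \e{coroL316} in the paper) in both cases.
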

\begin{proof}
Write $U$ under the form $$U=(1+a)\partial_z \widetilde{u}- \nabla_x \rho \cdot \nabla_x \widetilde{u},$$
where $a$ 
is as defined in~\e{rappel-abcd}. Then, the tame product estimate~\e{tame-EF} below
(applied with $s_0=s-2$) implies that 
\begin{align*}
\lA U\rA_{F^{\lambda ,s}}&\les (1+\lA a\rA_{E^{\lambda ,s-2}})
\lA \partial_z \widetilde{u}\rA_{F^{\lambda h,s}}+
\lA  \nabla_x \rho\rA_{E^{\lambda,s-2}}
\lA \nabla_x \widetilde{u}\rA_{F^{\lambda ,s}}\\
&\quad +\lA a\rA_{F^{\lambda h,s}}\lA \partial_z \widetilde{u}\rA_{E^{\lambda ,s-2}}+
\lA  \nabla_x \rho\rA_{F^{\lambda ,s}}
\lA \nabla_x \widetilde{u}\rA_{E^{\lambda ,s-2}}.
\end{align*}
The contribution of $\rho$ is estimated by means of 
Lemma~\ref{est-rho} and the one of $a$ is estimated by Lemma~\ref{algebre0} (which implies that $a$ belongs to the space $\mathcal{E}_1$). Consequently, 
provided $\lA \eta\rA_{\mathcal{H}^{\lambda h,s}}\le \overline{\eps}\le 1$, we have
\begin{align*}
&\lA a\rA_{E^{\lambda,s-2}}+\lA  \nabla_x \rho\rA_{E^{\lambda,s-2}}
\le C \lA \eta\rA_{\mathcal{H}^{\lambda h,s}}\le C,\\
&\lA a\rA_{F^{\lambda ,s}}+\lA  \nabla_x \rho\rA_{F^{\lambda ,s}}\le C 
\lA \eta\rA_{\mathcal{H}^{\lambda h,s+\mez}}.
\end{align*}
By combining the previous estimates, we obtain that
\be\label{estU1}
\lA U\rA_{F^{\lambda ,s}}
\le C \lA \nabla_{x,z}\widetilde{u}\rA_{F^{\lambda ,s}}
+\lA \eta\rA_{\mathcal{H}^{\lambda h,s+\mez}}
\lA \nabla_{x,z}\widetilde{u}\rA_{E^{\lambda ,s-2}},
\ee
Now, since $\Delta_{x,z}\widetilde{u}+R\widetilde{u}=0$, it follows from Lemma~\ref{L316} that 
\be\label{coroL316}
\lA \nabla_{x,z}\widetilde{u}\rA_{E^{\lambda ,s-2}}\le 
C\lA \nabla_{x,z}\widetilde{u}\rA_{F^{\lambda ,s-1}}.
\ee
By plugging this bound in~\e{estU1}, we conclude the proof of the estimate~\e{estU1F}.

We now estimate $\partial_zU$. 
To do so, we exploit the fact that the equation $$\Delta_{x,z}\widetilde{u}+R\widetilde{u}=0$$ 
can be written in divergence form, as we have seen in \e{formediv}. More precisely, we have
\begin{equation}\label{equat'}
\partial_z\Big(\frac{1+\la\nabla_x \rho\ra^2}{\partial_z \rho}
\partial_z \widetilde{u}-\nabla_x \rho\cdot\nabla_x\widetilde{u}\Big)
+\cnx_x\big(\partial_z \rho \nabla_x \widetilde{u}-\partial_z \widetilde{u}\nabla_x\rho\big)=0.
\end{equation}
This immediately implies that
$$
\lA \partial_z U\rA_{F^{\lambda,s-1}}\le 
\lA \partial_z \rho \nabla_x \widetilde{u}-\partial_z \widetilde{u}\nabla_x\rho\rA_{F^{\lambda ,s}}.
$$
Now, as above, we apply the same product estimate~\e{tame-EF} below to infer that
\begin{align*}
\lA \partial_z U\rA_{F^{\lambda ,s-1}}&\les 
\lA \nabla_{x,z} \rho\rA_{E^{\lambda ,s-2}}
\lA \nabla_{x,z} \widetilde{u}\rA_{F^{\lambda ,s}}+\lA \nabla_{x,z} \rho\rA_{F^{\lambda ,s}}
\lA \nabla_{x,z} \widetilde{u}\rA_{E^{\lambda,s-2}}.
\end{align*}
Then we use Lemma~\ref{est-rho} 
to obtain
$$
\lA \partial_zU\rA_{F^{\lambda  ,s-1}}
\le C \lA \nabla_{x,z}\widetilde{u}\rA_{F^{\lambda ,s}}
+\lA \eta\rA_{\mathcal{H}^{\lambda ,s+\mez}}
\lA \nabla_{x,z}\widetilde{u}\rA_{E^{\lambda ,s-2}},
$$
and hence, the desired estimate~\e{estU2F} follows from~\e{coroL316}.
\end{proof}

In view of \e{DNn2} and the previous lemma, the estimate~\e{DNn1} follows directly from Corollary~\ref{regul12} and Theorem~\ref{regul-fin'}.
 \end{proof}

In the following result we shall prove that, in a certain sense, 
the Dirichlet-Neuman operator is Lipschitz with respect to $(\psi, \eta)$. 
Let us introduce some notations. If $(\theta_1, \theta_2)$ is a given couple of functions and $t \in \xR $, we shall set
$$ \theta   = \theta_1- \theta_2, \quad \Vert (\theta_1, \theta_2)\Vert_{\mathcal{H}^{\lambda h,t}} =  \sum_{j=1}^2 \Vert \theta_j \Vert_{\mathcal{H}^{\lambda  h,t}}, $$
 and we shall use these notations if $\theta = \eta $ or $a(D_x)^\mez\psi.$ Moreover, we set
 \begin{equation}\label{lambdaj}
  \left\{
 \begin{aligned}
\mathcal{G}_j  &= G(\eta_j)(\psi_j,b),  j=1,2,\\
   \lambda_1 &=   \Vert (\eta_1, \eta_2) \Vert_{\mathcal{H}^{\lambda  h,s+ \mez }}   \big(\Vert (a(D_x)^\mez\psi_1,a(D_x)^\mez\psi_2) \Vert_{\mathcal{H}^{\lambda  h,s-\mez}}+\Vert b \Vert_{H^{s- \frac{3}{2}}}\big)\\
   &+    \Vert (a(D_x)^\mez\psi_1,a(D_x)^\mez \psi_2) \Vert_{\mathcal{H}^{\lambda  h,s }} 
  + \Vert b \Vert_{H^{s-\mez}},\\
 \lambda_2 &=   \Vert (a(D_x)^\mez\psi_1,a(D_x)^\mez\psi_2) \Vert_{\mathcal{H}^{\lambda  h,s-1}} + \Vert b \Vert_{H^{s-\frac{3}{2}}},\\
 \lambda_3 &=   \Vert (\eta_1, \eta_2) \Vert_{\mathcal{H}^{\lambda  h,s+ \mez }}.
   \end{aligned}
   \right.
   \end{equation}
 Then we have the following result.
 \begin{theo}\label{G-lip} 
For  all $s > d/2 + 2$ and all $0\leq \lambda_0<1$, there exist $C>0$ and  $\overline{\eps} >0$ such that for all $0\leq \lambda \leq \lambda_0,$ for    all $\eta_j \in \mathcal{H}^{\lambda  h,s+\frac{1}{2}}(\xT^d)$  satisfying $\Vert \eta_j \Vert_{\mathcal{H}^{\lambda h,s }} \leq \overline{\eps}, $ all $\psi_j$ such that  $ a(D_x)^\mez \psi_j  \in   \mathcal{H}^{\lambda h, s}(\xT^d),  j=1,2$ and  all $ b \in H^{s-\mez}(\xT^d)$  we have  
\begin{align*}
& \Vert \mathcal{G}_1 - \mathcal{G}_2\Vert_{\mathcal{H}^{\lambda  h, s-\mez}}\\
 \leq&\, C \Big( \lambda_1 \Vert \eta_1-\eta_2   \Vert_{\mathcal{H}^{\lambda  h,s}} + \lambda_2 \Vert \eta_1-\eta_2 \Vert_{\mathcal{H}^{\lambda  h,s + \mez}}\\
&+ \lambda_3 \Vert a(D_x)^\mez(\psi_1-\psi_2)  \Vert_{\mathcal{H}^{\lambda  h,s-\mez}} 
+ \Vert a(D_x)^\mez(\psi_1-\psi_2)    \Vert_{\mathcal{H}^{\lambda h,s}}\Big).
\end{align*}
\end{theo}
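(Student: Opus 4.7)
The plan is to follow the same strategy as in the proof of Theorem~\ref{est-DN}, applied to the difference of the two straightened potentials. For $j=1,2$ set $\rho_j(x,z)=\frac{z+h}{h}(e^{z\vert D_x\vert}\eta_j)(x)+z$, let $R_j$ be the remainder operator~\eqref{P=} built with $\rho_j$, and let $\widetilde{u}_j$ solve $(\Delta_{x,z}+R_j)\widetilde{u}_j=0$ in $\widetilde{\Omega}$ with $\widetilde{u}_j\arrowvert_{z=0}=\psi_j$ and $\partial_z \widetilde{u}_j\arrowvert_{z=-h}=(\partial_z\rho_j\arrowvert_{z=-h})b$. Writing $\mathcal{G}_j=U_j\arrowvert_{z=0}$ with $U_j=\frac{1+|\nabla_x\rho_j|^2}{\partial_z\rho_j}\partial_z\widetilde{u}_j-\nabla_x\rho_j\cdot\nabla_x\widetilde{u}_j$, the interpolation step~\eqref{DNn2} from the proof of Theorem~\ref{est-DN} reduces the problem to estimating
\[
\Vert U_1-U_2\Vert_{F^{\lambda,s}}\quad\text{and}\quad \Vert \partial_z(U_1-U_2)\Vert_{F^{\lambda,s-1}}.
\]

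I would first set $\widetilde{u}:=\widetilde{u}_1-\widetilde{u}_2$ and control it by applying Theorem~\ref{regul-fin'}, since subtracting the two elliptic equations gives
\[
(\Delta_{x,z}+R_1)\widetilde{u}=-(R_1-R_2)\widetilde{u}_2,\quad \widetilde{u}\arrowvert_{z=0}=\psi_1-\psi_2,\quad \partial_z\widetilde{u}\arrowvert_{z=-h}=\tfrac{1}{h}\bigl(e^{-h\vert D_x\vert}(\eta_1-\eta_2)\bigr)b.
\]
The Dirichlet datum naturally produces the terms $\Vert a(D_x)^\mez(\psi_1-\psi_2)\Vert_{\mathcal{H}^{\lambda h,s}}$ and $\lambda_3\Vert a(D_x)^\mez(\psi_1-\psi_2)\Vert_{\mathcal{H}^{\lambda h,s-\mez}}$ appearing in the conclusion. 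The Neumann datum is estimated in $H^{s-\mez}$ and $H^{s-\tdm}$ by a standard Sobolev product rule (using the regularizing character of $e^{-h\vert D_x\vert}$), and both ensuing cross-terms are absorbed by $\lambda_1\Vert\eta_1-\eta_2\Vert_{\mathcal{H}^{\lambda h,s}}$. The source $(R_1-R_2)\widetilde{u}_2$ is the technical heart: writing $R_j=a_j\partial_z^2+b_j\Delta_x+c_j\cdot\nabla_x\partial_z-d_j\partial_z$, each coefficient difference is a smooth nonlinear function of $\rho_1,\rho_2$ vanishing when $\rho_1=\rho_2$, and is therefore controlled through Lemma~\ref{algebre0} and Lemma~\ref{est-rho} by $\Vert\eta_1-\eta_2\Vert_{\mathcal{H}^{\lambda h,s}}$ in $E^{\lambda,s-1}$ and by $\Vert\eta_1-\eta_2\Vert_{\mathcal{H}^{\lambda h,s+\mez}}$ in $F^{\lambda,s}$. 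Multiplying these against the second derivatives of $\widetilde{u}_2$ (whose $F$-norms are provided by Corollary~\ref{regul-ouf}, with Lemma~\ref{L316} converting $E^{\lambda,s-\tdm}$-bounds into $F^{\lambda,s-2}$-bounds when needed) via the tame Lemma~\ref{uvF} yields exactly $\lambda_1\Vert\eta_1-\eta_2\Vert_{\mathcal{H}^{\lambda h,s}}+\lambda_2\Vert\eta_1-\eta_2\Vert_{\mathcal{H}^{\lambda h,s+\mez}}$ for the source contribution.

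It then remains to transfer these bounds on $\widetilde{u}$ to $U_1-U_2$ and $\partial_z(U_1-U_2)$. I would decompose $U_1-U_2$ into a ``coefficient-difference'' part of the form (smooth functional of $\rho_1,\rho_2$ differenced)$\cdot\nabla_{x,z}\widetilde{u}_2$ and a ``potential-difference'' part (coefficient at $\rho_1$)$\cdot\nabla_{x,z}\widetilde{u}$. For $\partial_z(U_1-U_2)$, I would first use the divergence identity~\eqref{equat'} to replace $\partial_z U_j$ by $\cnx_x\bigl(\partial_z\rho_j\nabla_x\widetilde{u}_j-\partial_z\widetilde{u}_j\nabla_x\rho_j\bigr)$, exchanging the normal derivative for a tangential one, and then apply the same decomposition. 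Each product is estimated by the tame rule~\eqref{tame-EF}/Lemma~\ref{uvF}: the coefficient-difference pieces produce the expected $\lambda_1\Vert\eta_1-\eta_2\Vert_{\mathcal{H}^{\lambda h,s}}+\lambda_2\Vert\eta_1-\eta_2\Vert_{\mathcal{H}^{\lambda h,s+\mez}}$, while the potential-difference pieces inherit the bound on $\widetilde{u}$ derived above.

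The main obstacle is the bookkeeping required to keep the estimate tame: the top-regularity norm $\Vert\eta_j\Vert_{\mathcal{H}^{\lambda h,s+\mez}}$, which enters both $\lambda_3$ and $\lambda_1$, must only ever multiply lower-regularity norms of $\psi_j$, $b$, or of the potential differences, never the top-regularity ones. A naive use of Sobolev algebra properties would produce illegitimate cross-terms such as $\Vert\eta_j\Vert_{\mathcal{H}^{\lambda h,s+\mez}}\Vert a(D_x)^\mez(\psi_1-\psi_2)\Vert_{\mathcal{H}^{\lambda h,s}}$, which are absent from the statement. Every product therefore has to be handled via the tame Leibniz-type estimates of Lemma~\ref{uvF} together with the strategic use of the divergence form~\eqref{equat'}, exactly as in the proof of Theorem~\ref{regul-fin'}; this is the only delicate point in the argument.
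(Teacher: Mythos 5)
Your proposal follows essentially the same strategy as the paper's proof: reduce via the interpolation step from \eqref{DNn2}, use the divergence form \eqref{equat'} to replace $\partial_z(U_1-U_2)$ by $\cnx_x(V_1-V_2)$, split $U_1-U_2$ and $V_1-V_2$ into ``coefficient-difference'' pieces $(p_1-p_2)\nabla_{x,z}\widetilde{u}_2$ and ``potential-difference'' pieces $q\nabla_{x,z}\widetilde{u}$, bound $\widetilde{u}=\widetilde{u}_1-\widetilde{u}_2$ through the elliptic system $(\Delta_{x,z}+R_1)\widetilde{u}=(R_2-R_1)\widetilde{u}_2$ via Theorem~\ref{regul-fin'}, Corollary~\ref{regul12} and Lemma~\ref{L316}, and then estimate $(R_1-R_2)\widetilde{u}_2$ coefficient by coefficient with the tame product rules of Lemma~\ref{uvF}. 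One small citation imprecision: the Lipschitz control of the coefficient differences (i.e.\ $a_j,b_j,c_j\in\mathcal{F}_1$, $d_j\in\mathcal{F}_2$) is Lemma~\ref{algebre1}, not Lemma~\ref{algebre0} --- the latter gives size bounds on the coefficients themselves, the former bounds on their differences.
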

\begin{rema}\label{G1-G2-s-1}
\begin{itemize}
\item[\rm{(1)}] The constants $\overline{\eps}$ and $C$ in the above Theorem depend on $\lambda_0$ but not on $\lambda$ for   $0\leq \lambda\leq \lambda_0.$
\item[\rm{(2)}]  Assume that $s> 2+ \delta + \frac{d}{2}, \delta>0$, $b \in H^{s-\mez}(\xT^d)$ and for $j=1,2$,
$$
\eta_j \in \mathcal{H}^{\lambda  h,s+\frac{1}{2}}(\xT^d)\text{ with }\Vert \eta_j \Vert_{\mathcal{H}^{\lambda h,s }} \leq \overline{\eps},\quad 
a(D_x)^\mez \psi_j  \in   \mathcal{H}^{\lambda h, s}(\xT^d).
$$
Then we may apply the above Theorem to $s'= s-\delta$ and we get an estimate of the term, 
$\Vert \mathcal{G}_1 - \mathcal{G}_2\Vert_{\mathcal{H}^{\lambda  h, s-\mez -\delta}}$ by the right hand side for $s$ replaced by $s-\delta.$
\end{itemize}
\end{rema}
\begin{proof}[Proof of Theorem \ref{G-lip}] 
Introduce the functions 
$$U_j=\frac{1+ \vert \nabla_x \rho_j\vert^2}{\partial_z \rho_j} \partial_z \widetilde{u}_j - \nabla_x \rho_j \cdot \nabla_x \widetilde{u}_j $$
for $j=1,2,$
where 
\begin{equation}\label{eq-uj}
  (\Delta_{x,z}+R_j) \widetilde{u}_j= 0   , \quad  \widetilde{u}_j\arrowvert_{z=0} = \psi_j, \quad   (\partial_ z  \widetilde{u}_j) \arrowvert_{z=-h} = (\partial_z \rho_j\arrowvert_{z=-h}) b.
\end{equation}
 Then, by definition we have $$\mathcal{G}_1- \mathcal{G}_2 = (U_1-U_2)\arrowvert_{z=0}.$$ 
 As in \eqref{DNn2}, we have
\begin{align*}
\Vert \mathcal{G}_1- \mathcal{G}_2 \Vert_{ \mathcal{H}^{\lambda h, s- \mez}}
&\le \lA U_1 -U_2\rA_{E^{\lambda,s-\mez}}\\
&\le C( \lA  U_1 -U_2\rA_{F^{\lambda,s}} +  \lA \partial_z (U_1 -U_2)\rA_{F^{\lambda,s-1}}).
\end{align*} 
Now according to \eqref{equat'}, our equation  on $\widetilde{u}_j$ reads $$\partial_z U_j + \text{div}_x V_j=0$$ with
\begin{equation}\label{UjVj}
U_j=\frac{1+ \vert \nabla_x \rho_j\vert^2}{\partial_z \rho_j} \partial_z \widetilde{u}_j - \nabla_x \rho_j \cdot \nabla_x \widetilde{u}_j, \quad 
\quad V_j = \partial_z \rho_j\nabla_x \widetilde{u}_j -  \partial_z \widetilde{u}_j\nabla_x \rho_j, 
\end{equation}
 It follows that
 \begin{equation}\label{DNdiff}
 \Vert \mathcal{G}_1- \mathcal{G}_2 \Vert_{ \mathcal{H}^{\lambda h, s- \mez}}\leq C(\lA U_1 -U_2\rA_{F^{\lambda,s}}+ \lA  V_1 -V_2\rA_{F^{\lambda,s}}).
 \end{equation}
 Recall  that 
 \[
 \partial_z \rho_j = 1+ q_j \quad \text{and} \quad 
 \nabla_x \rho_j= \frac{1}{h}(z+h)e^{z \vert D_x \vert}\nabla_x \eta_j,
 \]
 where 
\begin{equation*}
\left\{
\begin{aligned}
& q_j = \frac{1}{h}e^{z \vert D_x \vert} \eta_j + \frac{1}{h}(z+h)e^{z \vert D_x \vert}\vert D_x \vert \eta_j,\\ 
    &\partial_z \rho_j\arrowvert_{z=-h}= 1 + \frac{1}{h} e^{-h \vert D_x \vert} \eta_j, \\
    &\frac{1+ \vert \nabla_x \rho_j\vert^2}{\partial_z \rho_j}    = 1+ A_j,\quad A_j = - f(q_j) - \vert\nabla_x \rho_j\vert^2 f(q_j) +  \vert\nabla_x \rho_j\vert^2,
\end{aligned}
\right.
\end{equation*}
with 
$$
f(q_j)= \frac{q_j}{1+ q_j}.
$$  
With these notations and using \eqref{UjVj} we can write
\begin{equation}\label{DiffUV}
\left\{
\begin{aligned}
\widetilde{u}&= \widetilde{u}_1- \widetilde{u}_2,\\
 U_1-U_2&= (A_1-A_2) \partial_z\widetilde{u}_2-(\nabla_x \rho_1- \nabla_x \rho_2)\cdot \nabla_x \widetilde{u}_2\\
 &\quad 
 + (1+ A_1)\partial_z\widetilde{u} - \nabla_x \rho_1\cdot\nabla_x\widetilde{u},\\ 
 V_1 -V_2&= (\partial_z \rho_1- \partial_z \rho_2) \nabla_x \widetilde{u}_2  \\
 &\quad -(\nabla_x \rho_1- \nabla_x \rho_2) \partial_z \widetilde{u}_2+ \partial_z \rho_1  \nabla_x \widetilde{u} -  \nabla_x \rho_1  \partial_z \widetilde{u}.
\end{aligned}
\right.
\end{equation}
The first two terms in  the right hand side of \eqref{DiffUV} are of the form $$(p_1-p_2)\nabla_{x,z} \widetilde{u}_2\quad   \text{ with }p\in \mathcal{F}_1$$
by Lemma~\ref{algebre1}, where $\mathcal{F}_1$ has been defined in Definition~\ref{Frond}. 
They are estimated as follows. Using Lemma~\ref{uvF} (iii) with $ s_0=s-\frac{3}{2}, $ we can write
\begin{align*}
 \Vert(p_1-p_2)\nabla_{x,z} \widetilde{u}_2\Vert_{F^{\lambda h, s}} 
& \les  \Vert p_1-p_2  \Vert_{E^{\lambda  , s-\frac{3}{2}}}\Vert \nabla_{x,z}\widetilde{u}_2\Vert_{F^{\lambda,s}} \\
&+\Vert p_1-p_2  \Vert_{F^{\lambda  , s}}\Vert \nabla_{x,z}\widetilde{u}_2\Vert_{E^{\lambda,s-\frac{3}{2}}},
\end{align*}
and hence,
 \begin{align*}
  \Vert(p_1-p_2)\nabla_{x,z} \widetilde{u}_2\Vert_{F^{\lambda h, s}}&\les \Vert  p_1-p_2 \Vert_{L^\infty(I_h, \mathcal{H}^{\lambda h, s-\frac{3}{2}})} \Vert \nabla_{x,z} \widetilde{u}_2\Vert_{F^{\lambda  , s}}\\
  &+ \Vert p_1-p_2 \Vert_{L^2(I_h, \mathcal{H}^{\lambda h, s})} \Vert \nabla_{x,z} \widetilde{u}_2\Vert_{E^{\lambda h, s-\frac{3}{2}}}.
  \end{align*}
   Using the definition of $\mathcal{F}_1$,   \eqref{eq-uj}, Corollary~\ref{regul-ouf} , Lemma~\ref{L316}, Corollary~\ref{regul12}     and the notation in \eqref{lambdaj}, we obtain
 \begin{equation}\label{G1-G2-1}
 \Vert(p_1-p_2)\nabla_{x,z} \widetilde{u}_2\Vert_{F^{\lambda h, s}}\leq \lambda_1 \Vert \eta_1- \eta_2\Vert_{\mathcal{H}^{\lambda h, s}} + \lambda_2 \Vert \eta_1- \eta_2\Vert_{\mathcal{H}^{\lambda h, s+\mez}}.
\end{equation}
The last two terms in  the right hand side of \eqref{DiffUV} are of the form 
$$q  \nabla_{x,z} (\widetilde{u}_1- \widetilde{u}_2)  = q \nabla_{x,z} \widetilde{u}, \quad \text{with } q =1 \text{ or } q\in \mathcal{E}_1,$$
where $\mathcal{E}_1$ has been defined in Definition~\ref{Erond}.
They are estimated as follows. By Lemma~\ref{uvF} with with $s_0 = s-\frac{3}{2}$ we can write
\begin{equation*} 
 \Vert q \nabla_{x,z} \widetilde{u}\Vert_{F^{\lambda,s}}  \les   \Vert q \Vert_{L^\infty(I_h,\mathcal{H}^{\lambda h, s-\frac{3}{2}})}\Vert \nabla_{x,z} \widetilde{u} \Vert_{F^{\lambda  , s} }
 + \Vert q \Vert_{L^2(I_h,\mathcal{H}^{\lambda h, s})}\Vert \nabla_{x,z} \widetilde{u} \Vert_{E^{\lambda  , s-\frac{3}{2}}}.
\end{equation*}
Since $q\in \mathcal{E}_1$ we deduce from Definition~\ref{Erond} that
\begin{equation}\label{G1-G2-2}
\Vert q \nabla_{x,z} \widetilde{u}\Vert_{F^{\lambda,s}}  \les  \Vert \nabla_{x,z} \widetilde{u} \Vert_{F^{\lambda h, s}}+ \Vert \eta_1 \Vert_{\mathcal{H}^{s+ \mez}}\Vert \nabla_{x,z} \widetilde{u} \Vert_{E^{\lambda h, s-\frac{3}{2}}}.
\end{equation}

We see therefore that we have to estimate $\nabla_{x,z}\widetilde{u}.$ For that, according to   \eqref{eq-uj}, we notice  that $\widetilde{u}=  \widetilde{u}_1-\widetilde{u}_2$ is solution of the problem
 $$(\Delta_{x,z} +R_1)\widetilde{u} = (R_2-R_1)\widetilde{u}_2, \quad \widetilde{u} \arrowvert_{z=0} = \psi_1- \psi_2, ~\partial_z \widetilde{u}\arrowvert_{z=-h}=\frac{1}{h} e^{-h \vert D_x \vert}(\eta_1- \eta_2) b.$$
 Notice that for every $\mu> \frac{d}{2}$ we have
 \begin{equation*}
 \bigg\Vert \frac{1}{h} e^{-h \vert D_x \vert}(\eta_1- \eta_2) b \bigg\Vert_{H^\mu} \leq C\Vert    \eta_1- \eta_2   \Vert_{\mathcal{H}^{\lambda h,\mu}}\Vert   b\Vert_{H^\mu}.
 \end{equation*}
 Therefore, using Theorem~\ref{regul-fin'}, we can write
 \begin{equation}\label{G1-G2-3}
 \begin{aligned}
&\Vert   \nabla_{x,z}  \widetilde{u} \Vert _{F^{\lambda, s}}
 \les   
 \Vert  (R_2-R_1)\widetilde{u}_2\Vert_{F^{\lambda,s- 1}} + \Vert a(D_x)^\mez (\psi_1-\psi_2) \Vert_{\mathcal{H}^{\lambda h,s}}\\
&+\Vert \eta_1-\eta_2 \Vert_{\mathcal{H}^{\lambda h,s-\mez}}\Vert   b\Vert_{H^{s-\mez}} 
  +C \Vert \eta_1 \Vert_{  \mathcal{H}^{\lambda h, s+ \mez}}
 \Vert(R_2-R_1)\widetilde{u}_2\Vert_{F^{\lambda,s-2}}\\
&+ \Vert a(D_x)^\mez (\psi_1-\psi_2) \Vert_{\mathcal{H}^{\lambda h,s-\mez}} 
 +\Vert \eta_1-\eta_2 \Vert_{\mathcal{H}^{\lambda h,s-\tdm}}\Vert   b\Vert_{H^{s-\tdm}}.
\end{aligned}
\end{equation}
Moreover using Lemma~\ref{L316} we can write
$$
\lA \nabla_{x,z} \widetilde{u}\rA_{E^{\lambda,s-\tdm}}
\les  \lA \nabla_{x,z}\widetilde{u}\rA_{F^{\lambda,s-1}}+ \lA (R_2-R_1)\widetilde{u}_2\rA_{F^{\lambda,s-2}}.
$$
Then we can use Corollary~\ref{regul12} and obtain
\begin{equation}\label{G1-G2-4}
\lA \nabla_{x,z} \widetilde{u}\rA_{E^{\lambda,s-\tdm}}\les  \lA (R_2-R_1)\widetilde{u}_2\rA_{F^{\lambda,s-2}}+ \Vert \eta_1-\eta_2 \Vert_{\mathcal{H}^{\lambda h,s-\tdm}}\Vert   b\Vert_{H^{s-\tdm}}.
\end{equation}
Using \eqref{G1-G2-2}, \eqref{G1-G2-3} and \eqref{G1-G2-4}, we obtain
\begin{equation*}
\begin{aligned}
 &\Vert q  \nabla_{x,z}  \widetilde{u} \Vert _{F^{\lambda, s}}\\
 & \les   
 \Vert  (R_2-R_1)\widetilde{u}_2\Vert_{F^{\lambda,s- 1}} + \Vert a(D_x)^\mez (\psi_1-\psi_2) \Vert_{\mathcal{H}^{\lambda h,s}}\\
 &+\Vert \eta_1-\eta_2 \Vert_{\mathcal{H}^{\lambda h,s-\mez}}\Vert   b\Vert_{H^{s-\mez}}\\ 
&  +  \Vert \eta_1 \Vert_{  \mathcal{H}^{\lambda h, s+ \mez}}
\Big(\Vert(R_2-R_1)\widetilde{u}_2\Vert_{F^{\lambda,s-2}}
 + \Vert a(D_x)^\mez (\psi_1-\psi_2) \Vert_{\mathcal{H}^{\lambda h,s-\mez}}\\ 
 &+\Vert \eta_1-\eta_2 \Vert_{\mathcal{H}^{\lambda h,s-\tdm}}\Vert   b\Vert_{H^{s-\tdm}}\Big).
 \end{aligned}
\end{equation*}
Using the definition of $\lambda_j, j=1,2,3$, we obtain
\begin{equation}\label{G1-G2-5}
\begin{split}
 & \Vert q  \nabla_{x,z}  \widetilde{u} \Vert _{F^{\lambda, s}}\\
  \les &\,   
 \Vert  (R_2-R_1)\widetilde{u}_2\Vert_{F^{\lambda,s- 1}} +\Vert \eta_1\Vert_{\mathcal{H}^{\lambda h, s + \mez}}\Vert(R_2-R_1)\widetilde{u}_2\Vert_{F^{\lambda,s-2}}
 + \lambda_1 \Vert \eta_1-\eta_2 \Vert_{\mathcal{H}^{\lambda h,s }}\\
& + \lambda_3 \Vert a(D_x)^\mez (\psi_1-\psi_2) \Vert_{\mathcal{H}^{\lambda h,s-\mez }}+\Vert a(D_x)^\mez (\psi_1-\psi_2) \Vert_{\mathcal{H}^{\lambda h,s}}.
  \end{split}
\end{equation}
To complete the proof, we are led to estimate $(R_1-R_2)\widetilde{u}_2.$ According to \eqref{rappel-abcd}, we have
\[
\left\{
\begin{aligned}
 & (R_2-R_1) =(a_1-a_2)\partial_z^2  +(b_1-b_2)\Delta_x  
+(c_1-c_2)\cdot\nabla_x\partial_z   -(d_1-d_2)\partial_z \\
&  a_j=\frac{1+\vert\nabla_x\rho_j\vert^2}{\partial_z\rho}-1,\quad b_j=\partial_z\rho_j-1,\quad c_j=-2\nabla_x\rho_j,\\
& d_j =\frac{1+\vert\nabla_x\rho_j\vert^2}{\partial_z\rho_j}\partial_z^2\rho_j+\partial_z\rho_j \Delta_x \rho_j -2\nabla_x\rho_j
\cdot\nabla_x\partial_z\rho_j.
\end{aligned}
\right.
\]
Estimate of $\Vert  (R_2-R_1)\widetilde{u}_2\Vert_{F^{\lambda,s- 1}}.$

The first three terms in $(R_1-R_2)\widetilde{u}_2$ are estimated in the same manner. Since $a,b,c \in \mathcal{F}_1$ (see Lemma~\ref{algebre1}) for $r\in \{a,b,c\}$, using Lemma~\ref{uvF} (ii), we can write
\begin{align*}
 \Vert (r_1-r_2) \nabla_{x,z}^2 \widetilde{u}_2\Vert_{F^{\lambda, s-1}}&\leq C\Vert  r_1-r_2  \Vert_{L^\infty(I_h, \mathcal{H}^{\lambda h, s-1})}\Vert   \nabla_{x,z}^2 \widetilde{u}_2\Vert_{F^{\lambda, s-1}},\\
 &\leq C\Vert \eta_1-\eta_2\Vert_{\mathcal{H}^{\lambda h, s}}\Vert   \nabla_{x,z}^2 \widetilde{u}_2\Vert_{F^{\lambda, s-1}}.
\end{align*}
Thanks to Corollary~\ref{regul-ouf}, we have
\begin{equation*}
\begin{aligned}
\Vert (r_1-r_2) \nabla_{x,z}^2 \widetilde{u}_2\Vert_{F^{\lambda, s-1}}&\leq\Vert \eta_1-\eta_2\Vert_{\mathcal{H}^{\lambda h, s}} 
\Big(  \Vert a(D_x)^\mez \psi_2 \Vert_{\mathcal{H}^{\lambda h,s}} +\Vert b \Vert_{H^{s-\mez}}\\
 &+ \Vert \eta_2 \Vert_{  \mathcal{H}^{\lambda h, s+ \mez}}
\big( \Vert a(D_x)^\mez \psi_2 \Vert_{\mathcal{H}^{\lambda h,s-\mez}}+\Vert b \Vert_{H^{s-\tdm}}\big)\Big).
\end{aligned}
\end{equation*}
 With the notations in \eqref{lambdaj} one can deduce eventually that
 \begin{equation}\label{G1-G2-6}
 \Vert (r_1-r_2) \nabla_{x,z}^2 \widetilde{u}_2\Vert_{F^{\lambda, s-1}}\leq \lambda_1  \Vert \eta_1-\eta_2\Vert_{\mathcal{H}^{\lambda h, s}},
 \end{equation}
 where $r =a$ or $b$ or $c$.
 
 We consider now   the term $\Vert (d_1-d_2)\partial_z \widetilde{u}_2\Vert_{F^{\lambda, s-1}}.$ Using Lemma~\ref{uvF} (iii) with $s_0 = s-2$, one can write
 \begin{align*}
  \Vert (d_1-d_2)\partial_z \widetilde{u}_2\Vert_{F^{\lambda, s-1}}&\les     \Vert  d_1-d_2 \Vert_{L^\infty(I_h, \mathcal{H}^{\lambda h, s-2})} \Vert  \partial_z \widetilde{u}_2\Vert_{F^{\lambda, s-1}}\\
  &+ \Vert  d_1-d_2 \Vert_{L^2(I_h, \mathcal{H}^{\lambda h, s-1})} \Vert  \partial_z \widetilde{u}_2\Vert_{E^{\lambda, s-2}}. 
\end{align*}
 By Lemma~\ref{algebre1} we have $d\in \mathcal{F}_2.$ Therefore,
\begin{align*}
 &\Vert  d_1-d_2  \Vert_{L^\infty(I_h, \mathcal{H}^{\lambda, s-2})} \les \Vert \eta_1-\eta_2\Vert_{\mathcal{H}^{\lambda h,s}},\\
 &\Vert  d_1-d_2  \Vert_{L^2(I_h, \mathcal{H}^{\lambda, s-1})} \les \Vert \eta_1-\eta_2\Vert_{\mathcal{H}^{\lambda h,s+\mez}}.
\end{align*}
By Corollary~\ref{regul12} we have
\begin{equation}\label{G1-G2-60}
 \Vert  \partial_z  \widetilde{u}_2 \Vert _{F^{\lambda,s-1}}\les
  \blA a(D_x)^\mez \psi_2\brA_{\mathcal{H}^{\lambda h,s-1}} + \Vert b \Vert_{H^{s- \tdm}(\xT^d)}.
  \end{equation}
  By   Lemma~\ref{L316} and Corollary~\ref{regul12} we have
    $$
 \Vert  \partial_z \widetilde{u}_2\Vert_{E^{\lambda, s-2}}\les \Vert  \nabla_{x,z}\widetilde{u}_2\Vert_{F^{\lambda, s-1}} \les \Vert a(D_x)\psi_2\Vert_{\mathcal{H}^{\lambda, s-1}}+ \Vert b \Vert_{H^{s-\frac{3}{2}}}.
$$
Using   the notation in \eqref{lambdaj}, we obtain
\begin{equation}\label{G1-G2-7}
 \Vert (d_1-d_2)\partial_z \widetilde{u}_2\Vert_{F^{\lambda, s-1}}\leq \lambda_1 \Vert \eta_1-\eta_2\Vert_{\mathcal{H}^{\lambda h, s}}+ \lambda_2 \Vert \eta_1-\eta_2\Vert_{\mathcal{H}^{\lambda h, s+\mez}}. 
 \end{equation}
 It follows from \eqref{G1-G2-6} and \eqref{G1-G2-7} that
 \begin{equation}\label{G1-G2-8}
 \Vert (R_1-R_2)\widetilde{u}_2\Vert_{F^{\lambda, s-1}} \les \lambda_1 \Vert \eta_1-\eta_2\Vert_{\mathcal{H}^{\lambda h, s}}+ \lambda_2 \Vert \eta_1-\eta_2\Vert_{\mathcal{H}^{\lambda h, s+\mez}}.
 \end{equation}
 
 Estimate of $\Vert (R_1-R_2)\widetilde{u}_2\Vert_{F^{\lambda, s-2}}.$
 
 We use the same notations as above. Since $s-2> \frac{d}{2}$, we can write
 $$\Vert (r_1-r_2)\nabla_{x,z}^2 \widetilde{u}_2\Vert_{F^{\lambda, s-2}}\les \Vert  r_1-r_2  \Vert_{L^\infty(I_h,\mathcal{H}^{\lambda h, s-2})}  \Vert  \nabla_{x,z}^2 \widetilde{u}_2\Vert_{F^{\lambda, s-2}}.$$
 Since $r \in \mathcal{F}_1$, we have $$\Vert  r_1-r_2  \Vert_{L^\infty(I_h,\mathcal{H}^{\lambda h, s-2})}\les \Vert \eta_1-\eta_2\Vert_{\mathcal{H}^{\lambda h, s-1}}.$$ 
 Moreover, from Lemma~\ref{L316} and Corollary~\ref{regul12} we have
 $$\Vert  \nabla_{x,z}^2 \widetilde{u}_2\Vert_{F^{\lambda, s-2}}\les \Vert a(D_x)^\mez \psi_2\Vert_{\mathcal{H}^{\lambda h, s-1}} + \Vert b \Vert_{H^{s-\frac{3}{2}}}.$$
Hence we find that
 \begin{equation*} 
 \Vert (r_1-r_2)\nabla_{x,z}^2 \widetilde{u}_2\Vert_{F^{\lambda, s-2}}\les \Vert \eta_1-\eta_2\Vert_{\mathcal{H}^{\lambda h, s-1}}\big(\Vert a(D_x)^\mez \psi_2\Vert_{\mathcal{H}^{\lambda h, s-1}} + \Vert b \Vert_{H^{s-\frac{3}{2}}}\big).
 \end{equation*}
 
 Now,
 $$\Vert (d_1-d_2)\partial_z \widetilde{u}_2\Vert_{F^{\lambda, s-2}}\les  \Vert  d_1-d_2   \Vert_{L^\infty(I_h, \mathcal{H}^{\lambda h, s-2})} \Vert  \partial_z \widetilde{u}_2\Vert_{F^{\lambda, s-2}}.$$
 Since $d \in \mathcal{F}_2$, we have $$\Vert  d_1-d_2   \Vert_{L^\infty(I_h, \mathcal{H}^{\lambda h, s-2})} \les \Vert \eta_1-\eta_2\Vert_{\mathcal{H}^{\lambda h, s}}.$$
 Using \eqref{G1-G2-60}, we obtain
 $$\Vert (d_1-d_2)\partial_z \widetilde{u}_2\Vert_{F^{\lambda, s-2}}\les \Vert \eta_1-\eta_2\Vert_{\mathcal{H}^{\lambda h, s}}\big(\Vert a(D_x)^\mez \psi_2\Vert_{\mathcal{H}^{\lambda h, s-1}} + \Vert b \Vert_{H^{s-\frac{3}{2}}}\big).$$
 Therefore, we get 
 \begin{equation}\label{G1-G2-9}
 \Vert (R_1-R_2)\widetilde{u}_2\Vert_{F^{\lambda, s-2}} \les \Vert \eta_1-\eta_2\Vert_{\mathcal{H}^{\lambda h, s}}\big(\Vert a(D_x)^\mez \psi_2\Vert_{\mathcal{H}^{\lambda h, s-1}} + \Vert b \Vert_{H^{s-\frac{3}{2}}}\big).
 \end{equation}
 It follows from \eqref{G1-G2-5}, \eqref{G1-G2-8} and \eqref{G1-G2-9} that
 \begin{equation}\label{G1-G2-10}
 \begin{aligned}
  \Vert q  \nabla_{x,z}  \widetilde{u} \Vert _{F^{\lambda, s}}
  &\les   \lambda_1\Vert \eta_1-\eta_2\Vert_{\mathcal{H}^{\lambda h, s}} + \lambda_2 \Vert \eta_1-\eta_2\Vert_{\mathcal{H}^{\lambda h, s+\mez}}\\
   &+ \lambda_3\Vert a(D_x)(\psi_1-\psi_2)\Vert_{\mathcal{H}^{\lambda h, s-\mez}}  + \Vert a(D_x)(\psi_1-\psi_2)\Vert_{\mathcal{H}^{\lambda h, s}}.
  \end{aligned}
 \end{equation}
 Then Theorem~\ref{G-lip} follows from \eqref{DNdiff}, \eqref{DiffUV},  \eqref{G1-G2-1} and \eqref{G1-G2-10}. 
 \end{proof}
 
The following  result is an  immediate consequence of Theorem~\ref{G-lip}.
\begin{coro}\label{estG0-lip}
Let $ 0\leq \lambda_0<1$  and   $   s > 2+ \frac{d}{2}$.  Then there exist two constant $\overline{\eps} >0$ and $C>0$ such that  for all $0\leq \lambda \leq \lambda_0$, all $\eta\in \mathcal{H}^{\lambda  h,s+\frac{1}{2}}, a(D_x)^\mez\psi \in \mathcal{H}^{\lambda  h,s }$  satisfying $\Vert \eta \Vert_{\mathcal{H}^{\lambda h,s }} \leq \overline{\eps}$, we have
\begin{align*}
& \Vert G(\eta)(\psi,0)- G(0)(\psi,0)\Vert_{\mathcal{H}^{\lambda h, s-\mez}} \\
\leq&\, C\big( \Vert \eta \Vert_{\mathcal{H}^{\lambda h,s+ \mez }}\Vert a(D_x)^\mez \psi \Vert_{\mathcal{H}^{\lambda h,s-\mez}} + \Vert \eta \Vert_{\mathcal{H}^{\lambda h,s }} \Vert a(D_x)^\mez\psi \Vert_{\mathcal{H}^{\lambda h,s}}\big) .
\end{align*}
\end{coro}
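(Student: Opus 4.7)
The plan is to apply Theorem~\ref{G-lip} directly with the specializations
$$
\eta_1=\eta,\quad \eta_2=0,\quad \psi_1=\psi_2=\psi,\quad b=0.
$$
With these choices the hypothesis $\Vert \eta_j\Vert_{\mathcal{H}^{\lambda h,s}}\le\overline{\eps}$ reduces to the smallness assumption on $\eta$ (noting that $\eta_2=0$ trivially satisfies it), and the regularity assumptions on $\psi_j$ and $b$ are built into the statement. Thus Theorem~\ref{G-lip} is applicable and yields an inequality of the shape
\[
\Vert G(\eta)(\psi,0)-G(0)(\psi,0)\Vert_{\mathcal{H}^{\lambda h,s-\mez}}
\le C\Big(\lambda_1\Vert \eta\Vert_{\mathcal{H}^{\lambda h,s}}+\lambda_2\Vert \eta\Vert_{\mathcal{H}^{\lambda h,s+\mez}}\Big),
\]
because the two terms involving $\psi_1-\psi_2$ simply vanish.

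Next I would unpack $\lambda_1,\lambda_2$ from~\eqref{lambdaj} under these specializations. Since $b=0$ and $\psi_1=\psi_2=\psi$, we have
\[
\lambda_1=\Vert \eta\Vert_{\mathcal{H}^{\lambda h,s+\mez}}\Vert a(D_x)^\mez\psi\Vert_{\mathcal{H}^{\lambda h,s-\mez}}+\Vert a(D_x)^\mez\psi\Vert_{\mathcal{H}^{\lambda h,s}},\qquad
\lambda_2=\Vert a(D_x)^\mez\psi\Vert_{\mathcal{H}^{\lambda h,s-1}}.
\]
Substituting these expressions into the inequality above gives three contributions, and the remaining task is to check that each fits into the target right-hand side of Corollary~\ref{estG0-lip}.

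The term $\Vert a(D_x)^\mez\psi\Vert_{\mathcal{H}^{\lambda h,s}}\Vert \eta\Vert_{\mathcal{H}^{\lambda h,s}}$ is exactly the second term in the desired estimate. For the contribution
\[
\Vert \eta\Vert_{\mathcal{H}^{\lambda h,s+\mez}}\Vert a(D_x)^\mez\psi\Vert_{\mathcal{H}^{\lambda h,s-\mez}}\cdot \Vert \eta\Vert_{\mathcal{H}^{\lambda h,s}},
\]
the last factor is bounded by $\overline{\eps}$ by assumption, so this is absorbed into the first target term. For $\lambda_2\Vert \eta\Vert_{\mathcal{H}^{\lambda h,s+\mez}}$, the monotonicity of the spaces $\mathcal{H}^{\lambda h,\sigma}$ in $\sigma$ gives $\Vert a(D_x)^\mez\psi\Vert_{\mathcal{H}^{\lambda h,s-1}}\le\Vert a(D_x)^\mez\psi\Vert_{\mathcal{H}^{\lambda h,s-\mez}}$, so this piece also fits into the first target term. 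Combining these observations yields the inequality stated in Corollary~\ref{estG0-lip}.

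I do not expect any genuine obstacle here: the whole point of the corollary is to record a clean specialization of Theorem~\ref{G-lip}. The only mildly delicate step is keeping the bookkeeping straight and using the smallness of $\Vert \eta\Vert_{\mathcal{H}^{\lambda h,s}}$ (and the trivial embedding $\mathcal{H}^{\lambda h,s-\mez}\hookrightarrow\mathcal{H}^{\lambda h,s-1}$) to collapse the three terms on the right-hand side into the two terms appearing in the statement.
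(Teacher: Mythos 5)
Your proposal is correct and follows exactly the route the paper intends: the paper states that the corollary "is an immediate consequence of Theorem~\ref{G-lip}" and gives no further detail, and your specialization $\eta_1=\eta$, $\eta_2=0$, $\psi_1=\psi_2=\psi$, $b=0$, followed by the bookkeeping that collapses $\lambda_1,\lambda_2$ using $\Vert\eta\Vert_{\mathcal{H}^{\lambda h,s}}\le\overline{\eps}$ and the monotonicity $\mathcal{H}^{\lambda h,s-\mez}\hookrightarrow\mathcal{H}^{\lambda h,s-1}$, is exactly the verification that makes it immediate.
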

\begin{rema}
 Assume that $s> 2 + \delta+ \frac{d}{2}, \delta>0 $, $\eta  \in \mathcal{H}^{\lambda  h,s+\frac{1}{2}}(\xT^d)$  with $\Vert \eta  \Vert_{\mathcal{H}^{\lambda h,s }} \leq \overline{\eps}, $  and that $ a(D_x)^\mez \psi   \in   \mathcal{H}^{\lambda h, s}(\xT^d).$      Then we may apply Corollary \ref{estG0-lip} to $s'= s-\delta$, and obtain an estimate of $\Vert G(\eta)(\psi,0)- G(0)(\psi,0)\Vert_{\mathcal{H}^{\lambda h, s-\mez - \delta}}$ by the right hand side for $s$ replaced by $s-\delta.$
 \end{rema}

 We shall use later on the following result concerning the Dirichlet-Neumann operator defined in \eqref{def-DN}. Recall 
 from \eqref{def-VB} that
 $$ B=   \frac{\partial \phi}{\partial y} \big\arrowvert_{y= \eta(x)} =\frac{G(\eta)(\psi,b) + \nabla_x \eta \cdot \nabla_x \psi }{  1+ \vert \nabla_x \eta \vert^2}, 
 $$
 $$
V =    \nabla_x \phi\big \arrowvert_{y= \eta(x)} = \nabla_x \psi - B  \nabla_x \eta.$$

\begin{lemm}\label{der-DN}
 We have
$$\nabla_x\big[G(\eta)(\psi,b)\big] = G(\eta)(V, \nabla_x b)-(V \cdot \nabla_x)\nabla_x \eta - (\cnx V)\nabla_x \eta.$$
 \end{lemm}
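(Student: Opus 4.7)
The plan is to differentiate the explicit formula
\[
G(\eta)(\psi,b)=\bigl(\partial_y\phi-\nabla_x\eta\cdot\nabla_x\phi\bigr)\arrowvert_{y=\eta}
\]
componentwise in $x_j$ and identify the three resulting terms with the three pieces in the right-hand side of the claimed identity. Here $\phi$ is the harmonic lifting defined by~\eqref{dphi}.

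First, I would introduce $\phi_j\defn \partial_{x_j}\phi$ for each $1\le j\le d$. Harmonicity transfers from $\phi$ to $\phi_j$, and differentiating the Dirichlet relation $\psi(x)=\phi(x,\eta(x))$ together with the bottom Neumann condition $\partial_y\phi\arrowvert_{y=-h}=b$ in $x_j$ yields, via the chain rule,
\[
\phi_j\arrowvert_{y=\eta}=\partial_{x_j}\psi-B\,\partial_{x_j}\eta=V_j,\qquad \partial_y\phi_j\arrowvert_{y=-h}=\partial_{x_j}b.
\]
By uniqueness for the elliptic problem \eqref{dphi}, $\phi_j$ is then the harmonic lifting associated with the data $(V_j,\partial_{x_j}b)$, so that
\[
G(\eta)(V_j,\partial_{x_j}b)=\bigl(\partial_y\phi_j-\nabla_x\eta\cdot\nabla_x\phi_j\bigr)\arrowvert_{y=\eta}.
\]
This produces the first term on the right-hand side.

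Next, I would apply $\partial_{x_j}$ directly to the defining formula of $G(\eta)(\psi,b)$, treating it as the value of $F(x,y)=\partial_y\phi-\nabla_x\eta\cdot\nabla_x\phi$ at $y=\eta(x)$. The chain rule produces exactly three pieces: the combination $(\partial_y\phi_j-\nabla_x\eta\cdot\nabla_x\phi_j)\arrowvert_{y=\eta}$, which is $G(\eta)(V_j,\partial_{x_j}b)$ by the previous step; a term $-\partial_{x_j}\nabla_x\eta\cdot V$, which by equality of mixed partials equals $-[(V\cdot\nabla_x)\nabla_x\eta]_j$; and a residual boundary contribution
\[
R_j\defn \partial_{x_j}\eta\,\bigl(\partial_y^2\phi-\nabla_x\eta\cdot\nabla_x\partial_y\phi\bigr)\arrowvert_{y=\eta}.
\]

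The only non-mechanical step is the identification of $R_j$, and this is where the harmonicity of $\phi$ enters in a decisive way. I would compute $\cn V$ directly by differentiating $V_k=\phi_k\arrowvert_{y=\eta}$ and summing in $k$, which gives
\[
\cn V=(\Delta_x\phi)\arrowvert_{y=\eta}+\nabla_x\eta\cdot\nabla_x(\partial_y\phi)\arrowvert_{y=\eta},
\]
and then use $\Delta\phi=0$, i.e.\ $\Delta_x\phi=-\partial_y^2\phi$, to recognise that $\cn V=-\bigl(\partial_y^2\phi-\nabla_x\eta\cdot\nabla_x\partial_y\phi\bigr)\arrowvert_{y=\eta}$. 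Hence $R_j=-\partial_{x_j}\eta\cdot\cn V=-[(\cn V)\nabla_x\eta]_j$. Assembling the three contributions componentwise yields the stated vector identity. There is no genuine analytic difficulty; the care needed is purely bookkeeping of chain-rule terms at the moving boundary and the single, crucial use of $\Delta\phi=0$ to trade $\partial_y^2\phi$ for $-\Delta_x\phi$.
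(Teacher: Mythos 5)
Your argument is correct and is essentially the same as the paper's: differentiate the defining formula $G(\eta)(\psi,b)=(\partial_y\phi-\nabla_x\eta\cdot\nabla_x\phi)\arrowvert_{y=\eta}$ in $x_j$, observe that $\partial_{x_j}\phi$ is the harmonic lifting with trace $V_j$ and bottom Neumann data $\partial_{x_j}b$, and use $\Delta_{x,y}\phi=0$ together with the chain-rule expression for $\cnx V$ to recognise the residual boundary term as $-(\cnx V)\,\partial_{x_j}\eta$. The paper groups the three chain-rule pieces into $I$, $II$, $III$ and verifies the same identifications, so there is no substantive difference in method.
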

 \begin{proof}
 According to \eqref{def-DN}, we have, with $\partial_i = \frac{\partial}{\partial x_i}, 1 \leq i \leq d,$
\[
\partial_i\big[G(\eta)(\psi,b)\big]= I + II + III,
\]
where 
 \begin{align*}
     I &= (\partial_y \partial_i \phi - \nabla_x \eta \cdot \nabla_x \partial_i \phi)(y, \eta(x)),\\
     II&= -\nabla_x \partial_i \eta(x) \cdot \nabla_x \phi(y, \eta(x)),\\
     III&= -\partial_i \eta(x) \big(\partial_y^2 \phi- \nabla_x \eta \cdot \nabla_x \partial_y \phi\big)(x, \eta(x)).
  \end{align*}
  Now, $\partial_i \phi$ is the solution of the problem:
\[
\left\{
\begin{aligned}
& \Delta_{x,y} \partial_i \phi = 0  \quad \text{ in} -h<y<\eta(x),\\
&\partial_i \phi\arrowvert_{y = \eta(x)} = V, \quad \partial_y \partial_i \phi\arrowvert_{y=-h} = \partial_i b.
\end{aligned}
\right.
\]
It follows that $I= G(\eta)(V, \partial_i b).$ By the definition of $V$ we have 
  $$II  = - (V\cdot \nabla_x)(\partial_i \eta).$$ Eventually, since $V = \nabla_x \phi(x, \eta(x))$, we have
  \begin{align*}
   \cnx V &= \sum_{i=1}^d\big[  \partial_i^2 \phi (x, \eta(x)) + \partial_i\partial_y \phi(x, \eta(x))\partial_i \eta(x)\big]\\
   &= -\partial^2_y \phi(y, \eta(x)) + \nabla_x \eta(x) \cdot \nabla_x \partial_y \phi (x, \eta(x)),
   \end{align*}
   since $\partial^2_y + \sum_{i=1}^d \partial_i^2 \phi = 0.$  It follows that $$III= -(\cnx V)\nabla_x \eta.$$
Thus, Lemma \ref{der-DN} is proved. 
 \end{proof}
 \begin{lemm}
We have
\begin{equation}\label{GB=V}
 G(\eta)(B, - \Delta_x( \phi\arrowvert_{y=-h})) = - \cnx V.
 \end{equation}
\end{lemm}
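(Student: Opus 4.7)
The plan is to exhibit $B$ as the Dirichlet trace on the free surface of a harmonic function, namely $\Phi \defn \partial_y \phi$, where $\phi$ denotes the velocity potential associated to $(\eta,\psi,b)$ via the Laplace problem \eqref{dphi}. Indeed, differentiating $\Delta_{x,y}\phi = 0$ in $y$ gives $\Delta_{x,y}\Phi = 0$ in $\Omega$; by the very definition \eqref{def-VB} of $B$, we have $\Phi\arrowvert_{y=\eta} = \partial_y\phi\arrowvert_{y=\eta} = B$; and using $\Delta_{x,y}\phi = 0$ once more on the bottom boundary, we get
\[
\partial_y \Phi \arrowvert_{y=-h} = \partial_y^2 \phi \arrowvert_{y=-h} = -\Delta_x \phi \arrowvert_{y=-h} = -\Delta_x(\phi\arrowvert_{y=-h}).
\]
Consequently, by the very definition of the Dirichlet-Neumann operator,
\[
G(\eta)\bigl(B,-\Delta_x(\phi\arrowvert_{y=-h})\bigr) = \bigl(\partial_y \Phi - \nabla_x\eta\cdot\nabla_x\Phi\bigr)\arrowvert_{y=\eta} = \bigl(\partial_y^2\phi - \nabla_x\eta\cdot\nabla_x\partial_y\phi\bigr)\arrowvert_{y=\eta}.
\]

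It remains to compute $\cnx V$ and identify it with the opposite of the above right-hand side. Since $V(x) = \nabla_x\phi(x,\eta(x))$, the chain rule gives, for each $1\le i\le d$,
\[
\partial_i V_i = \partial_i^2 \phi\arrowvert_{y=\eta} + \partial_y\partial_i\phi\arrowvert_{y=\eta}\,\partial_i\eta,
\]
so that summing over $i$ yields $\cnx V = \Delta_x\phi\arrowvert_{y=\eta} + \nabla_x\eta\cdot\nabla_x\partial_y\phi\arrowvert_{y=\eta}$. Using once more the harmonicity $\Delta_x\phi = -\partial_y^2\phi$ at $y=\eta$, we find
\[
-\cnx V = \bigl(\partial_y^2\phi - \nabla_x\eta\cdot\nabla_x\partial_y\phi\bigr)\arrowvert_{y=\eta},
\]
which is precisely the expression obtained for $G(\eta)(B,-\Delta_x(\phi\arrowvert_{y=-h}))$. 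This proves \eqref{GB=V}.

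There is no real obstacle here: the identity is of the same algebraic nature as the formula for $\nabla_x[G(\eta)(\psi,b)]$ established just above in Lemma~\ref{der-DN}, and the argument reduces to the observation that the vertical derivative of a harmonic potential is itself harmonic with Dirichlet data $B$ on the top and Neumann data $-\Delta_x(\phi\arrowvert_{y=-h})$ on the bottom. The only point requiring mild care is ensuring the traces on $y=-h$ are taken with the correct sign; this is a direct consequence of $\Delta_{x,y}\phi=0$ applied on the flat bottom.
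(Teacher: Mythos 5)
Your proof is correct and takes essentially the same route as the paper: you observe that $\partial_y\phi$ is harmonic with Dirichlet trace $B$ on the free surface and Neumann trace $-\Delta_x(\phi\arrowvert_{y=-h})$ on the bottom, compute $\cnx V$ by the chain rule, and use $\Delta_x\phi = -\partial_y^2\phi$ to match the two expressions. The only difference is cosmetic ordering (you set up the auxiliary Laplace problem before computing $\cnx V$, the paper does the reverse).
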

 \begin{proof}
 We have
\begin{align*}
 \text{div } V &= \sum_{j=1}^d (\partial_j^2 \phi + \partial_j \eta \partial_j \partial_y \phi)\arrowvert_{y=\eta} = (\Delta_x \phi + \nabla_x \eta \cdot \nabla_x \partial_y \phi)\arrowvert_{y=\eta} \\
 &= (- \partial_y^2 \phi + \nabla_x \eta \cdot \nabla_x \partial_y \phi)\arrowvert_{y=\eta} \\
 &= -((\partial_y - \nabla_x \eta \cdot \nabla_x) \partial_y \phi)\arrowvert_{y=\eta}.
\end{align*}
Since $\partial_y \phi$ is a  solution of the problem
$$\Delta_{x,y} (\partial_y \phi)= 0, \quad \partial_y \phi\arrowvert_{y=\eta}= B, \quad \partial_y(\partial_y \phi)\arrowvert_{y=-h} = - \Delta_x( \phi\arrowvert_{y=-h}),$$
we get
$$G(\eta)(B, - \Delta_x( \phi\arrowvert_{y=-h})) = ((\partial_y - \nabla_x \eta \cdot \nabla_x)) \partial_y \phi\arrowvert_{y=\eta}.$$
This gives the desired identity.
 \end{proof}

\section{Another  Dirichlet-Neumann operator}\label{S:DNsuite}
Let $h>0$ and  $\eta \in W^{1, \infty}(\xT^d)$ be such that 
$\Vert \eta \Vert_{L^\infty(\xT^d)} \leq \eps \ll h$. Setting
$$
\mathcal{O} = \{(x,y): x \in \xT^d, -\eta(x) -h < y<0\},
$$
we consider the Dirichlet problem
\begin{equation}\label{DN2}
\Delta_{x,y} v = 0 \text{ in } \mathcal{O},
\quad v \arrowvert_{y=0} = b, \quad v\arrowvert_{y = - \eta(x) -h} = B.
\end{equation}
\begin{prop}\label{est-DN2}
For all $ s \in \xR$ there exist    $C>0$  and $\mathcal{F}:\xR^+\to \xR^+$ non-decreasing   such that for all  solution of the  problem \eqref{DN2} we have
$$
\Big\Vert \frac{\partial v}{\partial y}\Big\arrowvert_{y=0} \Big \Vert_{H^s(\xT^d)} \leq C  \Vert b \Vert_{H^{s+1}(\xT^d)}  + \mathcal{F}\big(\Vert \eta \Vert_{W^{1, \infty}(\xT^d)}) \big( \Vert b \Vert_{H^{1}(\xT^d)} + \Vert B \Vert_{H^{1}(\xT^d)}\big).$$
\end{prop}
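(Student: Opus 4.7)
The plan is to exploit the flatness of the top boundary $\{y=0\}$ together with its uniform positive distance from the graph: since $\Vert\eta\Vert_{L^\infty}\leq\eps\ll h$, we may assume $\Vert\eta\Vert_{L^\infty}\leq h/4$, so that the flat strip $S:=\xT^d\times(-h/2,0)$ is contained in $\mathcal{O}$. Setting $g:=v\arrowvert_{y=-h/2}$, the function $v$ restricted to $S$ is harmonic with Dirichlet data $b$ on top and $g$ on bottom, so solving by Fourier series in $x$ gives
\[
\partial_y v\arrowvert_{y=0}=\vert D_x\vert\coth\bigl(h\vert D_x\vert/2\bigr)\,b-\frac{\vert D_x\vert}{\sinh\bigl(h\vert D_x\vert/2\bigr)}\,g,
\]
where both Fourier multipliers extend smoothly through $\xi=0$ (both equal $2/h$ there, handling the zero mode without special care).

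This splits the sought quantity into two contributions with complementary roles. The first multiplier $m_1(\xi)=\vert\xi\vert\coth(h\vert\xi\vert/2)$ is of order exactly one with a bound independent of $\eta$, so $\Vert m_1(D_x)b\Vert_{H^s}\leq C\Vert b\Vert_{H^{s+1}}$, producing the first term on the right-hand side. The second multiplier satisfies $m_2(\xi)=\vert\xi\vert/\sinh(h\vert\xi\vert/2)\les\vert\xi\vert e^{-h\vert\xi\vert/2}$ and is therefore a smoothing operator of arbitrary order: $\Vert m_2(D_x)g\Vert_{H^s}\leq C_{s,h}\Vert g\Vert_{L^2}$.

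It remains to control $\Vert g\Vert_{L^2(\xT^d)}$. For this I use a standard variational argument: construct a lifting $\phi\in H^1(\mathcal{O})$ with $\phi\arrowvert_{y=0}=b$ and $\phi\arrowvert_{y=-\eta(x)-h}=B$ (the affine-in-$y$ interpolant works, with derivatives controlled by $\Vert\eta\Vert_{W^{1,\infty}}$), satisfying
\[
\Vert\phi\Vert_{H^1(\mathcal{O})}\leq\mathcal{F}(\Vert\eta\Vert_{W^{1,\infty}})\bigl(\Vert b\Vert_{H^1}+\Vert B\Vert_{H^1}\bigr).
\]
Then $v-\phi\in H^1_0(\mathcal{O})$ is the unique variational solution of $-\Delta(v-\phi)=\Delta\phi$, and since $\mathcal{O}$ has depth comparable to $h$, Poincar\'e's inequality with a constant uniform in $\eta$ gives $\Vert v\Vert_{H^1(\mathcal{O})}\leq\mathcal{F}(\Vert\eta\Vert_{W^{1,\infty}})(\Vert b\Vert_{H^1}+\Vert B\Vert_{H^1})$. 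The trace theorem on the flat slice $\{y=-h/2\}\subset\mathcal{O}$ then yields $\Vert g\Vert_{L^2}\leq C\Vert v\Vert_{H^1(\mathcal{O})}$ with $C$ depending only on $h$, and plugging this into the smoothing estimate for $m_2(D_x)g$ concludes.

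No serious obstacle is expected. The entire point of the estimate is that the flat top sits at $\eta$-independent positive distance from the bottom graph, so that the rough geometric contribution enters only through the low-regularity energy estimate on the full curved domain $\mathcal{O}$, while the high-regularity demand on $b$ is handled by the explicit Fourier calculation on the flat substrip $S$. The one minor point to verify carefully is that $m_1$ and $m_2$ remain smooth at $\xi=0$, which indeed holds and removes the zero Fourier mode as a source of concern.
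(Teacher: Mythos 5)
Your proposal is correct and relies on the same core idea as the paper: isolate the behavior near the flat boundary $\{y=0\}$ by working in a flat substrip at positive distance from the graph bottom, do an explicit Fourier computation there, and control the remaining contribution by the variational $H^1$ estimate in the full curved domain. The implementation differs in a minor but clean way. The paper introduces a cutoff $\chi$ and works with $w=\chi(y)v$, which solves a Dirichlet problem in $\xT^d\times(-h/2,0)$ with zero bottom data but an inhomogeneous source $F=2\chi'\partial_y v+\chi''v$ supported in $-h/2\le y\le -h/4$; the smoothing then comes from $e^{\sigma|\xi|}$ with $\sigma\le -h/4$ in the Duhamel-type formula. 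You instead keep $v$ harmonic on the exact slab $\xT^d\times(-h/2,0)$ and solve the two-point Dirichlet problem with trace data $b$ at $y=0$ and $g=v|_{y=-h/2}$ at the bottom, getting the normal derivative as $m_1(D_x)b-m_2(D_x)g$; the smoothing comes from $m_2(\xi)=|\xi|/\sinh(h|\xi|/2)\lesssim|\xi|e^{-h|\xi|/2}$. Both mechanisms are the same exponential gain from a fixed gap of size $h/2$; your version avoids the cutoff commutator and is arguably slightly more transparent, while the paper's version avoids having to record the exact slab solution formula. The bulk estimate (the $\mathcal{F}(\Vert\eta\Vert_{W^{1,\infty}})$ term) is treated identically — the paper just asserts it "since the problem is variational", whereas you spell out the lifting and Poincaré argument, which is a welcome addition of detail. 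No gaps; both routes go through.
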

\begin{proof}
Let $\chi \in C^\infty(\xR)$ be such that $\chi(y) = 1$ if $ - \frac{h}{4} \leq y \leq 0$, $\chi(y) =0$ if $y \leq - \frac{h}{2}$. We set  $w = \chi(y) v$. Then $w$ is  solution of the  problem:
$$\Delta_{x,y} w = 2 \chi'(y) \partial_y v + \chi''(y) v: = F \quad \text{ for }   - \frac{h}{2}<y<0, \quad w \arrowvert_{y=0} = b, \quad w\arrowvert_{y \leq -\frac{h}{2}} = 0.$$
We can solve explicitly this  problem.  Indeed, taking the Fourier transform with respect to $x$, we are lead to solve the two following problems:  
\begin{align*}
& (\partial_y + \vert \xi \vert) w_1 = \widehat{F}, \quad w_1 \arrowvert_{y \leq - \frac{h}{2}} = 0,\\
&  (\partial_y - \vert \xi \vert) \widehat{w} = w_1, \quad  \widehat{w} \arrowvert_{y =0} = \widehat{b}.
\end{align*}
For $-\frac{h}{2} \leq y \leq 0$ we obtain
$$  \widehat{w}(y, \xi)= e^{y \vert \xi \vert}  \widehat{b} - \int_y^0 \int_{-\frac{h}{2}}^s e^{(y+ \sigma -2s) \vert \xi \vert}  \widehat{F}(\sigma, \xi)\, ds \, d \sigma.$$
It follows that
$$\frac{\partial  \widehat{v}}{\partial y}\arrowvert_{y=0} = \frac{\partial  \widehat{w}}{\partial y}\arrowvert_{y=0} = \vert \xi \vert  \widehat{b} + \int_{-\frac{h}{2}}^0 e^{\sigma \vert \xi \vert} (\chi'(\sigma) \partial_y \widehat{v}(\sigma, \xi) + \chi''(\sigma)   \widehat{v}(\sigma, \xi) ) \, d\sigma.$$
On the support of a  derivative   of $\chi$ we have $\sigma \leq - \frac{h}{4}$. Multiplying both members  by $\langle \xi \rangle^s$ and  using the fact that $\langle \xi \rangle^s e^{- \frac{h}{4} \vert \xi \vert} $ is  bounded on $\xT^d$, we obtain easily the estimate
$$ \bigg\Vert \frac{\partial v}{\partial y}\arrowvert_{y=0} \bigg\Vert_{H^s(\xT^d)} \leq C \big( \Vert b \Vert_{H^{s+1}(\xT^d)} + \Vert v \Vert_{H^1((- \frac{h}{2}, 0) \times \xT^d)}\big).$$
Since the problem \eqref{DN2} is  variational, we see easily that
$$
\Vert v \Vert_{H^1((- \frac{h}{2}, 0) \times \xT^d)} \leq \mathcal{F}\big(\Vert \eta \Vert_{W^{1, \infty}(\xT^d)}) \big( \Vert b \Vert_{H^{1}(\xT^d)} + \Vert B \Vert_{H^{1}(\xT^d)}\big).
$$
This completes the proof.
\end{proof}
\begin{coro}\label{est-phih}
Let $\phi$ be the   solution of the  problem \eqref{Diri1}, that is,
$$\Delta_{x,y} \phi = 0 \text{ in } -h<y<\eta(x), \quad \phi\arrowvert_{y= \eta(x)}= \psi, \quad \partial_y \phi\arrowvert_{y=-h} = b.$$
Let  $\phi_h = \phi(x,-h)$, $B=\partial_y\phi(x,\eta(x))$ and fix 
$s_0> \frac{d}{2}$. Then, for all $\mu \in \xR$ there exists $C>0$ such that
$$\Vert \Delta_x \phi_h \Vert_{H^\mu(\xT^d)} \leq C  \Vert b  \Vert_{H^{\mu+1}(\xT^d)} + \mathcal{F}\big(\Vert \eta \Vert_{H^{s_0 +1}(\xT^d)}\big) \big(\Vert  b\Vert_{H^1(\xT^d)} + \Vert B \Vert_{H^1(\xT^d)} \big)  .$$
\end{coro}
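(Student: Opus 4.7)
The plan is to deduce this estimate directly from Proposition~\ref{est-DN2} by flipping the fluid domain upside down and applying the proposition to $\partial_y\phi$ rather than $\phi$ itself. The idea is that the bottom-side data we want to control, $\Delta_x\phi_h$, is (up to a sign) the normal derivative of the harmonic function $\partial_y\phi$ on the flat side $y=-h$; after the reflection $y \mapsto -y-h$, this flat side becomes the flat top of the new domain, and the curved free surface $y=\eta(x)$ becomes its curved bottom, which is exactly the geometry of \eqref{DN2}.

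More precisely, I would set $y' = -y-h$, so that the fluid domain $\{(x,y)\in\xT^d\times\xR : -h<y<\eta(x)\}$ is sent diffeomorphically onto $\mathcal{O}=\{(x,y')\in\xT^d\times\xR : -\eta(x)-h < y' < 0\}$, and define
$$
u(x,y')\defn(\partial_y\phi)(x,-y'-h).
$$
Since $\phi$ is harmonic, so is $\partial_y\phi$; and since $(x,y)\mapsto(x,-y-h)$ preserves the Laplacian, $u$ is harmonic on $\mathcal{O}$. Its Dirichlet traces are
$$
u\arrowvert_{y'=0}=\partial_y\phi\arrowvert_{y=-h}=b,\qquad u\arrowvert_{y'=-\eta(x)-h}=\partial_y\phi\arrowvert_{y=\eta(x)}=B,
$$
so $u$ is a solution of \eqref{DN2} with top datum $b$ and bottom datum $B$. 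Using $\partial_y^2\phi=-\Delta_x\phi$ (harmonicity) and the chain rule, one finds
$$
\partial_{y'}u\arrowvert_{y'=0}=-(\partial_y^2\phi)\arrowvert_{y=-h}=(\Delta_x\phi)\arrowvert_{y=-h}=\Delta_x\phi_h.
$$

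Applying Proposition~\ref{est-DN2} with $s=\mu$ to this $u$ then yields
$$
\lA \Delta_x\phi_h\rA_{H^\mu(\xT^d)}\le C\lA b\rA_{H^{\mu+1}(\xT^d)}+\mathcal{F}\bigl(\lA \eta\rA_{W^{1,\infty}(\xT^d)}\bigr)\bigl(\lA b\rA_{H^1(\xT^d)}+\lA B\rA_{H^1(\xT^d)}\bigr).
$$
To finish, I invoke the Sobolev embedding $H^{s_0+1}(\xT^d)\hookrightarrow W^{1,\infty}(\xT^d)$, which holds because $s_0>\frac{d}{2}$, to bound $\lA \eta\rA_{W^{1,\infty}}$ by a constant multiple of $\lA \eta\rA_{H^{s_0+1}}$ and absorb this constant into $\mathcal{F}$. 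There is no real obstacle in the argument: the two things to keep straight are the signs in the reflection (so as to identify $\partial_{y'}u|_{y'=0}$ with $+\Delta_x\phi_h$ and not its opposite) and the fact that Proposition~\ref{est-DN2} is being applied to $\partial_y\phi$, not to $\phi$ itself; everything else is bookkeeping.
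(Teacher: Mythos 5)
Your argument is exactly the paper's proof: set $u=\partial_y\phi$, note that $u$ is harmonic with Dirichlet data $b$ at $y=-h$ and $B$ at $y=\eta$, reflect via $y\mapsto-y-h$ to land in the geometry of \eqref{DN2}, apply Proposition~\ref{est-DN2}, and finish with the embedding $H^{s_0+1}\hookrightarrow W^{1,\infty}$. Your sign bookkeeping is in fact slightly more careful than the printed proof (which asserts $\partial_y v\arrowvert_{y=0}=\partial_y u\arrowvert_{y=-h}$, missing the minus sign from the chain rule), though this is immaterial since only the $H^\mu$ norm of $\Delta_x\phi_h$ is needed.
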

\begin{proof} 
Set  $u = \partial_y \phi$. It satisfies
$$\Delta_{x,y} u= 0, \quad u\arrowvert_{y= \eta(x)}= B, \quad u\arrowvert_{y=-h} = b.$$
On the other hand, we have
$$ \partial_y u \arrowvert_{y=-h} = \partial^2_y \phi \arrowvert_{y=-h}= - \Delta_x\phi \arrowvert_{y=-h} = - \Delta_x \phi_h.$$
Set  $v(x,y) = u(x, -y-h)$. Then $v$ is a solution of the  problem
$$\Delta_{x,y} v = 0 \text{ for  } -\eta(x) -h <y<0, \quad v \arrowvert_{y=0} = b, \quad v \arrowvert_{y=-\eta(x) -h} = B$$
and 
$\partial_y v \arrowvert_{y=0}= \partial_y u \arrowvert_{y=-h} =-\Delta_x \phi_h.$

Corollary~\ref{est-phih} follows from   Proposition~\ref{est-DN2} and from the fact that $H^{s_0+1}(\xT^d)$ is embedded in  $W^{1, \infty}(\xT^d)$.
\end{proof}

\section{Existence of a  solution on a time interval  of size 1}\label{S:size1}

In this section we prove Theorem~\ref{T=1}. 
We will obtain the solution as the limit of an iterative scheme. 
To avoid confusion of notations, 
we denote the initial data by $(u_0,v_0)$. Namely, we consider the Cauchy problem for the water-wave system~\e{system} 
with initial data
$$
(\eta,\psi)\arrowvert_{t=0}=(u_0,v_0).
$$
Let $u_0, v_0 \in \mathcal{H}^{\lambda h, s}$. We consider the sequence   $(\eta_\nu, \psi_\nu)_{ \nu \in \xN}$ defined by
\begin{equation}\label{ww-nu}
\begin{aligned}
& \eta_0 = u_0,\quad  \psi_0 = v_0, \text{ and for  } \nu \geq 0,\\
&\partial_t \eta_{\nu+1} = G(\eta_\nu)(\psi_\nu,b), \\
&\partial_t \psi_{\nu+1}= -g \eta_\nu - \mez \vert \nabla_x \psi_\nu\vert^2 
+ \frac{(G(\eta_\nu)(\psi_\nu,b)
+ \nabla_x \eta_\nu \cdot \nabla_x \psi_\nu)^2}{2(1+  \vert \nabla_x \eta_\nu\vert^2)},  \\
&\eta_{\nu +1}\arrowvert_{t=0} = u_0,\quad \psi_{\nu +1}\arrowvert_{t=0} = v_0.
\end{aligned}
\end{equation}
We set  for $s > \frac{d}{2}+2$, 
\begin{equation}\label{m-nu}
\begin{aligned}
m_\nu(t) &= \Vert \eta_\nu(t) \Vert^2_{\mathcal{H}^{\sigma(t), s}} 
+ \Vert \psi_\nu(t) \Vert^2_{\mathcal{H}^{\sigma(t), s}} \\
&\quad+ 2K \int_0^t \big(  \Vert  \eta_\nu(\tau) 
\Vert^2_{\mathcal{H}^{\sigma(\tau), s+\mez}} 
+ \Vert   \psi_\nu(\tau) 
\Vert^2_{\mathcal{H}^{\sigma(\tau), s+\mez }}\big)\, d \tau, \end{aligned}
\end{equation}
 where $\sigma(t) = \lambda h-Kt.$

\begin{prop}\label{eta-nu}
Let $T>0.$ Assume that 
$$
b\in L^\infty(\xR, H^{s-1}(\xT^d))\cap L^2(\xR, H^{s-\mez}(\xT^d)).
$$
Then there exist   positive constants $M,K_0,\eps_0$  such that for all $\eps \leq \eps_0$ and all $K \geq K_0,$  
\begin{multline*}
\big(\Vert b \Vert_{L^2(\xR, H^{s-\mez})\cap L^\infty(\xR, H^{s-1})} +  \Vert u_0 \Vert_{\mathcal{H}^{\lambda h, s}} + \Vert v_0 \Vert_{\mathcal{H}^{\lambda h, s}}\leq \eps\big)   \, \\
\Rightarrow \, \big(m_\nu(t) \leq M^2 \eps^2, \quad \forall t \leq T,    \forall \nu \geq 0\big).
\end{multline*}
 \end{prop}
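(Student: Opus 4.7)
The proof is by induction on $\nu$. The base case $\nu=0$ is direct since $(\eta_0,\psi_0)=(u_0,v_0)$ is constant in $t$: using $\sigma(t)\leq \lambda h$ and the Fourier computation $\int_0^t e^{-2K\tau|\xi|}d\tau\leq (2K|\xi|)^{-1}$ for $\xi\neq 0$ (and $\leq T$ at $\xi=0$, with $KT\leq \lambda_0 h$), one finds $m_0(t)\leq C(\lambda_0,h)\eps^2$.

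For the inductive step, assume $m_\nu(t)\leq M^2\eps^2$ on $[0,T]$. Differentiating the weighted Fourier series and using $\sigma'(t)=-K$ gives
\begin{equation*}
\frac{d}{dt}\Vert\eta\Vert^2_{\mathcal{H}^{\sigma(t),s}} = -2K\Vert\eta\Vert^2_{*} + 2\RE\langle\eta,\partial_t\eta\rangle_{\mathcal{H}^{\sigma(t),s}},\qquad \Vert\eta\Vert^2_{*}:=\sum_\xi|\xi|\langle\xi\rangle^{2s}e^{2\sigma|\xi|}|\widehat\eta|^2.
\end{equation*}
A sharp Cauchy--Schwarz (isolating $\xi=0$, and using $\langle\xi\rangle\leq\sqrt 2|\xi|$ for $\xi\in\xZ^d\setminus\{0\}$) gives $|\langle u,v\rangle_{\mathcal{H}^{\sigma,s}}|\leq C\Vert u\Vert_{*}\Vert v\Vert_{\mathcal{H}^{\sigma,s-\mez}}+|\widehat u(0)||\widehat v(0)|$; combined with Young's inequality, this absorbs exactly half of the dissipation. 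The auxiliary quantity
\begin{equation*}
\widetilde m_\nu(t):=\Vert\eta_\nu\Vert^2_{\mathcal{H}^{\sigma(t),s}}+\Vert\psi_\nu\Vert^2_{\mathcal{H}^{\sigma(t),s}}+K\int_0^t\big(\Vert\eta_\nu\Vert^2_*+\Vert\psi_\nu\Vert^2_*\big)d\tau
\end{equation*}
is designed so that the surplus $-K\Vert\cdot\Vert^2_*$ cancels exactly with the $+K\Vert\cdot\Vert^2_*$ from its integral term, yielding
\begin{equation*}
\widetilde m_{\nu+1}(t)\leq \widetilde m_{\nu+1}(0)+\frac{C^2}{K}\int_0^t\big(\Vert\partial_t\eta_{\nu+1}\Vert^2_{\mathcal{H}^{\sigma,s-\mez}}+\Vert\partial_t\psi_{\nu+1}\Vert^2_{\mathcal{H}^{\sigma,s-\mez}}\big)d\tau+(\text{low-order zero-mode terms}),
\end{equation*}
with no Gronwall self-reference.

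The integrands on the right are controlled via Theorem~\ref{est-DN} for $\partial_t\eta_{\nu+1}=G(\eta_\nu)(\psi_\nu,b)$ (which is applicable since $\Vert\eta_\nu\Vert_{\mathcal{H}^{\sigma,s}}\leq M\eps\leq\overline\eps$), via the algebra property of $\mathcal{H}^{\sigma,s-\mez}$ (valid since $s-\mez>d/2$) for the quadratic nonlinearity in $\partial_t\psi_{\nu+1}$ (each quadratic piece extracting a factor $M\eps$ from the induction by placing one factor in $L^\infty\hookleftarrow\mathcal{H}^{\sigma,s}$), and by a direct estimate $\Vert-g\eta_\nu\Vert_{\mathcal{H}^{\sigma,s-\mez}}\leq gM\eps$ for the linear term. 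Integrating in time and using $\int_0^t\Vert\psi_\nu\Vert^2_{\mathcal{H}^{\sigma,s+\mez}}d\tau\leq m_\nu(t)/(2K)\leq M^2\eps^2/(2K)$ together with $\Vert b\Vert^2_{L^2(\xR,H^{s-\mez})}\leq\eps^2$, the full integral is of size $O(\eps^2/K)+O(M^2\eps^2/K^2)+O(M^4\eps^4/K^2)$; hence $\leq\eps^2$ for $K\geq K_0$ large and $\eps\leq\eps_0$ small, which gives $\widetilde m_{\nu+1}(t)\leq 4\eps^2$. Finally, the elementary comparison $\Vert\eta\Vert^2_{\mathcal{H}^{\sigma,s+\mez}}\leq \sqrt{2}\Vert\eta\Vert^2_*+|\widehat\eta(0)|^2$ (valid since $|\xi|\geq 1$ for $\xi\neq 0$) yields $m_{\nu+1}(t)\leq (1+2\sqrt{2}+4KT)\sup_{\tau\leq t}\widetilde m_{\nu+1}(\tau)\leq 4(1+2\sqrt{2}+4\lambda_0 h)\eps^2$, closing the induction upon choosing $M^2\geq 4(1+2\sqrt{2}+4\lambda_0 h)$.

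The main technical subtlety is precisely the distinction between the dissipation weight $|\xi|\langle\xi\rangle^{2s}$ produced by $\sigma'(t)=-K$ and the weight $\langle\xi\rangle^{2s+1}$ of $\mathcal{H}^{\sigma,s+\mez}$ appearing in the definition of $m_\nu$; the auxiliary energy $\widetilde m_\nu$ built from the former makes the cancellation exact and avoids any Gronwall-type absorption, so that no additional smallness on $\lambda_0 h$ or $KT$ is required, and the fixed equivalence factor between the two weights is paid only once, at the final conversion step from $\widetilde m$ back to $m$.
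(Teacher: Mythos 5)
Your route is, in spirit, the same energy method the paper uses: produce a dissipation term from $\sigma'(t)=-K$, pair with the right-hand side via Cauchy--Schwarz and Young's inequality to absorb half of it, then control the resulting integral of $\Vert\partial_t\eta_{\nu+1}\Vert^2_{\mathcal{H}^{\sigma,s-\mez}}+\Vert\partial_t\psi_{\nu+1}\Vert^2_{\mathcal{H}^{\sigma,s-\mez}}$ by Theorem~\ref{est-DN}, the product rules in $\mathcal{H}^{\sigma,\cdot}$, and the induction hypothesis through the $L^2_t$ terms in $m_\nu$; no Gronwall self-reference is needed in either version. The difference is the choice of weight in the conjugation. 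The paper sets $\widetilde\eta_\nu=e^{\sigma(t)\langle D_x\rangle}\eta_\nu$ with the Japanese bracket $\langle\xi\rangle$, so the dissipation is $2K\Vert\widetilde\eta\Vert^2_{H^{s+\mez}}$, which has weight $\langle\xi\rangle$ and therefore does not vanish at $\xi=0$; moreover $\int_0^t 2K\langle\xi\rangle e^{-2K\tau\langle\xi\rangle}\,d\tau\le 1$ uniformly in $t$ and $K$, so no constraint on $KT$ is needed and the only price is a fixed factor $e^{2\lambda h}$ in passing between $\Vert\widetilde\eta\Vert_{H^\mu}$ and $\Vert\eta\Vert_{\mathcal{H}^{\sigma,\mu}}$. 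You instead differentiate $\Vert\eta\Vert^2_{\mathcal{H}^{\sigma(t),s}}$ directly, so the dissipation carries the weight $|\xi|\langle\xi\rangle^{2s}$, i.e.\ your $\Vert\cdot\Vert_*$, which vanishes at $\xi=0$. This is why you need three extra devices the paper does not: the standing hypothesis $KT\le\lambda_0 h$ (used in your base case, in the zero-mode contribution, and in the final conversion $m_{\nu+1}\le(1+2\sqrt 2+4KT)\sup\widetilde m_{\nu+1}$), the comparison $\langle\xi\rangle\le\sqrt 2\,|\xi|$ for $\xi\ne 0$, and a separate treatment of the zero mode.

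It is precisely that last point which is underspecified. Since $2\RE\langle\eta,\partial_t\eta\rangle_{\mathcal{H}^{\sigma,s}}$ restricted to $\xi=0$ is $\tfrac{d}{dt}|\widehat\eta(0)|^2$, integrating your estimate gives $|\widehat\eta_{\nu+1}(t,0)|^2-|\widehat u_0(0)|^2$ on the right, and the first piece is exactly the zero-mode content of $\widetilde m_{\nu+1}(t)$ on the left: it cancels, and your inequality only controls frequencies $\xi\ne 0$. To close you still need a direct bound on $|\widehat\eta_{\nu+1}(t,0)|$ and $|\widehat\psi_{\nu+1}(t,0)|$ from the zero-mode ODEs $\partial_t\widehat\eta_{\nu+1}(t,0)=\widehat{G(\eta_\nu)(\psi_\nu,b)}(t,0)$, etc., integrated in time and estimated via the $L^2$ norm of the DN operator; this is routine (and is where $KT\le\lambda_0 h$ enters again when you feed it back through $2K\int_0^t|\widehat\eta_{\nu+1}(\tau,0)|^2\,d\tau$), but the phrase ``low-order zero-mode terms'' conceals it. I recommend either making the zero-mode argument explicit, or adopting the paper's $\langle\xi\rangle$-conjugation so the issue never arises.
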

\begin{proof}
First of all, we take  $\eps_0$ and  $M$ such that
$M \eps_0 \leq \overline{\eps}$, where  $\overline{\eps}$ is defined in Theorem~\ref{est-DN} and  $M\eps_0 \leq 1.$

We claim that $m_0(t) \leq 2 \eps^2$. In fact, we have  
 $ \Vert u_0 \Vert^2_{\mathcal{H}^{\sigma(t), s}} + \Vert v_0 \Vert^2_{\mathcal{H}^{\sigma(t), s}} \leq \eps^2$, and
 \begin{align*}
 2K \int_0^t \ \Vert    u_0 \Vert^2_{\mathcal{H}^{\sigma(\tau), s+\mez}}\, d\tau&= \sum_{\xZ^d}
 \langle \xi \rangle^{2s}e^{2 \lambda h\langle \xi \rangle} \vert \widehat{u}_0(\xi)\vert^2 \Big(\int_0^t 2K\langle \xi \rangle \, e^{-2K\tau\langle \xi \rangle}\, d\tau \Big)\\
& \leq \Vert u_0 \Vert^2_{\mathcal{H}^{\lambda h, s}},
\end{align*}
similarly for $v_0$.
So we take  $M \geq 2.$
Assume now that  $m_j(t) \leq M^2 \eps^2, 0 \leq j \leq \nu$. Set
$$\eta_\nu = e^{- \sigma(t) \langle \xi \rangle} \widetilde{\eta}_\nu, \quad \psi_\nu = e^{- \sigma(t) \langle \xi \rangle} \widetilde{\psi}_\nu.$$
Notice that  since $\sigma(t)= \lambda h -Kt \leq \lambda h$ and $\vert \xi \vert \leq \langle \xi \rangle \leq 1+ \vert \xi \vert$, we have
$$\sigma(t)\vert \xi\vert \leq \sigma(t)\langle \xi \rangle\leq \lambda h + \sigma(t) \vert \xi \vert.$$
It follows that
$$\Vert f(t)\Vert_{\mathcal{H}^{\sigma(t),\alpha}} \leq  \Vert \widetilde{f}(t)\Vert_{H^{\alpha}}\leq e^{\lambda h}\Vert f(t)\Vert_{\mathcal{H}^{\sigma(t),\alpha}}.$$
\begin{rema}
We can write $\sigma(t) = \lambda(t)h$ with $\lambda(t) = \lambda -\frac{Kt}{h}.$ Since $\lambda(t) \leq \lambda$, we may use the estimates  in Sections 2 and 3 with constants depending only on the fixed parameter $\lambda.$
\end{rema}

The system satisfied by $(\widetilde{\eta}_\nu,\widetilde{\psi}_\nu) $ is  then
\begin{align*}
  &\partial_t \widetilde{\eta}_{\nu+1} + K \langle D_x \rangle \widetilde{\eta}_{\nu+1}= e^{\sigma(t)\langle D_x \rangle}G(\eta_\nu)(\psi_\nu,b):= F_\nu\\  
    &\partial_t \widetilde{\psi}_{\nu+1} +  K \langle D_x \rangle \widetilde{\psi}_{\nu+1}=G_\nu, 
 \end{align*}
 where
  \[
   G_nu\defn e^{\sigma(t)\langle D_x \rangle}\Big[-g \eta_\nu - \mez \vert \nabla_x \psi_\nu\vert^2 + \frac{(G(\eta_\nu)(\psi_\nu,b)+ \nabla_x \eta_\nu \cdot \nabla_x \psi_\nu)^2}{2(1+  \vert \nabla_x \eta_\nu\vert^2)}\Big].
   \]
We have
\begin{align*}
\frac{d}{dt}\big[  \Vert \widetilde{\eta}_{\nu+1}(t) \Vert^2_{H^s} + \Vert \widetilde{\psi}_{\nu+1}(t) \Vert^2_{H^s} \big]& = 2  \Big( \partial_t  \widetilde{\eta}_{\nu+1}(t),  \widetilde{\eta}_{\nu+1}(t)\Big)_{H^s} \\
&\quad + 2  \Big( \partial_t  \widetilde{\psi}_{\nu+1}(t),  \widetilde{\psi}_{\nu+1}(t)\Big)_{H^s}.
\end{align*}
Using the above equations, we get
\begin{align*}
  &\Vert \widetilde{\eta}_{\nu+1}(t) \Vert^2_{H^s}   + \Vert \widetilde{\psi}_{\nu+1}(t) \Vert^2_{H^s}  + 2K \int_0^t \big( \Vert  \widetilde{\eta}_{\nu+1}(\tau) \Vert^2_{H^{s+\mez}}  + \Vert   \widetilde{\psi}_{\nu+1}(\tau) \Vert^2_{H^{s+\mez}}\big)\, d\tau\\& =\Vert u_0 \Vert^2_{\mathcal{H}^{\lambda h, s}} + \Vert v_0 \Vert^2_{\mathcal{H}^{\lambda h, s}} + 2  \int_0^t \Big( F_\nu(\tau),  \widetilde{\eta}_{\nu+1}(\tau)\Big)_{H^s}\, d\tau\\
  &+ 2  \int_0^t \Big( G_\nu(\tau),  \widetilde{\psi}_{\nu+1}(\tau)\Big)_{H^s}\, d\tau.
 \end{align*}
Set
$$A(\tau)= \la  \Big( F_\nu(\tau),  \widetilde{\eta}_{\nu+1}(\tau)\Big)_{H^s} \ra, \quad B(\tau)= \la\Big( G_\nu(\tau),  \widetilde{\eta}_{\nu+1}(\tau)\Big)_{H^s}\ra.$$
We deduce from the hypotheses that
 \begin{equation}\label{m-nu+1}
 m_{\nu+1}(t) \leq \eps^2 + \int_0^t A(\tau)\, d\tau + \int_0^t B(\tau)\, d\tau.
 \end{equation}
 We have
 $$ A(\tau) \leq \frac{C}{K}\Vert  G(\eta_\nu)(\psi_\nu, b)(\tau)  \Vert^2_{\mathcal{H}^{\sigma(\tau), s-\mez}} + \frac{2K}{20}\Vert  \widetilde{\eta}_{\nu+1}(\tau)\Vert^2_{H^{s+ \mez}}.$$
For all fixed $\tau$  (which is skipped), Theorem~\ref{est-DN} shows that
$$
\Vert  G(\eta_\nu)(\psi_\nu, b)   \Vert_{\mathcal{H}^{\sigma , s-\mez}}\leq   C \big( \Vert  \widetilde{\psi}_\nu \Vert_{H^{s+\mez}}  + \Vert b \Vert_{H^{s-\mez}} 
 + \Vert \widetilde{\eta}_\nu \Vert_{H^{s+\mez}}\big( \Vert  \widetilde{\psi}_\nu \Vert_{H^{s }} 
   +  \Vert b \Vert_{H^{s-\frac{3}{2}}}\big) \big).
$$
 Since $\Vert \widetilde{\psi}_\nu(\tau)\Vert_{H^s} \leq  M\eps_0\leq 1$,   there exists $C>0$   depending only on   $s$ such that
\begin{equation}\label{est-G}
\Vert  G(\eta_\nu)(\psi_\nu, b)   \Vert_{\mathcal{H}^{\sigma(\tau), s-\mez}}\leq C \big(\Vert  \widetilde{\psi}_\nu \Vert _{H^{s+\mez}}+ \Vert b \Vert _{H^{s-\mez}}  + \Vert \widetilde{\eta}_\nu \Vert_{H^{s+\mez}}(1+ \Vert b \Vert _{H^{s-\frac{3}{2}}})\big).
\end{equation}
Therefore, since $\Vert b \Vert_{L^\infty(\xR, H^{s-1})} \leq \eps\leq 1$, we can write
$$
  A (\tau) \leq \frac{1}{K}  \big( \Vert  \widetilde{\psi}_\nu(\tau)\Vert^2_{H^{s+\mez}}+ \Vert b(\tau)\Vert^2_{H^{s-\mez}}  + \Vert \widetilde{\eta}_\nu(\tau)\Vert^2_{H^{s+\mez}} 
      \big) 
    +\frac{2K}{20}\Vert   \widetilde{\eta}_{\nu+1}(\tau)\Vert^2_{H^{s+ \mez}}.
$$
So,
\begin{align*}
  \int_0^t A(\tau)\, d\tau &\leq  \frac{C}{K} \int_0^t \big( \Vert    \widetilde{\psi}_\nu(\tau)\Vert^2_{ H^{s+\mez}}   +\Vert    \widetilde{\eta}_\nu (\tau)\Vert^2_{ H^{s+\mez}}     &+\Vert b(\tau)\Vert^2_{  H^{s-\mez} } \big)\, d\tau\\
 \quad &+ \frac{2K}{20}\int_0^t \Vert\widetilde{\eta}_{\nu+1}(\tau)\Vert^2_{H^{s+ \mez}}  \,d\tau.
 \end{align*}
 It follows that
 \begin{equation}\label{est:A}
  \int_0^t A(\tau)\, d\tau \leq  \frac{C}{K^2}M^2 \eps^2 + \frac{C}{K}\Vert b\Vert^2_{L^2(\xR, H^{s-\mez})}  + \frac{1}{20} m_{\nu+1}(t).
  \end{equation}
Now,
\begin{equation}
\begin{aligned}
& B(\tau) \leq B_1(\tau)+B_2(\tau)+B_3(\tau),\\
& B_1(\tau) = g\la\Big( \widetilde{\eta}_{\nu}(\tau) ,  \widetilde{\eta}_{\nu+1}(\tau)\Big)_{H^s}\ra,\\
&B_2(\tau) =\mez \la\Big( e^{\sigma(\tau) \langle D_x \rangle}\vert \nabla_x \psi_\nu(\tau)\vert^2,  \widetilde{\eta}_{\nu+1}(\tau)\Big)_{H^s}\ra\\
&B_3(\tau)= \la\Big( e^{\sigma(\tau) \langle D_x \rangle}\frac{(G(\eta_\nu)(\psi_\nu,b)+ \nabla_x \eta_\nu \cdot \nabla_x \psi_\nu)^2}{2(1+  \vert \nabla_x \eta_\nu\vert^2)},  \widetilde{\eta}_{\nu+1}(\tau)\Big)_{H^s}\ra.
\end{aligned}
 \end{equation}
Using the Cauchy-Schwarz inequality and the induction hypothesis,  we obtain
\begin{equation}\label{est:B1}
\int_0^t B_1(\tau)\, d\tau \leq \frac{C}{K^2} M^2 \eps^2 + \frac{1}{20} m_{\nu+1}(t).
\end{equation}
On the other hand, using   Proposition~\ref{uv} (iii) with $s_0= s-1$,  we obtain
\begin{align*}
B_2(\tau) &\les\Vert \widetilde{\psi}_\nu(\tau)\Vert_{H^{s}}\Vert \widetilde{\psi}_\nu(\tau)\Vert_{H^{s +\mez}}\Vert \widetilde{\eta}_{\nu+1}(\tau)\Vert_{H^{s +\mez}}\\
&\les M\eps\Vert \widetilde{\psi}_\nu(\tau)\Vert_{H^{s +\mez}}\Vert \widetilde{\eta}_{\nu+1}(\tau)\Vert_{H^{s +\mez}}.
\end{align*}
It follows that
$$\int_0^t B_2(\tau)\, d\tau \les   M\eps \Vert  \Vert \widetilde{\psi}_\nu\Vert_{L^2((0,t), H^{s+ \mez})} 
\Vert \widetilde{\eta}_{\nu+1} \Vert_{L^2((0,t), H^{s+ \mez})}. $$
Therefore, by the Cauchy-Schwarz inequality and the induction, we get 
\begin{equation}\label{est-B2}
 \int_0^t B_2(\tau)\, d\tau \leq \frac{C}{K^2} (M^2 \eps^2)^2 + \frac{1}{20}m_{\nu+1}(t).
  \end{equation}
To  estimate the term  $B_3$, set
$$
N_\nu =  G(\eta_\nu)(\psi_\nu,b)+ \nabla_x \eta_\nu \cdot \nabla_x \psi_\nu
$$
and
$$
U_\nu = \frac{N^2_\nu}{1+ \vert \nabla_x \eta_\nu \vert^2}.
$$
Then we have
$$B_3(\tau) \les\Vert e^{\sigma(\tau) \vert D_x \vert}U_\nu \Vert_{H^{s-\mez}}\Vert\widetilde{\eta}_{\nu+1}(\tau)\Vert_{H^{s +\mez}},$$
from which we deduce that 
   \begin{equation}\label{est-B3} 
  \int_0^t B_3(\tau)\, d\tau \leq \frac{C}{K} \int_0^t  \Vert U_\nu(\tau) \Vert^2_{\mathcal{H}^{\sigma(\tau), s-\mez}}\, d\tau+ \frac{1}{20}m_{\nu+1}(t).
    \end{equation}
 Using   Propositions~\ref{uv} and~\ref{est-f(u)bis}, with $s_0 =s-1$, we can write, skipping $\tau$,   
 \begin{align*}
 \Vert  U_\nu  \Vert_{\mathcal{H}^{\sigma ,s-\mez}} &\les \ \Vert   N_\nu \Vert_{\mathcal{H}^{\sigma,  s-1}}\Vert  N_\nu   \Vert_{\mathcal{H}^{\sigma ,s-\mez}} + \Vert  \vert \nabla_x \eta_\nu   \vert^2 U_\nu  \Vert_{\mathcal{H}^{\sigma ,s-\mez}} \\
 & \les \Vert   N_\nu \Vert_{\mathcal{H}^{\sigma, s-1}}\Vert  N_\nu    \Vert_{\mathcal{H}^{\sigma ,s-\mez}}  + \Vert \widetilde{\eta}_\nu \Vert^2_{H^s}\Vert  U_\nu  \Vert_{\mathcal{H}^{\sigma ,s-\mez}} \\
 &\quad+  \Vert \widetilde{\eta}_\nu \Vert_{H^s} \Vert \widetilde{\eta}_\nu \Vert_{H^{s+ \mez}}\Vert  U_\nu  \Vert_{\mathcal{H}^{\sigma ,s-1}}. 
  \end{align*}
 Since $\Vert \widetilde{\eta}_\nu (\tau)\Vert_{H^{s}} \leq M \eps$, taking  $M, \eps_0$ such that $C (M \eps_0)^2 \leq \mez$, we can  absorb  the second   term of the right hand side by the left  hand side and deduce that
 $$ \Vert  U_\nu  \Vert_{\mathcal{H}^{\sigma ,s-\mez}}  \les   \Vert   N_\nu \Vert_{\mathcal{H}^{\sigma, s-1}}\Vert  N_\nu    \Vert_{\mathcal{H}^{\sigma ,s-\mez}}  +   M\eps\Vert \widetilde{\eta}_\nu \Vert_{H^{s+ \mez}}\Vert  U_\nu  \Vert_{\mathcal{H}^{\sigma ,s-1}}. $$
Similarly, since $s-1> \frac{d}{2}$ we have
 $$ \Vert  U_\nu  \Vert_{\mathcal{H}^{\sigma ,s-1}} \les   \Vert   N_\nu \Vert^2_{\mathcal{H}^{\sigma, s-1}} + \Vert \widetilde{\eta}_\nu \Vert^2_{H^{s}}\Vert  U_\nu  \Vert_{\mathcal{H}^{\sigma ,s-1}}.$$
Using again the fact that $\Vert \widetilde{\eta}_\nu (\tau)\Vert_{H^{s}} \leq M \eps$, we can  absorb the second term in the   right hand side by the left, one can deduce that 
$$\Vert  U_\nu  \Vert_{\mathcal{H}^{\sigma ,s-1}} \les \Vert   N_\nu \Vert^2_{\mathcal{H}^{\sigma, s-1}}. $$
So we obtain the inequality
\begin{equation*}
 \Vert  U_\nu  \Vert_{\mathcal{H}^{\sigma ,s-\mez}}  \les    \Vert  N_\nu    \Vert_{\mathcal{H}^{\sigma ,s-\mez}}\Vert   N_\nu \Vert_{\mathcal{H}^{\sigma, s-1}} +  M\eps \Vert \widetilde{\eta}_\nu \Vert_{H^{s+ \mez}} \Vert   N_\nu \Vert^2_{\mathcal{H}^{\sigma, s-1}}.
  \end{equation*}
By Theorem~\ref{est-DN},   Remark~\ref{G-s-1} and the induction we have
$$\Vert   N_\nu \Vert_{\mathcal{H}^{\sigma, s-1}} \les   \Vert \widetilde{\psi}_\nu \Vert_{H^s} + \Vert b \Vert_{H^{s-1}}+ \Vert \widetilde{\eta}_\nu \Vert_{H^s}\Vert \widetilde{\psi}_\nu \Vert_{H^s}.$$ 
We have $\Vert \widetilde{\eta}_\nu(\tau) \Vert_{H^s}\leq  M\eps\leq 1,  \Vert \widetilde{\psi}_\nu(\tau) \Vert_{H^s} \leq  M\eps\leq 1$ and $b\in L^\infty(\xR, H^{s-1})$,  so that
\begin{equation}\label{Nnu(s-1)}
 \Vert   N_\nu \Vert_{\mathcal{H}^{\sigma, s-1}}   \les M \eps + \Vert b\Vert_{H^{s-1}}= \mathcal{O}(1).  
  \end{equation}
  It follows that
  \begin{equation}\label{est-Unu}
   \Vert  U_\nu  \Vert_{\mathcal{H}^{\sigma ,s-\mez}} \les  \Vert N_\nu    \Vert_{\mathcal{H}^{\sigma ,s-\mez}} +   \Vert \widetilde{\eta}_\nu \Vert_{H^{s+ \mez}}.
   \end{equation}
It remains to  estimate the term $ \Vert  N_\nu    \Vert_{\mathcal{H}^{\sigma ,s-\mez}}.  $ We have
\begin{align*}
 \Vert \nabla_x \psi_\nu\cdot \nabla_x \eta_\nu  \Vert_{\mathcal{H}^{\sigma ,s-\mez}} &\les\Vert  \widetilde{\psi}_\nu   \Vert_{ H^{s}}  \Vert  \widetilde{\eta}_\nu   \Vert_{ H^{s+ \mez}} + \Vert  \widetilde{\psi}_\nu   \Vert_{ H^{s+ \mez}}\Vert  \widetilde{\eta}_\nu   \Vert_{ H^{s}} ,\\  
  &\les   M \eps \big(  \Vert  \widetilde{\eta}_\nu   \Vert_{ H^{s+ \mez}} + \Vert  \widetilde{\psi}_\nu   \Vert_{ H^{s+ \mez}}\big) .  
\end{align*}
 Using \eqref{est-G}, we get
\begin{align*}
   \Vert  N_\nu    \Vert_{\mathcal{H}^{\sigma ,s-\mez}}  &\les\Vert  \widetilde{\psi}_\nu \Vert _{H^{s+\mez}} + \Vert b \Vert _{H^{s-\mez}}   + \Vert \widetilde{\eta}_\nu \Vert_{H^{s+\mez}} 
    \Vert b \Vert _{H^{s-\frac{3}{2}}}  
     \\
     &\quad+   M \eps \big(  \Vert  \widetilde{\eta}_\nu   \Vert_{ H^{s+ \mez}} + \Vert  \widetilde{\psi}_\nu   \Vert_{ H^{s+ \mez}}\big),
\end{align*}
so,
 \begin{equation}\label{Nnu(s-mez)}
  \Vert  N_\nu    \Vert_{\mathcal{H}^{\sigma ,s-\mez}} \les   \Vert  \widetilde{\psi}_\nu   \Vert_{ H^{s+ \mez}} + \Vert \widetilde{\eta}_\nu \Vert_{H^{s+\mez}}+ \Vert b\Vert_{H^{-\mez}}.  
 \end{equation}
Using   \eqref{est-Unu}, \eqref{Nnu(s-1)} and \eqref{Nnu(s-mez)}, we get
$$ \Vert  U_\nu  \Vert^2_{\mathcal{H}^{\sigma ,s-\mez}} \les   \Vert  \widetilde{\psi}_\nu   \Vert^2_{ H^{s+ \mez}} +  \Vert  \widetilde{\eta}_\nu   \Vert^2_{ H^{s+ \mez}}     + \Vert b\Vert^2_{L^\infty(\xR, H^{s-\mez})}.$$
We deduce from \eqref{est-B3} and the induction that
\begin{equation}\label{est-B3suite}
 \int_0^t B_3(\tau)\, d\tau \leq \frac{C}{K^2}M^2 \eps^2      + \frac{1}{20}m_{\nu+1}(t).
\end{equation}
Using \eqref{m-nu+1}, \eqref{est:A}, \eqref{est:B1}, \eqref{est-B2}, \eqref{est-B3suite}  and  taking $K$ large enough and $\eps_0$ small enough, 
we obtain
$$
m_{\nu+1}(t) \leq M^2 \eps^2,
$$
which ends the proof of    Proposition~\ref{eta-nu}.
\end{proof}

\begin{nota}
We set $I= [0,T]$, and
 \begin{align*}
 U_{\nu+1} &= \widetilde{\eta}_{\nu+1} - \widetilde{\eta}_{\nu},  \quad V_{\nu+1}= \widetilde{\psi}_{\nu+1} - \widetilde{\psi}_{\nu},\\
  M_{\nu+1}(t) &= \Vert U_{\nu+1}(t) \Vert^2_{ H^s } + \Vert V_{\nu+1}(t) \Vert^2_{ H^s }  \\
  &\quad+ 2K \int_0^t  \Big(\Vert     U_{\nu+1}(\tau) \Vert^2_{ H^{s+\mez} }  
+    \Vert   V_{\nu+1}(\tau) \Vert^2_{ H^{s+\mez} } \Big)\, d \tau,
   \end{align*}
   and
 \begin{align*}
      \mathcal{M}_{\nu+1} & = \Vert U_{\nu+1}  \Vert^2_{L^\infty(I,H^s)} + \Vert V_{\nu+1}  \Vert^2_{L^\infty(I,H^s)}  \\
      &\quad+  2K \int_0^T \Big(\Vert    U_{\nu+1}(\tau) \Vert^2_{H^{s+\mez}}\, d\tau  
 + \Vert   V_{\nu+1}(\tau) \Vert^2_{H^{s+\mez}}\Big)  \, d \tau. 
\end{align*}
\end{nota}
\begin{prop}\label{Mnu-Mnu+1}
Under the hypotheses of Proposition~{\rm \ref{eta-nu}} there exists a constant 
$C>0$ independent of $K$ such that for all $\nu \geq 0,$
$$\mathcal{M}_{\nu+1} \leq \frac{C}{K}\big( \frac{1}{K} + \Vert b\Vert^2_{L^2(\xR,H^{s-\mez})}\big)\mathcal{M}_\nu.$$
\end{prop}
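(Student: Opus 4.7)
My plan is to carry out an energy estimate on the differences $(U_{\nu+1}, V_{\nu+1})$, exploit the parabolic damping $K\langle D_x\rangle$ produced by the conjugation $e^{\sigma(t)\langle D_x\rangle}$ with $\sigma'(t)=-K$, and then convert the right-hand side into a multiple of $\mathcal{M}_\nu$ using Theorem~\ref{G-lip} for the Dirichlet--Neumann differences, together with the uniform bounds already obtained in Proposition~\ref{eta-nu}.

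Step 1 (\emph{difference system}). Subtracting the schemes at steps $\nu$ and $\nu+1$ in \eqref{ww-nu}, the differences $(U_{\nu+1},V_{\nu+1})$ solve, with vanishing initial data,
\[
\partial_t U_{\nu+1}+K\langle D_x\rangle U_{\nu+1}=F_\nu,\qquad
\partial_t V_{\nu+1}+K\langle D_x\rangle V_{\nu+1}=G_\nu,
\]
where $F_\nu=e^{\sigma(t)\langle D_x\rangle}(\mathcal{G}_\nu-\mathcal{G}_{\nu-1})$ with $\mathcal{G}_\nu=G(\eta_\nu)(\psi_\nu,b)$, and $G_\nu$ is the conjugate of the difference of the three nonlinear terms appearing in the second equation of \eqref{ww-nu} between indices $\nu$ and $\nu-1$.

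Step 2 (\emph{energy estimate with absorption}). Taking the $H^s$ inner product and using the duality $H^{s-\mez}\times H^{s+\mez}$ together with Young's inequality (absorbing $\tfrac{K}{2}\|U_{\nu+1}\|^2_{H^{s+\mez}}$ and $\tfrac{K}{2}\|V_{\nu+1}\|^2_{H^{s+\mez}}$ into the dissipation), one gets
\[
\mathcal{M}_{\nu+1}\le \frac{C}{K}\int_0^T\bigl(\|F_\nu(\tau)\|^2_{H^{s-\mez}}+\|G_\nu(\tau)\|^2_{H^{s-\mez}}\bigr)\,d\tau.
\]
Everything is then reduced to bounding the two integrals by $C(1/K+\|b\|^2_{L^2(\xR,H^{s-\mez})})\mathcal{M}_\nu$.

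Step 3 (\emph{bound on $F_\nu$ via Theorem~\ref{G-lip}}). Applying Theorem~\ref{G-lip} at the radius $\sigma(\tau)$ (which is $\le \lambda h\le \lambda_0 h$) and using $\|a(D_x)^\mez w\|_{\mathcal{H}^{\sigma,\mu}}\les \|w\|_{\mathcal{H}^{\sigma,\mu+\mez}}$, one finds
\[
\|F_\nu\|_{H^{s-\mez}}\les \lambda_1\|U_\nu\|_{H^s}+\lambda_2\|U_\nu\|_{H^{s+\mez}}+\lambda_3\|V_\nu\|_{H^s}+\|V_\nu\|_{H^{s+\mez}},
\]
where $\lambda_1,\lambda_2,\lambda_3$ are the quantities \eqref{lambdaj} evaluated at the iterates $(\eta_\nu,\psi_\nu)$ and $(\eta_{\nu-1},\psi_{\nu-1})$. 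From the uniform bound in Proposition~\ref{eta-nu}, one has (for $\tau\in[0,T]$ and after squaring and integrating in time)
\[
\int_0^T\lambda_1(\tau)^2\,d\tau\le C\Big(\frac{\eps^2}{K}+\|b\|^2_{L^2(\xR,H^{s-\mez})}\Big),\qquad
\int_0^T\bigl(\lambda_2^2+\lambda_3^2\bigr)d\tau\le \frac{C\eps^2}{K},
\]
the first gain coming from $\int_0^T\|\widetilde\psi_\nu\|^2_{H^{s+\mez}}\,d\tau\le M^2\eps^2/K$, the other from the $L^2_tH^{s+\mez}$ bound on $\widetilde\eta_\nu$. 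Combining these with $\|U_\nu\|^2_{L^\infty H^s}\le \mathcal{M}_\nu$ for the low-regularity factors and $\|U_\nu\|^2_{L^2 H^{s+\mez}}\le \mathcal{M}_\nu/(2K)$ (and similarly for $V_\nu$) for the high-regularity factors, one concludes
\[
\int_0^T\|F_\nu\|^2_{H^{s-\mez}}\,d\tau\le C\Big(\frac{1}{K}+\|b\|^2_{L^2(\xR,H^{s-\mez})}\Big)\mathcal{M}_\nu.
\]

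Step 4 (\emph{bound on $G_\nu$}). The difference of $-g\eta_\nu$ contributes $g\|U_\nu\|_{H^{s-\mez}}\le g\|U_\nu\|_{H^{s+\mez}}$, whose squared time-integral is controlled by $\mathcal{M}_\nu/K$. The difference of $\tfrac12|\nabla_x\psi_\nu|^2$ factors as $(\nabla\psi_\nu+\nabla\psi_{\nu-1})\cdot\nabla V_\nu$ and is treated by the tame product estimate in Proposition~\ref{uv}(iii), putting the smaller factor $\nabla V_\nu$ in $L^2_tH^{s-\mez}$ (giving $1/K$) and the other in $L^\infty_tH^{s-\mez}\les \eps$. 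The most delicate piece is the Bernoulli quotient; writing $N_\mu=\mathcal{G}_\mu+\nabla\eta_\mu\cdot\nabla\psi_\mu$, one telescopes
\[
\frac{N_\nu^2}{1+|\nabla\eta_\nu|^2}-\frac{N_{\nu-1}^2}{1+|\nabla\eta_{\nu-1}|^2}
\]
as in the proof of Proposition~\ref{eta-nu}: multiplicative differences of $N$ and of the rational factor. The difference $N_\nu-N_{\nu-1}$ contains $\mathcal{G}_\nu-\mathcal{G}_{\nu-1}$, which is again handled by Theorem~\ref{G-lip} with the same $\lambda_j$ estimates as in Step~3, while the difference of the rational factor is estimated by Proposition~\ref{est-f(u)bis} and contributes a factor $\eps\|U_\nu\|_{H^s}$ (using $\|\widetilde\eta_\mu\|_{H^s}\le M\eps$ to keep the denominator bounded). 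After these manipulations, one obtains the analogue of Step~3's conclusion for $G_\nu$.

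\emph{Main obstacle.} The most technical step is Step~4: the Bernoulli quotient difference is a product of three nontrivial factors, each with its own $\eta_\nu,\eta_{\nu-1},\psi_\nu,\psi_{\nu-1}$ and $\mathcal{G}_\nu,\mathcal{G}_{\nu-1}$ dependence. One has to argue carefully, at the level of regularity $s-\mez$, which factor is placed in the energy-type norm $L^\infty_t H^s$ (absorbed into $\mathcal{M}_\nu$) and which in the dissipation norm $L^2_t H^{s+\mez}$ (producing the critical $1/K$ gain); doing this consistently is what produces exactly the structure $\frac{C}{K}(\tfrac1K+\|b\|^2_{L^2 H^{s-\mez}})$ appearing in the conclusion. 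Once this bookkeeping is done, assembling Steps~2--4 yields the proposition.
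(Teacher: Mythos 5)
Your proposal follows essentially the same route as the paper's own proof: energy estimate on the differences, absorbing $K\eps\|\cdot\|^2_{H^{s+\mez}}$ into the dissipation produced by $\sigma'(t)=-K$, then Theorem~\ref{G-lip} for the DN difference, and a three-term decomposition of the remaining nonlinearity (with the Bernoulli quotient telescoped as in Proposition~\ref{eta-nu}). One small inaccuracy: you write $\int_0^T\lambda_2^2\,d\tau\lesssim\eps^2/K$, but from \eqref{lambdaj} one only has $\lambda_2=\mathcal{O}(1)$; the $1/K$ gain for the $\lambda_2\|U_\nu\|_{H^{s+\mez}}$ contribution comes instead from $\|U_\nu\|_{L^2_tH^{s+\mez}}^2\le\mathcal{M}_\nu/(2K)$, which your own Step 3 also invokes, so the conclusion stands.
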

\begin{proof} 

Since $U_{\nu+1}\arrowvert_{t=0}= V_{\nu+1}\arrowvert_{t=0}=0$, we obtain as in the first part,
\begin{align*}
M_{\nu+1}(t)& =  2  \int_0^t \Big( (F_\nu- F_{\nu-1})(\tau),  U_{\nu+1}(\tau)\Big)_{H^s}\, d\tau\\
&\quad+ 2  \int_0^t \Big( (G_\nu - G_{\nu-1})(\tau),  V_{\nu+1}(\tau)\Big)_{H^s}\, d\tau,
\end{align*}
where
\begin{align*}
&F_\nu= e^{\sigma(t)\vert D_x \vert}G(\eta_\nu)(\psi_\nu,b),\\
&G_\nu= e^{\sigma(t)\vert D_x \vert}\Big[-g \eta_\nu - \mez \vert \nabla_x \psi_\nu\vert^2 + \frac{\big(G(\eta_\nu)(\psi_\nu,b)+ \nabla_x \eta_\nu \cdot \nabla_x \psi_\nu)^2}{2(1+  \vert \nabla_x \eta_\nu\vert^2\big)}\Big].
\end{align*}
Using the inequality $$\vert (f,g)_{H^s}\vert \leq \frac{1}{4K} \Vert f\Vert^2_{H^{s- \mez}} + K \Vert g\Vert^2_{H^{s+ \mez}}$$ and the  definition of $M_{\nu+1}$, we get
\begin{equation}\label{Mnu<}
M_{\nu+1}(t) \leq A(t)+ B(t), 
\end{equation}
where
\[
\begin{aligned}
&A(t)= \frac{C}{K} \int_0^t \Vert (F_\nu- F_{\nu-1})(\tau) \Vert^2_{  H ^ {s-\mez}}\, d\tau, \quad   \\
&B(t)= \frac{C}{K}\int_0^t \Vert (G_\nu- G_{\nu-1})(\tau) \Vert^2_{  H ^ {s-\mez}}\, d\tau.
\end{aligned}
\]
Estimate of the  term  $A(t)$.

We use Theorem~\ref{G-lip} and Proposition~\ref{eta-nu}. Keeping the notations in the Theorem, we can write
\begin{multline}\label{Fnu}
 \Vert F_\nu -F_{\nu-1}\Vert_{H^{s-\mez}} \les \lambda_1 \Vert \widetilde{\eta}_\nu-\widetilde{\eta}_{\nu-1} \Vert_{H^{s}} + \lambda_2 \Vert \widetilde{\eta}_\nu-\widetilde{\eta}_{\nu-1} \Vert_{H^{s+\mez}} \\
 + \lambda_3 \Vert \widetilde{\psi}_\nu-\widetilde{\psi}_{\nu-1} \Vert_{H^{s}} + \Vert \widetilde{\psi}_\nu-\widetilde{\psi}_{\nu-1} \Vert_{H^{s+\mez}}
  \end{multline}
  with
  \begin{align*}
  \lambda_1 &\les \Big(\sum_{\mu=\nu-1}^\nu
   \Vert \widetilde{\eta}_\mu \Vert_{H^{s+\mez}}\Big)      \Big(\sum_{\mu=\nu-1}^\nu \Vert \widetilde{\psi}_\mu \Vert_{H^{s}}  + \Vert b\Vert_{H^{s-\frac{3}{2}}}\Big) 
   + \sum_{\mu=\nu-1}^\nu \Vert \widetilde{\psi}_\mu \Vert_{H^{s+\mez}}   + \Vert b\Vert_{H^{s-\mez}},\\
   \lambda_2&  \les \sum_{\mu=\nu-1}^\nu \Vert  \widetilde{\psi}_\mu \Vert_{H^{s-\mez}} +\Vert b\Vert_{H^{s-\frac{3}{2}}},\\
   \lambda_3 & \les \sum_{\mu=\nu-1}^\nu
   \Vert \widetilde{\eta}_\mu \Vert_{H^{s+\mez}}.
  \end{align*}
  Since $b\in L^\infty(\xR, H^{s-1})$ and using \eqref{m-nu} together with  Proposition~\ref{eta-nu}, we deduce that 
  \begin{align*}
   \lambda_1 \les&\, \sum_{\mu=\nu-1}^\nu
   \big(\Vert \widetilde{\eta}_\mu \Vert_{H^{s+\mez}}+ \Vert \widetilde{\psi}_\mu \Vert_{H^{s+\mez}}\big) + \Vert b \Vert_{H^{s-\mez}},\\  
   \lambda_2 =&\, \mathcal{O}(1), \\
     \lambda_3 \les&\, \sum_{\mu=\nu-1}^\nu
   \Vert \widetilde{\eta}_\mu \Vert_{H^{s+\mez}}.
  \end{align*}
  It follows easily from \eqref{Fnu} that
  \begin{equation}\label{est-Fnu}
  \begin{split}
 &\Vert F_\nu -F_{\nu-1}\Vert_{H^{s-\mez}}\\
 \les &\,  \sum_{\mu=\nu-1}^\nu
   \Big(\Vert \widetilde{\eta}_\mu \Vert_{H^{s+\mez}}+ \Vert \widetilde{\psi}_\mu \Vert_{H^{s+\mez}} + \Vert b \Vert_{H^{s-\mez}}\Big)\big(\Vert\widetilde{\eta}_\nu-\widetilde{\eta}_{\nu-1} \Vert_{H^{s}}
+ \Vert\widetilde{\psi}_\nu-\widetilde{\psi}_{\nu-1} \Vert_{H^{s}}\big) \\
& + \Vert\widetilde{\eta}_\nu-\widetilde{\eta}_{\nu-1} \Vert_{H^{s+\mez}}+ \Vert \widetilde{\psi}_\nu-\widetilde{\psi}_{\nu-1} \Vert_{H^{s+\mez}},
   \end{split}
  \end{equation}
  from which we deduce that
  \begin{equation}\label{est-A'}
A(t) \les   \frac{1}{K} \Big(\frac{ 1}{K} +  \Vert b\Vert^2_{L^2(\xR,H^{s-\mez})}\Big)\mathcal{M}_\nu.
 \end{equation}

Let us look at the  term  $B(t)$. We can write
\begin{equation}\label{def-B'}
B(t) \leq C(B_1(t) + B_2(t) + B_3(t)),
 \end{equation}
 where 
 \[
\begin{aligned}
&B_1(t) = \frac{1}{K}\int_0^t \Vert \widetilde{\eta}_\nu(\tau)-\widetilde{\eta}_{\nu-1}\Vert^2_{H^{s-\mez} }(\tau)\, d\tau,\\
&B_2(t) =\frac{1}{K}\int_0^t \Vert \vert \nabla_x \widetilde{\psi}_\nu(t)\vert^2 - \vert \nabla_x \widetilde{\psi}_{\nu-1}(t)\vert^2\Vert^2_{H^{s-\mez} }\, d\tau,\\
&B_3(t) = \frac{1}{K}\int_0^t \lA \frac{N_\nu^2(\tau)}{(1+ \vert \nabla_x \eta_\nu(\tau)\vert^2) }-  \frac{N_{\nu-1}^2(\tau)}{(1+ \vert \nabla_x \eta_{\nu-1}(\tau)\vert^2)} \rA^2_{\mathcal{H}^{\sigma(\tau),s-\mez}}\, d\tau
\end{aligned}
\]
with 
\[
N_\nu =  G(\eta_\nu)(\psi_\nu,b)+ \nabla_x \eta_\nu \cdot \nabla_x \psi_\nu. 
\]
First of all, we have
\begin{equation}\label{est-B1'}
B_1(t) \les \frac{1}{K^2} \mathcal{M}_\nu.
\end{equation}
Now set 
$$I_j= \Vert   (\partial_j \widetilde{\psi}_\nu(t))^2 - (\partial_j \widetilde{\psi}_{\nu-1}(t))^2 \Vert_{H^{s-\mez} }.
$$ 
We can write
\begin{align*}
  I_j &\les    \Vert   \partial_j \widetilde{\psi}_\nu(t) - \partial_j \widetilde{\psi}_{\nu-1}(t)\Vert_{H^{s-\mez}} \Vert   \partial_j \widetilde{\psi}_\nu(t) + \partial_j \widetilde{\psi}_{\nu-1}(t)\Vert_{H^{s-1}}\\
  &\quad +  \Vert   \partial_j \widetilde{\psi}_\nu(t) - \partial_j \widetilde{\psi}_{\nu-1}(t)\Vert_{H^{s-1}}  \Vert   \partial_j \widetilde{\psi}_\nu(t) + \partial_j \widetilde{\psi}_{\nu-1}(t)\Vert_{H^{s-\mez}},
 \end{align*}
which implies that 
 \begin{align*}
  I_j &\les    \Vert   \widetilde{\psi}_\nu(t) -   \widetilde{\psi}_{\nu-1}(t)\Vert_{H^{s+\mez}} \big( \Vert     \widetilde{\psi}_\nu(t)\Vert_{H^s} +   \Vert     \widetilde{\psi}_{\nu-1}(t)\Vert_{H^s}\big)\\
  &\quad+  \Vert    \widetilde{\psi}_\nu(t) -   \widetilde{\psi}_{\nu-1}(t)\Vert_{H^{s}} \big( \Vert     \widetilde{\psi}_\nu(t)\Vert_{H^{s+\mez}} +\Vert     \widetilde{\psi}_{\nu-1}(t)\Vert_{H^{s+\mez}}\big).
 \end{align*}
It follows from Proposition~\ref{eta-nu}  that
\begin{equation}\label{est-B2'}
B_2(t) \les \frac{(M\eps)^2 }{K^2} \mathcal{M}_\nu.
\end{equation}
To estimate $B_3$, set
\begin{equation*}
H_\nu = \frac{N_\nu^2}{(1+ \vert \nabla_x \eta_\nu\vert^2) }-  \frac{N_{\nu-1}^2}{(1+ \vert \nabla_x \eta_{\nu-1}\vert^2)}, \quad f(t)= \frac{t}{1+t}.
\end{equation*}
 Then we can write
\begin{equation}\label{est-Hnu}
H_\nu = (1) -(2)-(3)
\end{equation}
with
\[
\begin{aligned}
(1)&=  N_\nu^2-N_{\nu-1}^2,\\
(2)&=  N_\nu^2\big(f(\vert\nabla_x\eta_\nu\vert^2)- f(\vert\nabla_x\eta_{\nu-1}\vert^2)\big), \\
(3)&= \big( N_\nu^2-N_{\nu-1}^2\big)f(\vert\nabla_x\eta_{\nu-1}\vert^2). 
\end{aligned}
\]
 We have
 \begin{align*}
  \Vert (1)\Vert_{\mathcal{H}^{\sigma, s-\mez }} &\les \Vert N_\nu-N_{\nu-1}\Vert_{\mathcal{H}^{\sigma, s-\mez }}  \big(\Vert N_\nu \Vert_{\mathcal{H}^{\sigma, s-1 }}+ \Vert N_{\nu-1}\Vert_{\mathcal{H}^{\sigma, s-1 }}\big) \\
  &\quad+ \Vert N_\nu-N_{\nu-1}\Vert_{\mathcal{H}^{\sigma, s-1 }} \big(\Vert N_\nu\Vert_{\mathcal{H}^{\sigma, s-\mez }}+ \Vert N_{\nu-1}\Vert_{\mathcal{H}^{\sigma, s-\mez }}\big)
\end{align*}
According to \eqref{Nnu(s-1)} and \eqref{Nnu(s-mez)}, we have for $\mu= \nu-1, \nu,$
\begin{equation}\label{est-Nnu1}
\begin{aligned}
&\Vert   N_\mu \Vert_{\mathcal{H}^{\sigma, s-1}}   \les M \eps + \Vert b\Vert_{H^{s-1}}= \mathcal{O}(1),\\
&\Vert  N_\mu    \Vert_{\mathcal{H}^{\sigma ,s-\mez}} \les   \Vert  \widetilde{\psi}_\mu   \Vert_{ H^{s+ \mez}} + \Vert \widetilde{\eta}_\mu \Vert_{H^{s+\mez}}+ \Vert b\Vert_{H^{-\mez}}.
\end{aligned}
\end{equation}
Moreover, according to \eqref{est-Fnu}, we have
\begin{equation*}
\begin{aligned}
&\Vert N_\nu-N_{\nu-1} \Vert_{\mathcal{H}^{\sigma ,s-\mez}}\\
\les &\,  \sum_{\mu=\nu-1}^\nu
   \Big(\Vert \widetilde{\eta}_\mu \Vert_{H^{s+\mez}}+ \Vert \widetilde{\psi}_\mu \Vert_{H^{s+\mez}} + \Vert b \Vert_{H^{s-\mez}}\Big)\big(\Vert\widetilde{\eta}_\nu-\widetilde{\eta}_{\nu-1} \Vert_{H^{s}}\\
   &\quad\quad+ \Vert\widetilde{\psi}_\nu-\widetilde{\psi}_{\nu-1} \Vert_{H^{s}}\big) + \Vert\widetilde{\eta}_\nu-\widetilde{\eta}_{\nu-1} \Vert_{H^{s+\mez}}+ \Vert \widetilde{\psi}_\nu-\widetilde{\psi}_{\nu-1} \Vert_{H^{s+\mez}},
   \end{aligned}
\end{equation*}
and 
\begin{equation*}
\Vert N_\nu - N_{\nu-1}\Vert_{ \mathcal{H}^{\lambda h, s- 1}} \les   \Vert \widetilde{\eta}_\nu- \widetilde{\eta}_{\nu-1} \Vert_{\mathcal{H}^{\sigma, s }} +  \Vert \widetilde{\psi}_\nu- \widetilde{\psi}_{\nu-1} \Vert_{H^s}.
\end{equation*}
It follows that
\begin{equation}\label{est-(1)}
\begin{aligned}
\Vert(1)\Vert_{\mathcal{H}^{\sigma, s-\mez}} \les &\sum_{\mu=\nu-1}^\nu
   \Big(\Vert \widetilde{\eta}_\mu \Vert_{H^{s+\mez}}+ \Vert \widetilde{\psi}_\mu \Vert_{H^{s+\mez}} + \Vert b \Vert_{H^{s-\mez}}\Big)\big(\Vert\widetilde{\eta}_\nu-\widetilde{\eta}_{\nu-1} \Vert_{H^{s}}\\
   &+ \Vert\widetilde{\psi}_\nu-\widetilde{\psi}_{\nu-1} \Vert_{H^{s}}\big) + \Vert\widetilde{\eta}_\nu-\widetilde{\eta}_{\nu-1} \Vert_{H^{s+\mez}}+ \Vert \widetilde{\psi}_\nu-\widetilde{\psi}_{\nu-1} \Vert_{H^{s+\mez}}.
\end{aligned}
\end{equation}
Now we have
\begin{align*}
  \Vert(2)\Vert_{\mathcal{H}^{\sigma, s-\mez}} &\les \Vert N_\nu\Vert^2_{\mathcal{H}^{\sigma, s-1}} \Vert f(\vert\nabla_x\eta_\nu\vert^2)- f(\vert\nabla_x\eta_{\nu-1}\vert^2)\Vert_{\mathcal{H}^{\sigma, s-\mez}}\\
 &+ \Vert N_\nu\Vert_{\mathcal{H}^{\sigma, s-1}}\Vert N_\nu\Vert_{\mathcal{H}^{\sigma, s-\mez}}\Vert f(\vert\nabla_x\eta_\nu\vert^2)- f(\vert\nabla_x\eta_{\nu-1}\vert^2)\Vert_{\mathcal{H}^{\sigma, s-1}}.
 \end{align*}
 Using Proposition~\ref{est-f(u)ter}, \eqref{est-Nnu1} and the estimates
 \begin{align*}
      & \Vert  \vert \nabla_x \eta_\nu\vert^2 - \vert \nabla_x \eta_{\nu-1}\vert^2 \Vert_{\mathcal{H}^{\sigma, s-\mez}} \les    \Vert  \eta_{\nu}- \eta_{\nu-1}\Vert_{\mathcal{H}^{\sigma, s+\mez}},\\
      & \Vert  \vert \nabla_x \eta_\nu\vert^2 - \vert \nabla_x \eta_{\nu-1}\vert^2 \Vert_{\mathcal{H}^{\sigma, s-1}} \les\Vert  \eta_{\nu}- \eta_{\nu-1} \Vert_{\mathcal{H}^{\sigma, s}},
      \end{align*}
      we obtain
      \begin{equation}\label{est-(2)}
      \begin{aligned}
       \Vert(2)\Vert_{\mathcal{H}^{\sigma, s-\mez}}\les &\sum_{\mu=\nu-1}^\nu
   \Big(\Vert \widetilde{\eta}_\mu \Vert_{H^{s+\mez}}+ \Vert \widetilde{\psi}_\mu \Vert_{H^{s+\mez}}    \Big)\big(\Vert\widetilde{\eta}_\nu-\widetilde{\eta}_{\nu-1} \Vert_{H^{s}}\\
   &+ \Vert\widetilde{\psi}_\nu-\widetilde{\psi}_{\nu-1} \Vert_{H^{s}}\big) + \Vert\widetilde{\eta}_\nu-\widetilde{\eta}_{\nu-1} \Vert_{H^{s+\mez}}+ \Vert \widetilde{\psi}_\nu-\widetilde{\psi}_{\nu-1} \Vert_{H^{s+\mez}}.
      \end{aligned}
      \end{equation}
      The same method gives the estimate
       \begin{equation}\label{est-(3)}
      \begin{aligned}
       \Vert(3)\Vert_{\mathcal{H}^{\sigma, s-\mez}}\les &\sum_{\mu=\nu-1}^\nu
   \Big(\Vert \widetilde{\eta}_\mu \Vert_{H^{s+\mez}}+ \Vert \widetilde{\psi}_\mu \Vert_{H^{s+\mez}}    \Big)\big(\Vert\widetilde{\eta}_\nu-\widetilde{\eta}_{\nu-1} \Vert_{H^{s}}\\
   &+ \Vert\widetilde{\psi}_\nu-\widetilde{\psi}_{\nu-1} \Vert_{H^{s}}\big) + \Vert\widetilde{\eta}_\nu-\widetilde{\eta}_{\nu-1} \Vert_{H^{s+\mez}}+ \Vert \widetilde{\psi}_\nu-\widetilde{\psi}_{\nu-1} \Vert_{H^{s+\mez}}.
      \end{aligned}
      \end{equation}
      Using \eqref{est-(1)}, \eqref{est-(2)}, \eqref{est-(3)} and \eqref{est-Hnu}, we deduce that
      \begin{equation*}
\begin{aligned}
\Vert H_\nu\Vert_{\mathcal{H}^{\sigma ,s-\mez}}\les  &\sum_{\mu=\nu-1}^\nu
   \Big(\Vert \widetilde{\eta}_\mu \Vert_{H^{s+\mez}}+ \Vert \widetilde{\psi}_\mu \Vert_{H^{s+\mez}} + \Vert b \Vert_{H^{s-\mez})}\Big)\big(\Vert\widetilde{\eta}_\nu-\widetilde{\eta}_{\nu-1} \Vert_{H^{s}}\\
   &+ \Vert\widetilde{\psi}_\nu-\widetilde{\psi}_{\nu-1} \Vert_{H^{s}}\big) + \Vert\widetilde{\eta}_\nu-\widetilde{\eta}_{\nu-1} \Vert_{H^{s+\mez}}+ \Vert \widetilde{\psi}_\nu-\widetilde{\psi}_{\nu-1} \Vert_{H^{s+\mez}}.
   \end{aligned}
\end{equation*}
Going back to \eqref{def-B'}, we obtain eventually that 
\begin{equation}\label{est-B3'}
\vert B_3(t)\vert \les \frac{1}{K}\Big(\frac{1}{K}+ \Vert b \Vert^2_{L^2(\xR, H^{s-\mez}}\Big)\mathcal{M}_\nu.
\end{equation}
  Now we use \eqref{Mnu<}, \eqref{est-A'}, \eqref{def-B'}, \eqref{est-B1'}, \eqref{est-B2'} and \eqref{est-B3'}.  We obtain
  \begin{equation*}
\mathcal{M}_{\nu+1} \les \frac{1}{K} \Big( \frac{1}{K} + \Vert b \Vert^2_{L^2(\xR, H^{s-\mez}}\Big) \mathcal{M}_{\nu},
\end{equation*}
which completes the proof of Proposition~\ref{Mnu-Mnu+1}.
\end{proof}
\begin{proof}[Proof of Theorem~\ref{T=1}.]
 It follows from   Proposition~\ref{Mnu-Mnu+1} that there exist  $\eps_0 $ and $K $ such that  $\mathcal{M}^\mez_\nu \leq \delta^\nu \mathcal{M}^\mez_0$, where  $\delta<1$. Take $T< \frac{\lambda h}{K}$ and  set  $$\mathcal{X} = C^0([0,T], \mathcal{H}^{\sigma(t),s})\cap L^2((0,T),  \mathcal{H}^{\sigma(t),s+ \mez}).$$ It follows that the sequence $(\eta_\nu, \psi_\nu)$ converges in $\mathcal{X}\times \mathcal{X}$ to $(\eta, \psi)$. It remains to prove that $(\eta, \psi)$ is a solution of system \eqref{system}. 
 According to    \eqref{ww-nu}, we can write
 \begin{equation*}
 \begin{aligned}
 \eta_{\nu+1}(t) &= u_0 + \int_0^t F_\nu(\tau)\, d\tau,\quad F_\nu= G(\eta_\nu)(\psi_\nu,b),\\
   \psi_{\nu+1}(t)&= v_0 + \int_0^t G_\nu(\tau)\, d\tau,\\
   G_\nu&=   -g \eta_\nu - \mez \vert \nabla_x \psi_\nu\vert^2 + \frac{(G(\eta_\nu)(\psi_\nu,b)+ \nabla_x \eta_\nu \cdot \nabla_x \psi_\nu)^2}{2(1+  \vert \nabla_x \eta_\nu\vert^2)}.
 \end{aligned}
 \end{equation*}
 It is enough to prove that $\int_0^t F_\nu(\tau)\, d\tau$ converges in $L^\infty([0,T], \mathcal{H}^{\sigma(t) , s-\mez})$ to $\int_0^t F(\tau)\, d\tau$, where  $F= G(\eta)(\psi,b)$, together with a similar result for $\int_0^t G_\nu(\tau)\, d\tau$. We have with fixed  $t$,
 \begin{align*}
 A&:= \lA \int_0^t (F_\nu(\tau)-F(\tau))\, d\tau \rA^2_{\mathcal{H}^{\sigma(t) , s-\mez}} \\
 &= \sum_{\xZ^d}
 e^{2\sigma(t)\vert \xi \vert} \langle \xi \rangle^{2s-1}\la \int_0^t(\widehat{F_\nu-F})(\tau, \xi)\, d\tau\ra^2 .
 \end{align*}
Since for  $0 \leq \tau \leq t$ we have $\sigma(t) \leq \sigma(\tau)$, we deduce from the Cauchy-Schwarz  inequality that
$$A \leq T \int_0^t \Vert (F_\nu-F)(\tau)\Vert^2_{\mathcal{H}^{\sigma(\tau) , s-\mez}}\, d\tau.$$
Using Theorem~\ref{G-lip} applied with $\eta_1= \eta_\nu, \eta_2 = \eta$ and the fact that   the sequences $\Vert \eta_\nu\Vert_{\mathcal{X}}$ and  $\Vert \psi_\nu\Vert_{\mathcal{X}}$ are uniformly   bounded,  we deduce that   there exists     $C= C( \Vert \eta \Vert_{\mathcal{X}},\Vert \psi \Vert_{\mathcal{X}})>0$  such that  
$$
A \leq C (\Vert \eta_\nu- \eta \Vert_{\mathcal{X}} + \Vert \psi_\nu- \psi \Vert_{\mathcal{X}})\to 0, \quad \text{ if } \nu \to + \infty.
$$
Now we estimate
\begin{equation*}
B:= \lA \int_0^t (G_\nu -G)(\tau) \, d\tau \rA^2_{\mathcal{H}^{\sigma(t) , s-\mez}} \leq  C(B_1+B_2+B_3),
\end{equation*}
where 
\[
\left\{
\begin{aligned}
B_1 &= \lA \int_0^t   (\eta_\nu -\eta)(\tau)\, d\tau \rA^2_{\mathcal{H}^{\sigma(t) , s-\mez}},\\
B_2 & = \lA \int_0^t   (\vert \nabla_x \psi_\nu\vert^2  -\vert \nabla_x \psi \vert^2 )(\tau)\, d\tau \rA^2_{\mathcal{H}^{\sigma(t) , s-\mez}},\\
B_3&=  \lA \int_0^t  \frac{N_\nu^2(\tau)}{(1+ \vert \nabla_x \eta_\nu(\tau)\vert^2) }-  \frac{N ^2(\tau)}{(1+ \vert \nabla_x \eta (\tau)\vert^2)} \rA^2_{\mathcal{H}^{\sigma(t) , s-\mez}} 
\end{aligned}
\right.
\]
with 
$$N_\nu =  G(\eta_\nu)(\psi_\nu,b)+ \nabla_x \eta_\nu \cdot \nabla_x \psi_\nu, \quad N =  G(\eta )(\psi ,b)+ \nabla_x \eta  \cdot \nabla_x \psi. $$
As for the  term $B_1$, we have
$$B_1 \leq C \int_0^t  \Vert (\eta_\nu -\eta)(\tau)\Vert^2_{\mathcal{H}^{\sigma(t) , s-\mez}}\, d\tau \leq C \Vert (\eta_\nu -\eta)(\tau)\Vert^2_{\mathcal{X}} \to 0.$$
The  terms $B_2$ and $B_3$ can be estimated similarly by using \eqref{def-B'} with $\eta_{\nu-1} $ and $ \psi_{\nu-1}$ replaced by  $\eta, \psi$, together with the  estimates \eqref{est-B2'} and \eqref{est-B3'}. Hence, we conclude that $B$ tends to zero in $L^\infty([0,T], \mathcal{H}^{\sigma(t),s-\mez})$, which  implies that $(\eta, \psi)$ is a solution of system \eqref{system}.

The uniqueness of the solution follows from the computation made in Proposition~\ref{Mnu-Mnu+1}, where we replace   $(\widetilde{\eta}_{\nu}, \widetilde{\eta}_{\nu+1})$ by $(\widetilde{\eta}_1, \widetilde{\eta}_2)$ and  $(\widetilde{\psi}_{\nu}, \widetilde{\psi}_{\nu+1})$ by $(\widetilde{\psi}_1, \widetilde{\psi}_2)$, where $(\eta_1, \psi_1), (\eta_2, \psi_2)$ are the two supposed solutions.
 \end{proof}

\section{Existence of a  solution on a time interval  of size  $\eps^{-1}$}\label{S:sizeeps}
In this section we prove Theorem~\ref{T=2} about the well-posedness of the Cauchy problem on large time intervals. We shall construct solutions as limits of solutions to a sequence of approximate nonlinear systems. The analysis is in three different steps:
\begin{itemize}
\item [\rm{(1)}]  Firstly, we define approximate 
systems and prove that the Cauchy problem for the latter are 
well-posed locally in time by means of an ODE argument.
\item[\rm{(2)}]  Secondly, we prove that 
the solutions of the approximate systems are 
bounded on a uniform time interval.
\item [\rm{(3)}] Third, we prove that 
these approximate solutions converge to a solution of the water-waves system.
\end{itemize}

\subsection{Approximate systems.}
Let us rewrite the water-wave system under the form
\begin{equation*}
\partial_t f=\mathcal{T}(f;b),
\end{equation*}
where $f=\begin{pmatrix}\eta\\ \psi\end{pmatrix}$,
$$
\mathcal{T}(f;b)=\begin{pmatrix}G(\eta )(\psi ,b) \\[1ex]
-g \eta - \mez \vert \nabla_x \psi \vert^2 + \frac{1}{2(1+  \vert \nabla_x \eta \vert^2)}(G(\eta )(\psi,b)+ \nabla_x \eta  \cdot \nabla_x \psi )^2
\end{pmatrix},
$$
and $b=b(t,x)$ is a given function. We denote by $f_0=(\eta_0,\psi_0)$ the initial data.
 
To define the approximate systems, we use a 
version of Galerkin's method based on Friedrichs mollifiers. 
To do so, we shall  use smoothing operators. 
  We consider, for $n\in \xN\setminus\{0\}$, the operators $J_n$ defined by 
\begin{equation}\label{defi:Jn}
 \widehat{J_n u}(\xi) = \chi\Big(\frac{\xi  }{n}\Big)\widehat{u}(\xi) , 
\end{equation}
where $\chi\in C^\infty(\xR^d)$ is such that
$$
\chi(y)=1 \text{ if }\vert y \vert \leq 1 \quad\text{and}\quad \chi(y)= 0 \text{ if }\vert y \vert \geq 2.
$$ 
 
Now we consider the following approximate Cauchy problems:
\begin{equation}\label{A3}
\left\{
\begin{aligned}
&\partial_t f=J_n\big(\mathcal{T}(f;b)\big),\\
& f\arrowvert_{t=0}=J_n f_{0}.
\end{aligned}
\right.
\end{equation} 

The following lemma states that, for each $n\in\xN\setminus\{0\}$, the Cauchy problem~\e{A3} is well-posed locally in time.

\begin{lemm}\label{lemm-Tn}
Let $s> 2+\frac{d}{2}$. For all initial data 
$f_0=(\eta_0,\psi_0)\in L^{2}(\xT^d)^2$, 
for all source term $b\in C^0([0 , +\infty); H^{s- \mez}(\xT^d))$ 
and for all $n\in \xN\setminus \{0\}$, there exists $T_n>0$ such that 
the Cauchy problem~\e{A3} has a unique maximal solution
$$
f_n=(\eta_n,\psi_n)\in C^{1}\big([0,T_n);L^2(\xT^d)^2\big).
$$
Moreover, $f_n$ is a smooth function which belongs to $C^{0}([0,T_n);\mathcal{H}^{\lambda,\mu}(\xT^d))$ 
for any $\lambda,\mu\ge 0$. 

Finally, either 
\be\label{A4}
T_n=+\infty\qquad\text{or}\qquad \limsup_{t\rightarrow T_n} \lA f_n(t)\rA_{L^2}=+\infty.
\ee
\end{lemm}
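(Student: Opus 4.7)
The plan is to view the Cauchy problem \eqref{A3} as a classical ODE on a finite-dimensional space and apply the Cauchy--Lipschitz (Picard--Lindel\"of) theorem. Set $V_n := J_n L^2(\xT^d)^2$, the finite-dimensional subspace of pairs of trigonometric polynomials with Fourier support in $\{\xi\in\xZ^d : |\xi|\le 2n\}$. Since $J_n$ is a projection and the right-hand side of \eqref{A3} lies in $V_n$, any $C^1$-solution starting from $J_n f_0 \in V_n$ remains in $V_n$ for all positive times. All the norms $\mathcal{H}^{\lambda,\mu}$ are equivalent on $V_n$ (Bernstein inequalities, with constants depending on $n$), so the regularity $f_n \in C^0\big([0,T_n); \mathcal{H}^{\lambda,\mu}(\xT^d)\big)$ will be automatic for every $\lambda, \mu \ge 0$ once the existence in $C^1([0,T_n); L^2)$ is established.

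The key step is to verify that the vector field $F_n(f,t) := J_n \mathcal{T}(f;b(t))$ is continuous in $t$ and locally Lipschitz in $f$ on the open subset
\begin{equation*}
U_n := \{(\eta,\psi) \in V_n : \min_{x\in\xT^d} \eta(x) > -h\}.
\end{equation*}
For $f \in U_n$, the fluid domain $\Omega = \{-h<y<\eta(x)\}$ is a non-degenerate Lipschitz domain, and hence the elliptic problem \eqref{dphi} for $\phi$ admits a unique weak solution by the Lax--Milgram theorem, depending smoothly on $(\eta,\psi) \in V_n$ and continuously on $b \in H^{s-\mez}$. Taking normal traces produces the Dirichlet--Neumann operator $G(\eta)(\psi,b)$, and composing with the explicit algebraic formulas entering \eqref{system} then yields the claimed smoothness of $F_n$ on $U_n$. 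Provided $J_n f_0 \in U_n$ --- which holds whenever $\lA \eta_0\rA_{L^\infty}<h$, an implicit hypothesis in our applications --- Picard--Lindel\"of produces a unique maximal $C^1$-solution $f_n : [0,T_n) \to V_n$ of \eqref{A3}.

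The main obstacle is to obtain the precise form \eqref{A4} of the blow-up alternative. The abstract Picard--Lindel\"of theorem only gives that $f_n(t)$ exits every compact subset of $U_n$ as $t\uparrow T_n$ whenever $T_n<+\infty$; on the finite-dimensional space $V_n$ this alternative splits as either $\lA f_n(t)\rA_{L^2}\to +\infty$, or the domain degenerates with $\min \eta_n(t) \to -h$. To absorb the latter possibility into the former, one observes that as long as $\lA f_n(t)\rA_{L^2}$ stays bounded on $[0,T_n)$, all Sobolev norms of $f_n$ also stay bounded on $V_n$, so that the $L^\infty$-norms of $\eta_n$ and of $\partial_t \eta_n = J_n G(\eta_n)(\psi_n,b)$ remain uniformly bounded; a continuity argument in time then forbids $\min \eta_n$ from reaching $-h$ in finite time, yielding \eqref{A4}.
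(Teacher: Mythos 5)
Your proof takes a genuinely different (though closely related) route from the paper's. The paper does not work in the finite-dimensional space $V_n = J_n L^2$; instead it introduces a second cut-off $\widetilde{J}_n$ with $\widetilde{J}_n J_n = J_n$, considers the twisted vector field $F_n(f) = J_n\mathcal{T}(\widetilde{J}_n f;b)$ on a neighborhood of the origin in $L^2(\xT^d)^2$, solves this ODE by Cauchy--Lipschitz, and only \emph{a posteriori} shows that $(I-\widetilde{J}_n)f_n$ solves the trivial ODE with zero data, hence $\widetilde{J}_n f_n=f_n$ and $f_n$ solves~\eqref{A3}. The two framings are equivalent: the invariant set $\{f: \widetilde{J}_n f = f\}$ contains $V_n$, and the twisting is precisely a device to make the nonlinearity well-defined on $L^2$ data; your direct reduction to the finite-dimensional $V_n$, where all the spaces $\mathcal{H}^{\lambda,\mu}$ coincide, is arguably more transparent. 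You also replace the paper's use of its own sharp Dirichlet--Neumann estimates (Theorems~\ref{est-DN} and~\ref{G-lip} at $\lambda=0$) by a bare Lax--Milgram argument; the paper explicitly notes that such rough variational estimates suffice, so this is a legitimate substitution, though you are brief on the smooth dependence of the solution on $(\eta,\psi)$, which (like the paper's reduction) rests on flattening the domain.

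There is, however, a genuine gap in your justification of the blow-up alternative~\eqref{A4}. You argue that a bound on $\lA f_n(t)\rA_{L^2}$, together with equivalence of norms on $V_n$, gives a uniform bound on $\lA \partial_t\eta_n\rA_{L^\infty} = \lA J_n G(\eta_n)(\psi_n,b)\rA_{L^\infty}$, which then keeps $\min\eta_n$ away from $-h$. But $G(\eta_n)$ is not a finite-dimensional map: the elliptic estimates for the Dirichlet--Neumann operator carry constants that depend on the geometry of the fluid domain $\{-h<y<\eta_n(x)\}$ and are not uniform as $\min(\eta_n+h)\to 0$. So the bound on the velocity $\partial_t\eta_n$ that is supposed to prevent the degeneration already presupposes a lower bound on $\min(\eta_n+h)$; as stated the continuity argument is circular, and without a quantitative rate one cannot rule out a Zeno-type collapse in finite time. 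This is not fatal for the results of the paper --- the alternative is invoked only through the a priori estimate $M_{s,n}(T)\le C_*\eps$, which confines $\eta_n$ to a regime where the nondegeneracy is automatic --- and, to be fair, the paper's own appeal to the \emph{usual continuation principle} is at least as terse, since its vector field is only locally Lipschitz on the set where $\lA\widetilde{J}_n\eta\rA_{H^s}\le\overline{\eps}$, so the abstract principle really yields the alternative of $L^2$ blow-up \emph{or} exiting that set. Still, since you singled out the degeneration of the domain as the issue to be absorbed, you should either restrict the statement to data for which the a priori estimates keep the domain nondegenerate, or state the correct dichotomy (blow-up of $\lA f_n\rA_{L^2}$ or $\inf_x(\eta_n(x)+h)\to 0$) rather than claiming to have reduced the second case to the first.
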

\begin{proof}
Fix $f_0=(\eta_0,\psi_0)\in L^{2}(\xT^d)^2$ and $b\in L^2(\xT^d)$.  
Let $\widetilde {\chi}\in C^\infty_0(\xR^d)$ equal to $1$ on the support of $\chi$ and set 
$\widehat{ \widetilde{J}_n u} (\xi) = \widetilde{\chi} ( \frac {\xi} n ) \widehat{u} (\xi)$. 
With this choice we have $\widetilde{J}_nJ_n =J_n$.  
We begin by studying an auxiliary Cauchy problem which reads
\begin{equation}\label{A3-twisted}
\left\{
\begin{aligned}
&\partial_t f=F_n(f)\quad\text{where}\quad F_n(f)=J_n\big(\mathcal{T}(\widetilde{J}_n f;b)\big),\\
& f\arrowvert_{t=0}=J_n f_{0}.
\end{aligned}
\right.
\end{equation}
We will prove that the Cauchy problem is well-posed by using the classical fixed point argument. 
Then we will prove that if $f_n$ solves \eqref{A3-twisted}, then it also solves the original problem \eqref{A3}. 
Notice that the operators $J_n$ and $\widetilde{J}_n$ are 
smoothing operators: they are bounded from $L^2(\xT^d)$ 
into $H^\mu(\xT^d)$ for any $\mu\ge 0$. Consequently, it would be sufficient 
to exploit some rough estimates for the Dirichlet-Neumann 
operator which could be proved by elementary variational estimates (see~\cite[Theorems 3.8 and 3.9]{ABZ3}). For completeness (due to the additional source term $b$), we shall use here the more elaborates estimates from Section \ref{S:DN} and Appendix \ref{AppendixA} restricted to the case of Sobolev spaces ($\lambda =0$). From~\eqref{DNn1}, we have (for $s\geq \frac 3 2 + \frac d 2$)
\begin{equation*}
\Vert G(\eta)(\psi,b) \Vert_{ {H}^{ s}}
\leq C \big( \Vert \psi \Vert_{{H}^{ s+1 }} + \Vert b \Vert_{H^{s}}
+ \Vert  \eta\Vert_{{H}^{s+ 1}} \big(  \Vert \psi \Vert_{{H}^{s+1}} +\Vert b \Vert_{H^{s-1}}\big)\big).
\end{equation*}
and from~Theorem~\ref{G-lip}, we have 
\begin{equation}\label{DNn1ter}
\begin{split}
& \Vert G(\eta_1)(\psi_1,b) - G(\eta_2)(\psi_2,b) \Vert_{ {H}^{ s}}\\
\leq &\, C \big( 1+  \| (\eta_1 , \eta_2)\|_{H^{s+1}})   ( \| (\psi_1 , \psi_2)\|_{H^{s+1}} +\| b\|_{H^{s}}  )\big) \\
& \times\bigl( \| \eta_1 - \eta_2\|_{H^{s+1}}+  \| \psi_1 - \psi_2\|_{H^{s+1}}\bigr).
\end{split}
\end{equation}
It follows from  standard nonlinear estimates that  for some non decreasing functions $\mathcal{F}_j: \xR_+ \rightarrow \xR$, 
\begin{equation*}
\|  F_n (f) \|_{H^{s}}\\
  \leq C\mathcal{F}_1 ( \| \widetilde{J}_n f \|_{H^{s+ 1}}) ( \| b\|_{H^s}+  \| \widetilde{J}_nf\| _{H^{s+1}})
\end{equation*}
and (using~\eqref{DNn1ter})
\begin{equation*}
\Big\|   F_n (f_1) -  F_n (f_2)\Big\|_{H^{s}}\\
  \leq C \mathcal{F} _2   (\| \widetilde{J}_n f \|_{H^{s+ \frac 3 2}}+ \| b\|_{H^s})
  \| \widetilde{J}_n (f_1 - f_2)  \|_{H^s} .
\end{equation*}

Since the operators $\widetilde{J}_n$ are bounded from $L^2$ to $H^k$ by $C n^k$, It follows that 
the operator $f\mapsto F_n(f)$ is locally Lipschitz from $L^{2}(\xT^d)$ to itself. 
Consequently, the Cauchy-Lipschitz theorem implies that the Cauchy problem~\e{A3-twisted} has 
a unique maximal solution~$f_n$ in~$C^{1}([0,T_n);L^{2}(\xT^d))$. 
Since    $(I- \widetilde{J}_n)J_n =0$, we check that the function $(I-\widetilde{J}_n)f_n$ solves
$$
\partial_t (I-\widetilde{J}_n)f_n=0,\quad (I-\widetilde{J}_n)f_n\arrowvert_{t=0}=0.
$$
This shows that $(I-\widetilde{J}_n)f_n=0$, so $\widetilde{J}_nf_n=f_n$. Consequently, 
the fact that $f_n$ solves \e{A3-twisted} implies 
that $f_n$ is also a solution to~\e{A3}. In addition, since the Fourier transform of $f_n$ is compactly supported, 
the function $f_n$ belongs to $C^{0}([0,T_n);\mathcal{H}^{\lambda,\mu}(\xT^d))$  
for any $\lambda,\mu\ge 0$. 
 The alternative \eqref{A4} is a consequence of the usual 
continuation principle for ordinary differential equations. 
\end{proof}

\subsection{Reformulation of the equations.}

Let $n\in\xN\setminus\{0\}$ and denote by $(\eta_n,\psi_n)$ the 
approximate solution as constructed in the previous paragraph. 
Recall that
\begin{equation}\label{ww1'}
\left\{
\begin{aligned}
&\partial_t \eta_n  = J_n G(\eta_n )(\psi_n ,b),  \\
  &\partial_t \psi_n 
  =J_n\Big( -g \eta_n - \mez \vert \nabla_x \psi_n \vert^2 
  + \frac{(G(\eta_n )(\psi_n,b)+ \nabla_x \eta_n  \cdot \nabla_x \psi_n )^2}{2(1+  \vert \nabla_x \eta_n \vert^2)}\Big),\\   
  &\eta_n \arrowvert_{t=0} = J_n\eta_0, \quad \psi_n \arrowvert_{t=0} = J_n\psi_0.
 \end{aligned}
\right.
\end{equation}
In this paragraph we shall 
reformulate the above equations into a new set of equations where  the unknowns are $(\zeta_n,V_n,B_n)$ defined by 
\be\label{defiBnVn}
\zeta_n = \nabla_x \eta_n, \quad B_n=\frac{G(\eta_n)(\psi_n,b) + \zeta_n\cdot\nabla_x\psi_n}{1+| \zeta_n|^2},\quad 
V_n=\nabla_x\psi_n-B_n \zeta_n.
\ee
Notice that with the above notations the equation on $\psi_n$ can be reformulated as follows:
\begin{equation}\label{new-psin}
\partial_t \psi_n = J_n\Big(-g \eta_n -\mez \vert V_n + B_n   \zeta_n\vert^2 + \mez(1+ \vert   \zeta_n\vert^2) B_n^2\Big).
\end{equation}
Moreover, we have
\begin{equation}\label{DN-VB}
G(\eta_n)(\psi_n,b) = B_n -V_n \cdot   \zeta_n.
\end{equation}
Recall that by definition, we have
 \begin{equation}\label{DNn}
 \begin{aligned}
  &G(\eta_n)(\psi_n,b)= (\partial_y \phi_n -   \zeta_n\cdot \nabla_x  \phi_n)\arrowvert_{y= \eta_n(x)}, \quad \text{where}\\
  &\Delta_{x,y}\phi_n = 0 \text{ in} \, \{-h<y< \eta_n(x)\}, \quad \phi_n\arrowvert_{y= \eta_n(x)} = \psi_n, 
  \quad \partial_y\phi_n\arrowvert_{y=-h}= b.
  \end{aligned}
  \end{equation}
  Notice that by definition, we have
  \begin{equation*}
  \vert \nabla_{x,y} \phi_n\vert^2\arrowvert_{y = \eta_n} = \vert V_n \vert^2 + B_n^2.
  \end{equation*}
  Before reformulating the equations, we need several identities that will be used later.
  \begin{lemm}\label{der2phi}
  Let $\phi_n$ be as defined by \eqref{DNn}. Then
  \begin{equation*}
  \left\{
  \begin{aligned}
  \partial_y\nabla_x \phi_n \arrowvert_{y = \eta_n} &=   \nabla_x B_n  -\frac{(\nabla_x B_n \cdot \zeta_n) \zeta_n - (\cnx V_n)  \zeta_n }{1+ \vert  \zeta_n \vert^2} ,\\
 \partial_y^2 \phi_n \arrowvert_{y = \eta_n} &=     \frac{\nabla_x B_n\cdot  \zeta_n - \cnx V_n}{1+ \vert   \zeta_n \vert^2}.
   \end{aligned}
   \right.
  \end{equation*}
  \end{lemm}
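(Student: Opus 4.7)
Let me abbreviate $X := (\nabla_x\partial_y\phi_n)\arrowvert_{y=\eta_n}$ and $Y := (\partial_y^2\phi_n)\arrowvert_{y=\eta_n}$. These are the two quantities we want to solve for, so the strategy is to write down two independent linear relations between $X$, $Y$ and $(\nabla_x B_n, \cnx V_n)$ and then invert the resulting $2\times 2$ system (which will be invertible with determinant $1+|\zeta_n|^2$).

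The first relation is obtained by differentiating the identity $B_n = (\partial_y\phi_n)\arrowvert_{y=\eta_n}$ with respect to $x$ by the chain rule, namely
\[
\nabla_x B_n = X + \zeta_n\, Y.
\]
The second relation comes from reproducing the computation already carried out in the proof of~\eqref{GB=V}: since $V_n = \nabla_x\phi_n\arrowvert_{y=\eta_n}$, the chain rule and harmonicity ($\Delta_x\phi_n = -\partial_y^2\phi_n$) yield
\[
\cnx V_n = (\Delta_x\phi_n)\arrowvert_{y=\eta_n} + \zeta_n\cdot X = -Y + \zeta_n\cdot X.
\]

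Now I solve this linear system. Substituting $X = \nabla_x B_n - \zeta_n Y$ from the first equation into the second gives
\[
\cnx V_n = -Y + \zeta_n\cdot\nabla_x B_n - |\zeta_n|^2\, Y = -(1+|\zeta_n|^2)Y + \zeta_n\cdot\nabla_x B_n,
\]
hence
\[
Y = \frac{\nabla_x B_n\cdot\zeta_n - \cnx V_n}{1+|\zeta_n|^2},
\]
which is the second formula. Plugging this back into $X = \nabla_x B_n - \zeta_n Y$ yields
\[
X = \nabla_x B_n - \frac{(\nabla_x B_n\cdot\zeta_n)\zeta_n - (\cnx V_n)\zeta_n}{1+|\zeta_n|^2},
\]
which is the first formula. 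There is no real obstacle here; the only point requiring any care is to be precise with the chain-rule computations at the free surface, which have already been performed (in slightly different guises) in Lemma~\ref{der-DN} and in the proof of the identity~\eqref{GB=V}.
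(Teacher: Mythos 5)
Your proof is correct and uses essentially the same approach as the paper: both derive the two chain-rule identities $\nabla_x B_n = X + Y\zeta_n$ and $\cnx V_n = -Y + X\cdot\zeta_n$ (the latter via harmonicity), then invert the resulting linear system with determinant $1+|\zeta_n|^2$. The paper organizes the elimination by first contracting its intermediate equation with $\zeta_n$ and solving for $X\cdot\zeta_n$, while you solve for $Y$ directly and back-substitute, but this is only a cosmetic difference in bookkeeping.
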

  \begin{proof}
Notice that  $  B_n = \partial_y\phi_n(x, \eta_n(x))$. 
Differentiating this identity with respect to $x$ and using the equation satisfied by $\phi_n$, 
we obtain
$$ \nabla_x B_n = \partial_y\nabla_x \phi_n \arrowvert_{y = \eta_n} 
+ \partial_y^2\phi_n \arrowvert_{y = \eta_n}  \zeta_n 
=  \partial_y\nabla_x \phi_n \arrowvert_{y = \eta_n} - \Delta_x\phi_n \arrowvert_{y = \eta_n}  \zeta_n.
$$
We have also $V_n = \nabla_x \phi_n\arrowvert_{y = \eta_n}.$ Taking the divergence of both members, we obtain
\begin{equation} \label{xyphin1}
\cnx V_n = \Delta_x\phi_n \arrowvert_{y = \eta_n}+  \partial_y\nabla_x \phi_n \arrowvert_{y = \eta_n}\cdot  \zeta_n.
\end{equation}
It follows that
\begin{equation} \label{xyphin2}
\partial_y\nabla_x \phi_n \arrowvert_{y = \eta_n}= \nabla_x B_n 
+ \big[\cnx V_n -\partial_y\nabla_x \phi_n \arrowvert_{y = \eta_n}\cdot  \zeta_n\big]  \zeta_n.
\end{equation}
Taking the scalar product of both members with $\zeta_n=\nabla_x \eta_n$, we obtain
$$
(1+ \vert   \zeta_n \vert^2)(\partial_y\nabla_x \phi_n \arrowvert_{y = \eta_n}\cdot   \zeta_n)
= \nabla_x B_n \cdot\zeta_n+ \vert   \zeta_n \vert^2 \cnx V_n,
$$
therefore,
\begin{equation} \label{xyphin3}
\partial_y\nabla_x \phi_n \arrowvert_{y = \eta_n}\cdot   \zeta_n
= \frac{\nabla_x B_n\cdot   \zeta_n + \vert   \zeta_n \vert^2 \cnx V_n}{1+ \vert   \zeta_n\vert^2}. 
\end{equation}
Then we use \eqref{xyphin2} to obtain the first claim of 
the lemma. Since $$\partial_y^2\phi_n\arrowvert_{y= \eta_n} = - \Delta_x\phi_n\arrowvert_{y= \eta_n},$$ 
the second claim follows from \eqref{xyphin1} and \eqref{xyphin3}.    
\end{proof}

\subsubsection{The pressure.}
We introduce  now the pressure associated to the new system \eqref{ww1'}. 
We shall define $P_n$ by
\begin{equation}\label{press-n}
P_n + gy = -(\partial_t \phi_n + \mez \vert \nabla_{x,y}\phi_n \vert^2),
\end{equation}
where $\phi_n$ is defined in \e{DNn}. 
Then setting $v_n = \nabla_{x,y}\phi_n$ and differentiating \eqref{press-n} with respect to $x$ an $y$ we find that $v_n$ satisfies the system
\begin{equation}\label{euler-n}
\partial_t v_n + (v_n\cdot\nabla_{x,y}) v_n = -\nabla_{x,y}(P_n + gy).
\end{equation} 
We define now $Q_n$ by
\begin{equation}\label{def-Qn}
P_n = - \mez \vert \nabla_{x,y}\phi_n \vert^2 + Q_n - gy.
\end{equation}
Then according to \eqref{DNn}, we find that $Q_n$ is the solution of the problem
\begin{equation}\label{eq-Qn}
\left\{
\begin{aligned}
& \Delta_{x,y} Q_n = 0, \quad\text{ in} -h<y<\eta_n(x),\\   
& Q_n\arrowvert_{y = \eta_n} = P_n\arrowvert_{y = \eta_n} + g \eta_n + \mez(\vert V_n\vert ^2 + B_n^2),\\
& \partial_y Q_n\arrowvert_{y= -h} = - \partial_t b.
\end{aligned}
\right.
\end{equation}
This shows that $Q_n$ is solution of an elliptic problem. 
We will have good estimates on $Q_n$  as soon as we have 
described $P_n\arrowvert_{y = \eta_n},$ which we 
do now. 
\begin{lemm}
We have
\begin{equation}\label{press3}
P_n\arrowvert_{y= \eta_n}= g(J_n-1)\eta_n + \mez(J_n-1)(\vert V_n\vert^2 + B_n^2)   + [B_n,J_n](B_n -(V_n \cdot \zeta_n)). \\
\end{equation}
\end{lemm}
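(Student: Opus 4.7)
The plan is to start from the definition~\eqref{press-n} of $P_n$ and rewrite it at the free surface by using the chain rule together with the approximate evolution equations on $(\eta_n,\psi_n)$ and the commutator $[B_n,J_n]$.

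First, evaluating \eqref{press-n} at $y=\eta_n$, one has
$$
P_n\arrowvert_{y=\eta_n}=-g\eta_n-\partial_t\phi_n\arrowvert_{y=\eta_n}-\tfrac{1}{2}\bigl(|V_n|^2+B_n^2\bigr),
$$
since $|\nabla_{x,y}\phi_n|^2\arrowvert_{y=\eta_n}=|V_n|^2+B_n^2$ by the very definition~\eqref{defiBnVn}. Differentiating in $t$ the identity $\psi_n(t,x)=\phi_n(t,x,\eta_n(t,x))$ gives
$$
\partial_t\phi_n\arrowvert_{y=\eta_n}=\partial_t\psi_n-B_n\,\partial_t\eta_n,
$$
so I can substitute the first equation of~\eqref{ww1'}, that is $\partial_t\eta_n=J_n(B_n-V_n\cdot\zeta_n)$ in view of~\eqref{DN-VB}, and the reformulated equation~\eqref{new-psin} for~$\partial_t\psi_n$.

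Next, I expand $\tfrac{1}{2}|V_n+B_n\zeta_n|^2-\tfrac{1}{2}(1+|\zeta_n|^2)B_n^2=\tfrac{1}{2}|V_n|^2+B_nV_n\cdot\zeta_n-\tfrac{1}{2}B_n^2$ and collect terms. After inserting these expressions and regrouping, the equality to prove reduces to checking that
$$
B_nJ_n(B_n-V_n\cdot\zeta_n)-J_n\bigl(\tfrac{1}{2}B_n^2-\tfrac{1}{2}|V_n|^2-B_nV_n\cdot\zeta_n\bigr)-\tfrac{1}{2}\bigl(|V_n|^2+B_n^2\bigr)
$$
equals
$$
\tfrac{1}{2}(J_n-1)\bigl(|V_n|^2+B_n^2\bigr)+[B_n,J_n](B_n-V_n\cdot\zeta_n).
$$
The $|V_n|^2$ terms already match; the remaining identity is the purely algebraic relation
$$
B_nJ_nB_n-\tfrac{1}{2}J_nB_n^2-\tfrac{1}{2}B_n^2=[B_n,J_n]B_n+\tfrac{1}{2}(J_n-1)B_n^2,
$$
combined with $J_n(B_nV_n\cdot\zeta_n)-B_nJ_n(V_n\cdot\zeta_n)=-[B_n,J_n](V_n\cdot\zeta_n)$, which both follow at once from the definition $[B_n,J_n]f=B_nJ_nf-J_n(B_nf)$.

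There is no real analytic obstacle: the statement is an algebraic consequence of the equations and the presence of the Friedrichs mollifier $J_n$ in~\eqref{ww1'}. The only point requiring a little care is bookkeeping — one must check that the $-g\eta_n$ on the left exactly cancels the $J_ng\eta_n$ produced by $\partial_t\psi_n$ up to $g(J_n-1)\eta_n$, and that all cross terms assemble into the single commutator $[B_n,J_n](B_n-V_n\cdot\zeta_n)$. This last rearrangement is precisely where the formula becomes useful for energy estimates, since the commutator will later be gained back by the smoothing properties of $J_n$.
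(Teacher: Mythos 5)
Your proposal is correct and follows essentially the same route as the paper: start from \eqref{press-n} at $y=\eta_n$, differentiate $\psi_n=\phi_n\arrowvert_{y=\eta_n}$ in time, substitute \eqref{new-psin} and $\partial_t\eta_n=J_n(B_n-V_n\cdot\zeta_n)$, and regroup the remaining terms into $[B_n,J_n](B_n-V_n\cdot\zeta_n)$. The only difference is that you spell out the final algebraic bookkeeping (the two commutator identities) that the paper leaves implicit, which is a harmless elaboration.
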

\begin{proof}
It follows from  \eqref{press-n} that
\begin{equation}\label{press2}
P_n\arrowvert_{y = \eta_n} = -g \eta_n -(\partial_t \phi_n)\arrowvert_{y = \eta_n} - \mez(\vert V_n\vert^2 + B_n^2).
\end{equation}
Now since $\psi_n(t,x) = \phi_n(t,x, \eta_n(t,x))$, we have
\begin{align*}
  \partial_t \psi_n(t,x) &= \partial_t\phi_n(t,x, \eta_n(t,x)) + \partial_y \phi_n(y,x, \eta_n(t,x))\partial_t \eta_n(t,x)\\
  &=  \partial_t\phi_n\arrowvert_{y=\eta_n} + B_n J_n\big[G(\eta_n)(\psi_n,b)\big].
  \end{align*}
The identity \eqref{new-psin} for $\partial_t\psi_n$ implies that 
 $$J_n\big[-g \eta_n -\mez \vert V_n + B_n   \zeta_n\vert^2 + \mez(1+ \vert   \zeta_n\vert^2) B_n^2\big]=  \partial_t\phi_n\arrowvert_{y=\eta_n} + B_n J_n\big[G(\eta_n)(\psi_n,b)\big].$$
 The lemma follows from \eqref{DN-VB} and \eqref{press2}.
 \end{proof}
 \begin{lemm}\label{der-x-y-P}
 We have
 \begin{equation}
 \begin{aligned}
 &(\nabla_x P_n)\arrowvert_{y = \eta_n} = gJ_n \zeta_n -\big(\partial_y P_n\arrowvert_{y = \eta_n} +g)\zeta_n + S_n ,\\
 & S_n =  (J_n-1)(V_n\cdot \nabla_x) V_n -(J_n+1)B_n (\nabla_x B_n)
    +(\nabla_x B_n) J_n B_n  
  \\
     & \qquad  + B_n J_n (\nabla_x B_n)  +J_n\nabla_x(B_nV_n\cdot\zeta_n)  
      -\nabla_x(B_n J_nV_n\cdot\zeta_n).
 \end{aligned}
 \end{equation}
 \end{lemm}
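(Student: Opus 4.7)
The plan is to start from the explicit boundary formula \eqref{press3} for $P_n\arrowvert_{y=\eta_n}$, apply $\nabla_x$ to both sides, and then recover $(\nabla_x P_n)\arrowvert_{y=\eta_n}$ from $\nabla_x[P_n\arrowvert_{y=\eta_n}]$ via the chain rule
\[
\nabla_x\bigl[P_n\arrowvert_{y=\eta_n}\bigr]=(\nabla_x P_n)\arrowvert_{y=\eta_n}+(\partial_y P_n)\arrowvert_{y=\eta_n}\,\zeta_n.
\]
Differentiating the first summand $g(J_n-1)\eta_n$ gives $g(J_n-1)\zeta_n=gJ_n\zeta_n-g\zeta_n$; the stray $-g\zeta_n$ combines with the transported term $-(\partial_y P_n)\arrowvert_{y=\eta_n}\zeta_n$ to produce exactly $-(\partial_y P_n\arrowvert_{y=\eta_n}+g)\zeta_n$, while $gJ_n\zeta_n$ sits in the statement as the explicit main term. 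It remains to show that the quadratic terms produced by differentiating the last two summands of \eqref{press3} reorganize into $S_n$.

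For the kinetic part $\frac12(J_n-1)(|V_n|^2+B_n^2)$, the scalar part yields $(J_n-1)(B_n\nabla_x B_n)$ at once (using that the Fourier multiplier $J_n$ commutes with $\nabla_x$). The vector part requires the identity
\[
\tfrac12\nabla_x|V_n|^2=(V_n\cdot\nabla_x)V_n,
\]
which I would derive from the potential-flow structure $V_n=\nabla_x\phi_n\arrowvert_{y=\eta_n}$: starting from $\partial_jV_{n,i}=(\partial_j\partial_i\phi_n)\arrowvert_{y=\eta_n}+(\partial_y\partial_i\phi_n)\arrowvert_{y=\eta_n}\partial_j\eta_n$, the symmetric tensor $(\partial_j\partial_i\phi_n)\arrowvert_{y=\eta_n}$ contracts against $V_n$ in the same way as $(\partial_i\partial_j\phi_n)\arrowvert_{y=\eta_n}$, and the antisymmetric residue cancels upon comparing $\sum_k V_{n,k}\partial_j V_{n,k}$ with $\sum_k V_{n,k}\partial_k V_{n,j}$. (Equivalently, one may invoke the bulk Euler identity $(v_n\cdot\nabla_{x,y})v_n=\nabla_{x,y}(\tfrac12|v_n|^2)$ valid for the potential flow $v_n=\nabla_{x,y}\phi_n$, and evaluate its horizontal component at $y=\eta_n$.)

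The most substantial step is expanding $\nabla_x\!\bigl([B_n,J_n](B_n-V_n\cdot\zeta_n)\bigr)$. Writing $[B_n,J_n]f=B_nJ_nf-J_n(B_nf)$ and applying $\nabla_x$ together with $[\nabla_x,J_n]=0$, one gets four summands
\[
(\nabla_xB_n)J_nf+B_nJ_n(\nabla_xf)-J_n((\nabla_xB_n)f)-J_n(B_n\nabla_xf),
\]
after which I substitute $f=B_n-V_n\cdot\zeta_n$ and split. The pieces quadratic in $B_n$ collect as $(\nabla_xB_n)J_nB_n+B_nJ_n(\nabla_xB_n)-2J_n(B_n\nabla_xB_n)$; this $-2J_n(B_n\nabla_xB_n)$ merges with the earlier $(J_n-1)(B_n\nabla_xB_n)$ to give the asserted $-(J_n+1)B_n\nabla_xB_n$. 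The remaining pieces involving $V_n\cdot\zeta_n$ regroup, via Leibniz once more, as $J_n\nabla_x(B_nV_n\cdot\zeta_n)-\nabla_x(B_nJ_nV_n\cdot\zeta_n)$, exactly the last two terms of $S_n$.

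The main obstacle in the calculation is the bookkeeping: one must be careful to keep track of which $J_n$ acts on a product versus on a single factor, and to match the two groups of terms (the kinetic-energy gradient coming from $\tfrac12(J_n-1)(|V_n|^2+B_n^2)$ versus the commutator-gradient) so that the $B_n\nabla_xB_n$ contributions combine into a single coefficient $(J_n+1)$ and so that the $V_n\cdot\zeta_n$ terms assemble into the compact $J_n\nabla_x(\cdot)-\nabla_x(J_n(\cdot))$ form displayed in $S_n$. Beyond that, every step is a chain-rule/Leibniz computation using only that $J_n$ commutes with $\nabla_x$ and the pointwise identity \eqref{press3}.
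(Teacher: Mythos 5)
Your overall strategy is the same as the paper's (which says only ``differentiate \eqref{press3} with respect to $x$''), and the commutator bookkeeping in the last paragraph is correct: the reorganization of $\nabla_x\bigl([B_n,J_n](B_n-V_n\cdot\zeta_n)\bigr)$ into $(\nabla_xB_n)J_nB_n+B_nJ_n\nabla_xB_n-2J_n(B_n\nabla_xB_n)+J_n\nabla_x(B_nV_n\cdot\zeta_n)-\nabla_x(B_nJ_nV_n\cdot\zeta_n)$, and the merging of $-2J_n(B_n\nabla_xB_n)$ with $(J_n-1)(B_n\nabla_xB_n)$ into $-(J_n+1)B_n\nabla_xB_n$, all check out.

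The gap is in the kinetic part. You assert $\tfrac12\nabla_x|V_n|^2=(V_n\cdot\nabla_x)V_n$ and claim that ``the antisymmetric residue cancels.'' It does not. Writing $W=(\partial_y\nabla_x\phi_n)\arrowvert_{y=\eta_n}$ and $\zeta=\nabla_x\eta_n$, one has
\begin{equation*}
\partial_iV_{n,k}-\partial_kV_{n,i}=W_k\zeta_i-W_i\zeta_k,
\qquad\text{hence}\qquad
\tfrac12\partial_i|V_n|^2-(V_n\cdot\nabla_x)V_{n,i}=\zeta_i\,(V_n\cdot W)-W_i\,(V_n\cdot\zeta),
\end{equation*}
which is generically nonzero. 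The underlying reason is that $V_n=\nabla_x\phi_n\arrowvert_{y=\eta_n}$ is the \emph{trace} of a gradient, not itself a gradient: it is $\nabla_x\psi_n=V_n+B_n\zeta_n$, not $V_n$, that is curl-free. Your parenthetical ``equivalently'' argument via the bulk Euler identity has the same flaw: evaluating $\tfrac12\nabla_x|v_n|^2$ at $y=\eta_n$ produces $\bigl(\tfrac12\nabla_x|v_n|^2\bigr)\arrowvert_{y=\eta_n}$, which differs from $\tfrac12\nabla_x\bigl(|V_n|^2+B_n^2\bigr)$ by the chain-rule term $\bigl(\tfrac12\partial_y|v_n|^2\bigr)\arrowvert_{y=\eta_n}\,\zeta$, and likewise $[(v_n\cdot\nabla_{x,y})\nabla_x\phi_n]\arrowvert_{y=\eta_n}$ differs from $(V_n\cdot\nabla_x)V_n$ by $(V_n\cdot\zeta-B_n)\,W$. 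Restriction to the surface does not commute with $\nabla_x$.

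What differentiation of \eqref{press3} actually produces for the kinetic contribution is $\tfrac12(J_n-1)\nabla_x\bigl(|V_n|^2+B_n^2\bigr)$; this, together with the commutator terms you computed, is the correct content of $S_n$. If the first term of $S_n$ is read literally as $(J_n-1)(V_n\cdot\nabla_x)V_n$, then it requires the false identity above, and your attempted justification of it is the real gap in the argument. (For the downstream estimate of $R_n^{(0)}$ the two expressions are interchangeable, since both are quadratic in $V_n$ with one derivative, but the pointwise identity you invoke is not true.)
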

 \begin{proof}
 This follows from  \eqref{press3} which we differentiate with respect to $x.$
 \end{proof}
 
  \subsubsection{The new equations.}
  We are now in position to reformulate our equations.
\begin{prop}\label{new-eq}
Let $(\eta_n, \psi_n)$ be a solution of \eqref{ww1'}. Then, with the notations in \eqref{defiBnVn} we have
\begin{align*}
&{\rm (i)} \quad \partial_t\zeta_n    + J_n (V_n\cdot \nabla_x \zeta_n ) = J_n\big[G(\eta_n)(V_n,\nabla_x b) -(\cnx V_n) \zeta_n\big],\\
& {\rm (ii)} \quad (\partial_t   + V_n\cdot\nabla_x) B_n  \\
& \ \qquad \hfill = -(\partial_y P_n\arrowvert_{y= \eta_n} +g) -\partial_y^2\phi_n\arrowvert_{y= \eta_n}\big((J_n-1)(V_n \cdot  \zeta_n-B_n)\big),\\
& {\rm (iii)} \quad (\partial_t   + V_n\cdot\nabla_x) V_n \\
&\qquad = -\nabla_x P_n\arrowvert_{y= \eta_n}- \partial_y\nabla_x \phi_n\arrowvert_{y= \eta_n}\big((J_n-1)(V_n \cdot  \zeta_n-B_n)\big).
 \end{align*}
\end{prop}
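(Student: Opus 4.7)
The plan is to derive each equation by differentiating the defining relations for $\zeta_n$, $V_n$, $B_n$ and then using the Euler equations for $v_n=\nabla_{x,y}\phi_n$ that follow from the definition~\eqref{press-n} of the pressure. The only subtlety compared with the exact (un-mollified) system is that $\partial_t\eta_n$ differs from $G(\eta_n)(\psi_n,b)=B_n-V_n\cdot\zeta_n$ by the commutator term $(J_n-1)(B_n-V_n\cdot\zeta_n)$, and this is precisely what produces the remainders appearing in (ii) and (iii).

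For (i), I would simply apply $\nabla_x$ to the first equation in~\eqref{ww1'}. Since $J_n$ commutes with $\nabla_x$, this gives $\partial_t\zeta_n=J_n\nabla_x\big[G(\eta_n)(\psi_n,b)\big]$. The identity of Lemma~\ref{der-DN} rewrites the right-hand side as
\[
J_n\big[G(\eta_n)(V_n,\nabla_x b)-(V_n\cdot\nabla_x)\zeta_n-(\cnx V_n)\zeta_n\big],
\]
which is exactly~(i).

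For (ii) and (iii), I would use that $v_n=\nabla_{x,y}\phi_n$ satisfies the Euler equation~\eqref{euler-n} (a direct consequence of~\eqref{press-n} obtained by applying $\nabla_{x,y}$). Restricting the horizontal and vertical components of~\eqref{euler-n} to $y=\eta_n$, and using $v_n\arrowvert_{y=\eta_n}=(V_n,B_n)$, gives
\begin{align*}
&(\partial_t\nabla_x\phi_n+V_n\cdot\nabla_x\nabla_x\phi_n+B_n\partial_y\nabla_x\phi_n)\arrowvert_{y=\eta_n}=-\nabla_x P_n\arrowvert_{y=\eta_n},\\
&(\partial_t\partial_y\phi_n+V_n\cdot\nabla_x\partial_y\phi_n+B_n\partial_y^2\phi_n)\arrowvert_{y=\eta_n}=-(\partial_y P_n\arrowvert_{y=\eta_n}+g).
\end{align*}
Now I would express $\partial_t V_n$ and $\partial_t B_n$ via the chain rule: $\partial_t V_n=(\partial_t\nabla_x\phi_n)\arrowvert_{y=\eta_n}+(\partial_y\nabla_x\phi_n)\arrowvert_{y=\eta_n}\partial_t\eta_n$, and similarly for $\partial_t B_n$ with $\partial_y^2\phi_n$ in place of $\partial_y\nabla_x\phi_n$. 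Differentiating $V_n=(\nabla_x\phi_n)\arrowvert_{y=\eta_n}$ and $B_n=(\partial_y\phi_n)\arrowvert_{y=\eta_n}$ in $x$ also yields
\[
(\nabla_x\nabla_x\phi_n)\arrowvert_{y=\eta_n}=\nabla_xV_n-(\partial_y\nabla_x\phi_n)\arrowvert_{y=\eta_n}\otimes\zeta_n,\quad
(\nabla_x\partial_y\phi_n)\arrowvert_{y=\eta_n}=\nabla_xB_n-\partial_y^2\phi_n\arrowvert_{y=\eta_n}\zeta_n.
\]
Substituting these into the restricted Euler identities and collecting the terms proportional to $\partial_y\nabla_x\phi_n\arrowvert_{y=\eta_n}$ or $\partial_y^2\phi_n\arrowvert_{y=\eta_n}$ respectively, one obtains
\begin{align*}
\partial_t V_n+(V_n\cdot\nabla_x)V_n&=-\nabla_xP_n\arrowvert_{y=\eta_n}+(\partial_t\eta_n+V_n\cdot\zeta_n-B_n)\,\partial_y\nabla_x\phi_n\arrowvert_{y=\eta_n},\\
\partial_t B_n+V_n\cdot\nabla_x B_n&=-(\partial_yP_n\arrowvert_{y=\eta_n}+g)+(\partial_t\eta_n+V_n\cdot\zeta_n-B_n)\,\partial_y^2\phi_n\arrowvert_{y=\eta_n}.
\end{align*}
The conclusion then follows from the key algebraic observation
\[
\partial_t\eta_n+V_n\cdot\zeta_n-B_n=J_n(B_n-V_n\cdot\zeta_n)-(B_n-V_n\cdot\zeta_n)=-(J_n-1)(V_n\cdot\zeta_n-B_n),
\]
where I used~\eqref{DN-VB} and the equation $\partial_t\eta_n=J_nG(\eta_n)(\psi_n,b)$.

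The only real obstacle is purely bookkeeping: the chain-rule relations mix $\nabla_x$ and $\partial_y$ at $y=\eta_n$, so one has to be careful to see that the two cross-terms ($B_n\partial_y\nabla_x\phi_n$ from the Euler equation, and $(V_n\cdot\zeta_n)\partial_y\nabla_x\phi_n$ from the restricted gradient) combine precisely into the factor $V_n\cdot\zeta_n-B_n$ that then pairs with the mollifier defect $J_n-1$ via~\eqref{DN-VB}. All remaining computations are elementary.
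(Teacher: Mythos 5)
Your proof is correct and follows essentially the same route as the paper: apply $\nabla_x$ and Lemma~\ref{der-DN} for (i), and for (ii)--(iii) combine the restricted Euler identities with the chain-rule expressions for $\partial_t V_n$, $\partial_t B_n$, $\nabla_x V_n$, $\nabla_x B_n$, so that the defect $\partial_t\eta_n+V_n\cdot\zeta_n-B_n=-(J_n-1)(V_n\cdot\zeta_n-B_n)$ is precisely what survives. The only cosmetic difference is that you solve the chain-rule relation for the interior second derivative and substitute into Euler, whereas the paper expands $(V_n\cdot\nabla_x)V_n$ directly; the two are algebraically identical.
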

\begin{proof}
(i)\quad We have
$$\partial_t \zeta_n = \nabla_x (\partial_t \eta_n) = J_n\nabla_x\big[ G(\eta)(\psi_n,b)\big],$$
so using Lemma~\ref{der-DN} we obtain
$$\partial_t \zeta_n  = J_n\big[G(\eta_n)(V_n, \nabla_x b)-(V_n \cdot \nabla_x) \zeta_n - (\cnx V_n)  \zeta_n\big],$$
which proves (i).

(ii) We have $B_n = \partial_y \phi_n(t,x, \eta_n(t,x)).$ Therefore,
\begin{equation}\label{eq-Bn1}
 \partial_t B_n = \partial_t \partial_y \phi_n\arrowvert_{y= \eta_n} + \partial_y^2 \phi_n\arrowvert_{y= \eta_n} \partial_t \eta_n .
 \end{equation}
Now, since $v_n = \nabla_{x,y}\phi_n$, the last equation of the system \eqref{euler-n} restricted to $y = \eta_n$ reads
\begin{equation}\label{eq-Bn2}
\partial_t \partial_y \phi_n\arrowvert_{y= \eta_n} + V_n \cdot \nabla_x \partial_y \phi_n\arrowvert_{y= \eta_n} + B_n \partial_y^2 \phi_n\arrowvert_{y= \eta_n} = -(\partial_y P_n\arrowvert_{y= \eta_n} + g).
\end{equation}
On the other hand, we have
\begin{equation}\label{eq-Bn3}
\begin{aligned}
V_n \cdot \nabla_x B_n &= (V_n \cdot \nabla_x)[\partial_y \phi_n(t,x,\eta(t,x))],\\
&= V_n \cdot \nabla_x \partial_y \phi_n\arrowvert_{y = \eta_n} + (V_n\cdot   \zeta_n)\partial_y^2\phi_n\arrowvert_{y = \eta_n}.
\end{aligned}
\end{equation}
 Using \eqref{eq-Bn1}, \eqref{eq-Bn2} and \eqref{eq-Bn3}, we obtain
 $$\partial_t B_n + V_n \cdot \nabla_x B_n = -(\partial_y P_n \arrowvert_{y = \eta_n} +g) + \partial_y^2 \phi_n\arrowvert_{y = \eta_n}\big( V_n \cdot   \zeta_n -B_n + \partial_t \eta_n\big).$$
 Then (ii) follows from the fact that
 \begin{equation}\label{eq-Bn4}
  \partial_t \eta_n = J_n G(\eta_n)(\psi_n,b) = J_n(B_n - (V_n \cdot \nabla_x) \eta_n),
  \end{equation}
 by \eqref{DN-VB}.
 
 (iii) We have $V_n = \nabla_x \phi_n(t,x,\eta(t,x)).$ Therefore,
 \begin{equation}\label{eq-Vn1}
 \partial_t V_n = \partial_t\nabla_x \phi_n\arrowvert_{y = \eta_n} + \partial_y \nabla_x \phi_n\arrowvert_{y = \eta_n}\partial_t \eta_n.
 \end{equation}
 On the other hand,
 \begin{equation}\label{eq-Vn2}
 (V_n \cdot\nabla_x) V_n = (V_n \cdot\nabla_x)\nabla_x \phi_n\arrowvert_{y = \eta_n}+( (V_n \cdot\nabla_x)\eta_n )\nabla_x \partial_y \phi_n\arrowvert_{y = \eta_n}.
 \end{equation}
Now, since $v_n = \nabla_{x,y}\phi_n$, the $d$-first equations of the system \eqref{euler-n} restricted to $y = \eta_n$ 
are written as 
\begin{equation}\label{eq-Vn3}
\partial_t \nabla_x \phi_n \arrowvert_{y = \eta_n}
+ (V_n\cdot\nabla_x) \nabla_x \phi_n \arrowvert_{y = \eta_n}
+ B_n \partial_y \nabla_x \phi_n \arrowvert_{y = \eta_n}= -\nabla_x P_n\arrowvert_{y= \eta_n}.
\end{equation}
Using \eqref{eq-Vn1}, \eqref{eq-Vn2}, \eqref{eq-Vn3}, we obtain
$$
\partial_t V_n + (V_n \cdot\nabla_x) V_n
= -\nabla_x P_n\arrowvert_{y= \eta_n}
+ \nabla_x \partial_y \phi_n\arrowvert_{y = \eta_n}(\partial_t \eta_n + (V_n \cdot\nabla_x)\eta_n -B_n).
$$
Then (iii) follows from \eqref{eq-Bn4}.
\end{proof}
\begin{coro}\label{new-eq2}
We have
\begin{equation*}
\left\{
\begin{aligned}
&\partial_t V_n +gJ_n\zeta_n = (\partial_y P_n \arrowvert_{y = \eta_n} + g\big)\zeta_n   + R_n^{(0)},\\
&    \partial_t \zeta_n - J_n G(0)(V_n, 0) = R_n^{(1)},\\
\end{aligned}
\right.
\end{equation*}
where
 \begin{align*}
R_n^{(0)} =& \, -J_n( V_n\cdot\nabla_x) V_n -(\partial_y \nabla_x\phi_n\arrowvert_{y= \eta_n}) (J_n-1)(V_n\cdot \zeta_n - B_n)    \\
&+ (J_n+1)B_n \nabla_x B_n -\nabla_x B_n J_n B_n   
 - B_n J_n \nabla_x B_n\\  
     &-J_n\nabla_x(B_nV_n\cdot \zeta_n)  
      +\nabla_x(B_n J_nV_n\cdot \zeta_n), 
\end{align*}
\[
R_n^{(1)} = J_n \big[G(\eta_n)(V_n, 0) - G(0)(V_n,0)  
+  G(\eta_n)(0, \nabla_x b) -(\cnx V_n)\zeta_n -(V_n\cdot \nabla_x) \zeta_n\big].
 \]
 \end{coro}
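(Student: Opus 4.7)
\textbf{Proof plan for Corollary~\ref{new-eq2}.} The corollary is an algebraic rearrangement of the equations provided by Proposition~\ref{new-eq} and Lemma~\ref{der-x-y-P}. No new analysis is needed; the whole content is bookkeeping. I would split the argument into two independent parts, one for each equation.

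For the second equation, I start from part (i) of Proposition~\ref{new-eq}:
\[
\partial_t\zeta_n=J_n\bigl[G(\eta_n)(V_n,\nabla_x b)-(V_n\cdot\nabla_x)\zeta_n-(\cnx V_n)\zeta_n\bigr].
\]
I then use the fact that, for each fixed $\eta$, the Dirichlet-Neumann operator $G(\eta)(\cdot,\cdot)$ is linear in its arguments, so that
\[
G(\eta_n)(V_n,\nabla_x b)=G(\eta_n)(V_n,0)+G(\eta_n)(0,\nabla_x b).
\]
Adding and subtracting $J_n G(0)(V_n,0)$ on both sides and using that $G(0)(V_n,0)=a(D_x)V_n$ yields precisely the stated identity, with $R_n^{(1)}$ as defined.

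For the first equation, I start from part (iii) of Proposition~\ref{new-eq}:
\[
\partial_t V_n=-\,(V_n\cdot\nabla_x)V_n-\nabla_x P_n\arrowvert_{y=\eta_n}-\partial_y\nabla_x\phi_n\arrowvert_{y=\eta_n}\bigl((J_n-1)(V_n\cdot\zeta_n-B_n)\bigr),
\]
and substitute the expression for $\nabla_x P_n\arrowvert_{y=\eta_n}$ provided by Lemma~\ref{der-x-y-P}:
\[
\nabla_x P_n\arrowvert_{y=\eta_n}=gJ_n\zeta_n-(\partial_y P_n\arrowvert_{y=\eta_n}+g)\zeta_n+S_n.
\]
Moving $gJ_n\zeta_n$ to the left and $(\partial_y P_n\arrowvert_{y=\eta_n}+g)\zeta_n$ outside the remainder, I obtain
\[
\partial_t V_n+gJ_n\zeta_n=(\partial_y P_n\arrowvert_{y=\eta_n}+g)\zeta_n-(V_n\cdot\nabla_x)V_n-S_n-\partial_y\nabla_x\phi_n\arrowvert_{y=\eta_n}\bigl((J_n-1)(V_n\cdot\zeta_n-B_n)\bigr).
\]
It then remains only to identify the right-hand side (apart from the leading $(\partial_y P_n\arrowvert_{y=\eta_n}+g)\zeta_n$) with $R_n^{(0)}$. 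The key simplification here is that the transport term combines with the first contribution in $S_n$:
\[
-(V_n\cdot\nabla_x)V_n-(J_n-1)(V_n\cdot\nabla_x)V_n=-J_n(V_n\cdot\nabla_x)V_n,
\]
and the remaining terms of $-S_n$ rewrite exactly as the remaining terms in $R_n^{(0)}$.

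There is no genuine obstacle in this proof: the only thing to be careful about is signs and the correct grouping of the six commutator-type expressions coming from $-S_n$. Since everything is a direct consequence of Proposition~\ref{new-eq} and Lemma~\ref{der-x-y-P}, the write-up should be short — essentially the two displayed computations above, followed by the observation that the resulting expression matches the definitions of $R_n^{(0)}$ and $R_n^{(1)}$ given in the statement.
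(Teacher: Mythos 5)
Your proposal is correct and follows exactly the same route as the paper's proof: Proposition~\ref{new-eq}~(i) plus linearity of $G(\eta_n)$ for the second equation, and Proposition~\ref{new-eq}~(iii) combined with Lemma~\ref{der-x-y-P} for the first. The algebraic cancellation $-(V_n\cdot\nabla_x)V_n-(J_n-1)(V_n\cdot\nabla_x)V_n=-J_n(V_n\cdot\nabla_x)V_n$ that you identify is indeed the one needed, and your write-up is actually slightly more explicit than the paper's two-sentence proof.
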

\begin{proof}
The first claim follows from Proposition~\ref{new-eq} (iii) and Lemma~\ref{der-x-y-P}.
The second claim follows from Proposition~\ref{new-eq} (i)  writing
$$G(\eta_n)(V_n, \nabla_x b)= G(0)(V_n,0) + \big[G(\eta_n)(V_n, 0) - G(0)(V_n,0  )\big] +  G(\eta_n)(0, \nabla_x b).$$
\end{proof}
 Recall  that we have set
 $$G(0)(V_n,0) = a(D_x)V_n = \vert D_x \vert \tanh  (h \vert D_x \vert)V_n.$$

\subsubsection{The final equation.}
We shall use Corollary~\ref{new-eq2}   to deduce a new equation on a new unknown with which we shall work. 
  
 Let us set
\begin{equation}\label{def-un}
u_n = \sqrt{g}\zeta_n + i\big(a(D_x)\big)^\mez  V_n.
\end{equation}
Then, with the notations in Corollary~\ref{new-eq2} we have:
\begin{lemm}
The function $u_n$ satisfies the equation
 \begin{equation}\label{eq-fin}
\partial_t u_n + iJ_n \big(ga(D_x)\big)^\mez u_n = \sqrt{g }R_n^{(1)} + i a(D_x) ^\mez\big[(\partial_y P_n \arrowvert_{y = \eta_n} + g)\zeta_n + R_n^{(0)}\big].
\end{equation}
\end{lemm}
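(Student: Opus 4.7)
The proof is a direct computation combining the two evolution equations from Corollary~\ref{new-eq2}, together with the identity $G(0)(V_n,0) = a(D_x)V_n$ from \eqref{a(D)=}. The plan is to differentiate the definition $u_n = \sqrt{g}\,\zeta_n + i\,a(D_x)^{1/2} V_n$ in time, substitute the expressions for $\partial_t \zeta_n$ and $\partial_t V_n$, and then identify the principal part with $-iJ_n(ga(D_x))^{1/2} u_n$.

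Concretely, from Corollary~\ref{new-eq2} one has
\[
\partial_t \zeta_n = J_n\, a(D_x) V_n + R_n^{(1)}, \qquad
\partial_t V_n = -g J_n \zeta_n + \bigl(\partial_y P_n\arrowvert_{y=\eta_n}+g\bigr)\zeta_n + R_n^{(0)}.
\]
Applying $\sqrt{g}$ to the first and $i\, a(D_x)^{1/2}$ to the second, and summing, yields
\[
\partial_t u_n = \sqrt{g}\, J_n\, a(D_x)\, V_n \;-\; i g\, J_n\, a(D_x)^{1/2} \zeta_n \;+\; \sqrt{g}\, R_n^{(1)} \;+\; i\, a(D_x)^{1/2}\Bigl[\bigl(\partial_y P_n\arrowvert_{y=\eta_n}+g\bigr)\zeta_n + R_n^{(0)}\Bigr].
\]

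Next, I would observe that the first two terms on the right are precisely $-iJ_n(ga(D_x))^{1/2} u_n$. Indeed, since $J_n$, $a(D_x)^{1/2}$ and $(ga(D_x))^{1/2}$ are Fourier multipliers they all commute, and
\[
-iJ_n(ga(D_x))^{1/2}\bigl(\sqrt{g}\,\zeta_n + i\, a(D_x)^{1/2} V_n\bigr) = -ig\, J_n\, a(D_x)^{1/2}\zeta_n + \sqrt{g}\, J_n\, a(D_x)\, V_n,
\]
using $(ga(D_x))^{1/2}\cdot a(D_x)^{1/2} = \sqrt{g}\, a(D_x)$. Moving this term to the left-hand side gives exactly the claimed equation \eqref{eq-fin}.

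There is really no obstacle here: the only point to verify carefully is the cancellation of the linear part, which is purely algebraic and rests on the commutation of Fourier multipliers together with the identity $(ga(D_x))^{1/2} a(D_x)^{1/2} = \sqrt{g}\, a(D_x)$. Everything else is a direct substitution from Corollary~\ref{new-eq2} and the definition \eqref{def-un} of $u_n$.
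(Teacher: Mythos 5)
Your proof is correct and is exactly the argument the paper has in mind: the paper's proof simply states that the lemma ``follows immediately from Corollary~\ref{new-eq2}'', and what you wrote out is precisely that immediate computation, namely differentiating $u_n=\sqrt{g}\,\zeta_n+i\,a(D_x)^{1/2}V_n$ in $t$, substituting the two equations of Corollary~\ref{new-eq2} with $G(0)(V_n,0)=a(D_x)V_n$, and recognizing the linear part as $-iJ_n(ga(D_x))^{1/2}u_n$ using that $g>0$ is a constant and Fourier multipliers commute.
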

\begin{proof}
 This follows immediately from Corollary~\ref{new-eq2}.
\end{proof}
Set
$$U_{s,n}(t) = e^{\sigma(t) \langle D_x \rangle}\langle D_x \rangle^{s-\mez} u_n(t), \quad \sigma(t) = \lambda h - K \eps t,$$
where  $K$ is a  large positive constant to be chosen.

Since
$$ e^{\sigma(t)\vert \xi \vert} \leq e^{\sigma(t)\langle \xi \rangle} \leq e^{\sigma(t)}   e^{\sigma(t)\vert \xi \vert} \leq e^{\lambda h} e^{\sigma(t)\vert \xi \vert},$$
and since $\zeta_n$ and $a(D_x)^\mez V_n$ are real valued functions, there exist  two absolute positive constants $C_1, C_2$ 
such that
for all $t\in [0,T]$ and $\mu = 0$ or $\mu = \mez$ 
\begin{equation}\label{equiv-Us-u}
\begin{aligned}
& \Vert \zeta_n(t)\Vert_{\mathcal{H}^{\sigma(t), s-\mez+\mu}} + \Vert a(D_x)^\mez V_n(t)\Vert_{\mathcal{H}^{\sigma(t), s-\mez+\mu}}  \leq C_1\Vert U_{s,n}(t)\Vert_{H^\mu},\\
&\Vert U_{s,n}(t)\Vert_{H^\mu} \leq C_2\big(\Vert \zeta_n(t)\Vert_{\mathcal{H}^{\sigma(t), s-\mez+\mu}} + \Vert a(D_x)^\mez V_n(t)\Vert_{\mathcal{H}^{\sigma(t), s-\mez+\mu}}\big).
\end{aligned}
\end{equation}

Fix two real numbers $h>0$ and $\lambda \in [0,1)$. Consider initial data $(\eta_0,\psi_0)$ such that 
$$
 \Vert   \eta_0  \Vert_{\mathcal{H}^{\lambda h,s+ \mez}}  +  \Vert   a(D_x)^\mez \psi_0  \Vert_{\mathcal{H}^{\lambda h,s}} 
+     \Vert   V_0  \Vert_{\mathcal{H}^{\lambda h,s}}  +  \Vert  B_0 \Vert_{\mathcal{H}^{\lambda h,s}}<+\infty,
$$
where, as above,
$$
B_0=\frac{G(\eta_0)\psi_0+\nabla_x\eta_0\cdot\nabla_x\psi_0}{1+|\nabla_x\eta_0|^2},\quad 
V_0=\nabla_x\psi_0-B_0\nabla_x\eta_0.
$$
Notice that, when $\psi_0=0$, the functions $V_0$ and $B_0$ vanish and hence the previous assumption is satisfied whenever $\eta_0\in \mathcal{H}^{\lambda h,s+\mez}$. 
Recall that we denote by $T_n$ the 
lifespan of the approximate solution $(\eta_n,\psi_n)$ and that 
we denote by $\overline{\eps}$ the constant determined by means of Theorem~\ref{est-DN}. 
We are now in position to state our main Sobolev estimates. 
For this we introduce some notations. We set
\begin{equation}\label{Nsb}
\left\{
\begin{aligned}
&N_s(b) =   \Vert  b \Vert_{L^\infty(\xR,H^{s+ \mez})} + \Vert  \partial_t  b \Vert_{L^\infty(\xR,H^{s- \mez})} +\Vert b \Vert_{L^1(\xR, H^{s+ \mez})},\\
& M_{s,n} (T) =   \Vert   \eta_n  \Vert_{X_T^{\infty,s+ \mez}}  +  \Vert   a(D_x)^\mez \psi_n \Vert_{X_T^{\infty,s}} +     \Vert  V_n  \Vert_{X_T^{\infty,s}}  
+  \Vert  B_n \Vert_{X_T^{\infty,s}}
\end{aligned}
\right.
\end{equation}
where
\[
X_T^{\infty,s} = L^\infty([0,T], \mathcal{H}^{\sigma(\cdot), s})\quad\text{with}\quad \sigma(t) = \lambda h - K \eps t.
\]

\begin{prop}\label{Prop:C0}
Fix $g>0, h>0, 0<\lambda <1$ and  $s>5/2+d/2$. Set
\begin{equation}\label{defi:epsnorm}
\eps=\Vert   \eta_0  \Vert_{\mathcal{H}^{\lambda h,s+ \mez}}  +  \Vert   a(D_x)^\mez \psi_0  \Vert_{\mathcal{H}^{\lambda h,s}} 
+     \Vert   V_0  \Vert_{\mathcal{H}^{\lambda h,s}}  +  \Vert  B_0 \Vert_{\mathcal{H}^{\lambda h,s}} + N_s(b).
\end{equation}
There exists a constant $C_0\ge 1$ such that for all $n\in \xN\setminus\{0\}$, 
for all $T\le T_n$ and for all $K>0$  
the norm $M_{s,n} (T)$ 
satisfies the following inequality: if $M_{s,n}(T)\le \overline{\eps}$, then 
\begin{equation}\label{a-priori30}
\begin{split}
& M_{s,n}(T)^2 + 2K\eps\int_0^T\Vert U_{s,n}(t)\Vert^2_{H^\mez}\, dt    \\
 \leq &\,C_0 \Big(  M_{s,n}(T)\int_0^T \Vert U_{s,n}(t)\Vert^2_{H^\mez}\, dt  + \frac{1}{K\eps}h_\eps(T)
 +(1+ T^2)M_{s,n}(T)^4 + \eps^2\Big),
 \end{split}
 \end{equation}
where
\begin{equation}\label{heps}
 h_\eps(T)= TM_{s,n}(T)^4+   T^5M_{s,n}(T)^{8} + \eps^2 T  M_{s,n}(T)^2.
 \end{equation}
\end{prop}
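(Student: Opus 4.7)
The proof is a weighted energy estimate for the diagonalized unknown $u_n=\sqrt{g}\,\zeta_n+i(a(D_x))^{1/2}V_n$ of \eqref{def-un}. The main idea is to apply the ``analytic multiplier'' $\Lambda(t)=e^{\sigma(t)\langle D_x\rangle}\langle D_x\rangle^{s-\mez}$ to the evolution equation \eqref{eq-fin} and exploit the fact that, since $\sigma'(t)=-K\eps$,
\[
\partial_t U_{s,n}=\Lambda(t)\partial_t u_n-K\eps\,\langle D_x\rangle U_{s,n}.
\]
Taking the real part of the $L^2$ inner product with $U_{s,n}$, the operator $iJ_n(ga(D_x))^{1/2}$ contributes nothing (it is self-adjoint and $J_n$ commutes with the Fourier multiplier $\Lambda(t)$), while the weight contribution yields the \emph{coercive} dissipative term $2K\eps \lA U_{s,n}\rA_{H^\mez}^2$ that appears in~\eqref{a-priori30}. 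This is precisely the mechanism responsible for the linear loss of analyticity radius.

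\textbf{Step 1: the $u_n$ energy identity.} From~\eqref{eq-fin} and the above observation one gets, after pairing with $U_{s,n}$ and integrating in time,
\[
\lA U_{s,n}(T)\rA_{L^2}^2+2K\eps\!\int_0^T\!\!\lA U_{s,n}\rA_{H^\mez}^2 dt
=\lA U_{s,n}(0)\rA_{L^2}^2+2\RE\!\int_0^T\!\!\big(\Lambda(t)\mathcal{R}_n,U_{s,n}\big)_{L^2} dt,
\]
where $\mathcal{R}_n=\sqrt{g}R_n^{(1)}+ia(D_x)^\mez\big[(\partial_y P_n\arrowvert_{y=\eta_n}+g)\zeta_n+R_n^{(0)}\big]$. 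Thanks to the equivalence~\eqref{equiv-Us-u} the left-hand side controls the $\zeta_n,V_n$-contributions to $M_{s,n}(T)^2$ and to the time-integrated $H^\mez$ norm.

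\textbf{Step 2: controlling the right-hand side.} The source $\mathcal{R}_n$ splits into three kinds of contributions, each estimated by combining the elliptic theory of Sections~\ref{S:Elliptic}--\ref{S:DN} with the product/commutator lemmas from Appendix~\ref{AppendixA}.
\emph{(a) The transport-type terms} in $R_n^{(0)}$ and $R_n^{(1)}$ (e.g.\ $J_n(V_n\cdot\nabla_x)V_n$, $(V_n\cdot\nabla_x)\zeta_n$, $(\cnx V_n)\zeta_n$) are quasilinear: after pairing with $\Lambda(t)U_{s,n}$ they do \emph{not} gain a full derivative, but the symmetrization of the top-order commutator (together with the tame product estimate in analytic spaces) produces exactly the factor $M_{s,n}(T)\int_0^T\lA U_{s,n}\rA_{H^\mez}^2 dt$ in~\eqref{a-priori30}.
\emph{(b) The Dirichlet--Neumann correction $G(\eta_n)(V_n,0)-G(0)(V_n,0)$ and the term $G(\eta_n)(0,\nabla_x b)$} are handled by Corollary~\ref{estG0-lip} and Theorem~\ref{est-DN}; the first contributes a factor $\lA\eta_n\rA_{\mathcal{H}^{\lambda h,s+\mez}}\lA V_n\rA_{\mathcal{H}^{\lambda h,s}}\les M_{s,n}(T)^2$, while the second is responsible, through $N_s(b)$, for the terms involving $\eps^2 T$ inside $h_\eps(T)$.
\emph{(c) The commutators with $J_n$} appearing in the expression of $R_n^{(0)}$ (such as $[B_n,J_n](\cdot)$ and $J_n\nabla_x(B_nV_n\cdot\zeta_n)-\nabla_x(B_nJ_nV_n\cdot\zeta_n)$) are semilinear of degree four, giving rise to the $(1+T^2)M_{s,n}(T)^4$ contribution.

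\textbf{Step 3: the Rayleigh--Taylor coefficient.} The factor $\partial_y P_n\arrowvert_{y=\eta_n}+g$ multiplying $\zeta_n$ in $\mathcal{R}_n$ is analyzed from~\eqref{def-Qn}--\eqref{eq-Qn}: writing $\partial_y P_n\arrowvert_{y=\eta_n}+g=-\partial_y\vert\nabla_{x,y}\phi_n\vert^2\arrowvert_{y=\eta_n}+\partial_y Q_n\arrowvert_{y=\eta_n}$, using Lemma~\ref{der2phi} for the first summand and Corollary~\ref{regul-ouf} together with~\eqref{press3} and Corollary~\ref{est-phih} for $Q_n$, one bounds this coefficient in $\mathcal{H}^{\lambda h,s-1}$ by $C(M_{s,n}(T)^2+\eps N_s(b))$. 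Pairing with the prefactor $a(D_x)^{1/2}\zeta_n$ then reproduces a cubic/quartic contribution that fits in the $(1+T^2)M_{s,n}(T)^4$ and $\frac{1}{K\eps}h_\eps(T)$ buckets after use of Young's inequality $|ab|\le\frac{1}{2K\eps}a^2+\frac{K\eps}{2}b^2$.

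\textbf{Step 4: closing the remaining unknowns in $M_{s,n}$.} The variable $u_n$ only controls $\nabla_x\eta_n$ and $a(D_x)^{1/2}V_n$; to bound the remaining contributions in $M_{s,n}(T)$ I would argue as follows. The zero frequency of $\eta_n$ is estimated directly from~\eqref{ww1'} (integration in time of the mean of $\partial_t\eta_n=J_nG(\eta_n)(\psi_n,b)$). The trace $a(D_x)^{1/2}\psi_n$ is recovered from $V_n$, $B_n$ and $\zeta_n$ via $\nabla_x\psi_n=V_n+B_n\zeta_n$ together with the inversion of $a(D_x)$ up to a harmless constant. Finally, the evolution of $B_n$ is given by Proposition~\ref{new-eq}(ii); a parallel (but simpler) energy argument for $\Lambda(t)B_n$, where the pressure term is estimated as in Step~3, produces an $H^\mez$-coercive dissipation of the same form and closes the $\lA B_n\rA_{\mathcal{H}^{\lambda h,s}}$ contribution to $M_{s,n}(T)$.

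\textbf{Main obstacle.} The delicate point is Step~2(a): controlling the quasilinear transport commutator without creating an uncontrolled half-derivative loss. The saving is that the ``bad'' top-order commutator has a symmetric structure after one uses the equations on both $\zeta_n$ and $V_n$ simultaneously (which is exactly why we work with the diagonalized variable $u_n$), so that only a harmless term $M_{s,n}(T)\lA U_{s,n}\rA_{H^\mez}^2$ remains. Keeping track of the tame dependence in $\eta$ (an $\mathcal{H}^{\lambda h,s+\mez}$ norm multiplying only the $\mathcal{H}^{\lambda h,s-\mez}$ norms of the other unknowns), as provided by Theorem~\ref{regul-fin'} and Theorem~\ref{G-lip}, is essential for preventing artificial $M_{s,n}(T)^3$ contributions where only $M_{s,n}(T)^4$ are allowed on the right-hand side of~\eqref{a-priori30}.
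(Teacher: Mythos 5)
Your overall strategy matches the paper's: diagonalize with $u_n=\sqrt{g}\,\zeta_n+i\,a(D_x)^{1/2}V_n$, apply the analytic weight $e^{\sigma(t)\langle D_x\rangle}\langle D_x\rangle^{s-1/2}$, exploit that the linear dispersive part is skew-adjoint so it drops from the energy, and extract the dissipative term $2K\eps\lA U_{s,n}\rA^2_{H^{1/2}}$ from $\sigma'(t)=-K\eps$. Steps~1--3 are in the spirit of the paper's argument (Proposition~\ref{est-restes} and Corollary~\ref{est-fin-R}). However, there are two genuine departures that either reveal a misconception or would leave the proof incomplete.

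First, your ``Main obstacle'' paragraph attributes the control of the transport terms to a quasilinear commutator symmetrization made possible by the diagonalized variable. That is not the mechanism here. The diagonalization serves only to render the linear principal part $\big(\begin{smallmatrix}0 & a(D)\\ -g & 0\end{smallmatrix}\big)$ a purely imaginary Fourier multiplier which vanishes in the energy; the nonlinear transport remainders in $R^{(0)}$ and $R^{(1)}$ are then treated \emph{perturbatively}, via the product laws in $\mathcal{H}^{\sigma,s}$, and the half-derivative they lose is \emph{absorbed by the dissipation $K\eps\lA U_s\rA_{H^{1/2}}^2$} generated by the shrinking analyticity radius. There is no commutator symmetrization and the paper never invokes one in this section. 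In fact the whole point of working in analytic spaces with a linearly shrinking strip is precisely to avoid the delicate paradifferential symmetrization that would be required in a purely Sobolev framework. Your argument, as written, would send you hunting for a cancellation that is neither needed nor present in this form.

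Second, Step~4 recovers $B_n$ via a parallel energy estimate on Proposition~\ref{new-eq}(ii). The paper does something essentially different and simpler: it recovers $B_n$ \emph{algebraically} from $V_n$ through the divergence identity \eqref{GB=V}, $G(\eta)(B,-\Delta_x\phi_h)=-\cnx V$, inverting $a(D_x)$ on nonzero frequencies and separately controlling the low frequencies (Lemma just before Corollary~\ref{est-B-V2}, and Corollary~\ref{est-B-V2}). An energy estimate for $B_n$ is not obviously closed, since the Rayleigh--Taylor coefficient on the right-hand side of the $B_n$-equation itself involves $B_n$ through the pressure, and the transport term $V_n\cdot\nabla_x B_n$ has no skew structure to exploit; you would have to go through commutator estimates again and re-derive the pressure bounds a second time. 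More seriously, your proposal omits entirely the \emph{low-frequency estimates} (Lemma~\ref{basseF}), which are indispensable at this stage: the elliptic bound for $\Delta_x\phi_h$ (Corollary~\ref{est-phih}) requires an $H^1$ control of $B$, and the high-frequency inversion of $a(D_x)$ leaves the zero frequency uncontrolled; without Lemma~\ref{basseF} the bootstrap on $M_{s,n}(T)$ does not close, regardless of whether you take the elliptic or the energy route for $B_n$.
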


The proof of this proposition is postponed to Section \ref{S:proofPC0}.

Let us assume Proposition~\ref{Prop:C0} for the moment. 
Our goal in the end of this paragraph is to explain how to deduce some uniform estimates for the approximate solutions on a large time interval.

\begin{coro}\label{coro-c*}
Fix $g>0, h>0, 0<\lambda <1$ and  $s>5/2+d/2$. There exist four positive real numbers $\eps_*$, $c_*$, $C_*$ and $K_*$ such that for all $n\in \xN\setminus\{0\}$, 
the following properties hold: if the initial norm $\eps$ (as defined 
by~\e{defi:epsnorm}) satisfies $\eps\le \eps_*$, then for all $n\in \xN\setminus\{0\}$, the lifespan is bounded from below by
$$
T_n\ge \frac{c_*}{\eps},
$$
and moreover,
$$
M_{s,n}\Big(\frac{c_*}{\eps}\Big)\le C_*\eps,
$$
where the norm $M_{s,n}$ has been  defined in~\eqref{Nsb}, with $K$ replaced by $K_*$.
\end{coro}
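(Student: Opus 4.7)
The plan is to deduce Corollary~\ref{coro-c*} from the a priori estimate \eqref{a-priori30} by a standard continuity (bootstrap) argument combined with the blow-up alternative \eqref{A4}. The heart of the matter is to choose the four constants $\eps_*, c_*, C_*, K_*$ in a suitable order so that, under the bootstrap ansatz $M_{s,n}(T) \le C_*\eps$, the right-hand side of \eqref{a-priori30} strictly improves the bound to $M_{s,n}(T) \le C_*\eps/2$. First fix $C_* \ge 4\sqrt{C_0}$, then $K_* \ge 2 C_0 C_*$, so that under the bootstrap assumption one has
\[
C_0 M_{s,n}(T) \int_0^T \|U_{s,n}(t)\|_{H^\mez}^2\, dt \le K_*\eps\int_0^T \|U_{s,n}(t)\|_{H^\mez}^2\, dt,
\]
and the mixed term in \eqref{a-priori30} can be absorbed into the left-hand side. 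Finally pick $c_*$ small (in particular $c_* \le \lambda h/K_*$, so that $\sigma(t) = \lambda h - K_*\eps t > 0$ for $t \in [0, c_*/\eps]$) and $\eps_*$ small enough that $C_*\eps_* \le \overline{\eps}$ (the threshold making Proposition~\ref{Prop:C0} applicable throughout the bootstrap range) and such that the initial bound $M_{s,n}(0) \le C_*\eps/2$ holds.

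The bootstrap step runs as follows. For each $n$, set
\[
T_n^* = \sup\bigl\{ T \in [0,T_n)\cap [0, c_*/\eps] : M_{s,n}(T) \le C_*\eps \bigr\},
\]
which is positive thanks to the initial bound and to the continuity of $T \mapsto M_{s,n}(T)$ (a consequence of $f_n \in C^0([0,T_n); \mathcal{H}^{\lambda h, \mu})$ for any $\mu \ge 0$, by Lemma~\ref{lemm-Tn}). After absorbing the mixed term, \eqref{a-priori30} reduces on $[0, T_n^*]$ to
\[
M_{s,n}(T)^2 \le C_0 \Big( (1+T^2) M_{s,n}(T)^4 + (K_*\eps)^{-1}h_\eps(T) + \eps^2 \Big).
\]
Using $M_{s,n}(T) \le C_*\eps$ and $T \le c_*/\eps$, every term on the right is a finite combination of monomials of the form $c_*^j C_*^k K_*^{-\ell}\eps^2$ plus $C_0\eps^2$. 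For instance $(1+T^2)M_{s,n}(T)^4 \le (C_*^4\eps^2 + c_*^2 C_*^4)\eps^2$, while the three summands of $h_\eps(T)/(K_*\eps)$ are respectively $(c_*C_*^4/K_*)\eps^2$, $(c_*^5 C_*^8/K_*)\eps^2$ and $(c_*C_*^2/K_*)\eps^2$. Since $C_*$ and $K_*$ have already been chosen with $C_*^2 \ge 16 C_0$, taking $c_*$ small and possibly shrinking $\eps_*$ makes the bracket bounded by $C_*^2/(4 C_0)\eps^2$, yielding the strict improvement $M_{s,n}(T) \le C_*\eps/2$ on $[0, T_n^*]$. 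Continuity of $M_{s,n}$ then forces $T_n^* = \min(T_n, c_*/\eps)$. If $T_n \le c_*/\eps$, \eqref{A4} would give $\limsup_{t \to T_n^-} \|f_n(t)\|_{L^2} = +\infty$, contradicting the uniform bound $\|f_n(t)\|_{L^2} \les M_{s,n}(t) \le C_*\eps/2$ on $[0, T_n^*]$ (using that $\mathcal{H}^{\sigma(t),s}$ embeds in $L^2$ since $\sigma(t) \ge 0$). Hence $T_n > c_*/\eps$ and $M_{s,n}(c_*/\eps) \le C_*\eps$.

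No genuinely new difficulty arises: Proposition~\ref{Prop:C0} delivers the a priori estimate in exactly the shape required by the bootstrap. The only points of care are (i) the correct ordering of the five constants $(C_0, C_*, K_*, c_*, \eps_*)$, where absorption fixes $K_*$ in terms of $C_*$ while all the nonlinear smallness is then absorbed into $c_*$ and $\eps_*$, and (ii) the initialization $M_{s,n}(0) \le C_*\eps/2$. This last point is mildly subtle because $B_{0,n}$ and $V_{0,n}$ are nonlinear functions of the mollified data $J_n\eta_0, J_n\psi_0$, not simple mollifications of $B_0, V_0$; one must therefore apply Theorem~\ref{est-DN} to $(J_n\eta_0, J_n\psi_0)$ and check that the resulting bound is uniform in $n$, which follows from the fact that $J_n$ is bounded by $1$ on every $\mathcal{H}^{\lambda h, \mu}$.
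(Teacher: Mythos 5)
Your proof is correct and follows essentially the same route as the paper: a bootstrap (continuity) argument driven by the a priori estimate of Proposition~\ref{Prop:C0}, with the four constants chosen in the order $C_*$, then $K_*$ (to absorb the mixed integral term), then $c_*$ and $\eps_*$ (to shrink the nonlinear remainders), and with the blow-up alternative \eqref{A4} closing the argument. The only genuine difference is that your final paragraph flags an initialization subtlety the paper elides: the paper simply asserts $M_{s,n}(0)\le\eps$, but since $V_n(0)$ and $B_n(0)$ are nonlinear (and nonlocal, via $G(J_n\eta_0)$) functions of the mollified data rather than mollifications of $V_0,B_0$, this needs a small argument. Your observation is sound, but the proposed remedy as stated is a bit short: Theorem~\ref{est-DN} alone only controls $G(J_n\eta_0)(J_n\psi_0,b)$ in $\mathcal{H}^{\lambda h,s-\mez}$, half a derivative below what is needed for $\lA B_n(0)\rA_{\mathcal{H}^{\lambda h,s}}$; one would also need the auxiliary mechanism of Section~\ref{S:proofPC0} (the identity $\cnx V=-G(\eta)(B,-\Delta_x\phi_h)$ and the lemma estimating $B$ through $V$), or else show directly that $V_n(0)-V_0$ and $B_n(0)-B_0$ are small. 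In any event, the fix only changes $C_*$ by a harmless universal constant and does not affect the structure of the bootstrap, so the proof stands.
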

\begin{proof}
Fix $\mu$ such that $\mu \geq 16$ and $  \mu C_0\geq 1.$ Then set
\begin{equation}\label{param}
 C_*= \sqrt{\mu C_0}\geq 1,\quad \eps_*
 = \frac{\overline{\eps}}{4 C^2_*},\quad  c_*=\frac{1}{C_*^4}\leq 1,\quad  K_* \geq \text{max}\Big(3, \frac{1}{\mu} C_*^3\Big), 
 \end{equation}
where $C_0$ is the constant whose existence 
is the main assertion of Proposition~\ref{Prop:C0}.

Hereafter we assume that $\eps\le\eps_*$. 
Given $n\in \xN\setminus\{0\}$, let us introduce the interval
$$
I_n=\Big[0,\min\Big\{T_n,\frac{c_*}{\eps}\Big\}\Big].
$$
We prove that, for all $n\in \xN\setminus\{0\}$,
\be\label{claim:uni}
\forall T\in I_n,\quad M_{s,n}(T)\le C_*\eps.
\ee
Notice that if \e{claim:uni} holds, then it follows   that $T_n\ge c_*/\eps$ in light of the alternative~\e{A4}. 
In particular, 
we can apply \e{claim:uni} for $T=c_*/\eps$, which will give the wanted result.

It remains to prove~\e{claim:uni}. 
To do so, introduce the set $$J_n=\{T\in I_n\, :\, M_{s,n}(T)\le C_*\eps\}.$$ 
With this notation, we prove that $J_n=I_n$. 
Notice that $0\in J_n$ since $M_{s,n}(0)\le \eps$ and $C_*\ge 1$. Since $T\mapsto M_{s,n}(T)$ is continuous, the set $J_n$ is closed. 
Hence, to conclude the proof, it is sufficient to prove that $J_n$ is open. This is turn will be a straightforward corollary of the following claim:
\begin{equation}\label{claimcontinuity}
\forall T\in J_n,\quad M_{s,n}(T)\le \frac{1}{2}C_*\eps.
\end{equation}
Let us prove this claim. To do so, we will exploit \eqref{a-priori30}. Since $\eps\le \eps_*=\overline{\eps}/(4C_*)$, notice that if $M_{s,n}(T)\le C_*\eps$, then we automatically obtain that 
$M_{s,n}(T)\le \overline{\eps}$. Then we are in position to apply the estimate~\eqref{a-priori30}. We use \eqref{claim:uni} and the fact that $T_n \leq \frac{c_*}{\eps}.$ Recall from \eqref{a-priori30} that
\begin{align*}
 & M_{s,n}(T)^2 + 2K\eps\int_0^T\Vert U_{s,n}(t)\Vert^2_{H^\mez}\, dt  \\
   \leq &\, \frac{1}{\mu}C_*^2 \left( C_* \eps\int_0^T \Vert U_{s,n}(t)\Vert^2_{H^\mez}\, dt  + \frac{1}{K\eps}h_\eps(T)
 +(1+ T^2)M_{s,n}(T)^4 + \eps^2\right).
 \end{align*}
Since $\frac{1}{\mu}C_*^3 \leq K_*$, we can absorb the integral term in the right hand side into the left hand side.
Now according to \eqref{heps}, we have
$$\frac{1}{K_*\eps}h_\eps(T) \leq \frac{1}{K_*}\big(c_* C_*^4 + c_*^5 C_*^8 + c_* C_*\big) \eps^2.$$
Since $c_* = \frac{1}{C^4_*},$  $C_* \geq 1$ and $K_* \geq 3$, we obtain
$$\frac{1}{K_*\eps}h_\eps(T) \leq \frac{3}{K_*}  \eps^2 \leq \eps^2.$$
On the other hand, by \eqref{param} we have for $\eps \leq \eps_*$,
 $$(1 + T_n^2)M_{s,n}(T)^4 + \eps^2\leq (\eps^2 + c_*^2)C^4_*\eps^2 + \eps^2 \leq 3 \eps^2.$$
 It follows that
\begin{align*}
 M_{s,n}(T)^2  \leq \frac{4}{\mu} C_*^2\eps^2 \leq \frac{1}{4} C_*^2\eps^2,
\end{align*}
 since $\mu \geq 16.$
This completes the proof of the claim~\eqref{claimcontinuity}.
\end{proof}

\subsection{Proof of Proposition~\ref{Prop:C0}.}\label{S:proofPC0}
We fix $d\ge 1$, $g>0$, $\lambda<1$, $h>0$, $s>5/2+d/2$.

To simplify notations, the indexes $n$ will be skipped: we fix an integer 
$n$ in $\xN\setminus\{0\}$ and denote simply by $(\eta,\psi)$ the 
solutions to the approximate system~\e{ww1'}. 
We shall denote by   $C$ many different constants, 
whose values may  change  from a  line to another, and which 
depend only on the parameters which are considered fixed (that is 
$d,\lambda,h$ and $s$). 
In particular, these constants are independent of $T,K,\overline{\eps}$ and $n$.

We set 
\begin{align*}
& I = [0,T],    \\  
 &\sigma(t) = \lambda h - K \eps t,  \quad t \in [0,T], \quad \eps>0, \quad   K>0 \quad  \text{(to be chosen)}, \\  
 &a(D_x)  = \vert D_x \vert \tanh (h \vert D_x \vert)\,  (= G_0(0)),\\
 &X^{\infty,s} = L^\infty(I, \mathcal{H}^{\sigma(\cdot), s}),\\
&M_s (T) =   \Vert   \eta  \Vert_{X^{\infty,s+ \mez}}  +  \Vert   a(D_x)^\mez \psi  \Vert_{X^{\infty,s}} +     \Vert   V  \Vert_{X^{\infty,s}}  +  \Vert  \B \Vert_{X^{\infty,s}},\\
& N_s(b) 
=   \Vert  b \Vert_{L^\infty(\xR,H^{s+ \mez})} + \Vert  \partial_t  b \Vert_{L^\infty(\xR,H^{s- \mez})} 
+\Vert b \Vert_{L^1(\xR, H^{s+ \mez})},\\
&U_{s,n}(t) = e^{\sigma(t)\langle D_x \rangle} \langle D_x\rangle^{s-\mez}u_n.
\end{align*}
Recall (see \eqref{E-F}) that
\begin{align*}
E^{\lambda,\mu} &= \{u: e^{\lambda z\vert D_x \vert} u 
\in C_z^0([-h,0], \mathcal{H}^{\lambda h,\mu}(\xT^d)\},\\
 F^{\lambda,\mu} &=  \{u: e^{\lambda  z\vert D_x \vert} u \in L_z^2((-h,0), 
 \mathcal{H}^{\lambda h,\mu}(\xT^d)\}.
\end{align*}
In what follows,  we fix  $t \in [0,T]$  and write 
$\sigma(t) = \lambda(t)h, $ where  
$$\lambda(t)= \lambda - \frac{K \eps t}{h}  \leq \lambda <1.$$  
Recall the basic hypotheses  made in Proposition~\ref{Prop:C0}:  
\begin{equation}\label{hypot}
\begin{aligned}
  &\text{(i)}\quad \Vert   \eta_0  \Vert_{\mathcal{H}^{\lambda h,s+ \mez}}  +  \Vert   a(D_x)^\mez \psi_0  \Vert_{\mathcal{H}^{\lambda h,s}} 
+     \Vert   V_0  \Vert_{\mathcal{H}^{\lambda h,s}}  +  \Vert  B_0 \Vert_{\mathcal{H}^{\lambda h,s}} + N_s(b) = \eps \leq 1, \\
 &\text{(ii)}\quad M_s(T)\leq \overline{\eps}\leq 1.
\end{aligned}
\end{equation}

We begin by preliminaries. The goal of the two following sections  is to show that it is sufficient to have estimates on $V$ to control the terms   $B$ and $\psi$.

  \subsubsection{ Estimates of $\psi$ and $B$.}
  \begin{lemm}
     There exists  $C>0$ such that if $M_s(T)\leq \overline{\eps}\leq 1$,  then for all $t \in [0,T]$,
     \begin{align*}
    &\text{\rm{(i)}}\quad  \Vert B(t) \Vert_{\mathcal{H}^{\sigma,s+ \mez} }\leq  C \big(\Vert V(t) \Vert_{\mathcal{H}^{\sigma,s+ \mez} } + \Vert b(t) \Vert_{ H^{s + \mez}} + \Vert B(t) \Vert_{ H^1 } \big),\\
      &\text{\rm{(ii)}}\quad     \Vert B(t) \Vert_{\mathcal{H}^{\sigma,s} }\leq  C \big(\Vert V(t) \Vert_{\mathcal{H}^{\sigma,s} } + \Vert b(t) \Vert_{ H^{s}} + \Vert B(t) \Vert_{ H^1 } \big).
          \end{align*}
        \end{lemm}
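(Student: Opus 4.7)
The plan is to invert the identity~\eqref{GB=V}, which reads
$$G(\eta)(B,-\Delta_x\phi_h)=-\cnx V,\quad\text{with }\phi_h=\phi\arrowvert_{y=-h},$$
and to exploit the fact that the Dirichlet–Neumann operator $G(\eta)$ is an elliptic operator of order one whose principal part is $a(D_x)=G(0)(\cdot,0)$. Writing $G(\eta)(B,-\Delta_x\phi_h)=G(\eta)(B,0)+G(\eta)(0,-\Delta_x\phi_h)$, then splitting $G(\eta)(B,0)=a(D_x)B+[G(\eta)(B,0)-G(0)(B,0)]$, we obtain the key identity
$$
a(D_x)B=-\cnx V-G(\eta)(0,-\Delta_x\phi_h)-\bigl[G(\eta)(B,0)-G(0)(B,0)\bigr].
$$
Since $a(D_x)$ has symbol $|\xi|\tanh(h|\xi|)$, which behaves like $|\xi|$ at high frequencies and like $h|\xi|^2$ at low frequencies, we have the elementary Fourier estimate
$$
\lA u\rA_{\mathcal{H}^{\sigma,r+1}}\le C\bigl(\lA a(D_x)u\rA_{\mathcal{H}^{\sigma,r}}+\lA u\rA_{L^2}\bigr),\quad r\in\xR,
$$
which I will apply with $r=s-\mez$ for (i) and $r=s-1$ for (ii).

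For (i), applied to $u=B$ and $r=s-\mez$, the plan is then to bound each of the three terms on the right above in the $\mathcal{H}^{\sigma,s-\mez}$-norm. The divergence term gives directly $\lA \cnx V\rA_{\mathcal{H}^{\sigma,s-\mez}}\le C\lA V\rA_{\mathcal{H}^{\sigma,s+\mez}}$. For the second term, Theorem~\ref{est-DN} (applied with $\psi=0$ and source $-\Delta_x\phi_h\in H^{s-\mez}$) yields
$$
\lA G(\eta)(0,-\Delta_x\phi_h)\rA_{\mathcal{H}^{\sigma,s-\mez}}\le C\bigl(\lA\Delta_x\phi_h\rA_{H^{s-\mez}}+\lA\eta\rA_{\mathcal{H}^{\sigma,s+\mez}}\lA\Delta_x\phi_h\rA_{H^{s-\tdm}}\bigr),
$$
and Corollary~\ref{est-phih} (with $\mu=s-\mez$) controls the right-hand side by $C\lA b\rA_{H^{s+\mez}}$ plus lower-order contributions of $b$ and $B$ in $H^1$. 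For the third term, Corollary~\ref{estG0-lip} gives
$$
\lA (G(\eta)-G(0))(B,0)\rA_{\mathcal{H}^{\sigma,s-\mez}}\le C\bigl(\lA\eta\rA_{\mathcal{H}^{\sigma,s+\mez}}\lA a(D_x)^\mez B\rA_{\mathcal{H}^{\sigma,s-\mez}}+\lA\eta\rA_{\mathcal{H}^{\sigma,s}}\lA a(D_x)^\mez B\rA_{\mathcal{H}^{\sigma,s}}\bigr).
$$
Using \eqref{nabla-G} and the fact that $a(D_x)^\mez$ is of order $\mez$ at high frequency, we have $\lA a(D_x)^\mez B\rA_{\mathcal{H}^{\sigma,s}}\le C\lA B\rA_{\mathcal{H}^{\sigma,s+\mez}}$. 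Since $\lA\eta\rA_{\mathcal{H}^{\sigma,s}}\le M_s(T)\le\overline{\eps}$, this last contribution can be absorbed into the left-hand side provided $\overline{\eps}$ is chosen small enough. The remaining pieces combine to give (i).

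For (ii), the argument is identical, with the index shifted down by $\mez$: we apply the elliptic estimate with $r=s-1$, and now all the auxiliary estimates (Theorem~\ref{est-DN}, Corollaries~\ref{est-phih} and~\ref{estG0-lip}) are invoked at one order lower in regularity; here the smallness of $\eta$ ensures that the analogous term $\lA\eta\rA_{\mathcal{H}^{\sigma,s}}\lA a(D_x)^\mez B\rA_{\mathcal{H}^{\sigma,s-\mez}}$ produced by Corollary~\ref{estG0-lip} applied at this lower order is again absorbable. The main point to keep track of is that the $\mathcal{H}^{\sigma,s+\mez}$-norm of $\eta$ (which would be a top-order quantity from the water-wave viewpoint) only multiplies the lower-order quantity $\lA B\rA_{\mathcal{H}^{\sigma,s-\mez}}$ on the right, so no derivative loss occurs. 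The low-frequency residue produced by the elliptic estimate is controlled by $\lA B\rA_{L^2}\le \lA B\rA_{H^1}$, yielding the stated bounds.
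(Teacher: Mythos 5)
Your proposal is correct and follows essentially the same route as the paper: both start from the identity $-\cnx V=G(\eta)(B,-\Delta_x\phi_h)$, isolate $a(D_x)B$, invert $a(D_x)$ at high frequency (the paper via an explicit cut-off $\chi(D)$, you via the equivalent Fourier estimate $\lA u\rA_{\mathcal{H}^{\sigma,r+1}}\lesssim\lA a(D_x)u\rA_{\mathcal{H}^{\sigma,r}}+\lA u\rA_{L^2}$), bound the source term by Theorem~\ref{est-DN} together with Corollary~\ref{est-phih}, bound the contraction term $(G(\eta)-G(0))(B,0)$ by Theorem~\ref{G-lip}/Corollary~\ref{estG0-lip}, and absorb it using the smallness of $\overline{\eps}$. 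The shift of one half-derivative to obtain (ii) is also handled exactly as in the paper.
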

\begin{proof} 
 {(i)}\quad   We fix  $t \in [0,T]$  and take  $\sigma \leq \lambda h$. We start from the identity:
        $$ - \text{div }V= G(\eta)(B, - \Delta_x \phi_h) =G(\eta)(B, 0)+G(\eta)(0, - \Delta_x \phi_h),$$
   where $\phi_h = \phi\arrowvert_{y=-h},$     proved in \eqref{GB=V}.
        We have  seen   that
$$
G(0)(B,0) = a(D_x)B = \vert D_x \vert \tanh(h \vert D_x\vert)B.
$$
We write
\begin{equation}\label{est-B0}
a(D_x)B = -\text{div }V +G(\eta)(0,  \Delta_x \phi_h) +(G(0) - G_0(\eta))(B,0) : = F.
\end{equation}
Let $\chi \in C_0^\infty(\xR^d)$ be such that $\chi(\xi) = 1 $ if $\vert \xi \vert \leq 1$ and  $\chi(\xi) = 0 $ if $\vert \xi \vert \geq 2$. 
We have
$$
\Vert \chi(D) B \Vert_ {\mathcal{H}^{\sigma, s+\mez}} \leq C \Vert B \Vert_{L^2}.
$$
On the other hand, 
it follows from \eqref{est-B0} that
$$
(1- \chi(\xi)) \widehat{B}(\xi) = \frac{1- \chi(\xi)}{  a(\xi)}\widehat{F}(\xi),
$$ 
so that
$$
\Vert (1-\chi(D))B \Vert_{\mathcal{H}^{\sigma, s+\mez}} \leq C \Vert F\Vert_{\mathcal{H}^{\sigma, s-\mez}}.
$$ 
First of all, we have $\Vert \text{div } V \Vert_{\mathcal{H}^{\sigma, s- \mez}}
\les \Vert V \Vert_{\mathcal{H}^{\sigma, s+\mez}}$. 
By using Theorem~\ref{G-lip} we have
$$
\Vert (G(0) - G_0(\eta))(B,0) \Vert_{\mathcal{H}^{\sigma, s- \mez}} \leq C M_s(T) \Vert B \Vert_{\mathcal{H}^{\sigma, s+\mez}}  \leq C  \overline{\eps}  \Vert B \Vert_{\mathcal{H}^{\sigma, s+\mez}}.
$$
Eventually from  Theorem~\ref{est-DN} and Corollary~\ref{est-phih} 
applied 
with $\mu = s-\frac{1}{2}$ we deduce that 
$$
\Vert G(\eta)(0,  \Delta_x \phi_h)\Vert_{\mathcal{H}^{\sigma, s- \mez}} \les    \Vert \Delta \phi_h \Vert_{H^{s -\frac{1}{2}}} \les \Vert b\Vert_{H^{s+\mez}}+ \Vert B \Vert_{H^1}.
$$
It follows that
$$
\Vert (1-\chi(D))B \Vert_{\mathcal{H}^{\sigma, s+\mez}} \leq C \big(\Vert V \Vert_{\mathcal{H}^{\sigma, s + \mez}} +  \Vert b \Vert_{H^{s+\mez}} + \Vert B \Vert_{H^1} +\overline{\eps} \Vert B \Vert_{\mathcal{H}^{\sigma, s +\mez}} \big).
$$
We see from the above estimate of $\chi(D)B$ that
$$
\Vert B \Vert_{\mathcal{H}^{\sigma, s+\mez}}    \leq C \big(\Vert V \Vert_{\mathcal{H}^{\sigma, s + \mez}} +  \Vert b \Vert_{H^{s+\mez}} + \Vert B \Vert_{H^1} +\overline{\eps} \Vert B \Vert_{\mathcal{H}^{\sigma, s+ \mez}} \big).
$$ 
Taking $\overline{\eps}$ sufficiently small we obtain the desired result. 

The proof of (ii) is identical. We have just to use Remark~\ref{G1-G2-s-1}, Remark~\ref{G-s-1} and Corollary~\ref{est-phih} 
with $\mu = s-1.$
\end{proof}  
 
\begin{lemm} 
   There exists $C>0$ such that for all $t\in (0,T)$ we have
$$\Vert a(D_x)^\mez \psi(t) \Vert_{\mathcal{H}^{\sigma,s} } \leq C  \big( \Vert V(t) \Vert_{\mathcal{H}^{\sigma,s-\mez} } + \Vert B(t)\Vert_{\mathcal{H}^{\sigma,s-\mez} } \Vert \eta(t) \Vert_{\mathcal{H}^{\sigma,s+ \mez} }\big).$$
\end{lemm}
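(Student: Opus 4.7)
The strategy is to first compare the operator $a(D_x)^{\mez}$ with the plain gradient $\nabla_x$ in the analytic scale, and then substitute the identity $\nabla_x\psi = V + B\,\nabla_x\eta$ coming from the definition of $V$ in~\e{def-VB}.

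The first (and essentially only analytic) step is the symbolic estimate
\[
a(\xi)^{\mez} = \bigl(\,|\xi|\tanh(h|\xi|)\bigr)^{\mez} \le C\,\frac{|\xi|}{\langle\xi\rangle^{\mez}}\quad\text{for all }\xi\in\xR^d,
\]
which follows from $\tanh(h|\xi|)\le \min(h|\xi|,1)$. Indeed for $|\xi|\le 1$ one has $a(\xi)\le h|\xi|^2$, hence $a(\xi)^{\mez}\le \sqrt{h}\,|\xi| \lesssim |\xi|\langle\xi\rangle^{-\mez}$; for $|\xi|\ge 1$ one has $a(\xi)\le |\xi|\lesssim |\xi|^2\langle\xi\rangle^{-1}$, and the claim follows. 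Multiplying by $e^{\sigma|\xi|}\langle\xi\rangle^s$ and taking $\ell^2$ in $\xi$, this yields
\[
\Vert a(D_x)^{\mez}\psi(t)\Vert_{\mathcal{H}^{\sigma,s}} \le C\,\Vert \nabla_x\psi(t)\Vert_{\mathcal{H}^{\sigma,s-\mez}}.
\]

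For the second step, I would invoke the definition $V=\nabla_x\psi - B\,\nabla_x\eta$, which gives $\nabla_x\psi = V + B\,\nabla_x\eta$, whence
\[
\Vert \nabla_x\psi(t)\Vert_{\mathcal{H}^{\sigma,s-\mez}} \le \Vert V(t)\Vert_{\mathcal{H}^{\sigma,s-\mez}} + \Vert B(t)\,\nabla_x\eta(t)\Vert_{\mathcal{H}^{\sigma,s-\mez}}.
\]
Since $s>\tfrac{5}{2}+\tfrac{d}{2}$, the index $s-\mez$ satisfies $s-\mez > 2 + \tfrac{d}{2} > \tfrac{d}{2}$, so the space $\mathcal{H}^{\sigma,s-\mez}(\xT^d)$ is a Banach algebra (this is the product estimate recorded in Lemma~\ref{algebre0} of Appendix~\ref{AppendixA}). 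Applying it and using $\Vert \nabla_x\eta\Vert_{\mathcal{H}^{\sigma,s-\mez}}\le \Vert \eta\Vert_{\mathcal{H}^{\sigma,s+\mez}}$ gives
\[
\Vert B\,\nabla_x\eta\Vert_{\mathcal{H}^{\sigma,s-\mez}} \le C\,\Vert B\Vert_{\mathcal{H}^{\sigma,s-\mez}}\,\Vert \eta\Vert_{\mathcal{H}^{\sigma,s+\mez}}.
\]
Combining the two displays produces exactly the claimed inequality. There is essentially no obstacle here beyond the elementary symbol bound; the point of the lemma is precisely to record that, because of the sub-quadratic decay of $a(\xi)^{\mez}$ near $\xi=0$, one can trade one full derivative on $\psi$ for a half-derivative gain, and hence bound $a(D_x)^{\mez}\psi$ at level $s$ by $V$ at the lower level $s-\mez$, up to the quadratic correction $B\,\nabla_x\eta$.
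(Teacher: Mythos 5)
Your proof is correct and follows the paper's own argument step for step: the symbol bound $\vert \xi \vert\tanh(h\vert \xi \vert)\lesssim \vert\xi\vert^2/\langle\xi\rangle$, the substitution $\nabla_x\psi = V + B\,\nabla_x\eta$, and the algebra property of $\mathcal{H}^{\sigma,s-\mez}$ for $s-\mez>d/2$. The only slip is a citation: the product estimate you invoke is Proposition~\ref{uv}\,(ii), not Lemma~\ref{algebre0}, which is about the coefficients of the change of variables $\rho$ rather than a Banach-algebra statement.
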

\begin{proof} 
We write, for fixed  $t \in [0,T]$ and $\sigma \leq \lambda h,$
$$
\Vert a(D_x)^\mez \psi \Vert^2_{\mathcal{H}^{\sigma, s}} = \sum  \langle \xi \rangle^{2s}e^{2 \sigma \vert \xi \vert} \vert \xi \vert  \tanh(h \vert \xi \vert)\vert \widehat{\psi}(\xi)\vert^2.  
$$
Since $\vert \xi \vert \tanh(h \vert\xi \vert)\les \frac{\vert \xi \vert^2}{\langle \xi \rangle}$, we have
$$
\Vert a(D_x)^\mez \psi \Vert_{\mathcal{H}^{\sigma, s}}
\leq C \Vert \nabla_x \psi \Vert_{\mathcal{H}^{\sigma, s-\mez}}.
$$
Eventually we notice that    $\nabla_x \psi = V + B \nabla_x \eta$, so that, since $s -\mez >  \frac{d}{2}$, 
we obtain
$$
\Vert \nabla_x \psi(t) \Vert_{\mathcal{H}^{\sigma,s-\mez} } \les \Vert  V(t)\Vert_{\mathcal{H}^{\sigma,s-\mez} } +  \Vert B \Vert_{\mathcal{H}^{\sigma,s-\mez} }\Vert \eta \Vert_{\mathcal{H}^{\sigma,s+\mez} },
$$
which proves the  lemma.
\end{proof}
 
 \subsubsection{Low frequency estimates of  $\eta,V, \psi, B$.}
 \begin{lemm}\label{basseF} 
 Assume that $s>3+\frac{d}{2}$ and $M_s(T)\leq \overline{\eps}\leq 1.$ There exists a constant $C= C(d,g,\lambda,h,s)>0$ such that for every $t\in (0,T)$, 
 \begin{equation} \label{basseF1}
 \begin{split}
& \Vert \eta(t) \Vert_{H^{s-1}} +\Vert a(D_x)^\mez\psi(t) \Vert_{H^{s-1}}\\
\leq&\, C \Big(\Vert \eta_0\Vert_{H^{s-1}} +\Vert a(D_x)^\mez\psi_0 \Vert_{H^{s-1}}
 + T M_s(T)^2 + \Vert b \Vert^2_{L^2((0,T), H^{s-\mez})}\\
 & \qquad \qquad \qquad \qquad \qquad \qquad \qquad \qquad \qquad \qquad \qquad  
 + \Vert b  \Vert_{L^1((0,T),H^{s-\mez})}\Big),
 \end{split}
 \end{equation}
 \begin{equation}\label{basseF2}
 \begin{aligned}
& \Vert V(t) \Vert_{H^{s-\frac{3}{2}}} + \Vert B(t)\Vert_{H^{s-\frac{3}{2}} } \\
\leq&  C \Big( \Vert \eta_0\Vert_{H^{s-1}} +\Vert a(D_x)^\mez\psi_0 \Vert_{H^{s-1}}
 + T M_s(T)^2 + \Vert b \Vert^2_{L^2((0,T), H^{s-\mez})}\\ 
 & \qquad \qquad \qquad \qquad \qquad \qquad 
 + \Vert b \Vert_{L^\infty((0,T), H^{s-\frac{3}{2}})} + \Vert b  \Vert_{L^1((0,T),H^{s-\mez})}\Big).
 \end{aligned}
 \end{equation}
 \end{lemm}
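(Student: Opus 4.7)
The plan is to derive both estimates by performing $L^2$-type energy estimates on ``wave'' unknowns that diagonalize the leading-order linear part of the system, exploiting the skew-adjointness of the resulting linear propagator so as to integrate in time only the (essentially) quadratic remainder.

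For~\eqref{basseF1} I would introduce $w_n=\sqrt{g}\,\eta_n+ia(D_x)^{\mez}\psi_n$. Using the splitting
\[
G(\eta_n)(\psi_n,b)=a(D_x)\psi_n+\bigl[G(\eta_n)-G(0)\bigr]\psi_n+G(\eta_n)(0,b)
\]
together with the reformulation~\eqref{new-psin} of the equation on $\psi_n$, a direct computation shows that $w_n$ satisfies
\[
\partial_t w_n + i\bigl(ga(D_x)\bigr)^{\mez} J_n w_n=\mathcal{Q}_n,
\]
where $\mathcal{Q}_n$ is a finite sum of terms each either quadratic in $(\eta_n,\psi_n,V_n,B_n)$ or linear in the source $b$. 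Since $i(ga(D_x))^{\mez}J_n$ has purely imaginary symbol it is skew-adjoint on $L^2$, and the $H^{s-1}$-energy estimate reduces to the Duhamel bound
\[
\lA w_n(t)\rA_{H^{s-1}}\le \lA w_n(0)\rA_{H^{s-1}}+\int_0^t \lA \mathcal{Q}_n(\tau)\rA_{H^{s-1}}\,d\tau.
\]
Since $s-1>d/2+2$ the space $H^{s-1}$ is an algebra and the purely quadratic terms are controlled pointwise by $CM_s(T)^2$; the Dirichlet-Neumann increment $[G(\eta_n)-G(0)]\psi_n$ is estimated via Corollary~\ref{estG0-lip} (at $\lambda=0$, with $s$ replaced by $s-\mez$), and the source contribution $G(\eta_n)(0,b)$ via Theorem~\ref{est-DN}, giving $\lesssim \Vert b\Vert_{H^{s-\mez}}+M_s(T)\Vert b\Vert_{H^{s-\tdm}}$. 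Integrating in time and using $ab\le \tfrac12 a^2+\tfrac12 b^2$ on the cross term $M_s(T)\Vert b\Vert$ produces exactly the right-hand side of~\eqref{basseF1}, while the equivalence $\Vert w_n\Vert_{H^{s-1}}\simeq \Vert \eta_n\Vert_{H^{s-1}}+\Vert a(D_x)^{\mez}\psi_n\Vert_{H^{s-1}}$ closes the argument.

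For~\eqref{basseF2} I would run the same scheme on $u_n=\sqrt{g}\,\zeta_n+ia(D_x)^{\mez}V_n$, using equation~\eqref{eq-fin} and performing the energy estimate in $H^{s-\tdm}$: the linear propagator $iJ_n(ga(D_x))^{\mez}$ is again skew-adjoint, and each of the remainders $R_n^{(0)}$, $R_n^{(1)}$, and $(\partial_y P_n\arrowvert_{y=\eta_n}+g)\zeta_n$ is quadratic in the unknowns, the only source contributions coming from $G(\eta_n)(0,\nabla_x b)\subset R_n^{(1)}$ (via Theorem~\ref{est-DN}, producing the $\Vert b\Vert_{L^1H^{s-\mez}}$ contribution after time integration) and from the Neumann datum $-\partial_t b$ of the elliptic problem~\eqref{eq-Qn} for $Q_n$ (giving the $\Vert b\Vert_{L^\infty H^{s-\tdm}}$ term after using $\int_0^t\partial_\tau b\,d\tau=b(t)-b(0)$). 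The initial contribution $\lA u_n(0)\rA_{H^{s-\tdm}}$ is bounded by $\lA \eta_0\rA_{H^{s-1}}+\lA a(D_x)^{\mez}\psi_0\rA_{H^{s-1}}$ via the identity $V_0=\nabla_x\psi_0-B_0\nabla_x\eta_0$, the elementary pointwise bound $|\xi|^2\le Ca(\xi)\langle\xi\rangle$ to convert $\nabla_x\psi_0$ in $H^{s-\tdm}$ into $a(D_x)^{\mez}\psi_0$ in $H^{s-1}$, and the smallness~\eqref{hypot}. Finally, the estimate for $B_n$ is obtained by integrating the transport equation from Proposition~\ref{new-eq}(ii) in $H^{s-\tdm}$, bounding $V_n\cdot\nabla_x B_n$ by the Sobolev algebra property (valid since $s-\tdm>d/2$), $\partial_y^2\phi_n\arrowvert_{y=\eta_n}$ by Lemma~\ref{der2phi}, and the pressure term $\partial_y P_n\arrowvert_{y=\eta_n}+g$ exactly as in the $V_n$-estimate above.

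The main obstacle is to verify that the Taylor coefficient $\partial_y P_n\arrowvert_{y=\eta_n}+g$ is genuinely quadratic in the low-regularity Sobolev norm, with only $\partial_t b$ entering as a source. For this I would exploit the pressure decomposition~\eqref{def-Qn}, which gives $\partial_y P_n+g=-\partial_y(\tfrac12|\nabla_{x,y}\phi_n|^2)+\partial_y Q_n$; the first term is quadratic at $y=\eta_n$ via Lemma~\ref{der2phi}, while the trace $\partial_y Q_n\arrowvert_{y=\eta_n}$ is controlled by elliptic regularity applied to~\eqref{eq-Qn}, whose Dirichlet datum is shown by~\eqref{press3} to involve only quadratic quantities in the unknowns (together with $(J_n-I)$-commutators absorbed as quadratic remainders) and whose Neumann datum at $y=-h$ is precisely $-\partial_t b$, controlled by $\Vert \partial_t b\Vert_{L^\infty H^{s-\mez}}\le N_s(b)\le \eps$.
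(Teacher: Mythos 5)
Your derivation of \eqref{basseF1} follows essentially the paper's route: the paper uses the variable $u=g\eta+i(ga(D_x))^{\mez}\psi$, exploits the skew-adjointness of the linearized propagator in $H^{s-1}$, and estimates $f_1$ via Corollary~\ref{estG0-lip} and Theorem~\ref{est-DN} and $f_2$ by product laws in Sobolev (with the useful refinement that the $a(D_x)^{\mez}$ weight allows $f_2$ to be measured in $H^{s-\mez}$ rather than $H^{s-1}$). So that part is fine.

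For~\eqref{basseF2}, however, you are taking a genuinely different and in fact problematic route. The paper does \emph{not} re-run an energy estimate on $u_n=\sqrt{g}\zeta_n+ia(D_x)^{\mez}V_n$; instead it deduces~\eqref{basseF2} \emph{directly} from~\eqref{basseF1} via the algebraic identities $B = W\bigl(1-g(|\nabla_x\eta|^2)\bigr)$ with $W=G(\eta)(\psi,b)+\nabla_x\psi\cdot\nabla_x\eta$ and $V=\nabla_x\psi-B\nabla_x\eta$, together with the Dirichlet--Neumann bound in $H^{s-\tdm}$ from Remark~\ref{G-s-1} at $\lambda=0$, $\delta=1$. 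This is a few lines and produces exactly the stated bound in terms of $\lA\eta_0\rA_{H^{s-1}}$ and $\lA a(D_x)^{\mez}\psi_0\rA_{H^{s-1}}$.

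Your proposal for~\eqref{basseF2} has a concrete gap. The initial-data term in your Duhamel bound is $\lA u_n(0)\rA_{H^{s-\tdm}}$, which contains $\lA\zeta_n(0)\rA_{H^{s-\tdm}}\simeq\lA\eta_0\rA_{H^{s-\mez}}$ and $\lA a(D_x)^{\mez}V_n(0)\rA_{H^{s-\tdm}}\gtrsim\lA V_0\rA_{H^{s-1}}$; both are strictly stronger norms than the $\lA\eta_0\rA_{H^{s-1}}$ and $\lA a(D_x)^{\mez}\psi_0\rA_{H^{s-1}}$ appearing on the right of~\eqref{basseF2}, and the conversion you propose via $|\xi|^2\leq Ca(\xi)\langle\xi\rangle$ goes in the wrong direction for $\eta_0$ (that inequality converts $\nabla_x\psi_0$ but does not reduce $\lA\eta_0\rA_{H^{s-\mez}}$ to $\lA\eta_0\rA_{H^{s-1}}$). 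Appealing to the smallness of the data in the stronger analytic norm~\eqref{hypot} would only let you replace these terms by $\eps$, which is not the statement you are asked to prove. Beyond the initial-data mismatch, your route also requires bounding $R_n^{(0)}$, $R_n^{(1)}$ and the pressure term in $H^{s-\tdm}$, which is precisely the content of Proposition~\ref{est-restes} and the surrounding elliptic analysis (flattening, Theorem~\ref{regul-fin'}, Lemma~\ref{der2phi}, the $(J_n-1)$ commutators); that material is several pages in the paper, and some of it (Corollary~\ref{est-B-V2}) itself depends on Lemma~\ref{basseF}. So at minimum you would need to repeat those estimates carefully in plain Sobolev while avoiding that circular dependency, which your sketch does not address. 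The short algebraic route is both correct and considerably more economical here.
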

 \begin{coro}
 We have
 \begin{equation}\label{B-H1}
  \Vert B(t)\Vert_{H^1} \les  TM_s(T)^2 + \eps.
  \end{equation}
 \end{coro}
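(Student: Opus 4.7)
The plan is straightforward: the corollary is a direct consequence of the preceding Lemma~\ref{basseF}, specifically of the estimate~\eqref{basseF2} on $\Vert B(t)\Vert_{H^{s-3/2}}$, combined with the Sobolev embedding and the size assumption~\eqref{hypot} on the initial data and source term.

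First I would use the assumption $s>3+\frac{d}{2}$, which forces $s-\frac{3}{2}>\frac{3}{2}+\frac{d}{2}>1$, so that $H^{s-\frac{3}{2}}(\xT^d)\hookrightarrow H^1(\xT^d)$ continuously. Hence
\[
\Vert B(t)\Vert_{H^1}\le C\,\Vert B(t)\Vert_{H^{s-\frac{3}{2}}},
\]
and it suffices to estimate the right-hand side of \eqref{basseF2}.

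Next I would bound each of the terms appearing on the right-hand side of \eqref{basseF2} by the quantity $\eps+TM_s(T)^2$. Using \eqref{hypot}(i) together with the trivial embedding $\mathcal{H}^{\lambda h,\mu}\hookrightarrow H^\mu$, we have
\[
\Vert \eta_0\Vert_{H^{s-1}}\le \Vert \eta_0\Vert_{\mathcal{H}^{\lambda h,s+\mez}}\le\eps,\qquad
\Vert a(D_x)^\mez\psi_0\Vert_{H^{s-1}}\le \Vert a(D_x)^\mez\psi_0\Vert_{\mathcal{H}^{\lambda h,s}}\le \eps.
\]
For the terms involving $b$, the definition of $N_s(b)$ in \eqref{Nsb} yields directly
\[
\Vert b\Vert_{L^\infty((0,T),H^{s-\frac{3}{2}})}+\Vert b\Vert_{L^1((0,T),H^{s-\mez})}\le 2N_s(b)\le 2\eps,
\]
while the quadratic term is controlled by interpolation:
\[
\Vert b\Vert^2_{L^2((0,T),H^{s-\mez})}\le \Vert b\Vert_{L^\infty(\xR,H^{s-\mez})}\,\Vert b\Vert_{L^1(\xR,H^{s-\mez})}\le N_s(b)^2\le\eps^2\le\eps,
\]
where we have used $\eps\le 1$ from \eqref{hypot}. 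Putting these bounds together with \eqref{basseF2}, we obtain the announced inequality $\Vert B(t)\Vert_{H^1}\lesssim \eps+TM_s(T)^2$.

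There is no real obstacle here; the only minor point to keep in mind is the interpolation step used to control $\Vert b\Vert^2_{L^2H^{s-\mez}}$ by $\eps^2$, which relies on the fact that both $L^\infty$ and $L^1$ norms of $b$ in $H^{s-\mez}$ appear in $N_s(b)$. Everything else is a direct application of Lemma~\ref{basseF} and the smallness hypothesis on the data.
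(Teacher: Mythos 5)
Your argument is correct and is precisely the one the paper intends: the corollary is just the estimate \eqref{basseF2} combined with $H^{s-3/2}\hookrightarrow H^1$ (valid since $s-3/2>1$) and the hypotheses \eqref{hypot} on the data and on $b$, including the H\"older bound $\Vert b\Vert^2_{L^2_tH^{s-\mez}}\le\Vert b\Vert_{L^\infty_tH^{s-\mez}}\Vert b\Vert_{L^1_tH^{s-\mez}}\le N_s(b)^2\le\eps$. The paper states this in one line; you simply supply the elementary details.
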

 \begin{proof}
 Since $s-\frac{3}{2} \geq 1$, this follows from   \eqref{basseF2} and the hypotheses \eqref{hypot} made on the data and $b$.
 \end{proof}
\begin{proof}[Proof of Lemma~\ref{basseF}] 
We start from  system~\eqref{ww1'} which we write as
\begin{align*}
  &\partial_t \eta -Ja(D_x)\psi = f_1:= J(G(\eta)-G(0))(\psi,0) + G(\eta)(0,b),\\
  &\partial_t \psi + J g \eta= f_2: =J \Bigl( \mez \vert \nabla_x \psi \vert^2 -  \frac{(G(\eta) (\psi,b)+ \nabla_x \eta \cdot \nabla_x \psi)^2}{2(1+ \vert \nabla_x \eta \vert^2)}\Bigr),
  \end{align*}
where $ a(D_x)\psi = \vert D_x\vert\tanh(h\vert D_x\vert)\psi.$
   
We set
$$
u= g \eta +i\big(g \,a(D_x)\big)^\mez \psi.
$$
Then $u$ is a  solution   of the equation
\begin{equation*}
\partial_t u +iJ(ga(D_x))^\mez u = g f_1+ i (ga(D_x))^\mez f_2.
\end{equation*}
Computing  $\frac{d}{dt} \Vert u(t) \Vert^2_{H^{s-1}}$ on the interval  $I= [0,T]$, we obtain the inequality
\begin{equation}\label{est-u1}
\Vert u(t) \Vert_{  H^{s-1} } \leq C \Big( \Vert u_0 \Vert_{H^{s-1}} + \int_0^t \Vert f_1(t') \Vert_{H^{s-1}}\, dt' +  \int_0^t \Vert f_2(t') \Vert_{H^{s-\mez}}\, dt'\Big).
\end{equation}
Let us estimate $f_1$. First of all, Corollary~\ref{estG0-lip} gives
\begin{align*}
 \Vert (G(\eta(t)) - G(0))(\psi(t),0) \Vert _{H^{s-1}}  
  \leq &\, \Vert (G(\eta(t)) - G(0))(\psi(t),0) \Vert _{\mathcal{H}^{\sigma(t), s-\mez}} \\
   \leq &\,  CM_s(T)^2.
\end{align*}
 It follows  that
 \begin{equation}\label{bf1}
 \int_0^t \Vert  (G(\eta(t))-G(0)  (\psi(t'),0) \Vert _{H^{s-1}}\, dt' \leq CTM_s(T)^2.
 \end{equation}
Now, from Theorem ~\ref{est-DN}   we get
\begin{equation}\label{bf2}
\begin{aligned}
 \int_0^t \Vert G(\eta(t'))(0,b(t')) \Vert_{ H^{s-1}}\, dt'  &\leq \int_0^t \Vert  G(\eta(t')) (0, b(t')) \Vert_{  \mathcal{H}^{\sigma(t), s-\mez}} \, dt'\\
 &\leq  C \Vert b  \Vert_{L^1(I,H^{s-\mez})}.
 \end{aligned}
 \end{equation}
It follows from \eqref{bf1}, \eqref{bf2} that
 \begin{equation}\label{est-psi1}
 \int_0^t \Vert f_1(t')\Vert_{H^{s-1}}\, dt' \leq  C\big( TM_s(T)^2 + \Vert b  \Vert_{L^1(I,H^{s-\mez})}\big).
 \end{equation}
Let us estimate $f_2$. Using  Remark~\ref{nabla-G}, we can write
\begin{align*}
  \Vert \vert \nabla_x \psi(t) \vert^2 \Vert_{H^{s - \mez}} \leq &\, C \Vert \nabla_x \psi(t) \Vert^2_{H^{s- \mez}} \leq C \Vert \nabla_x \psi(t) \Vert^2_{\mathcal{H}^{\sigma(t),s- \mez}} \\
  \leq &\, C' \Vert a(D_x)^\mez \psi(t) \Vert^2_{\mathcal{H}^{\sigma(t), s}}.
  \end{align*}
 Therefore, we get 
  $$  \Vert \vert \nabla_x \psi \vert^2 \Vert_{L^\infty(I,H^{s -\mez})} \leq C M_s(T)^2.$$
 It follows  that
  \begin{equation}\label{bf3}
  \int_0^T  \Vert \vert \nabla_x \psi(t) \vert^2 \Vert_{H^{s-\mez}}\, dt \leq CTM_s(T)^2.  
  \end{equation}
  Now we can write 
  \begin{align*}
   \widetilde{f}_2 &= \frac{(G(\eta) (\psi,b)+ \nabla_x \eta \cdot \nabla_x \psi)^2}{2(1+ \vert \nabla_x \eta \vert^2)}:= U  \big(1-g(\vert \nabla_x \eta \vert^2)\big),\\
  U & =   \mez (G(\eta) (\psi,b)+ \nabla_x \eta \cdot \nabla_x \psi)^2, \quad g(t)= \frac{t}{1+t}.
  \end{align*}
 Then 
 $$ \Vert \widetilde{f}_2   \Vert_{H^{s-\mez}}\leq C\Vert U\Vert_{H^{s-\mez}}\big(1+  \Vert g(\vert \nabla \eta\vert^2)\Vert_{H^{s-\mez}}\big).$$
By the product laws in the usual Sobolev spaces we can write
\begin{align*}
   \Vert U  \Vert_{H^{s-\mez}} &\les \Vert  G(\eta) (\psi,b)\Vert^2_{H^{s-\mez}} +  \Vert  \eta \Vert^2_{H^{s+\frac{1}{2}}} \Vert a(D_x)^\mez \psi \Vert^2_{H^{s}} \\
   & \les \Vert  G(\eta) (\psi,b)\Vert^2_{H^{s-\mez}}+ M_s(T)^4,
   \end{align*}
\[    \Vert g(\vert\nabla\eta\vert^2)\Vert_{H^{s-\mez}} \les \Vert \eta \Vert^2_{H^{s+\mez}}\les M_s(T)^2.
  \]
 Using Theorem~\ref{est-DN}, we have
 \begin{align*}
  & \Vert  G(\eta) (\psi,b)\Vert^2_{H^{s-\mez}} \\
  \les &\, \big(\Vert a(D_x)^\mez\psi \Vert^2_{\mathcal{H}^{\sigma, s }} + \Vert b\Vert^2_{H^{s-\mez}} + \Vert \eta\Vert^2_{H^{s+\mez}}(\Vert a(D_x)^\mez\psi \Vert^2_{\mathcal{H}^{\sigma, s-\mez }} + \Vert b\Vert^2_{H^{s-\frac{3}{2}}})\big)\\
   \les&\, M_s(T)^2 + \Vert b\Vert^2_{  H^{s-\mez} }.
  \end{align*}
   Therefore, 
 $$ \Vert \widetilde{f}_2(t)   \Vert_{H^{s-\mez}} \leq C \big( M_s(T)^2 + \Vert b(t) \Vert^2_{H^{s-\mez}}\big).$$
It follows that
 \begin{equation}\label{est-psi3}
 \int_0^t \Vert \widetilde{f}_2(t)\Vert_{H^{s-\mez}} \, dt \les    TM_s(T)^2 +     \Vert b\Vert^2_{L^2(I, H^{s-\mez})}.
 \end{equation}
Using \eqref{est-u1}, \eqref{est-psi1}, \eqref{bf3}, and \eqref{est-psi3}, we obtain that 
\begin{equation*}
\Vert u(t) \Vert_{  H^{s-1} } \les \Vert u_0 \Vert_{H^{s-1}} +   TM_s(T)^2 +   \Vert b\Vert^2_{L^2(\xR, H^{s-\mez})} + \Vert b  \Vert_{L^1(I,H^{s-\mez})}.
\end{equation*}
Using \eqref{est-u1} and the  definition of $u$, we get
\begin{equation} \label{est-etapsi}
\begin{split}
& \Vert \eta(t)  \Vert_{ H^{s-1} } + \Vert a(D_x)^\mez \psi(t) \Vert_{H^{s-1}} \\
\les&\,  \Vert \eta_0  \Vert_{  H^{s-1} }  + \Vert a(D_x)^\mez \psi_0 \Vert_{  H^{s-1} }
+   TM_s(T)^2 +  \Vert b\Vert_{L^1(\xR, H^{s-\mez})}+  \Vert b\Vert^2_{L^2(\xR, H^{s-\mez})}.
\end{split}
\end{equation}
This completes the proof of~\eqref{basseF1}.

To prove~\eqref{basseF2}, we  estimate  $B$  with fixed $t$.  By definition we can write
\begin{align*}
&B = \frac{G(\eta)(\psi,b) + \nabla_x \psi \cdot \nabla_x \eta}{1+ \vert \nabla_x \eta \vert^2}= W\big(1-g(\vert \nabla_x \eta \vert^2\big),\\
&W= G(\eta)(\psi,b) + \nabla_x \psi \cdot \nabla_x \eta, \quad g(t)= \frac{t}{1+t}.
\end{align*}
Now, we use as before the product laws in the usual Sobolev spaces. Since $s>3+d/2$, it follows 
from Remark~\ref{G-s-1} applied with $\delta=1 $ and $\lambda=0$, that
$$
\Vert G(\eta)(\psi,b)(t)\Vert_{H^{s-\frac{3}{2}}} \les \Vert a(D_x)^\mez \psi(t)\Vert_{H^{s-1}} + \Vert b \Vert_{H^{s-\frac{3}{2}}}.
$$
Therefore,
$$
\Vert W(t)\Vert_{H^{s-\frac{3}{2}}} \les \Vert a(D_x)^\mez \psi(t)\Vert_{H^{s-1}} 
+ \Vert b (t)\Vert_{H^{s-\frac{3}{2}}}+ \Vert a(D_x)^\mez \psi(t)\Vert_{H^{s-1}}\Vert \eta(t)\Vert_{H^{s-\mez}}.
$$
Moreover,
$$
\Vert g(\vert \nabla \eta\vert^2)(t)\Vert_{H^{s-\frac{3}{2}}}
\les \Vert \eta(t)\Vert_{H^{s-\mez}}\les \overline{\eps}\leq 1.
$$
It follows that
$$
\Vert B(t) \Vert_{H^{s-\frac{3}{2}}}
\les \Vert a(D_x)^\mez \psi(t)\Vert_{H^{s-1}} + \Vert b(t) \Vert_{H^{s-\frac{3}{2}}}.
$$
Using \eqref{est-etapsi}, we deduce that 
\begin{equation}\label{estbf-B}
\begin{aligned}
\Vert B\Vert_{L^\infty(I, H^{s-\frac{3}{2}}) }  \les &\big( \Vert \eta_0  \Vert_{  H^{s-1} }   + \Vert a(D_x)^\mez \psi_0 \Vert_{  H^{s-1} } 
  +   TM_s(T)^2\\
 &+   \Vert b\Vert^2_{L^2(\xR, H^{s-\mez})}+ \Vert b\Vert_{L^1(\xR, H^{s-\mez})}+ \Vert b\Vert_{L^\infty(\xR, H^{s-\frac{3}{2}})}\big)
 \end{aligned}
\end{equation}
Now we have $V = \nabla_x \psi - B \nabla_x \eta$. It follows that
$$\Vert V \Vert_{L^\infty(I, H^{s-\frac{3}{2}})}\les  \Vert a(D_x)^\mez\psi \Vert_{L^\infty(I, H^{s-1})} + \Vert B \Vert_{L^\infty(I, H^{s-\frac{3}{2}})}\Vert \eta \Vert_{L^\infty(I, H^{s-\mez})}.$$
Since $\Vert \eta \Vert_{L^\infty(I, H^{s-\mez})}\leq \overline{\eps}\leq 1$, using \eqref{est-etapsi} and \eqref{estbf-B} we obtain
 \begin{equation*}
\begin{aligned}
\Vert V\Vert_{ L^\infty(I, H^{s-\frac{3}{2}}) } \les  &\big( \Vert \eta_0  \Vert_{  H^{s-1} }   + \Vert a(D_x)^\mez \psi_0 \Vert_{  H^{s-1} } 
  +   TM_s(T)^2\\
 & +   \Vert b\Vert^2_{L^2(\xR, H^{s-\mez})}+\Vert b\Vert_{L^1(\xR, H^{s-\mez})}+ \Vert b\Vert_{L^\infty(\xR, H^{s-\frac{3}{2}})}\big).
 \end{aligned}
\end{equation*}
This completes the proof of Lemma \ref{basseF} .
\end{proof}
  \begin{coro}\label{est-B-V2}
  Under the assumptions \eqref{hypot} we have for every $t \in [0,T]$,
  \begin{align*}
  &\Vert B(t)\Vert_{\mathcal{H}^{\sigma, s + \mu}} \les \Vert V(t)\Vert_{\mathcal{H}^{\sigma, s + \mu}} + T M_s (T)^2+ \eps,\quad   \mu = 0, \mu = \mez,  \\
  &\Vert a(D_x)^\mez \psi(t) \Vert_{\mathcal{H}^{\sigma,s} } \les \Vert V(t) \Vert_{\mathcal{H}^{\sigma,s-\mez} } + \Vert B(t)\Vert_{\mathcal{H}^{\sigma,s-\mez} } M_s(T).
  \end{align*}
  \end{coro}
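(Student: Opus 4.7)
The plan is to obtain Corollary~\ref{est-B-V2} as a direct consequence of the two preceding lemmas combined with the low-frequency control \eqref{B-H1} and with the size of the source term $b$ encoded in the assumption \eqref{hypot}. Since everything of substance has already been done, this is really a matter of bookkeeping.

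For the first estimate, I would simply invoke the first lemma: when $\mu=\tfrac{1}{2}$ statement (i) gives
\[
\Vert B(t)\Vert_{\mathcal{H}^{\sigma,s+\mez}}\le C\big(\Vert V(t)\Vert_{\mathcal{H}^{\sigma,s+\mez}}+\Vert b(t)\Vert_{H^{s+\mez}}+\Vert B(t)\Vert_{H^{1}}\big),
\]
and when $\mu=0$ statement (ii) gives the same estimate with $s+\tfrac12$ replaced by $s$ throughout. In either case, the definition of $N_s(b)$ and the hypothesis \eqref{hypot}(i) yield $\Vert b(t)\Vert_{H^{s+\mez}}\le N_s(b)\le \eps$ (and a fortiori $\Vert b(t)\Vert_{H^s}\le \eps$). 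The term $\Vert B(t)\Vert_{H^1}$ is precisely what is controlled by the corollary \eqref{B-H1} of Lemma~\ref{basseF}, namely $\Vert B(t)\Vert_{H^1}\lesssim TM_s(T)^2+\eps$. Substituting these two bounds gives exactly
\[
\Vert B(t)\Vert_{\mathcal{H}^{\sigma,s+\mu}}\lesssim \Vert V(t)\Vert_{\mathcal{H}^{\sigma,s+\mu}}+TM_s(T)^2+\eps
\]
for both admissible values $\mu\in\{0,\mez\}$.

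For the second estimate, the second lemma already reads
\[
\Vert a(D_x)^{\mez}\psi(t)\Vert_{\mathcal{H}^{\sigma,s}}\lesssim \Vert V(t)\Vert_{\mathcal{H}^{\sigma,s-\mez}}+\Vert B(t)\Vert_{\mathcal{H}^{\sigma,s-\mez}}\Vert\eta(t)\Vert_{\mathcal{H}^{\sigma,s+\mez}}.
\]
It remains only to replace $\Vert\eta(t)\Vert_{\mathcal{H}^{\sigma,s+\mez}}$ by $M_s(T)$, which is immediate from the very definition of $M_s(T)$ in \eqref{Nsb}. This yields the announced inequality.

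There is no real obstacle here: the only things to be careful about are that \eqref{hypot}(i) really does include $b$ in the $\eps$ bound through $N_s(b)$ (so that $\Vert b(t)\Vert_{H^{s+\mez}}$ and $\Vert b(t)\Vert_{H^s}$ are both $\le\eps$), and that \eqref{B-H1} is available under the standing assumption $M_s(T)\le\overline\eps\le 1$ required in the first lemma. Both are in force throughout Section~\ref{S:sizeeps}, so the proof is little more than a one-line substitution for each of the two claims.
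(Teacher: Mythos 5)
Your proof is correct and follows exactly the route the paper intends (the paper in fact states Corollary~\ref{est-B-V2} without an explicit proof, precisely because it is the straightforward combination of the two preceding unnamed lemmas, the bound~\eqref{B-H1}, and the hypothesis~\eqref{hypot}(i) that you spell out). Nothing to add.
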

\subsubsection{Proof of Proposition~\ref{Prop:C0} (continued).} 

Recall that  the function  $ u = \sqrt{g}\zeta +i \big(a(D_x)\big)^\mez V$ 
that was introduced in \eqref{def-un}  satisfies (see \eqref{eq-fin}) the equation
\begin{equation}\label{eq-fin2}
 \partial_t u + iJ \big(g a(D_x)\big)^\mez u = \sqrt{g }R^{(1)} + ia(D_x) ^\mez\big[(\partial_y P \arrowvert_{y = \eta} + g)\zeta + R^{(0)}\big],
 \end{equation}
where
\begin{equation}\label{R1=}
\left\{ 
\begin{aligned}
R^{(1)} &= R_1^{(1)} + R_2^{(1)},\\
R_1^{(1)}&= J \big[G(\eta)(V, 0) - G(0)(V,0)  +  G(\eta)(0, \nabla_x b)\big],\\
R_2^{(1)}&= J \big[-(\cnx V)\zeta -(V\cdot \nabla_x) \zeta\big]\\
 R^{(0)} &= R^{(0)}_1 + R^{(0)}_2 + R^{(0)}_3 + R^{(0)}_4,\\
  R^{(0)}_1 &= -J(V \cdot\nabla_x )V,\\
   R^{(0)}_2 &= -(\partial_y \nabla_x \phi\arrowvert_{y = \eta})(J -1)( V \cdot \zeta-B), \\
   R^{(0)}_3& =  (J+1)B \nabla_x B -\nabla_x B J B   
 - B J \nabla_x B,\\   
   R^{(0)}_4 &=  -\nabla_x J (BV\cdot \zeta)  
      +\nabla_x(B JV\cdot \zeta).
 \end{aligned}\right. \end{equation}
We shall now   estimate   the remainder terms. Recall that $$\sigma(t) = \lambda h - K \eps t = \lambda(t) h$$ and
$$ U_s(t) = e^{\sigma(t) \langle D_x\rangle } \langle D_x\rangle ^{s - \mez} u(t) .$$
\begin{prop}\label{est-restes}
We have
\begin{align*}
&\text{\rm{(i)}}\quad \Vert \sqrt{g}R_1^{(1)}   \Vert_{\mathcal{H}^{\sigma, s-\mez}}   \les  M_s(T) \Vert U_s \Vert_{H^\mez} + \Vert b \Vert_{H^{s+\mez}},\\
&\text{\rm{(ii)}}\quad\Vert \sqrt{g}R_2^{(1)}   \Vert_{\mathcal{H}^{\sigma, s-1}}\les   M_s(T) \Vert U_s \Vert_{H^\mez},\\
& \text{\rm{(iii)}}\quad \Vert ia(D_x) ^\mez\big[(\partial_y P \arrowvert_{y = \eta} + g)\zeta   \Vert_{\mathcal{H}^{\sigma , s-1}} \\
&\qquad  \les  M_s(T)^2 + T^2 M_s(T)^5 + \eps M_s(T)+ M^2_s(T) \| U_s\|_{H^{\mez}},\\
&\text{\rm{(iv)}}\quad\Vert ia(D_x)^\mez R^{(0)}\Vert_{\mathcal{H}^{\sigma, s-1}}\les
  M_s(T) \Vert U_s \Vert_{H^\mez} + M_s(T)^2+ T M_s(T)^3 + \eps M_s(T).
\end{align*}
\end{prop}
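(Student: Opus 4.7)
The plan is as follows. Each of the four estimates is obtained by combining the Dirichlet--Neumann bounds of Section~\ref{S:DN} with tame product estimates in analytic Sobolev spaces and two key linear identities: \eqref{equiv-Us-u}, which gives $\lA U_s\rA_{H^\mu}\approx\lA\zeta\rA_{\mathcal{H}^{\sigma,s-\mez+\mu}}+\lA a(D_x)^\mez V\rA_{\mathcal{H}^{\sigma,s-\mez+\mu}}$, and \eqref{nabla-G}, which shows $\lA\nabla_x V\rA_{\mathcal{H}^{\sigma,\mu-\mez}}\les\lA a(D_x)^\mez V\rA_{\mathcal{H}^{\sigma,\mu}}$. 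The guiding principle is that $M_s(T)$ controls the solution a half-derivative below the top order, while $\lA U_s\rA_{H^\mez}$ captures the top order itself: a bilinear term is estimated by placing one factor at top order (producing $\lA U_s\rA_{H^\mez}$) and the other in a lower norm embedded in $L^\infty$ (producing $M_s(T)$). The mollifier $J=J_n$ is treated as a uniformly bounded Fourier multiplier.

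Claim (i) is obtained by splitting $R_1^{(1)}$ into the difference $G(\eta)(V,0)-G(0)(V,0)$ and the remainder $G(\eta)(0,\nabla_x b)$. Applying Corollary~\ref{estG0-lip} with $\psi=V$, together with the monotonicity of $\lA\cdot\rA_{\mathcal{H}^{\sigma,\mu}}$ in $\mu$, bounds the first piece by $M_s(T)\lA U_s\rA_{H^\mez}$, while Theorem~\ref{est-DN} applied with $\psi=0$ and source $\nabla_x b$ controls the second piece by $\lA b\rA_{H^{s+\mez}}+M_s(T)\lA b\rA_{H^{s-\mez}}\les\lA b\rA_{H^{s+\mez}}$. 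For claim (ii), I would use the tame product in $\mathcal{H}^{\sigma,s-1}$ (available because $s-1>d/2$) and place the differentiated factor at top order: \eqref{nabla-G} gives $\lA\cnx V\rA_{\mathcal{H}^{\sigma,s-1}}\les\lA U_s\rA_{H^\mez}$, while $\lA\nabla_x\zeta\rA_{\mathcal{H}^{\sigma,s-1}}\les\lA\zeta\rA_{\mathcal{H}^{\sigma,s}}\les\lA U_s\rA_{H^\mez}$; the remaining factor sits in $\mathcal{H}^{\sigma,s-\mez}\hookrightarrow L^\infty$ and is bounded by $M_s(T)$.

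For claim (iii), since $a(D_x)^\mez$ loses a half-derivative at high frequency, it suffices to bound $\lA(\partial_yP|_{y=\eta}+g)\zeta\rA_{\mathcal{H}^{\sigma,s-\mez}}$. Differentiating the pressure identity \eqref{press-n}--\eqref{def-Qn} in $y$ and evaluating at $y=\eta$ via Lemma~\ref{der2phi} yields
\[
\partial_yP|_{y=\eta}+g=-B\,\partial_y^2\phi|_{y=\eta}-V\cdot\partial_y\nabla_x\phi|_{y=\eta}+\partial_yQ|_{y=\eta},
\]
in which the first two terms are rational expressions in $B,V,\nabla_xB,\cnx V,\zeta$, and $\partial_yQ|_{y=\eta}$ is controlled by applying Theorem~\ref{regul-fin'} and Corollary~\ref{regul-ouf} to \eqref{eq-Qn}, whose Dirichlet data is by \eqref{press3} a sum of $J$-smoothed quadratic quantities and the commutator $[B,J](B-V\cdot\zeta)$. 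The low-regularity part of this data is absorbed using $\lA B\rA_{H^1}\les TM_s(T)^2+\eps$ from \eqref{B-H1} and $N_s(b)\le\eps$. Multiplying by $\zeta\in\mathcal{H}^{\sigma,s-\mez}$ (bounded by $M_s(T)$) and applying the tame product produces the four contributions $M_s(T)^2$ (from quadratic bulk terms), $M_s(T)^2\lA U_s\rA_{H^\mez}$ (from a derivative at top order), $\eps M_s(T)$ (from initial-data or $b$-terms), and $T^2M_s(T)^5$ (from iterating \eqref{B-H1} inside a four-factor product).

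Claim (iv) is proved by analyzing $R^{(0)}_1,\dots,R^{(0)}_4$ separately. The term $R^{(0)}_1=-J(V\cdot\nabla_x)V$ is treated as in (ii) and produces $M_s(T)\lA U_s\rA_{H^\mez}$. For $R^{(0)}_2$, Lemma~\ref{der2phi} rewrites $\partial_y\nabla_x\phi|_{y=\eta}$ in terms of $\nabla_xB,\cnx V,\zeta$, and the product with $(J-1)(V\cdot\zeta-B)$, combined with \eqref{B-H1}, yields $M_s(T)^2+TM_s(T)^3+\eps M_s(T)$. The commutator-like terms $R^{(0)}_3$ and $R^{(0)}_4$, schematically $[J,B]\nabla_xB$ and $\nabla_x[J,B](V\cdot\zeta)$, are bounded by placing $B$ and $V$ in $L^\infty\hookleftarrow\mathcal{H}^{\sigma,s-\mez}$ (by $M_s(T)$) and the remaining derivative at the top level (by $\lA U_s\rA_{H^\mez}$). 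The main obstacle is claim (iii): whereas in the exact water-waves system $\partial_yP+g$ is the benign positive Taylor coefficient, the mollifier $J_n$ introduces through \eqref{press3} additional commutators that must be bounded uniformly in $n$ without loss of derivative. This requires combining the full strength of the analytic elliptic theory of Section~\ref{S:Elliptic} applied to \eqref{eq-Qn} with the low-regularity bounds of Lemma~\ref{basseF} and \eqref{B-H1}, and constitutes the technical core of the proposition.
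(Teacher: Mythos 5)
Your decomposition of $R_1^{(1)}$, $R_2^{(1)}$ and the treatment of (i) and (ii) match the paper's proof. For (iii), you decompose
\[
\partial_y P\arrowvert_{y=\eta}+g=-B\,\partial_y^2\phi\arrowvert_{y=\eta}-V\cdot\partial_y\nabla_x\phi\arrowvert_{y=\eta}+\partial_y Q\arrowvert_{y=\eta},
\]
computing the first two via Lemma~\ref{der2phi} and estimating $\partial_y Q\arrowvert_{y=\eta}$ via the elliptic theory for the \emph{harmonic} $Q$ of \eqref{eq-Qn}. The paper instead works directly with $\mathcal{P}=P-g\rho$ in the straightened variables, which solves the \emph{Poisson} problem \eqref{eq-mathcalP} with source $G$ (a linear combination of $(\Lambda_j\Lambda_k\widetilde\phi)^2$), and reads $\mathfrak{a}-g=-\Lambda_1\mathcal{P}\arrowvert_{z=0}$ from Theorem~\ref{regul-fin'}. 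Both routes need the same ingredients (estimates of the Dirichlet data $\mathcal{P}_0$ via \eqref{press3} and Proposition~\ref{commutat}, of the Neumann data $\mathcal{P}_1$ via Corollary~\ref{est-phih}, and of $\nabla_{x,z}\widetilde\phi$), so yours is an acceptable variant. Your account of where $T^2M_s(T)^5$ arises is imprecise — it comes from $\lA B\rA_{\mathcal{H}^{\sigma,s}}\lA B\rA_{\mathcal{H}^{\sigma,s+\mez}}$ through Corollary~\ref{est-B-V2} followed by absorption via AM--GM, not from a four-factor iteration of \eqref{B-H1} — but this is a bookkeeping detail, not an error.

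There is, however, a genuine gap in your treatment of (iv) for the term $R^{(0)}_4=\nabla_x\bigl([J,B](V\cdot\zeta)\bigr)$. You propose bounding it by "placing $B$ and $V$ in $L^\infty\hookleftarrow\mathcal{H}^{\sigma,s-\mez}$ and the remaining derivative at the top level." That does not close. If $[J,B]$ is treated as a merely bounded operator (which is what placing $B$ in $L^\infty$ amounts to), then after applying $a(D_x)^\mez$ and measuring in $\mathcal{H}^{\sigma,s-1}$ you must control $\lA V\cdot\zeta\rA_{\mathcal{H}^{\sigma,s+\mez}}$, which necessarily produces a term of size $M_s(T)\lA U_s\rA_{H^\mez}$ (since $V$ must go to order $s+\mez$); multiplied by the algebra-norm of $[J,B]$, namely $\lA B\rA_{\mathcal{H}^{\sigma,s+\mez}}\les\lA U_s\rA_{H^\mez}+\cdots$, this yields a $\lA U_s\rA_{H^\mez}^2$ contribution, which is not admissible (the target is linear in $\lA U_s\rA_{H^\mez}$). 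The crucial point — which the paper supplies through Proposition~\ref{commutat} — is that the commutator $[J,B]$ \emph{gains a full derivative}, uniformly in $n$ because $\chi(\cdot/n)\in S^0_{1,0}$ with $n$-independent seminorms: one has
\[
\lA[J,B](V\cdot\zeta)\rA_{\mathcal{H}^{\sigma,s+\mez}}\les\lA B\rA_{\mathcal{H}^{\sigma,s+\mez}}\lA V\cdot\zeta\rA_{\mathcal{H}^{\sigma,s-\mez}},
\]
so the product $V\cdot\zeta$ sits a full derivative lower and contributes only $M_s(T)^2$, leaving $B$ as the \emph{single} factor at top order. Your bookkeeping is therefore inverted: the top order must land on $B$, via the commutator gain, while $V$ and $\zeta$ are placed in the low-order spaces.
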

\begin{proof}
Proof of (i) and (ii): Estimate of $\sqrt{g}R^{(1)}$. 
  
We shall estimate each term separately. 
All the estimates will be with fixed $t$, 
with constants independent of $t$. 
Therefore $t$ will be omitted in what follows.

Moreover we notice that the family of operators  $J_n$ is uniformly bounded with respect to $n$ on any space $\mathcal{H}^{\sigma,s}.$ Therefore we can skip it in what follows.
 
First of all, we have
\begin{equation}\label{est-F_11}
\Vert  G(\eta)(V,0)-G(0)(V,0)\Vert_{\mathcal{H}^{\sigma, s-\mez}}\les M_s(T)   \Vert   U_s\Vert_{H^\mez}.
\end{equation}
To see this, 
we use Theorem~\ref{G-lip} with $\eta_1 = \eta, \eta_2 =0, \psi_1 = \psi_2 = V, 
b=0$. 
Then, with the notations in \eqref{lambdaj}, we have
$$
\Vert  G(\eta)(V,0)-G(0)(V,0)\Vert_{\mathcal{H}^{\sigma, s-\mez}} \les \lambda_1\Vert \eta\Vert_{\mathcal{H}^{\sigma, s}} + \lambda_2\Vert \eta\Vert_{\mathcal{H}^{\sigma, s+\mez}}\leq (\lambda_1 + \lambda_2)M_s(T).
$$
Now, by definition of $U_s$, we have
\begin{align*}
 \lambda_1=&\, \Vert \eta\Vert_{\mathcal{H}^{\sigma, s+\mez}}\Vert a(D_x)^\mez V\Vert_{\mathcal{H}^{\sigma(t), s-\mez}} + \Vert a(D_x)^\mez V\Vert_{\mathcal{H}^{\sigma, s}} \\
 \les&\, M_s(T)\Vert U_s\Vert_{L^2} + \Vert U_s\Vert_{H^\mez} ,\\
 \lambda_2=&\, \Vert a(D_x)^\mez V\Vert_{\mathcal{H}^{\sigma, s-\mez}} \les \Vert U_s\Vert_{L^2},
\end{align*}
which implies \eqref{est-F_11}. 
Moreover, according to Theorem~\ref{est-DN}, we have
  \begin{equation*}
  \Vert G(\eta)(0, \nabla_x b)\Vert_{\mathcal{H}^{\sigma, s-\mez}} \les \Vert b\Vert_{H^{s+\mez}} + M_s(T)\Vert b\Vert_{H^{s-\mez}}.
  \end{equation*}
Since $M_s(T)\leq \overline{\eps}\leq 1$, we obtain
\begin{equation*}
\Vert \sqrt{g}R_1^{(1)}   \Vert_{\mathcal{H}^{\sigma, s-\mez}} \les  M_s(T) \Vert U_s \Vert_{H^\mez} + \Vert b \Vert_{H^{s+\mez}}.
\end{equation*}
 This proves (i).

Now, since $\zeta=\nabla_x\eta$, it follows from the product rule in Proposition~\ref{uv} that
\begin{equation*}
\Vert (V\cdot\nabla_x)\zeta\Vert_{\mathcal{H}^{\sigma, s-1}}\les \Vert  V  \Vert_{\mathcal{H}^{\sigma, s-1}}\Vert  \zeta\Vert_{\mathcal{H}^{\sigma, s}}\les M_s(T) \Vert U_s\Vert_{H^\mez},
\end{equation*}
and
\begin{equation*}
\Vert (\text{div}V) \zeta\Vert_{\mathcal{H}^{\sigma, s-1}}\les \Vert  V  \Vert_{\mathcal{H}^{\sigma, s}}\Vert  \zeta\Vert_{\mathcal{H}^{\sigma, s-1}}\les M_s(T) \Vert U_s\Vert_{L^2}.
\end{equation*}
It follows that
\begin{equation*}
\Vert \sqrt{g}R_2^{(1)}   \Vert_{\mathcal{H}^{\sigma, s-1}}\les  M_s(T) \Vert U_s \Vert_{H^\mez},
\end{equation*}
which proves (ii)

Proof of (iii):  Estimate of\,: 
\begin{equation*}
(1)= i  a(D_x)^{\frac{1}{2}}  \big[(\partial_y P \arrowvert_{y = \eta} + g)\zeta)\big]. 
\end{equation*}

For simplicity we shall set
\begin{equation*}
\mathfrak{a}= -\partial_y P\arrowvert_{y = \eta}.
\end{equation*}
We have
 \begin{equation*}
\Vert (1)\Vert_{\mathcal{H}^{\sigma, s-1}} \les  \Vert  \mathfrak{a} -g   \Vert_{\mathcal{H}^{\sigma, s- \mez}}\Vert \eta\Vert_{\mathcal{H}^{\sigma, s+ \mez}} 
  \les  M_s(T) \Vert  \mathfrak{a} -g   \Vert_{\mathcal{H}^{\sigma, s- \mez}}.
\end{equation*}
   We claim that, for fixed $t,$  
   \begin{equation}\label{est-F22}
 \Vert  \mathfrak{a} -g   \Vert_{\mathcal{H}^{\sigma, s- \mez}}   \les    M_s(T)+ T^2M_s(T)^4+ \eps + M_s(T) \| U_s\|_{H^\mez}
 \end{equation}
 It will follow that
 \begin{multline*}
   \Vert  i a(D_x)^{\frac{1}{2}}  \big[(\partial_y P \arrowvert_{y = \eta} + g)\zeta_n)\big]   \Vert_{\mathcal{H}^{\sigma , s-1}}  \\
   \les  M_s(T)^2 + T^2 M_s(T)^5 + \eps M_s(T) +  M^2_s(T) \| U_s\|_{H^\mez}. 
     \end{multline*}
 
  We prove \eqref{est-F22}. 
 Since the  function $\phi$ is the   solution  of the  problem
\begin{equation*}
 \Delta_{x,y} \phi= 0, \quad \phi \arrowvert_\Sigma = \psi, \quad \partial_y \phi \arrowvert_{y=-h} = b,
 \end{equation*}
we have
 \begin{equation}\label{PsurG}
 \partial_y(P- gy)\arrowvert_{y=-h} = - \partial_t b +  \nabla_x b \cdot (\nabla_x \phi_h) - b (\Delta_x \phi_h) := \mathcal{P}_1,
 \end{equation}
where  $\phi_h = \phi\arrowvert_{y=-h}.$
Let us recall that  according to   \eqref{def-Qn}, \eqref{press3}, \eqref{eq-Qn} and \eqref{PsurG}, the  pressure $P$ satisfies
\begin{equation}\label{pression}
\begin{aligned}
  \Delta_{x,y} (P -gy)&= - \la \nabla^2_{x,y} \phi \ra^2,\\  (P- gy) \arrowvert_{y= \eta(x)} &= g(J-2)\eta + \mez (J-1)(\vert V \vert^2 + B^2)+ [B,J](B - V \cdot \zeta):= \mathcal{P}_0, \\ 
  \partial_yP- g \arrowvert_{y=-h} &= - \partial_t b +  \nabla_x b \cdot (\nabla_x \phi_h) - b (\Delta_x \phi_h) := \mathcal{P}_1
 \end{aligned}
 \end{equation}
 We are going to work in the $(x,z)$ variables  defined previously. Let us recall some  notations (see section~\ref{defiCOV}): 
\begin{align*}
 \rho(x,z)&= \frac{1}{h}(z+h)e^{z \vert D_x \vert}(\eta(x))+z,\quad
  \Lambda_1 = \frac{1}{\partial_z \rho} \partial_z, \quad \Lambda_2 = \nabla_x - \frac{\nabla_x \rho}{\partial_z \rho}\partial_z,\\ 
    \widetilde{\phi}(x,z) &= \phi(x, \rho(x,z)).
\end{align*}
Set
 $$\phi_h(x):=\phi(x,-h) = \widetilde{\phi}(x, -h):= \widetilde{\phi}_h(x),$$ 
and
$$\mathcal{P}(x,z) = P(x, \rho(x,z)) - g \rho(x,z).$$
Since $\Lambda_1 \rho=1$, we have 
\begin{equation*}
  \mathfrak{a} -g = - (\Lambda_1 \mathcal{P})\arrowvert_{z=0}.
  \end{equation*}
 It follows from~\eqref{pression} that $\mathcal{P}$ is a solution of the problem
 \begin{equation}\label{eq-mathcalP}
  (\Lambda^2_1 + \Lambda^2_2) \mathcal{P} = G, \quad  \mathcal{P}\arrowvert_{z=0}  =   \mathcal{P}_0,\quad 
  \partial_z \mathcal{P}\arrowvert_{z=-h}  = -(\partial_z \rho\arrowvert_{z=-h})  \mathcal{P}_1,
  \end{equation}
 where   $G$ is a  linear combination   of $(\Lambda_j \Lambda_k \widetilde{\phi})^2, 1 \leq j,k \leq 2.$
 
From Theorem~\ref{regul-fin'}    we have
 \begin{equation}\label{est-P1}
 \begin{split}
 & \Vert e^{\lambda(z+h) \vert D_x \vert} \nabla_{x,z} \mathcal{P}\Vert_{C^0([-h,0], H^{s-\mez})}\\
  \les&\, 
  \big(1+  \Vert \eta\Vert_{\mathcal{H}^{\sigma,s+\mez}}\big)\big(\Vert G \Vert_{F^{\lambda, s-1}} + \Vert a(D_x)^\mez \mathcal{P}_0 \Vert_{\mathcal{H}^{\sigma, s}} +\Vert  (\partial_z \rho)\arrowvert_{z=-h}\mathcal{P}_1\Vert_{H^{s-\mez}}\big).
\end{split} 
  \end{equation}
 Estimate of   $G$ in $F^{\lambda,s-1}$. According to Lemma~\ref{uvF}, we have, since $s > 2+ \frac{d}{2}$, 
$$
  \Vert (\Lambda_j \Lambda_k \widetilde{\phi})^2 \Vert_{F^{\lambda h, s-1}} \les \Vert \Lambda_j \Lambda_k \widetilde{\phi}\Vert_{E^{\lambda h, s-\frac{3}{2}}}\Vert \Lambda_j \Lambda_k \widetilde{\phi}\Vert_{F^{\lambda h, s-1}}.
 $$
 We shall prove the following estimate.
\begin{equation}\label{est-lambda2}
\sum_{j,k=1}^2\Vert \Lambda_j \Lambda_k \widetilde{\phi} \Vert_{E^{\lambda h, s-\frac{3}{2}  }}+ \sum_{j,k=1}^2\Vert \Lambda_j \Lambda_k \widetilde{\phi} \Vert_{F^{\lambda h, s-1}} \les   \Vert a(D_x)^\mez V \Vert_{\mathcal{H}^{\lambda h, s-1}} + \Vert b \Vert_{H^{s- \mez}}.\\
  \end{equation}
In fact, since $\Lambda_1$ and  $\Lambda_2$ commute , we have $(\Lambda^2_1 + \Lambda^2_2) \Lambda_2 \widetilde{\phi} = 0$. On the other hand, by definition $\Lambda_2 \widetilde{\phi}\arrowvert_{z=0} = V$. Now,
$$\Lambda_1(\Lambda_2 \widetilde{\phi})\arrowvert_{z=-h} = \Lambda_2(\Lambda_1 \widetilde{\phi})\arrowvert_{z=-h} = \nabla_x b.$$
Indeed, the right hand side is the image by our diffeomorphism of the quantity $\nabla_x(\partial_y \phi)\arrowvert_{y=-h}= \nabla_x[(\partial_y \phi)\arrowvert_{y=-h}] = \nabla_x b$. Summing up $U = \Lambda_2 \widetilde{\phi}$ is a  solution of the  problem
\begin{equation}
\left\{
\begin{gathered}
 (\Lambda^2_1 + \Lambda^2_2) U = 0, \quad U\arrowvert_{z=0} = V, \\
  \partial_z U \arrowvert_{z=-h} = (\partial_z \rho)\arrowvert_{z=-h} \nabla_x b = (1+ \frac{1}{h}e^{-h\vert D_x\vert}\eta) \nabla_xb.
  \end{gathered}
  \right.
  \end{equation}
    Using Corollary~\ref{regul12}  we obtain when $M_s(T)\leq \overline{\eps},$
  \begin{equation}\label{est-nablaU-F}
 \Vert \nabla_{x,z} U\Vert_{F^{\lambda, s-1}} \les  \Vert a(D_x)^\mez V \Vert_{\mathcal{H}^{\lambda h, s-1}} + \Vert b \Vert_{H^{s-\mez}}.
 \end{equation}
 Recall that $\Lambda_1 = \frac{1}{\partial_z \rho}\partial_z$ and $ \Lambda_2 = \nabla_x -  \frac{\nabla_x \rho}{\partial_z \rho}\partial_z.$ Since $\nabla_x \rho$ and $\partial_z \rho -1$ belong to $ \mathcal{E}_1$ and since  $\Lambda_1^2 \widetilde{\phi} = - \Lambda_2^2 \widetilde{\phi}$, we obtain
 $$ \sum_{j,k=1}^2\Vert \Lambda_j \Lambda_k \widetilde{\phi} \Vert_{F^{\lambda h, s-1}} \les   \Vert a(D_x)^\mez V \Vert_{\mathcal{H}^{\lambda h, s-1}} + \Vert b \Vert_{H^{s- \mez}}.$$
Now, using Lemma~\ref{L316}, we can write
 $$  \Vert \nabla_{x,z} U\Vert_{E^{\lambda, s-\frac{3}{2}}} \leq C  \Vert \nabla_{x,z} U\Vert_{F^{\lambda, s-1}}.$$ 
 Using same argument as above and \eqref{est-nablaU-F}, we deduce that
   $$ \sum_{j,k=1}^2 \Vert \Lambda_j \Lambda_k \widetilde{\phi} \Vert_{E^{\lambda h, s-\frac{3}{2}}} \les \Vert a(D_x)^\mez V \Vert_{\mathcal{H}^{\lambda h, s-1}} + \Vert b \Vert_{H^{s- \mez}},$$
   which proves  \eqref{est-lambda2}. Therefore, with the notation in \eqref{eq-mathcalP} we have
   \begin{equation*}
   \Vert G \Vert_{F^{\lambda , s-1}} \les\Vert a(D_x)^\mez V \Vert^2_{\mathcal{H}^{\lambda h, s-1}} + \Vert b \Vert^2_{H^{s- \mez}}. 
\end{equation*}
 Estimate of $I=\Vert a(D_x)^\mez \mathcal{P}_0\Vert_{\mathcal{H}^{\sigma, s}}.$ 
 Recall that
  $$\mathcal{P}_0= g(J-2)\eta + \mez (J-1)(\vert V \vert^2 + B^2)+ [B,J](B - V \cdot \zeta).$$ 
 According to Proposition~\ref{uv} (iii) (with $t = s+\mez, s_0=s$) and to Proposition~\ref{commutat} applied to $B(D) = J_n(D)= \chi(\frac {D}{n})$ and $a= B$, with $\nu = 0$ and $s+\mez$ instead of $s$, we obtain (using that $\xi \mapsto \chi( \frac \xi n) \in S^0_{1,0}$ with semi-norms bounded with respect to the $n$ parameter)
  \begin{align*}
   I&\les\Vert \eta\Vert_{\mathcal{H}^{\sigma, s+\mez}} +  \Vert V\Vert_{\mathcal{H}^{\sigma, s}}\Vert V\Vert_{\mathcal{H}^{\sigma, s+\mez}}\\ 
   &\qquad +  \Vert B\Vert_{\mathcal{H}^{\sigma, s}}\Vert B\Vert_{\mathcal{H}^{\sigma, s+\mez}} + \Vert B\Vert_{\mathcal{H}^{\sigma,s+\mez}} \Vert  B- V\cdot\zeta \Vert_{\mathcal{H}^{\sigma,s-\mez}} ,\\
    &\les \Vert \eta\Vert_{\mathcal{H}^{\sigma, s+\mez}}+  \Vert V\Vert_{\mathcal{H}^{\sigma, s}}\Vert V\Vert_{\mathcal{H}^{\sigma, s+\mez}}\\
    &\qquad +  \Vert B\Vert_{\mathcal{H}^{\sigma, s}}\Vert B\Vert_{\mathcal{H}^{\sigma, s+\mez}} 
     +  \Vert B\Vert_{\mathcal{H}^{\sigma, s+\mez}}\Vert \eta\Vert_{\mathcal{H}^{\sigma, s+\mez}} \Vert V\Vert_{\mathcal{H}^{\sigma, s-\mez}}.
 \end{align*}
 Here $S^0_{1,0}$ is the  class of symbols $p$   for which the seminorms
  $$\sup_{(x,\xi)\in \xR^d \times \xR^d} \langle \xi \rangle^{\vert \alpha \vert}\vert\partial_x^\beta \partial_{\xi}^\alpha p(\xi)\vert  $$
 are finite for every $(\alpha, \beta) \in \xN^d\times\xN^d.$
 
 Using Corollary~\ref{est-B-V2} and the fact that $$\Vert \eta\Vert_{\mathcal{H}^{\sigma, s+\mez}}\leq M_s(T)\leq \overline{\eps}\leq 1,$$
 we obtain eventually
 \begin{equation*}
\Vert a(D_x)^\mez \mathcal{P}_0\Vert_{\mathcal{H}^{\sigma, s}}  \les      M_s(T) \Vert U_s\Vert_{H^\mez}
 +  M_s(T) +TM_s(T)^3 .
\end{equation*}
  Estimate of $II= \Vert(\partial_z\rho)\arrowvert_{z=-h}\mathcal{P}_1\Vert_{H^{s-\mez}}.$
  
  We have $$ \mathcal{P}_1 =  \partial_t b -  \nabla_x b \cdot (\nabla_x \phi_h) + b (\Delta_x \phi_h)$$ in $H^{s-\mez}$. Therefore,
  $$\Vert \mathcal{P}_1 \Vert_{H^{s-\mez}} \les \Vert \partial_t b \Vert_{H^{s-\mez}} +   \Vert b \Vert_{H^{s+ \mez}}(\Vert \nabla_x \phi_h\Vert_{H^{s- \mez}} +\Vert \Delta_x \phi_h \Vert_{H^{s- \mez}}).$$
Now, we have 
$$ \Vert \nabla_x  {\phi}_h \Vert_{H^{s-\mez}} \les C \Vert \nabla_x {\phi}_h \Vert_{H^{s-1}} + \Vert \Delta_x  \phi_h \Vert_{H^{s - \frac{3}{2}}},$$
so that
  $$\Vert \mathcal{P}_1 \Vert_{H^{s-\mez}} \les \Vert \partial_t b \Vert_{H^{s-\mez}} +   \Vert b \Vert_{H^{s+ \mez}}(\Vert \nabla_x \phi_h\Vert_{H^{s- 1}} +\Vert \Delta_x \phi_h \Vert_{H^{s- \mez}}).$$
 By Corollary~\ref{est-phih}   we have
$$\Vert \Delta_x \phi_h \Vert_{H^{s-\mez} } \leq C \big( \Vert b  \Vert_{H^{s+ \mez} }  + \Vert B \Vert_{H^1} \big).$$
 Theorem~\ref{regul-fin'} and the fact that
 $$\mathcal{X}^{\lambda h, s-1} \subset\{ u: e^{\lambda (z+h) \vert D_x \vert} u \in C^0([-h,0], H^{s-1})\}$$ imply that
$$
 \Vert \nabla_x {\phi}_h \Vert_{H^{s-1}} \les  \Vert a(D_x)^\mez \psi \Vert_{  \mathcal{H}^{\lambda h, s-\mez}} + \Vert b \Vert_{H^{s-1}}. $$
Therefore,
$$\Vert \mathcal{P}_1\Vert_{H^{s-\mez}}\les \Vert \partial_t b \Vert_{H^{s-\mez}} +   \Vert b \Vert_{H^{s+ \mez}} \big( \Vert a(D_x)^\mez \psi \Vert_{  \mathcal{H}^{\lambda h, s-\mez}} +   \Vert b \Vert_{H^{s+ \mez}} + \Vert B \Vert_{H^1}\big)$$
and  eventually,
\begin{equation}\label{est-P4}
   \Vert \mathcal{P}_1 \Vert_{H^{s-\mez}} \les
   \Vert \partial_t b \Vert_{H^{s-\mez}} +   \Vert b \Vert_{H^{s+ \mez}} \big( M_s(T) +   \Vert b \Vert_{H^{s+ \mez}} + TM_s(T)^2 + \eps\big).   
   \end{equation}
  Then \eqref{est-F22}  follows from   \eqref{est-P1} to \eqref{est-P4}, and (iii) is proved.

(iv)\quad Estimate of $(2) = i a(D_x)^\mez R^{(0)}$ 
 Recall for convenience that
 \begin{equation}\label{R0=}
 \left\{
 \begin{aligned}
 &R^{(0)} = R^{(0)}_1 + R^{(0)}_2 + R^{(0)}_3 + R^{(0)}_4,\\
 &R^{(0)}_1= -J(V \cdot\nabla_x )V,\\
 & R^{(0)}_2= -(\partial_y \nabla_x \phi\arrowvert_{y = \eta})(J -1)( V \cdot \zeta-B), \\
 & R^{(0)}_3 =  (J+1)B \nabla_x B -(\nabla_x B) J B   
 - B J \nabla_x B,\\   
 & R^{(0)}_4=  -\nabla_x J (BV\cdot \zeta)  
      +\nabla_x(B JV\cdot \zeta),
 \end{aligned}
 \right.
 \end{equation}
 First of all, we have
 \begin{align*}
 \Vert a(D_x)^\mez J (V\cdot\nabla_x)V\Vert_{\mathcal{H}^{\sigma, s-1}}&\les \Vert   (V\cdot\nabla_x)V\Vert_{\mathcal{H}^{\sigma, s-\mez}}\les\Vert    V  \Vert_{\mathcal{H}^{\sigma, s-\mez}}\Vert V\Vert_{\mathcal{H}^{\sigma, s+\mez}}\\ 
 &\les \Vert    V  \Vert_{\mathcal{H}^{\sigma, s-\mez}}(\Vert    V  \Vert_{\mathcal{H}^{\sigma, s }} +\Vert   a(D_x)^\mez V  \Vert_{\mathcal{H}^{\sigma, s }}).
 \end{align*}
 Therefore,
    \begin{equation}\label{est-F1}
     \Vert a(D_x)^\mez R^{(0)}_1\Vert_{\mathcal{H}^{\sigma, s-1}}\les M_s(T)^2+ M_s(T)\Vert U_s\Vert_{H^\mez}.
  \end{equation}
Let us consider the term $R^{(0)}_2.$ Since $s > 1+ \frac{d}{2}$, 
we have
$$ \Vert a(D_x)^\mez R^{(0)}_2\Vert_{\mathcal{H}^{\sigma, s-1}} \les  \Vert\partial_y \nabla_x \phi\arrowvert_{y = \eta}\Vert_{\mathcal{H}^{\sigma, s-\mez}} \Vert V\cdot\zeta-B\Vert_{\mathcal{H}^{\sigma, s-\mez}}.$$
Here we have
$$ \Vert V\cdot\zeta-B\Vert_{\mathcal{H}^{\sigma, s-\mez}}\les \Vert V \Vert_{\mathcal{H}^{\sigma, s-\mez}} \Vert  \eta \Vert_{\mathcal{H}^{\sigma, s+\mez}} + \Vert  B\Vert_{\mathcal{H}^{\sigma, s-\mez}}\leq  C M_s(T).$$
Recall that by Lemma~\ref{der2phi}, we have
$$ \partial_y\nabla_x \phi \arrowvert_{y = \eta} =   \nabla_x B  -\frac{(\nabla_x B \cdot \zeta) \zeta + (\cnx V)  \zeta }{1+ \vert  \zeta \vert^2} .$$
It follows from the product rules and Proposition~\ref{est-f(u)bis} that
\begin{align*}
 &\Vert\partial_y \nabla_x \phi\arrowvert_{y = \eta}\Vert_{\mathcal{H}^{\sigma, s-\mez}}\\
 \les &\,   \Vert B\Vert_{\mathcal{H}^{\sigma, s+ \mez}}(1+\Vert \eta\Vert^2_{\mathcal{H}^{\sigma, s+ \mez}})^2 + \Vert V\Vert_{\mathcal{H}^{\sigma, s+ \mez}}\Vert \eta\Vert_{\mathcal{H}^{\sigma, s+ \mez}}(1+ \Vert \eta\Vert^2_{\mathcal{H}^{\sigma, s+ \mez}})\\
 \les &\,  \Vert B\Vert_{\mathcal{H}^{\sigma, s+ \mez}}+ \Vert V\Vert_{\mathcal{H}^{\sigma, s+ \mez}} \\  
   \les&\, \Vert V\Vert_{\mathcal{H}^{\sigma, s+ \mez}}+ TM_s(T)^2 + \eps,
\end{align*}
since $\Vert \eta\Vert_{\mathcal{H}^{\sigma, s+ \mez}}\leq M_s(T)\leq \overline{\eps}\leq 1$  and using   Corollary~\ref{est-B-V2}. Now,
$$
  \Vert V\Vert_{\mathcal{H}^{\sigma, s+ \mez}}  \les  \Vert  a(D_x)^\mez V\Vert_{\mathcal{H}^{\sigma, s}}+ \Vert V\Vert_{\mathcal{H}^{\sigma, s}}  
   \les\Vert U_s\Vert_{H^\mez} + M_s(T).
$$
 It follows that
 $$\Vert\partial_y \nabla_x \phi\arrowvert_{y = \eta}\Vert_{\mathcal{H}^{\sigma, s-\mez}}\les  \Vert U_s\Vert_{H^\mez} + M_s(T) + TM_s(T)^2 + \eps.$$
  Eventually we obtain
  \begin{equation*}
  \Vert a(D_x)^\mez R^{(0)}_2\Vert_{\mathcal{H}^{\sigma, s-1}}\les  \Vert U_s\Vert_{H^\mez}M_s(T) + M^2_s(T) + TM_s(T)^3 + \eps M_s(T).
  \end{equation*}
We consider now the term $R^{(0)}_3.$ It is easy to see that
$$ \Vert a(D_x)^\mez R^{(0)}_3\Vert_{\mathcal{H}^{\sigma, s-1}}\les \Vert B \Vert_{\mathcal{H}^{\sigma, s-\mez}}\Vert B \Vert_{\mathcal{H}^{\sigma, s+ \mez}}.$$
So using Corollary~\ref{est-B-V2} we obtain eventually,
\begin{equation*}
  \Vert a(D_x)^\mez R^{(0)}_3\Vert_{\mathcal{H}^{\sigma, s-1}}\les    \Vert U_s\Vert_{H^\mez}M_s(T) + M^2_s(T) + TM_s(T)^3 + \eps M_s(T).
    \end{equation*}
    Consider now the term $R^{(0)}_4.$ We notice that $R^{(0)}_4 = \nabla_x \big( [J,B] V \zeta\big).$ 
    It follows that
    $$\Vert a(D_x)^\mez R^{(0)}_4\Vert_{\mathcal{H}^{\sigma, s-1}}\leq C\Vert [J,B] V \zeta \Vert_{\mathcal{H}^{\sigma, s+ \mez}}.$$
    Since the operator $J = \varphi(2^{-n}D_x)$ belongs to $\text{Op}S^0_{1,0}$ with semi-norms  bounded with respect to  $n$,  Proposition~\ref{commutat} gives
    $$\Vert a(D_x)^\mez R^{(0)}_4\Vert_{\mathcal{H}^{\sigma, s-1}}\les  \Vert B   \Vert_{\mathcal{H}^{\sigma, s+ \mez}}\Vert   V   \Vert_{\mathcal{H}^{\sigma, s- \mez}}\Vert   \eta \Vert_{\mathcal{H}^{\sigma, s+ \mez}}.$$ 
   Then by  Corollary~\ref{est-B-V2}, we obtain
    \begin{equation}\label{est-F4}
    \Vert a(D_x)^\mez R^{(0)}_4\Vert_{\mathcal{H}^{\sigma, s-1}}\les  M_s(T)^2\big( \Vert U_s\Vert_{H^\mez} + T M_s(T)^2 + \eps\big).
    \end{equation}
    Summing up, using \eqref{est-F1} to \eqref{est-F4}, then \eqref{R0=} and the fact that $M_s(T)\leq \overline{\eps}\leq 1$, we obtain
    \begin{equation*}
    \Vert  a(D_x)^\mez R^{(0)}\Vert_{\mathcal{H}^{\sigma, s-1}}\les
  M_s(T) \Vert U_s \Vert_{H^\mez} + M_s(T)^2+ T M_s(T)^3 + \eps M_s(T).
    \end{equation*}
      The proof of Proposition~\ref{est-restes} is complete. 
       \end{proof}
    \begin{coro}\label{est-fin-R}
    We have
\begin{align*}
&\text{\rm{(i)}}\quad \Vert \sqrt{g}R_1^{(1)}   \Vert_{\mathcal{H}^{\sigma, s-\mez}}   \les  M_s(T) \Vert U_s \Vert_{H^\mez} + \Vert b \Vert_{H^{s+\mez}},\\
&\text{\rm{(ii)}} \quad \Vert \sqrt{g}R_2^{(1)}+ ia(D_x) ^\mez\big[(\partial_y P \arrowvert_{y = \eta} + g)\zeta+  ia(D_x)^\mez R^{(0)}\Vert_{\mathcal{H}^{\sigma, s-1}}\\
&\qquad\qquad\qquad\qquad\qquad\qquad \les M_s(T) \Vert U_s \Vert_{H^\mez} + g_\eps(T), 
 \end{align*}
    where
    \begin{equation}\label{geps}
     g_\eps(T)=  M_s(T)^2+ T M_s(T)^3 +  T^2 M_s(T)^5+ \eps M_s(T).
     \end{equation}
  \end{coro}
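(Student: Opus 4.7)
The corollary is essentially a bookkeeping consequence of Proposition~\ref{est-restes}, so the plan is short. Statement~(i) of the corollary is literally statement~(i) of the proposition, so there is nothing to do there. For statement~(ii), I would simply apply the triangle inequality in $\mathcal{H}^{\sigma,s-1}$ to the three terms $\sqrt{g}R_2^{(1)}$, $ia(D_x)^\mez[(\partial_y P\arrowvert_{y=\eta}+g)\zeta]$, and $ia(D_x)^\mez R^{(0)}$, and then sum the corresponding estimates provided by (ii), (iii), (iv) of Proposition~\ref{est-restes}.

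The only mildly non-trivial point is to collect the $\|U_s\|_{H^\mez}$ contributions. The three estimates contribute respectively $M_s(T)\|U_s\|_{H^\mez}$, $M_s(T)^2\|U_s\|_{H^\mez}$, and $M_s(T)\|U_s\|_{H^\mez}$. Using the standing assumption $M_s(T)\le \overline{\eps}\le 1$ from~\eqref{hypot}, the middle one is dominated by $M_s(T)\|U_s\|_{H^\mez}$, so that the total coefficient of $\|U_s\|_{H^\mez}$ is $\lesssim M_s(T)$, as required.

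The remaining terms on the right-hand sides of (ii), (iii), (iv) do not involve $\|U_s\|_{H^\mez}$; they are
\[
M_s(T)^2+T^2M_s(T)^5+\eps M_s(T)\quad\text{and}\quad M_s(T)^2+TM_s(T)^3+\eps M_s(T),
\]
and the estimate in (ii) contributes no such remainder. Summing gives exactly
\[
g_\eps(T)=M_s(T)^2+TM_s(T)^3+T^2M_s(T)^5+\eps M_s(T),
\]
which matches the definition~\eqref{geps}. Since no analysis beyond the triangle inequality and the smallness $M_s(T)\le 1$ is involved, I do not expect any real obstacle; the only thing to double-check is that every term appearing in (ii)--(iv) of the proposition is either absorbed into the $M_s(T)\|U_s\|_{H^\mez}$ term or into one of the four monomials in $g_\eps(T)$, which it is.
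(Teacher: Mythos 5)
Your proposal is correct and matches the paper's intent exactly: the paper states the corollary immediately after Proposition~\ref{est-restes} with no separate proof, precisely because it is the triangle inequality applied to (ii)--(iv) of the proposition, with the coefficient $M_s(T)^2\|U_s\|_{H^\mez}$ from (iii) absorbed into $M_s(T)\|U_s\|_{H^\mez}$ via the standing hypothesis $M_s(T)\le\overline\eps\le 1$. Your bookkeeping of the remaining monomials into $g_\eps(T)$ is also accurate.
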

\subsubsection{A priori estimates.}

In this paragraph we first bound the terms in $M_s(T)$ containing $\eta$ and $V$.
 For the reader's convenience we recall some  notations. We have set
$$\zeta = \nabla_x \eta, \quad  u = \sqrt{g}\zeta +i a(D_x)^\mez V.$$
Then $u$ is solution of the equation (see~\eqref{eq-fin2})
\begin{equation*}
  \partial_t u + iJ \big(g a(D_x)\big)^\mez u = \sqrt{g }R^{(1)} + ia(D_x) ^\mez\big[(\partial_y P \arrowvert_{y = \eta} + g)\zeta + R^{(0)}\big]:= f
\end{equation*}
We have also set
$$U_s(t) = e^{\sigma(t) \langle D_x \rangle}\langle D_x \rangle^{s-\mez} u(t), \quad \sigma(t) = \lambda h - K \eps t,$$
where  $K$ is a  large positive constant to be chosen.
Then $U_s$ satisfies the equation
$$\partial_t U_s+iJ(g \,a(D_x))^\mez U_s+ K \eps \langle D_x \rangle  U_s = e^{\sigma(t)\langle D_x \rangle}  \langle D_x \rangle^{s-\mez}f.$$
So we have
\begin{align*}
  \frac{d}{dt} \Vert U_s(t)\Vert^2_{L^2}  &= 2  \big(U_s(t), \partial_t U_s(t)\big)_{L^2},\\
 &= - 2K \eps  \Vert \, \langle D_x \rangle^\mez U_s(t)\Vert^2_{L^2}  + 2  \big(U_s(t), e^{\sigma(t) \langle D_x \rangle} \langle D_x \rangle^{s-\mez}f(t) \big)_{L^2},
 \end{align*}
 because  the term  $ 2\text{Re } i \big(U_s(t), (g \,Ja(D_x))^\mez U_s(t)\big)_{L^2} $ vanishes,  since the symbol of $Ja$ is real. We deduce the estimate
 \begin{equation}\label{a-priori1}  
 \begin{split}
&  \Vert U_s(t)\Vert^2_{L^2} +  2K \eps  \int_0^t \Vert \,   U_s(t')   \Vert^2_{H^\mez} \,dt' \\
  =&\, \Vert U_s(0)\Vert^2_{L^2} + 2 \int_0^t \big(U_s(t'),\mathcal{K}_s(t')f(t') \big)_{L^2}\, dt',
 \end{split}
 \end{equation}
 where (see \eqref{R1=})
\begin{align*}
 &\mathcal{K}_s(t') = e^{\sigma(t') \langle D_x \rangle} \langle D_x \rangle^{s-\mez}, \quad f= f_1 + f_2,\\
 &\quad f_1= \sqrt{g }R_1^{(1)},\quad 
  f_2= \sqrt{g }R_2^{(1)} + ia(D_x) ^\mez\big[(\partial_y P \arrowvert_{y = \eta} + g)\zeta + R^{(0)}\big].
\end{align*}
  Set
 \begin{equation}\label{defA}
A_j(t) = \int_0^t F_j(t')\, dt' 
\end{equation}
where 
\[
\begin{aligned} 
F_1(t')&=\vert \big(U_s(t'), K_s(t')f_1(t')  \big)_{L^2} \vert,\quad  
   F_2(t')  = \vert \big(U_s(t'), K_s(t')f_2 \big)_{L^2} \vert 
  \end{aligned}
\]

 Estimate of $A_1(t)$. 
  By Corollary~\ref{est-fin-R} (i) we can write
$$
F_1(t') \leq \Vert U_s(t')\Vert_{L^2}  \Vert f_1(t')\Vert_{\mathcal{H}^{\sigma, s-\mez}}
 \les M_s(T) \Vert U_s(t')\Vert^2_{H^\mez} +  \Vert U_s(t')\Vert_{L^2} \Vert b(t')\Vert_{H^{s+\mez}}. $$
 It follows that
 \begin{equation}\label{est-A1}
 A_1(t)\les M_s(T)\int_0^t   \Vert U_s(t')\Vert^2_{H^\mez}\, dt' + \int_0^t  \Vert U_s(t')\Vert_{L^2}\Vert b(t')\Vert_{H^{s+\mez}}\, dt'.
  \end{equation}
Estimate of $A_2(t)$. By Corollary~\ref{est-fin-R} (ii) we can write
$$
 F_2(t')  \leq \Vert U_s(t')\Vert_{H^{\mez}} \Vert f_2(t')\Vert_{\mathcal{H}^{\sigma, s-1}}\\
   \les M_s(T) \Vert U_s(t')\Vert^2_{H^\mez} + g_\eps(T).  
$$
 It follows that
 \begin{equation}\label{est-A2}
  A_2(t)    \les M_s(T)\int_0^t \Vert U_s(t')\Vert^2_{H^\mez}\, dt' + g_\eps(T)\int_0^t \Vert U_s(t')\Vert_{H^\mez}\, dt'
\end{equation}
 Let us set
 \begin{equation}\label{def-Es}
\mathcal{E}_s(t)=  \Vert U_s(t)\Vert^2_{ L^2 }  +  2K \eps  \int_0^t \Vert  U_s(t')   \Vert^2_{H^\mez} \,dt'.
\end{equation}
 Using \eqref{a-priori1}, \eqref{defA}, \eqref{est-A1} and \eqref{est-A2}, we obtain
  \begin{equation}\label{a-priori2}
   \mathcal{E}_s(t) \les  \Vert U_s(0)\Vert^2_{L^2}    + M_s(T)\int_0^t \Vert U_s(t')\Vert^2_{H^\mez}\, dt' +   I_1(t) + I_2(t),
\end{equation}
where 
\begin{align*}
    I_1(t)&= \int_0^t \Vert b(t')\Vert_{H^{s+\mez}} \Vert U_s(t')\Vert_{L^2}\, dt', \\ 
    I_2(t) &= g_\eps(T)\int_0^t \Vert U_s(t') \Vert_{H^\mez}\, dt'.
     \end{align*}
Here $g_\eps$ is defined by \eqref{geps}.
We estimate separately each term $I_k.$ We have
\begin{equation}\label{I1}
 I_1(t)\leq \Vert U_s\Vert_{L^\infty(I,L^2)} \Vert b\Vert_{L^1(\xR, H^{s+\mez})} \leq \delta \Vert U_s\Vert^2_{L^\infty(I,L^2)}+ C_\delta\Vert b\Vert^2_{L^1(\xR, H^{s+\mez})},\\
 \end{equation}
  Using the Cauchy-Schwarz inequality, we write
 $$I_2(t) \leq g_\eps(T)T^\mez\Big(\int_0^t \Vert U_s(t') \Vert^2_{H^\mez}\, dt'\Big)^\mez,$$
 so, with absolute constants $C_1$ and $C_2$,
 \begin{equation}\label{I2}
 I_2(t) \les \frac{1}{K \eps}T\, g_\eps(T)^2 +  K\eps \int_0^t \Vert U_s(t') \Vert^2_{H^\mez}\, dt'.
 \end{equation}
 We will absorb the term $\frac{K\eps}{C_2}\int_0^t \Vert U_s(t') \Vert^2_{H^\mez}\, dt'$ by the corresponding term in $\mathcal{E}_s(t).$
  It follows from \eqref{a-priori2}, \eqref{I1} and \eqref{I2} that
 \begin{equation}\label{a-priori3}
 \begin{aligned}
 \mathcal{E}_s(t)  \les \Vert U_s(0)\Vert^2_{L^2}  + \delta \Vert U_s\Vert^2_{L^\infty(I,L^2)}&+  M_s(T)\int_0^t \Vert U_s(t')\Vert^2_{H^\mez}\, dt' \\
 &+ \frac{C_1}{K \eps}T g_\eps(T)^2 +\Vert b\Vert^2_{L^1(\xR, H^{s+\mez})}.
  \end{aligned}
 \end{equation}
 Now according to \eqref{equiv-Us-u}, we have
 \begin{equation}\label{a-priori4}
 \begin{aligned}
 &\Vert U_s(0)\Vert_{L^2} \les \Vert \eta_0\Vert^2_{\mathcal{H}^{\lambda h, s+\mez}}+ \Vert V_0\Vert^2_{\mathcal{H}^{\lambda h, s}},\\
 &\Vert \nabla \eta(t)\Vert^2_{\mathcal{H}^{\sigma(t), s-\mez}}+ \Vert a(D_x)^\mez V(t)\Vert^2_{\mathcal{H}^{\sigma(t), s-\mez}}\les \Vert U_s(t)\Vert^2_{L^2}.
 \end{aligned}
 \end{equation}
 Moreover, we have
 \begin{equation}\label{a-priori5}
 \begin{aligned}
 &\Vert   \eta(t)\Vert^2_{\mathcal{H}^{\sigma(t), s+\mez}}\les \Vert \nabla \eta(t)\Vert^2_{\mathcal{H}^{\sigma(t), s-\mez}}+ \Vert   \eta(t)\Vert^2_{L^2},\\
 &\Vert  V(t)\Vert^2_{\mathcal{H}^{\sigma(t), s }}\les \Vert a(D_x)^\mez V(t)\Vert^2_{\mathcal{H}^{\sigma(t), s-\mez}}+ \Vert   V(t)\Vert^2_{L^2}.
\end{aligned}
\end{equation}
Using \eqref{def-Es}, \eqref{a-priori3}, \eqref{a-priori4} and \eqref{a-priori5}, 
we obtain
\begin{align*}
&\mathcal{E}_s(t)+ \Vert   \eta(t)\Vert^2_{\mathcal{H}^{\sigma(t), s+\mez}}
+\Vert  V(t)\Vert^2_{\mathcal{H}^{\sigma(t), s }} \\
\les&\, \Vert \eta_0\Vert^2_{\mathcal{H}^{\lambda h, s+\mez}}
+\Vert V_0\Vert^2_{\mathcal{H}^{\lambda h, s}}
+ \delta \Vert U_s(t)\Vert^2_{L^\infty(I,L^2)}\\
&\, +M_s(T)\int_0^t \Vert U_s(t')\Vert^2_{H^\mez}\, dt' + \frac{1}{K \eps}T g_\eps(T)^2 +\Vert b\Vert^2_{L^1(\xR, H^{s+\mez})}.
\end{align*}

To complete the proof of Proposition~\ref{Prop:C0} 
we are left with the estimate of  the part in $M_s(T)$ containing  $B$ and $\psi.$ For that we use Corollary~\ref{est-B-V2}. Using also the hypotheses \eqref{hypot} we obtain
 \begin{coro}
There exists $C = C(d,g,\lambda,h,s)>0$ such that for every $t\in (0,T)$ we have
\begin{equation}
\begin{split}
& \mathcal{E}_s(t)+ \Vert   \eta(t)\Vert^2_{\mathcal{H}^{\sigma(t), s+\mez}}+ \Vert a(D_x)^\mez 
\psi\Vert^2_{\mathcal{H}^{\sigma(t), s}} 
  + \Vert  V(t)\Vert^2_{\mathcal{H}^{\sigma(t), s }}+ \Vert  B(t)\Vert^2_{\mathcal{H}^{\sigma(t), s }}\\ 
  \les  &\, 
    \delta \Vert U_s\Vert^2_{L^\infty(I,L^2)} 
  + M_s(T)\int_0^t \Vert U_s(t')\Vert^2_{H^\mez}, dt' 
+\frac{1}{K \eps}T g_\eps(T)^2  \\
&\qquad \qquad + (1+T^2) M_s(T)^4 + \eps^2,
\end{split}
  \end{equation}
   where
   $$g_\eps(T)=   M_s(T)^2+ T M_s(T)^3 +  T^2 M_s(T)^5+ \eps M_s(T)$$
\end{coro}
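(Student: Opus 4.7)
The plan is to start from the inequality derived immediately above the statement of the corollary, which already controls $\mathcal{E}_s(t) + \Vert \eta(t)\Vert^2_{\mathcal{H}^{\sigma(t),s+\mez}} + \Vert V(t)\Vert^2_{\mathcal{H}^{\sigma(t),s}}$ by the right-hand side, modulo the initial-data contribution $\Vert \eta_0\Vert^2_{\mathcal{H}^{\lambda h,s+\mez}}+\Vert V_0\Vert^2_{\mathcal{H}^{\lambda h,s}}$ and the source term $\Vert b\Vert^2_{L^1(\xR,H^{s+\mez})}$. By hypothesis~\eqref{hypot} and the definition of $N_s(b)$ in~\eqref{Nsb}, each of these three quantities is bounded by $\eps^2$, so they are absorbed in the final $\eps^2$ of the conclusion. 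The only genuine work left is therefore to add the $B$- and $a(D_x)^\mez\psi$-contributions to the left-hand side, which is achieved by invoking Corollary~\ref{est-B-V2}.

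Concretely, squaring the first assertion of Corollary~\ref{est-B-V2} (applied with $\mu=0$) gives
\[
\Vert B(t)\Vert^2_{\mathcal{H}^{\sigma(t),s}} \les \Vert V(t)\Vert^2_{\mathcal{H}^{\sigma(t),s}} + T^2 M_s(T)^4 + \eps^2,
\]
which is already of the form the corollary allows. For the $\psi$-contribution, the second assertion of Corollary~\ref{est-B-V2} combined with the previous display (used at regularity $s-\mez$ rather than $s$) yields
\[
\Vert a(D_x)^\mez \psi(t)\Vert^2_{\mathcal{H}^{\sigma(t),s}} \les \Vert V(t)\Vert^2_{\mathcal{H}^{\sigma(t),s-\mez}} + M_s(T)^2\bigl(\Vert V(t)\Vert^2_{\mathcal{H}^{\sigma(t),s-\mez}} + T^2M_s(T)^4 + \eps^2\bigr).
\]
Using the smallness assumption $M_s(T)\le \overline{\eps}\le 1$ from~\eqref{hypot}, the factor $M_s(T)^2$ in front of $\Vert V\Vert^2_{\mathcal{H}^{\sigma,s-\mez}}$ is harmless and the entire expression is dominated by $\Vert V(t)\Vert^2_{\mathcal{H}^{\sigma(t),s}} + T^2M_s(T)^4 + \eps^2$, i.e.\ of the same shape as the $B$-bound.

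Adding these two bounds to the pre-existing inequality on $\mathcal{E}_s(t) + \Vert\eta\Vert^2_{\mathcal{H}^{\sigma,s+\mez}} + \Vert V\Vert^2_{\mathcal{H}^{\sigma,s}}$ and grouping the new error terms with the pre-existing one $M_s(T)^4$ coming from the $b$-free part of the right-hand side (which is already present through $g_\eps(T)^2/(K\eps)$ and the bound~$\delta\Vert U_s\Vert^2_{L^\infty(I,L^2)}$ that we preserve), the purely algebraic errors combine into $(1+T^2)M_s(T)^4$; the constants from~\eqref{hypot} give the residual $\eps^2$. There is no real obstacle: the corollary is a straightforward assembly step, and the only point requiring any verification is the bookkeeping of $M_s(T)$-factors, handled by the smallness assumption $M_s(T)\le 1$ which keeps every such factor sub-unitary so that no term exceeds the admissible order $(1+T^2)M_s(T)^4+\eps^2$.
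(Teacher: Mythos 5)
Your proof is correct and follows exactly the route of the paper: start from the inequality established just above the corollary, absorb the initial-data and $b$ contributions into $\eps^2$ via hypotheses~\eqref{hypot} and~\eqref{Nsb}, then add the $B$ and $a(D_x)^\mez\psi$ pieces to the left-hand side using Corollary~\ref{est-B-V2} together with the smallness $M_s(T)\le 1$. The only detail worth making explicit in your write-up is that $\Vert B(t)\Vert_{\mathcal{H}^{\sigma,s-\mez}}$ in the $\psi$-estimate is controlled by the trivial embedding $\Vert B(t)\Vert_{\mathcal{H}^{\sigma,s-\mez}}\le\Vert B(t)\Vert_{\mathcal{H}^{\sigma,s}}$ before invoking the first assertion of Corollary~\ref{est-B-V2} (which is stated only for $\mu=0$ or $\mu=\mez$); the rest of your bookkeeping is correct.
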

 
 \begin{proof}[End of the proof of Proposition~\ref{Prop:C0}]
 According to the definition of $\mathcal{E}_s(T)$ (see \eqref{def-Es}) taking the supremum of both members with respect to 
 $t$ in $(0,T)$ and  $\delta$ small enough, we can absorb the term $\delta \Vert U_s\Vert^2_{L^\infty(I,L^2)}$ in the right hand side  by the left hand side. 
This completes the proof.
 \end{proof}

\subsection{End of the proof.}

We are now in position to complete the proof of Theorem~\ref{T=2}. 

\textbf{Uniqueness.} Without source term (that is when $b=0$), 
the uniqueness of smooth solutions is a well-known result. 
When $b$ is non-trivial, we notice that the uniqueness result asserted by 
Theorem~\ref{T=1} implies the uniqueness of the solutions 
satisfying the regularity assumptions in Theorem~\ref{T=2}. Indeed, 
if we consider an initial data $(\eta_0,\psi_0)$ satisfying assumption ~\e{assu:T=2}, 
and two possible solutions $(\eta_1,\psi_1)$ 
and $(\eta_2,\psi_2)$, satisfying the Cauchy problem~\eqref{system}, with the same initial data $(\eta_0,\psi_0)$, and satisfying the regularity result~\e{result:T=2}, then they are both solutions satisfying trivially \e{result:T=1}. 

\textbf{Passing to the limit.} It remains to prove the existence part of the result. 
For the convenience of the readers, let us recall that we have proved the existence 
of approximate solutions $(\eta_n\psi_n)$ to the Cauchy problem
\begin{equation}\label{ww:approxf}
\left\{
\begin{aligned}
&\partial_t \eta_n  = J_n G(\eta_n )(\psi_n ,b),  \\
  &\partial_t \psi_n 
  =J_n\Big( -g \eta_n - \mez \vert \nabla_x \psi_n \vert^2 
  + \frac{(G(\eta_n )(\psi_n,b)+ \nabla_x \eta_n  \cdot \nabla_x \psi_n )^2}{2(1+  \vert \nabla_x \eta_n \vert^2)}\Big),\\   
  &\eta_n \arrowvert_{t=0} = J_n\eta_0, \quad \psi_n \arrowvert_{t=0} = J_n\psi_0,
 \end{aligned}
\right.
\end{equation}
where $J_n$ is a truncation in frequency space defined in \e{defi:Jn}. 
In this paragraph, 
we shall prove that we can extract a sub-sequence of $((\eta_{n'},\psi_{n'}))$ 
that converges weakly 
to a solution of the water-wave system (thanks to the uniqueness of the solution to the water-wave system, this will imply that the whole sequence converges, without extraction of a sub-sequence). 
This part relies on classical arguments from functional analysis, but since we work in analytic spaces and since the problem is nonlinear and nonlocal, some verifications are needed.

Recall from Lemma \ref{lemm-Tn} and Corollary \ref{coro-c*} that there exist 
four positive real 
numbers $\eps_*$, $c_*$, $C_*$ and $K_*$ such that for all $n\in \xN\setminus\{0\}$, 
the following properties hold: if the initial norm $\eps$ (as defined 
by~\e{defi:epsnorm}) satisfies $\eps\le \eps_*$, then for all $n\in \xN\setminus\{0\}$, the lifespan is bounded from below by
$T_n\ge \frac{c_*}{\eps}$, 
and moreover,
$$
M_{s,n}\Big(\frac{c_*}{\eps}\Big)\le C_*\eps,
$$
where the norm $M_{s,n}(T)$ is defined by
\[
M_{s,n} (T) =   \Vert   \eta_n  \Vert_{X_T^{\infty,s+ \mez}}  +  \Vert  a(D_x)^\mez \psi_n \Vert_{X_T^{\infty,s}} 
+    \Vert  V_n  \Vert_{X_T^{\infty,s}}  
+  \Vert  B_n \Vert_{X_T^{\infty,s}}.
\]
Here we put 
\[
 X^{\infty,s}_T = L^\infty([0,T], \mathcal{H}^{\sigma, s}) \quad \text{with} \quad \sigma(t) = \lambda h - K_* \eps t.
\]
Let us notice that Theorem~\ref{est-DN} implies that
$$
\lA G(\eta_n)(\psi_n,b)\rA_{X^{\infty,s-\mez}_{c_*/\eps}}\le C_*'\eps,
$$
for some constant $C_*'$ independent of $\eps$ and $n$. Then, by using the product rule given by point~$\ref{HorNLii)})$ in Proposition~\ref{uv}, 
as we already did 
repeatedly in the previous paragraph, we infer from the equations~\e{ww:approxf} that
$$
\lA \partial_t\eta_n\rA_{X^{\infty,s-\mez}_{c_*/\eps}}
+\lA \partial_t\psi_n\rA_{X^{\infty,s-\mez}_{c_*/\eps}}\le C_*''\eps.
$$
Thanks to the Arzela-Ascoli theorem and the compact embedding of 
$\mathcal{H}^{\lambda h,s}(\xT^d)$ in $\mathcal{H}^{\lambda h,s'}(\xT^d)$ for $s'<s$, 
there exist a sub-sequence $((\eta_{n'},\psi_{n'}))$ and the limit $(\eta,\psi)$ such that $(\eta_{n'},\psi_{n'})$ converges to $(\eta,\psi)$ in $X^{\infty,s'}_{c_*/\eps}$. 
Now, the contraction result for the Dirichlet-to-Neumann operator given by Theorem~\ref{G-lip} implies that the sequence $(G(\eta_{n'})(\psi_{n'},b))$ converges to $G(\eta)(\psi,b)$. Thus we conclude that the limit $(\eta,\psi)\in X^{\infty,s'}_{c_*/\eps}$ 
solves the water-waves equations.

\appendix

\section{Some properties of the $\mathcal{H}^{\sigma,s}$ spaces}
\label{AppendixA}

\subsection{Characterization.}

In this paragraph we prove Theorem~\ref{Gsigmas} whose statement is recalled below, 
together with the fact that functions 
in $\mathcal{H}^{\sigma,s}(\xT^d)$ are the traces on $\xT^d$ of holomorphic functions in 
$$
S_\sigma= \{(x,y)\in \xT^d\times\xR^d: \vert y \vert <\sigma\},
$$
where
\[
|y|= \Bigg(\sum_{j=1}^d y_j^2\Bigg)^\mez.
\]
Recall that, given $U \colon S_\sigma\to \xC$, 
we denote by $U_y$ the function from $\xT^d$ to $\xC$ defined by $x \mapsto  U(x+iy)$. 


\begin{theo}\label{Gsigmas2}
Let $\sigma>0$ and $s\in \xR$. 
\begin{itemize}
\item[\rm{(1)}]  Let $u \in \mathcal{H}^{\sigma,s}(\xT^d)$. 
There exists $U\in \mathcal{H}ol(S_\sigma)$  such that $U_0 = u$ and
$$\sup _{\vert y \vert<\sigma} \Vert U_y\Vert_{H_x^s(\xT^d)} \leq\Vert u \Vert_{ \mathcal{H}^{\sigma,s}}.$$
\item[\rm{(2)}]  Let $U\in \mathcal{H}ol(S_\sigma)$ 
such that  $M_0:= \sup_{\vert y \vert<\sigma} \Vert U_y\Vert_{H_x^s(\xT^d)}<+ \infty$. Set $u = U_0$. Then, 
\begin{itemize}
\item[\rm{(i)}] If $d=1$, then $u$ 
belongs to $\mathcal{H}^{\sigma,s}(\xT^d)$ and  $\Vert u \Vert_{\mathcal{H}^{\sigma,s}} \leq 2M_0.$

\item[\rm{(ii)}] If $d\geq 2$, 
then $u$ belongs to $\mathcal{H}^{\delta,s}(\xT^d)$ for any $\delta<\sigma$ and there exists a constant 
$C_\delta>0 $ such 
that $\Vert u \Vert_{\mathcal{H}^{\delta,s}} \leq C_\delta M_0$.
\end{itemize}
\item[\rm{(3)}] Let $U\in \mathcal{H}ol(S_\sigma)$ be such that
$$
M_1:= \sup_{\vert y \vert<\sigma} \Vert U_y\Vert_{H_x^{s'}(\xT^d)}<+ \infty \quad\text{with}\quad 
s'>s + \frac{d-1}{4}.
$$
Then the function $u = U_0$ belongs to $\mathcal{H}^{\sigma,s}(\xT^d)$ 
and there exists a constant $C>0$ such that 
$\Vert u \Vert_{\mathcal{H}^{\sigma,s}} \leq C   M_1$.
\end{itemize}
\end{theo}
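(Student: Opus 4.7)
\textbf{Proof plan for Theorem~\ref{Gsigmas2}.}

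\emph{Direct direction (1).} Given $u\in\mathcal{H}^{\sigma,s}(\xT^d)$, I would define
$$U(x+iy)=\frac{1}{(2\pi)^d}\sum_{\xi\in\xZ^d}\widehat{u}(\xi)e^{i(x+iy)\cdot\xi}.$$
The factor $e^{-y\cdot\xi}$ is controlled by $e^{|y||\xi|}\le e^{\sigma|\xi|}$, so the hypothesis $u\in\mathcal{H}^{\sigma,s}$ gives absolute convergence on compact subsets of $S_\sigma$ (hence holomorphy), and the identity $\widehat{U_y}(\xi)=\widehat{u}(\xi)e^{-y\cdot\xi}$ immediately yields the pointwise bound $\|U_y\|_{H^s}^2\le\|u\|_{\mathcal{H}^{\sigma,s}}^2$.

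\emph{Converse, key identity.} For parts (2) and (3) I would first establish, under the standing hypothesis $\sup_{|y|<\sigma}\|U_y\|_{H^s}<\infty$, the Fourier representation
\begin{equation}\label{planFourier}
\widehat{U_y}(\xi)=\widehat{u}(\xi)\,e^{-y\cdot\xi},\qquad |y|<\sigma,\ \xi\in\xZ^d,\quad u\defn U_0.
\end{equation}
Indeed, writing $U_y$ as a Fourier series with $y$-dependent coefficients $c_\xi(y)$, the Cauchy--Riemann equations $\partial_{y_j}U=i\partial_{x_j}U$ translate into $\partial_{y_j}c_\xi(y)=-\xi_j c_\xi(y)$, so $c_\xi(y)=c_\xi(0)\,e^{-y\cdot\xi}$.

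\emph{Part (2)(i), $d=1$.} With \eqref{planFourier}, for $\xi>0$ I let $y\to-\sigma$ and for $\xi<0$ I let $y\to+\sigma$; Fatou's lemma applied to each half-sum produces
$$\sum_{\xi\ne 0}\langle\xi\rangle^{2s}|\widehat{u}(\xi)|^2 e^{2\sigma|\xi|}\le 2M_0^2,$$
and the $\xi=0$ contribution is bounded by $M_0^2$ directly. This one-sided trick is intrinsically one-dimensional.

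\emph{Part (2)(ii), $d\ge 2$.} Here no single $y$ with $|y|<\sigma$ extracts $e^{\sigma|\xi|}$ for all directions of $\xi$ simultaneously. Fix $\delta<\sigma$, pick $\alpha\in(\delta/\sigma,1)$, and choose a finite family of unit vectors $\nu_1,\dots,\nu_{N(\alpha,d)}$ such that the cones
$$A_j=\{\xi\neq 0:\nu_j\cdot\xi\ge\alpha|\xi|\}$$
cover $\xZ^d\setminus\{0\}$. Setting $y_j=-\sigma'\nu_j$ with $\sigma'<\sigma$ and $\sigma'\alpha\ge\delta$, on each cone $A_j$ one has $-y_j\cdot\xi\ge\delta|\xi|$, so \eqref{planFourier} yields $|\widehat{u}(\xi)|^2 e^{2\delta|\xi|}\le|\widehat{U_{y_j}}(\xi)|^2$. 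Summing over $j$ gives $\|u\|_{\mathcal{H}^{\delta,s}}\le\sqrt{N(\alpha,d)}\,M_0$. Since $N(\alpha,d)\to\infty$ as $\alpha\to 1^-$, the constant $C_\delta$ unavoidably blows up as $\delta\to\sigma$, consistently with the counter-example pointed out in the remark.

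\emph{Part (3).} The crucial new idea is to \emph{average in $y$} over the sphere $\{|y|=r\}$ rather than to evaluate at a single point. Using \eqref{planFourier} and Fubini,
$$\int_{|y|=r}\|U_y\|_{H^{s'}}^2\,d\mu_r(y)=\sum_\xi\langle\xi\rangle^{2s'}|\widehat{u}(\xi)|^2\,\Psi_d(2r|\xi|),\qquad \Psi_d(t)\defn\frac{1}{\omega_{d-1}}\int_{S^{d-1}}e^{-t\,\omega\cdot e_1}\,d\omega,$$
and the left-hand side is bounded by $M_1^2$ uniformly in $r<\sigma$. The function $\Psi_d$ is, up to normalization, a modified Bessel function, and its sharp large-argument asymptotic
$$\Psi_d(t)\simeq \frac{e^{t}}{t^{(d-1)/2}}\qquad(t\to+\infty)$$
gives $\Psi_d(2r|\xi|)\gtrsim (1+|\xi|)^{-(d-1)/2}e^{2r|\xi|}$. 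The hypothesis $s'>s+(d-1)/4$ then produces
$$\sum_\xi\langle\xi\rangle^{2s}|\widehat{u}(\xi)|^2 e^{2r|\xi|}\le C M_1^2,$$
uniformly in $r<\sigma$, and monotone convergence as $r\uparrow\sigma$ finishes the proof.

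\emph{Main obstacle.} The heart of the argument is (3): the precise loss $(d-1)/4$ matches the $t^{-(d-1)/2}$ decay in $\Psi_d$, itself a manifestation of the stationary-phase/surface-measure decay on $S^{d-1}$. The care I expect to need is in obtaining a two-sided bound $\Psi_d(t)\simeq t^{-(d-1)/2}e^{t}$ for $t\ge 1$ together with a harmless lower bound $\Psi_d(t)\ge c>0$ for $t\le 1$, so that the final inequality is uniform in $|\xi|$ with no spurious logarithmic factors. Everything else is a matter of combining \eqref{planFourier} with elementary covering/Fatou arguments.
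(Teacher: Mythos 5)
Your argument for parts (1), (2)(i) and (2)(ii) is essentially identical to the paper's (same Fourier extension, same one-sided Fatou trick in $d=1$, same finite cone covering with $C_\delta$ blowing up as $\delta\to\sigma$). For the key Fourier identity $\widehat{U_y}(\xi)=\widehat{u}(\xi)e^{-y\cdot\xi}$ you argue via the Cauchy--Riemann system on the coefficients $c_\xi(y)$, whereas the paper instead mollifies with a frequency cutoff $\psi_\lambda$, constructs a second holomorphic extension $V_\lambda$, and invokes uniqueness of analytic continuation before letting $\lambda\to\infty$; both routes are valid, and yours is arguably more direct provided one notes that the interchange of $\partial_{y_j}$ with the Fourier transform is licit because $U$ is real-analytic.

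Part (3) is where you genuinely diverge from the paper, and your route appears both correct and sharper. The paper splits $|\xi|>R_0$ into dyadic rings $\mathcal{C}_\ell$, covers $\xS^{d-1}$ by $N_\ell\sim\delta_\ell^{-(d-1)/2}$ caps with $\delta_\ell\sim(b2^\ell)^{-1}$, and inserts an artificial factor $\ell^{-1-\eps}(\log|\xi|)^{1+\eps}$ to make the sum over scales converge; that logarithmic loss is exactly what forces the strict inequality $s'>s+\frac{d-1}{4}$ in the statement. You instead average $\|U_y\|^2_{H^{s'}}$ continuously over the sphere $\{|y|=r\}$ and read off the gain from the Laplace/Bessel asymptotic $\Psi_d(t)\simeq(1+t)^{-(d-1)/2}e^{t}$; since each $\xi$ is counted once and the $(1+|\xi|)^{-(d-1)/2}$ factor is built in, there is no summation over scales, hence no log loss, and you obtain the endpoint $s'=s+\frac{d-1}{4}$ for arbitrary (not merely radial) $U$. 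This in fact improves on the theorem as stated and subsumes the paper's Remark~(ii). The only care needed, which you already flag, is the two-sided bound $c(1+t)^{-(d-1)/2}e^t\leq\Psi_d(t)\leq C(1+t)^{-(d-1)/2}e^t$ for all $t\geq 0$: this follows from $\Psi_d(t)\geq 1$ (Jensen), the standard asymptotic $I_\nu(z)\sim e^z/\sqrt{2\pi z}$, and continuity; and the lower bound is uniform in $r\in(0,\sigma)$ since $(1+2r|\xi|)^{-(d-1)/2}\geq(1+2\sigma|\xi|)^{-(d-1)/2}$.
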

\begin{proof}
We divide the proof of Theorem \ref{Gsigmas2} into six steps. \\

{\textbf{Step 1: Existence of an holomorphic extension}.} Let us prove statement~(1). 
Fix $\sigma>0$ and $s\in \xR$ and consider a function 
$u \in \mathcal{H}^{\sigma,s}(\xT^d)$. We want to prove that 
there exists $U\in \mathcal{H}ol(S_\sigma)$  such that $U_0 = u$ and
$$
\sup _{\vert y \vert<\sigma}
\Vert U_y\Vert_{H_x^s(\xT^d)} \leq 
\Vert u \Vert_{ \mathcal{H}^{\sigma,s}}.
$$

We begin by observing that, for any $z\in S_\sigma$, the function $\xZ^d \to \xC$, 
$\xi \mapsto e^{iz\cdot \xi} \widehat{u}(\xi)$ belongs to $\ell^1(\xZ^d)$. To see this, we write
\begin{equation*}
\vert e^{iz\cdot \xi} \widehat{u}(\xi) \vert = e^{-y \cdot \xi}\vert \widehat{u}(\xi) \vert = \big[ \langle \xi \rangle^{-s} e^{-\sigma\vert \xi \vert} e^{-y \cdot \xi}\big] \times \big[\langle \xi \rangle^{s}e^{\sigma \vert \xi \vert} \vert \widehat{u}(\xi) \vert\big] : = f_1(\xi) \times f_2(\xi),
\end{equation*}
and then conclude since 
$\vert f_1(\xi) \vert \leq  \langle \xi \rangle^{-s} e^{-(\sigma - \vert y \vert)\vert \xi \vert} \in \ell^2(\xZ^d)$  and $f_2 \in \ell^2(\xZ^d)$, by assumption on $u$. So we can define the function  
 \begin{equation*}
  U(z) = (2\pi)^{-d}\sum_{\xi\in\xZ^d} e^{iz\cdot \xi} \widehat{u}(\xi).
 \end{equation*}
The previous inequality implies that, for any $\eps>0$,
$$
\sup_{z\in S_{\sigma-\eps}}\sum_{\xi\in\xZ^d} \vert e^{iz\cdot \xi} \widehat{u}(\xi)\vert<+\infty.
$$
Since the function $z\mapsto e^{iz\cdot \xi} \widehat{u}(\xi)$ is holomorphic, we deduce that 
$U\in \mathcal{H}ol(S_\sigma)$.

We next observe that the Fourier inversion formula implies that $U_0 = \overline{\mathcal{F}} \widehat {u} = u$. 
In addition, we have 
$U(x+iy) =  \overline{\mathcal{F}}(e^{-y \cdot \xi} \widehat{u})$, hence 
$\widehat{U_y} = e^{-y \cdot \xi} \widehat{u}$ which in turn implies that
\begin{equation*}
 \Vert U_y \Vert^2_{H^s(\xT^d)}= \sum_{\xi\in\xZ^d} \langle \xi \rangle^{2s} e^{-2 y \cdot \xi} \vert \widehat{u}(\xi) \vert^2  \leq \sum_{\xi\in\xZ^d} \langle \xi \rangle^{2s} e^{ 2 \sigma \vert \xi \vert } \vert \widehat{u}(\xi) \vert^2  = \Vert u \Vert^2_{\mathcal{H}^{\sigma,s}}.
 \end{equation*}
This completes the proof of statement~(1). \\

{\textbf{Step2: A Sobolev estimate}.} For later purpose, let us prove an additional estimate. 
Consider a real number $s_0>d/2$ and write
$$
e^{-y\cdot\xi}\vert  \widehat{u}(\xi)\vert = \langle \xi \rangle^{-s_0}e^{-y\cdot\xi}e^{ -  \sigma \vert \xi \vert }e^{   \sigma \vert \xi \vert } \langle \xi \rangle^{s_0}\vert  \widehat{u}(\xi)\vert.
$$
Now, compared to the previous proof, we see that the factor 
$\langle \xi \rangle^{-s_0}e^{-y\cdot\xi}e^{ -  \sigma \vert \xi \vert }$ is summable in $\xi$, uniformly for $\la y\ra\le \sigma$, thanks to the assumption $s_0>d/2$. Then, the Cauchy-Schwarz inequality implies that 
there exists a positive constant $C=C(d,s_0)$ such that  
\begin{equation*}
\Vert U\Vert_{L^\infty{(S_\sigma)}}=\sup_{\la y\ra< \sigma}\lA U_y\rA_{L^\infty(\xT^d)}  \leq C\Vert u \Vert_{\mathcal{H}^{\sigma,s_0}}.
\end{equation*}

\vspace{5mm}

{\textbf{Step 3: Trace of an holomorphic function}.} 
In this step, we initiate the proof of the various points in statement~(2). 
Consider a function $U\in \mathcal{H}ol(S_\sigma)$ 
such that
\begin{equation}\label{T1assuM0}
M_0:= \sup_{\vert y \vert<\sigma} \Vert U_y\Vert_{H_x^s(\xT^d)}<+ \infty.
\ee
We want to study the regularity of the 
trace $u = U_0$. 

First, observe that the assumption~\e{T1assuM0} implies that $u = U_0 \in H^s(\xT^d)$.

Now, let $\psi= 1_{\vert k \vert \leq 1}.$   Given $\lambda >0$, we introduce the functions 
$\psi_\lambda(\xi) = \psi(\frac{\xi}{\lambda})$ and 
$\varphi_\lambda = \overline{\mathcal{F}} \psi_\lambda$. Set
$$
F_\lambda(z) = (2\pi)^{-d} \sum_{\xi \in \xT^d} e^{iz\cdot \xi} \psi_{\lambda}(\xi) \widehat{u}(\xi).
$$
This function is holomorphic in $S_\sigma$. Indeed, the summand is holomorphic and, for $z \in S_\sigma$,
\begin{align*}
\vert e^{iz\cdot \xi} \psi_{\lambda}(\xi) \widehat{u}(\xi) \vert &= e^{-(\IM z) \cdot\xi} \psi_{\lambda}(\xi)\vert \widehat{u}(\xi) \vert \leq  e^{\sigma\vert \xi\vert}\psi_{\lambda}(\xi) \vert\widehat{u}(\xi) \vert \leq e^{2\sigma \lambda}   \psi_{\lambda}(\xi)\vert \widehat{u}(\xi) \vert,\\
  & \leq \big[ e^{2\sigma \lambda} \langle \xi \rangle^{-s}  \psi_{\lambda}(\xi)\big] \times \big[ \langle \xi \rangle^{s}\vert\widehat{u}(\xi) \vert] \in \ell^1(\xZ^d).
  \end{align*}
Notice that
$$
F_\lambda(x+iy) = \overline{\mathcal{F}}(e^{-y\cdot \xi}\psi_{\lambda}(\xi) \widehat{u}(\xi))(x).
$$
For $z = x+iy \in S_\sigma$  we set
$$
V_\lambda(z) = U_y\star \varphi_\lambda(x) = \int_{\xT^d} U(x+iy-t) \varphi_\lambda(t)\, dt.
$$
This function is holomorphic in the strip $S_\sigma$. In addition,  
$$
V_\lambda\arrowvert_ {y=0} = u\star \varphi_\lambda = \overline{\mathcal{F}} (\psi_\lambda \widehat{u}) = (2\pi)^{-d} \sum_{\xi \in \xZ^d} e^{ix\cdot \xi}\psi_\lambda \widehat{u} = F_\lambda\arrowvert_{y=0}.
$$
By the uniqueness for analytic functions, this implies that $V_\lambda=F_\lambda$ in $S_\sigma$. By taking the Fourier transform 
of the previous identity, we obtain
$$
\mathcal{F}  U_y (\xi) \psi_\lambda(\xi) = e^{-y\cdot \xi}\psi_{\lambda}(\xi) \widehat{u}(\xi).
$$
By letting $\lambda$ goes to $+ \infty$, we infer that $\mathcal{F}  U_y (\xi)  = e^{-y\cdot \xi}  \widehat{u}(\xi)$. 
Consequently,
$$
\Vert U_y \Vert^2_{H^s(\xT^d)}=\sum_{\xi \in \xZ^d} e^{-2y\cdot \xi} \langle \xi \rangle^{2s} \vert \widehat{u}(\xi) \vert^2.
$$
Then  the assumption on $U$ implies that
\begin{equation}\label{equation2}
\sup_{\vert y \vert <\sigma} \sum_{\xi \in \xZ^d} e^{-2y\cdot \xi} \langle \xi \rangle^{2s} \vert \widehat{u}(\xi )\vert^2  = M_0^2<+ \infty.
\end{equation}
This is the key ingredient to prove the two statements in point~(2). \\

{\textbf{Step 4: Trace of an holomorphic function in dimension one}.} 

Assume that $d=1$. Set $v(\xi) = \langle \xi \rangle^{2s}\vert \widehat{u}(\xi) \vert^2$. 
For any real number $0<b<\sigma$, 
the inequality \eqref{equation2} applied with $y=-b$ (resp.\ $y=b$) implies that 
$$
\sum_{\xi=0}^{+\infty}  e^{2b \xi}v(\xi)  \leq M_0,\quad \text{resp.}\quad 
\sum_{\xi=-\infty}^{0}e^{-2b \xi}v(\xi)  \leq M_0.
$$
It follows that $\sum_{\xi \in \xZ} e^{2b \vert \xi \vert}v(\xi)\leq 2M_0 $.   Fatou's lemma 
then implies that, when $b$ goes to $ \sigma$, we have
$$
\sum_{\xi \in \xZ} e^{2\sigma \vert \xi \vert}v(\xi)
\leq 2M_0,
$$
which proves statement~(i). \\

{\textbf{Step 5: Arbitrary dimension}.} 

Let us prove statement~(ii).
We now assume that $d\geq 2$ and consider a real number $\delta<\sigma$. 
We can write $\delta = (1- \frac{\eps^2}{2}) \sigma$ for some $\eps>0$. 
Then there exists $N= N(\eps)$ and $\omega_1, \ldots, \omega_N \in \xS^{d-1}$  such that 
$$
\xZ^d\setminus \{0\} = \bigcup_{j=1}^N \Gamma_j\quad\text{where}\quad
\Gamma_j = \Big\{\xi\in\xZ^d: \la \frac{\xi}{\vert \xi \vert} - \omega_j\ra < \eps \Big\}.
$$
Notice that
$$
\xi \in \Gamma_j\quad\Rightarrow \quad\la \frac{\xi}{\vert \xi \vert}\ra^2  +\vert\omega_j\vert^2  - 2 \frac{\xi}{\vert \xi \vert}\cdot \omega_j< \eps^2\quad\Rightarrow\quad \left(1- \frac{\eps^2}{2}\right)\vert \xi \vert \leq \xi\cdot \omega_j.
$$
Consider $0<b<\delta$ and set $\widetilde{b} = \frac{b}{1- \eps^2/2} <\sigma$. 
We have
$$
\sum_{\xZ^d} e^{2 b\vert \xi \vert} v(\xi)
\leq \sum_{j=1}^N \sum_{\Gamma_j} e^{2 b\vert \xi \vert} v(\xi)
\leq \sum_{j=1}^N \sum_{\Gamma_j} e^{2 \widetilde{b } (\xi \cdot \omega_j)}v(\xi).
$$
Since the vector $y_j=\widetilde{b}  \omega_j$ satisfies $\vert y_j \vert := \widetilde{b} \vert \omega_j \vert < \sigma$, 
the key estimate \eqref{equation2} implies that 
$$
\sum_{\xZ^d} e^{2 b\vert \xi \vert} v(\xi) \leq N M_0^2.
$$ 
As above, we conclude by using Fatou's lemma, which implies that
$$
\sum_{\xZ^d} e^{2 \delta \vert \xi \vert} v(\xi) \leq N M_0^2,
$$
which concludes the proof of statement~(ii).\\
 
{\textbf{Final step: Arbitrary dimension, sharp estimate}.}  
 
We now prove statement~(3), which gives a smaller loss in analyticity. Namely, we assume that, 
$U\in \mathcal{H}ol(S_\sigma)$ is such that
$$
M_1:= \sup_{\vert y \vert<\sigma} \Vert U_y\Vert_{H_x^{s'}}<+ \infty
\quad\text{with}\quad 
s'>s + \frac{d-1}{4}.
$$
Our goal is to prove that, 
the function $u = U_0$ belongs to $\mathcal{H}^{\sigma,s}$ 
and there exists a constant $C>0$ such that 
$\Vert u \Vert_{\mathcal{H}^{\sigma,s}} \leq C   M_1$.

Let $b<\sigma$ and $s'>s + \frac{d-1}{4}$.  By replacing $s$ by $s'$ in \eqref{equation2}  we have
\begin{equation}\label{equation3}
\sup_{\vert y \vert <\sigma} \sum_{\xZ^d}
e^{-2y\cdot \xi} \langle \xi \rangle^{2s'} \vert \widehat{u}(\xi) \vert^2 
= \sup_{\vert y \vert <\sigma} \Vert U_y \Vert^2_{H^{s'}} = M_1^2<+ \infty.
\end{equation}
Set $v_s(\xi) = \langle \xi \rangle^{2s} \vert \widehat{u}(\xi)\vert^2$ and consider a real number $R_0$ 
such that $\sigma R_0 \gg 1$. We first notice that
\begin{equation}\label{equation3.5} 
\sum_{\vert \xi \vert \leq R_0}e^{2b \vert \xi \vert} v_s(\xi)
\leq e^{2b R_0}\sum   v_s(\xi) \leq e^{2bR_0 }\sum
\langle \xi \rangle^{\frac{d-1}{2}} v_s(\xi)
\leq e^{2bR_0 } M_1^2.
\end{equation} 
Define $\ell_0$ as the largest integer such that 
$\mez 2^{\ell_0} \leq R_0$ and then, for $\ell \geq \ell_0$, introduce the dyadic rings:
$$
\mathcal{C}_\ell = \{\xi \in \xZ^d\,:\, \mez 2^\ell \leq \vert \xi \vert \leq   2^{\ell +1}\}.
$$
Write
\begin{equation}\label{equation4}
\sum_{\vert \xi \vert > R_0}e^{2b \vert \xi \vert} v_s(\xi)
\leq  \sum_{\ell =\ell_0}^{+ \infty}
\sum_{\mathcal{C}_\ell}e^{2b \vert \xi \vert} v_s(\xi) 
:=   \sum_{\ell =\ell_0}^{+ \infty} I_\ell.
\end{equation}
Fix $\ell \geq \ell_0$ and set $\delta_\ell = \frac{  1}{b 2^\ell} \ll 1$. 
Let $\omega_0$ be an arbitrary point on the sphere $\xS^{d-1} $ and introduce 
$$
\Omega_{\omega_0} = \{ \omega \in \xS^{d-1}: \omega \cdot \omega_0 > 1- \delta_\ell\} =  \{ \omega \in \xS^{d-1}: \vert \omega- \omega_0 \vert < \sqrt{2 \delta_\ell}\}.
$$
The sets $\Omega_{\omega_0}$ have a $(d-1)$-dimensional measure independent of $\omega_0$. Moreover, 
there exists two positive constants $c_1, c_2$ independent of the dimension such that
$$
c_1 \delta_\ell^{\frac{d-1}{2}} \leq \mu(\Omega_{\omega_0}) \leq c_2 \delta_\ell^{\frac{d-1}{2}}.
$$ 
There exists $\omega_1, \ldots, \omega_{N_\ell} \in \xS^{d-1}$ where  ${N_\ell} \sim C_d\delta_\ell^{- \frac{d-1}{2}}$ such that $\xS^{d-1} = \cup_{j=1}^{N_\ell} \Omega_{\omega_j}$. 
Set
$$
\mathcal{C}_{\ell,j} = \left\{\xi \in \mathcal{C}_\ell : \frac{\xi}{\vert \xi \vert} \in \Omega_{\omega_j}\right\} .
$$
Then one can split the dyadic ring, $\mathcal{C}_\ell$ as 
$\mathcal{C}_\ell = \cup_{j=1}^{N_\ell} \mathcal{C}_{\ell,j}$, to obtain 
$$
I_\ell  \leq \sum_{j=1}^{N_\ell}
\sum_{\mathcal{C}_{\ell,j} } e^{2b \vert \xi \vert} v_s(\xi)
:= \sum_{j=1}^{N_\ell} I_{\ell,j}.
$$
If $\xi$ belongs to $\mathcal{C}_{\ell,j}$ one has, 
$ \frac{\xi}{\vert \xi \vert}\cdot \omega_j >1- \delta_\ell$, so $\vert \xi \vert < \xi\cdot \omega_j + \vert \xi \vert \delta_\ell$. Then  we write
$$
I_\ell  \leq  \frac{1}{N_\ell}\sum_{j=1}^{N_\ell}
\sum_{\mathcal{C}_{\ell,j} } e^{2(b \omega_j) \cdot\xi + 2b\vert \xi \vert \delta_\ell} {N_\ell} \, v_s(\xi).
$$
Recall that $\delta_\ell=1/(b2^\ell)$. Consequently, 
if $\xi$ in $\mathcal{C}_{\ell,j}$, we have
$2b \vert \xi \vert \delta_\ell \leq 4$. We deduce that 
$e^{2b\vert \xi \vert \delta_\ell} \leq e^4$. Moreover, 
$$N_\ell  \leq C_d \delta_\ell^{-\frac{d-1}{2}} \leq C_d(b 2^\ell)^{\frac{d-1}{2}} \leq C_d(2b)^{\frac{d-1}{2}} \vert \xi \vert^{\frac{d-1}{2}},$$ and 
$\ell \leq c \log \vert \xi \vert$.
 
Remembering that $v_s(\xi) = \langle \xi \rangle^{2s} \vert \widehat{u}(\xi)\vert^2$, 
we deduce that, for any $\eps>0$, 
\begin{equation*}
I_\ell  \leq \frac{C'_d b^{\frac{d-1}{2}}}{ \ell^{1+\eps }}   \frac{1}{N_\ell}\sum_{j=1}^{N_\ell}
\sum_{\mathcal{C}_{\ell,j}} e^{2(b \omega_j) \cdot \xi} \vert \xi \vert^{\frac{d-1}{2}}  (\log \vert \xi \vert)^{1+\eps}\langle \xi \rangle^{2s} \vert \widehat{u}(\xi)\vert^2.
\end{equation*}
Now since $s'> s + \frac{d-1}{4}$, there exists $\eps>0$ so small enough that $s' \geq s+\eps.$ We use \eqref{equation3} to infer that 
\begin{equation}\label{equation5} 
I_\ell \leq  \frac{C'_d b^{\frac{d-1}{2}}}{ \ell^{1+\eps }}  M_1^2.
\end{equation}
Since $\sum_{\ell\ge 1}\ell^{-1-\eps}<+\infty$, 
it follows from \eqref{equation4} that
$$
\sum_{\vert \xi \vert > R_0}e^{2b \vert \xi \vert} \langle \xi \rangle^{2s} \vert \widehat{u}(\xi)\vert^2 \leq C M_1^2.
$$
Using \eqref{equation3.5} we obtain eventually,
$$
\sum_{\xZ^d}
e^{2b \vert \xi \vert} \langle \xi \rangle^{2s} \vert \widehat{u}(\xi)\vert^2
\leq C M_1^2.
$$
Again, we conclude the proof thanks to Fatou's lemma. 
This completes the proof of Theorem~\ref{Gsigmas2}. 
\end{proof}

Notice that, if $u$ is radial, then one can remove the factor 
$(\log \vert \xi \vert)^{1+\eps}$ in \eqref{equation5} and hence it is sufficient to 
assume that 
$$M_1  = \sup_{\vert y \vert< \sigma} \Vert U_y \Vert_{H_x^{s+ \frac{d-1}{4}}}<+ \infty.$$ 
Notice also that, in this case, the above assumption 
on $U_y$ is optimal to insure that $u \in \mathcal{H}^{\sigma,s}$.

\subsection{Technical lemmas.}

We recall the following interpolation lemma. 
 \begin{lemm}\label{lions}
Let $s\in \xR$ and $h>0$.  
There exists $C>0 $ such that for all $\sigma\ge 0$, if  
$f$ is in $L^2((-h,0),\mathcal{H}^{\sigma,  s+\mez})$ and   
$\partial_zf$ is in $L^2((-h,0),\mathcal{H}^{\sigma,  s-\mez})$, then  
$f$ belongs to $C^0([-h,0], \mathcal{H}^{\sigma,  s})$ together with the estimate
$$
\sup_{z\in [-h,0]}\Vert f(z, \cdot)\Vert_{\mathcal{H}^{\sigma,  s}} \leq C \big( \Vert f \Vert_{L^2((-h,0),\mathcal{H}^{\sigma,  s+\mez})}
+\Vert \partial_zf \Vert_{L^2((-h,0),\mathcal{H}^{\sigma,  s-\mez})}\big).
$$
\end{lemm}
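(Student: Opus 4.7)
The strategy is the classical Lions trace argument carried out frequency by frequency. Since the space $\mathcal{H}^{\sigma,\mu}(\xT^d)$ is characterized by the weighted Fourier norm $\Vert u\Vert_{\mathcal{H}^{\sigma,\mu}}^2 = \sum_{\xi\in \xZ^d}e^{2\sigma\vert\xi\vert}\langle\xi\rangle^{2\mu}\vert\widehat{u}(\xi)\vert^2$, it suffices to bound, uniformly in $z\in[-h,0]$ and $\xi\in\xZ^d$, the quantity $g(z,\xi):=e^{\sigma\vert\xi\vert}\langle\xi\rangle^{s}\widehat f(z,\xi)$ in a way that sums in $\xi$. All the hypotheses of the lemma translate into the statement that $g(\cdot,\xi)$ and $\langle\xi\rangle^{-1/2}\partial_zg(\cdot,\xi)\cdot\langle\xi\rangle^{1/2}$ belong to $L^2(-h,0)$ with $\ell^2(\xZ^d)$-summable norms, which in particular ensures that $g(\cdot,\xi)\in H^1(-h,0)\hookrightarrow C^0([-h,0])$ for each fixed $\xi$.

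First I would fix $\xi\in\xZ^d$ and pick $z_0(\xi)\in[-h,0]$ at which the continuous function $z\mapsto\vert g(z,\xi)\vert^2$ attains its minimum, so that $\vert g(z_0,\xi)\vert^2\le h^{-1}\int_{-h}^0\vert g(z',\xi)\vert^2\,dz'$. Then for any $z\in[-h,0]$, writing
\[
\vert g(z,\xi)\vert^2=\vert g(z_0,\xi)\vert^2+2\RE\int_{z_0}^{z}\overline{g(z',\xi)}\,\partial_{z}g(z',\xi)\,dz',
\]
the Cauchy--Schwarz inequality together with the Young inequality $2ab\le \langle\xi\rangle a^2+\langle\xi\rangle^{-1}b^2$ applied to $a=\vert\widehat f(z',\xi)\vert$ and $b=\vert\partial_z\widehat f(z',\xi)\vert$ yields
\[
\vert g(z,\xi)\vert^2\le \Big(\tfrac{1}{h}+1\Big)\int_{-h}^0 e^{2\sigma\vert\xi\vert}\langle\xi\rangle^{2s+1}\vert\widehat f(z',\xi)\vert^2\,dz'+\int_{-h}^0 e^{2\sigma\vert\xi\vert}\langle\xi\rangle^{2s-1}\vert\partial_{z}\widehat f(z',\xi)\vert^2\,dz'.
\]
The right-hand side is independent of $z$, so summing over $\xi\in\xZ^d$ and invoking the Plancherel identity I obtain the announced bound
\[
\sup_{z\in[-h,0]}\Vert f(z,\cdot)\Vert_{\mathcal{H}^{\sigma,s}}^2\le C\big(\Vert f\Vert_{L^2(I_h,\mathcal{H}^{\sigma,s+\mez})}^2+\Vert\partial_z f\Vert_{L^2(I_h,\mathcal{H}^{\sigma,s-\mez})}^2\big),
\]
with a constant $C$ depending only on $h$ (and crucially independent of $\sigma$, since the weight $e^{\sigma\vert\xi\vert}$ factors out of every computation).

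It remains to upgrade ``bounded'' to ``continuous'' in $z$. To this end I would introduce, for $N\in \xN$, the partial sum $f_N(z,x)=(2\pi)^{-d}\sum_{\vert\xi\vert\le N}e^{ix\cdot\xi}\widehat f(z,\xi)$, which is continuous in $z$ with values in $\mathcal{H}^{\sigma,s}$ by smoothness of each frequency mode (using $\partial_z\widehat f(\cdot,\xi)\in L^2(-h,0)$). Applying the same estimate to $f-f_N$ shows that $f_N\to f$ uniformly in $z$ in $\mathcal{H}^{\sigma,s}$, since the tails
\[
\sum_{\vert\xi\vert>N}\int_{-h}^0 e^{2\sigma\vert\xi\vert}\big(\langle\xi\rangle^{2s+1}\vert\widehat f\vert^2+\langle\xi\rangle^{2s-1}\vert\partial_z\widehat f\vert^2\big)\,dz'
\]
tend to $0$ as $N\to\infty$. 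Hence $f$ is a uniform limit of continuous functions, and therefore belongs to $C^0([-h,0],\mathcal{H}^{\sigma,s})$. No step is really an obstacle here; the only point that requires some care is the uniformity of the constant $C$ in $\sigma$, which is automatic once one realises that the weight $e^{\sigma\vert\xi\vert}$ commutes with every operation performed on the Fourier side.
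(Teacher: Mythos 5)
Your proof is correct, and it is the standard Lions-type trace argument carried out frequency by frequency, adapted to the weighted norms. Each step checks out: $g(\cdot,\xi)\in H^1(-h,0)\hookrightarrow C^0([-h,0])$ because the hypotheses guarantee that both $\widehat f(\cdot,\xi)$ and $\partial_z\widehat f(\cdot,\xi)$ lie in $L^2(-h,0)$ for each fixed $\xi$; the choice of $z_0$ as a minimizing point gives $|g(z_0,\xi)|^2\le h^{-1}\int_{-h}^0|g|^2\,dz'$, which after the harmless bound $\langle\xi\rangle^{2s}\le\langle\xi\rangle^{2s+1}$ is absorbed into the first term; the Young-inequality step with weights $\langle\xi\rangle^{\pm 1}$ correctly redistributes the half-derivatives; and the final summation exchanges $\sup_z$ and $\sum_\xi$ in the correct direction. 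The continuity upgrade via truncated Fourier sums is also sound, since the tail sums vanish by the finiteness of the hypotheses. The constant depends only on $h$, and the $\sigma$-independence is indeed automatic because $e^{\sigma|\xi|}$ factors through every step. Note that the paper merely \emph{recalls} this lemma without giving a proof, so there is no authorial proof to compare against; your argument is precisely the one a reader would supply.
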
 

We shall use the following lemma from H\"ormander~(see \cite[Theorem~8.3.1]{HormanderL}).
\begin{lemm}\label{Theo:Hormander}
Consider three real numbers $s_1,s_2,s_3$ such that
$$
s_1+s_2\ge 0,\quad s_3\le \min\{s_1,s_2\},\quad s_3\le s_1+s_2-\frac{d}{2},
$$
with the last inequality strict if $s_1$ or $s_2$ or $-s_3$ is equal to $d/2$. 
For $\xi,\zeta$ in $\xR^d$, define
$$
F(\xi,\zeta)=\frac{\langle \xi\rangle^{s_3}}{\langle \xi-\zeta\rangle^{s_1}\langle \zeta\rangle^{s_2}},
$$
and then set
$$
T_F(f,g)(\xi)=\sum_{\zeta\in  \xZ^d}F(\xi,\zeta)f(\xi-\zeta)g(\zeta),
$$
when $f$ and $g$ are continuous functions with compact support. Then there exists a positive constant $C$ such that
$$
\lA T_F(f,g)\rA_{\ell^2}\le C \lA f\rA_{\ell^2}\lA g\rA_{\ell^2}.
$$
\end{lemm}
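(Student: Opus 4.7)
The plan is to prove the estimate by duality combined with a phase-space decomposition, following the classical argument for product estimates in Sobolev spaces; this statement is Theorem 8.3.1 in H{\"o}rmander's book and is simply cited from there.

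First, by the self-duality of $\ell^2$, the bilinear bound $\lA T_F(f,g)\rA_{\ell^2} \le C\lA f\rA_{\ell^2}\lA g\rA_{\ell^2}$ is equivalent to the trilinear bound
$$
|S(f,g,h)| \le C\lA f\rA_{\ell^2}\lA g\rA_{\ell^2}\lA h\rA_{\ell^2},\qquad S(f,g,h) = \sum_{\xi,\zeta\in \xZ^d} F(\xi,\zeta)\,f(\xi-\zeta)\,g(\zeta)\,h(\xi),
$$
for every $h\in \ell^2(\xZ^d)$. This is well-defined under the compact-support assumption on $f,g$ and extends to general $\ell^2$ inputs by density.

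Next, I would partition $\xZ^d\times\xZ^d$ into three regions determined by the relative magnitudes of $\langle\xi-\zeta\rangle$ and $\langle\zeta\rangle$. On the low/high region $\{\langle\zeta\rangle \le \tfrac{1}{2}\langle\xi-\zeta\rangle\}$, the triangle inequality gives $\langle\xi\rangle \sim \langle\xi-\zeta\rangle$, and the hypothesis $s_3 \le s_1$ yields the pointwise bound $F(\xi,\zeta) \lesssim \langle\xi-\zeta\rangle^{s_3-s_1}\langle\zeta\rangle^{-s_2}$. The high/low region $\{\langle\xi-\zeta\rangle \le \tfrac{1}{2}\langle\zeta\rangle\}$ is symmetric, yielding $F(\xi,\zeta) \lesssim \langle\xi-\zeta\rangle^{-s_1}\langle\zeta\rangle^{s_3-s_2}$ via $s_3 \le s_2$. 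On the remaining high/high region $\{\langle\xi-\zeta\rangle \sim \langle\zeta\rangle\}$, one uses $\langle\xi\rangle \lesssim \langle\zeta\rangle$ together with the consequence $s_3 \le s_1+s_2$ (which follows from $s_3 \le \min(s_1,s_2)$ and $s_1+s_2 \ge 0$) to obtain $F(\xi,\zeta) \lesssim \langle\zeta\rangle^{s_3-s_1-s_2}$. In each region, one applies Cauchy--Schwarz in $\zeta$ with a Schur-type weight and reduces to the classical convolution estimate
$$
\sum_{\zeta\in\xZ^d} \frac{1}{\langle\xi-\zeta\rangle^{2\alpha}\langle\zeta\rangle^{2\beta}} \le C\langle\xi\rangle^{d-2(\alpha+\beta)}, \qquad \alpha,\beta\ge 0,\ 2(\alpha+\beta)>d.
$$
Applied with $(\alpha,\beta)=(s_1-s_3,s_2)$ and its symmetric counterpart, the convergence condition becomes exactly $s_1+s_2-s_3 > d/2$; a final Cauchy--Schwarz in $\xi$ absorbs the $h$-factor.

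The main obstacle is the endpoint case $s_3 = s_1+s_2-d/2$: when one of $s_1$, $s_2$, $-s_3$ equals $d/2$, the convolution estimate above degenerates (logarithmic loss), and one must refine the argument by a dyadic Littlewood--Paley decomposition of $f,g,h$ into frequency rings, using almost-orthogonality together with the Bernstein inequality to recover the bound. The strictness condition in the hypothesis is designed precisely to exclude this degenerate configuration, so that the naive argument sketched above suffices in all remaining cases. Full details are found in the cited reference.
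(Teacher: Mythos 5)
The paper does not prove this lemma at all: it is stated purely as a citation to H\"ormander \cite[Theorem~8.3.1]{HormanderL}, and you correctly identify that. Your extra sketch of the standard trichotomy argument (low/high, high/low, balanced) is the usual route, but as written it quietly assumes favorable signs. In the balanced region the inference $\langle\xi\rangle^{s_3}\lesssim\langle\zeta\rangle^{s_3}$ from $\langle\xi\rangle\lesssim\langle\zeta\rangle$ requires $s_3\ge 0$, and in the region $\langle\zeta\rangle\le\tfrac12\langle\xi-\zeta\rangle$ you invoke the convolution estimate with $\beta=s_2$, which demands $s_2\ge0$; neither sign condition is guaranteed by the hypotheses (take $s_1=3,\,s_2=-1,\,s_3=-1,\,d=2$, which satisfies all three stated constraints with the required strictness). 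Both gaps can be patched by pushing the region inequality $\langle\zeta\rangle\lesssim\langle\xi-\zeta\rangle$ harder (converting the offending negative power of $\langle\zeta\rangle$ into a power of $\langle\xi-\zeta\rangle$, and similarly for $\langle\xi\rangle^{s_3}$ in the balanced regime) before applying Young or Schur, but your outline does not make this explicit. Since you --- like the paper --- defer the full argument to the cited reference, the sketch is adequate as an orientation; just be aware that the sign bookkeeping is where the real work lies.
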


  \subsection{Paradifferential operators on analytic spaces and non linear estimates.}
  
In this section we investigate the continuity of the paradifferential 
operators on the spaces $\mathcal{H}^{\sigma,s}$ and we apply these 
results to prove nonlinear estimates in these spaces.

We will use the following lemma.
\begin{lemm}\label{produit}
Let $s_0> \frac{d}{2}.$ There exists $C>0$ such that for all 
$\sigma>0,$  and for all 
$f,g \in \mathcal{H}^{\sigma,s_0},$ we have 
\begin{equation}\label{produit1}
\Vert f g \Vert_{\mathcal{H}^{\sigma,0}} \leq C \Vert f \Vert_{\mathcal{H}^{\sigma,s_0}} \Vert g\Vert_{\mathcal{H}^{\sigma,0}}.  
  \end{equation}
In addition, if $\supp \widehat{f} \subset \{\xi: \vert \xi \vert \leq R\}$, then
\begin{equation}\label{produit2}
 \Vert f g \Vert_{\mathcal{H}^{\sigma,0}} \leq C R^{\frac{d}{2}}\Vert f \Vert_{\mathcal{H}^{\sigma,0}} \Vert g\Vert_{\mathcal{H}^{\sigma,0}}.  
 \end{equation}
\end{lemm}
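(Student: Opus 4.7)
The plan is to reduce both estimates to convolution inequalities on $\ell^2(\xZ^d)$, exploiting the fact that the function $\xi\mapsto |\xi|$ is subadditive, so the exponential weight splits nicely under convolution. Write $\widehat{fg}(\xi) = \sum_{\zeta\in\xZ^d}\widehat{f}(\xi-\zeta)\widehat{g}(\zeta)$ and use the triangle inequality $|\xi|\le |\xi-\zeta|+|\zeta|$ to obtain
\[
e^{\sigma|\xi|}\bigl|\widehat{fg}(\xi)\bigr|\le \sum_{\zeta\in\xZ^d} A(\xi-\zeta)\,B(\zeta),
\]
where $A(\zeta)=e^{\sigma|\zeta|}|\widehat f(\zeta)|$ and $B(\zeta)=e^{\sigma|\zeta|}|\widehat g(\zeta)|$. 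With these notations, $\|g\|_{\mathcal H^{\sigma,0}}=\|B\|_{\ell^2}$, $\|f\|_{\mathcal H^{\sigma,0}}=\|A\|_{\ell^2}$ and $\|f\|_{\mathcal H^{\sigma,s_0}}$ is equivalent to the weighted norm $\big(\sum\langle\zeta\rangle^{2s_0}A(\zeta)^2\big)^{1/2}$. So everything reduces to bounding $\|A*B\|_{\ell^2}$ by the appropriate product of norms of $A$ and $B$.

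For \eqref{produit1}, I would insert the weight $\langle\xi-\zeta\rangle^{\pm s_0}$ inside the convolution and apply Cauchy--Schwarz in $\zeta$:
\[
|A*B(\xi)|^2\le \Bigl(\sum_{\zeta}\langle \xi-\zeta\rangle^{-2s_0}\Bigr)\Bigl(\sum_{\zeta}\langle \xi-\zeta\rangle^{2s_0}A(\xi-\zeta)^2B(\zeta)^2\Bigr).
\]
The first factor is a finite constant $C=C(d,s_0)$ because $s_0>d/2$. Summing in $\xi$ and changing variables $\eta=\xi-\zeta$ separates the two sums and yields
\[
\sum_\xi |A*B(\xi)|^2\le C\Bigl(\sum_\eta \langle\eta\rangle^{2s_0}A(\eta)^2\Bigr)\|B\|_{\ell^2}^2,
\]
which is exactly the desired bound after returning to $f,g$.

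For \eqref{produit2}, the support of $A$ is contained in the lattice ball $\{|\xi-\zeta|\le R\}$, so the summation in $\zeta$ runs over at most $C R^d$ indices (with $C$ independent of $R\ge 1$). Cauchy--Schwarz then gives
\[
|A*B(\xi)|^2 \le C R^d \sum_{\zeta}A(\xi-\zeta)^2B(\zeta)^2,
\]
and summing over $\xi$ produces $CR^d\|A\|_{\ell^2}^2\|B\|_{\ell^2}^2$, hence the factor $R^{d/2}$ after taking square roots.

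There is no real obstacle: the only point that requires a moment's care is that the triangle inequality is applied to $|\xi|$ (which is subadditive) rather than $\langle\xi\rangle$ (which is not), so the exponential weight factorizes cleanly; the polynomial $\langle\cdot\rangle^{s_0}$ weight is then handled by the standard Schur/Cauchy--Schwarz trick using $s_0>d/2$. Both inequalities are stated on the torus, but the argument is purely Fourier-side and identical on $\xR^d$ (with integrals replacing sums), which is why the constants are dimension-dependent but $\sigma$-independent.
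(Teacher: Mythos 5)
Your argument is correct and follows essentially the same route as the paper: both proofs use the subadditivity $|\xi|\le|\xi-\zeta|+|\zeta|$ to reduce to a convolution bound on $A=e^{\sigma|\cdot|}|\widehat f|$ and $B=e^{\sigma|\cdot|}|\widehat g|$, then exploit $s_0>d/2$ (resp.\ the lattice-point count $\#\{|\zeta|\le R\}\lesssim R^d$) to control the sum. The only cosmetic difference is that the paper factors through Young's inequality $\ell^1*\ell^2\to\ell^2$ and then bounds $\|A\|_{\ell^1}$ by Cauchy--Schwarz, whereas you perform the weighted Cauchy--Schwarz directly inside the convolution; these are the same estimate.
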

\begin{proof}
Set $U= e^{\sigma \vert D \vert}(fg).$ Then 
$$
\widehat{U}(\xi) = e^{\sigma \vert \xi \vert}\sum_{\eta \in \xZ^d} \widehat{f}(\xi - \eta) \widehat{g}(\eta),
$$
which implies that
$$
\vert \widehat{U}(\xi) \vert \leq \sum_{\eta\in\xZ^d}e^{\sigma \vert \xi- \eta \vert} \vert\widehat{f}(\xi - \eta)\vert  e^{\sigma \vert \eta\vert}  \vert\widehat{g}(\eta)\vert
=  \vert\widehat{e^{\sigma \vert D \vert}f}\vert  \star \vert\widehat{e^{\sigma \vert D \vert}g} \vert.
$$
Therefore,  
\begin{equation}\label{produit3}
 \Vert U \Vert_{L^2}    \leq C\Vert  \widehat{e^{\sigma \vert D \vert}f} \Vert_{\ell^1} \Vert  \widehat{e^{\sigma \vert D \vert}g} \Vert_{\ell^2} \leq C' \Vert  \widehat{e^{\sigma \vert D \vert}f}\Vert_{\ell^1} \Vert g \Vert_{\mathcal{H}^{\sigma,0}}.
 \end{equation}
If $s_0>\frac{d}{2}$, we have by using the H\" older inequality,
$$
\Vert u \Vert_{\ell^1} \leq C_{s_0} \Vert \langle \cdot \rangle^{s_0} u \Vert_{\ell^2},
$$
which proves the required inequality~\eqref{produit1}.  

To prove~\eqref{produit2} we introduce a function $\psi \in C_0^\infty(\xR^d)$ such that 
$ \supp \psi \subset \{ \xi : \vert \xi \vert \leq 2\}$ and $\psi = 1$ for 
$\vert \xi \vert \leq 1.$ Then
\begin{align*}
\sum_{\xi \in \xZ^d} e^{ \sigma \vert \xi \vert} \vert \widehat{f}(\xi) \vert &= \sum_{\xi \in \xZ^d}e^{ \sigma \vert \xi \vert} \psi\big(\frac{\xi}{R} \big)\vert  \widehat{f}(\xi) \vert
\leq \Big( \sum_{\xi \in \xZ^d} \psi\big(\frac{\xi}{R}\big)^2 \Big)^\mez \Vert f \Vert_{\mathcal{H}^{\sigma, 0}}\\
& \leq C R^{\frac 
{d}{2}}\Vert f \Vert_{\mathcal{H}^{\sigma, 0}},
\end{align*}
and we conclude the proof by using~\eqref{produit3}.
\end{proof}

Let us introduce a symbol class. 
Given $m \in \xR, \sigma>0,s\in\xR $, $\Gamma^{m,\sigma,s}$ denotes the space 
of those functions $p$ that are $C^\infty$ on 
$\xT^d\times\xR^d$, of the form 
$$p(x, \xi) \sim \sum_{j=0}^{+\infty} p_{m-j}(x,\xi),$$  where 
$p_{m-j}$ is homogeneous of order $m-j$ in $\xi$ for $\vert   \xi \vert \geq 1$,  and satisfies
\begin{equation*}
\mathcal{N}_s(p):= \Vert \langle D \rangle^s e^{\sigma \vert D \vert}(1- \Delta_\omega)^k p(x, \omega)\Vert_{L^2(\xT^d \times \xS^{d-1})} < + \infty, 
 \end{equation*}
with $k> \frac{d}{2}.$
 
We denote by $T_p$ the paradifferential operator associated with this symbol. By definition:
\begin{equation}\label{para}
\widehat{T_p u}(\xi) = \sum_{\eta\in \xZ^d} \chi(\xi- \eta, \eta)\widehat{p}(\xi-\eta, \xi) \psi(\eta)\widehat{u}(\eta),
\end{equation}
where 
$\chi \in C^\infty(\xR^d \times \xR^d), \chi(\theta, \eta) = 1$ if 
$ \vert \theta\vert \leq \eps_1 \vert \eta \vert, \, \chi(\theta, \eta) = 0$ if 
$ \vert \theta\vert \geq \eps_2 \vert \eta \vert,$ $0<\eps_1<\eps_2<1$  and where $\widehat{p}$ is the Fourier transform of 
$p$ with respect to $x$, whereas $\psi \in C^\infty(\xR^d)$ is a cut-off function such that 
$\psi(\xi) = 0$ for $\vert \xi \vert \leq1$ and $\psi(\xi) = 1$ for $\vert \xi \vert \geq2.$

\begin{theo}\label{continuite}
Let $s_0> \frac{d}{2}.$ For all $s \in \xR$ there exists a constant $C>0$ such that for all 
$p \in \Gamma^{m,\sigma,s_0}$ and all $u \in \mathcal{H}^{\sigma,s+m}$ we have
$$
\Vert T_p u\Vert_{\mathcal{H}^{\sigma,s}} \leq C \mathcal{N}_{s_0}(p) \Vert  u\Vert_{\mathcal{H}^{\sigma,s+m}}.
$$
\end{theo}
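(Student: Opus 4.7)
The plan is to estimate $\Vert U\Vert_{L^{2}}$, where $U=e^{\sigma\vert D\vert}\langle D\rangle^{s}T_p u$, by a pointwise Fourier-side analysis that reduces the problem to a weighted convolution inequality in the spirit of Lemma~\ref{produit}. First I would reduce to the case of a symbol $p$ that is homogeneous of degree $m$ in $\xi$ for $\vert\xi\vert\ge 1$: the asymptotic expansion $p\sim\sum_{j}p_{m-j}$ is then handled by treating each $p_{m-j}$ separately, the terms with $j\ge 1$ producing strictly better decay in $\langle\xi\rangle$ on the support of the spectral cutoffs (so they are no harder than the principal symbol).

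The key step exploits the spectral localization built into~\eqref{para}. On the support of $\chi(\xi-\eta,\eta)\psi(\eta)$, one has $\vert\eta\vert\gtrsim 1$ and $\vert\xi-\eta\vert\le\eps_2\vert\eta\vert$, so that $\vert\xi\vert\sim\vert\eta\vert\gtrsim 1$ and $\langle\xi\rangle\sim\langle\eta\rangle$. The triangle inequality then yields $e^{\sigma\vert\xi\vert}\le e^{\sigma\vert\xi-\eta\vert}e^{\sigma\vert\eta\vert}$. For the principal symbol $p_m$, homogeneity on the cone $\vert\xi\vert\ge 1$ transfers to the partial Fourier transform in $x$, giving $\widehat{p_m}(\xi-\eta,\xi)=\vert\xi\vert^{m}\widehat{p_m}(\xi-\eta,\omega_\xi)$, where $\omega_\xi=\xi/\vert\xi\vert\in\xS^{d-1}$. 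Combining these ingredients produces the pointwise bound
\[
e^{\sigma\vert\xi\vert}\langle\xi\rangle^{s}\bigl\vert\widehat{T_{p_m} u}(\xi)\bigr\vert
\les \sum_{\eta}\bigl(e^{\sigma\vert\xi-\eta\vert}\vert\widehat{p_m}(\xi-\eta,\omega_\xi)\vert\bigr)\bigl(\langle\eta\rangle^{s+m}e^{\sigma\vert\eta\vert}\vert\widehat{u}(\eta)\vert\bigr).
\]

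The next step converts this into a genuine convolution estimate in spite of the $\omega_\xi$-dependence of the symbol factor. I would apply the Sobolev embedding $H^{k}(\xS^{d-1})\hookrightarrow L^{\infty}(\xS^{d-1})$, valid since $k>d/2>(d-1)/2$, to obtain
\[
\sup_{\omega\in\xS^{d-1}}\vert\widehat{p_m}(\theta,\omega)\vert
\les\bigl\Vert(1-\Delta_\omega)^{k}\widehat{p_m}(\theta,\cdot)\bigr\Vert_{L^{2}(\xS^{d-1})}=:\widetilde{A}(\theta).
\]
Setting $A(\theta)=e^{\sigma\vert\theta\vert}\widetilde{A}(\theta)$ and $g(\eta)=\langle\eta\rangle^{s+m}e^{\sigma\vert\eta\vert}\vert\widehat{u}(\eta)\vert$, the preceding pointwise bound becomes $\vert\widehat{U}(\xi)\vert\les\sum_{\eta}A(\xi-\eta)g(\eta)$, now with $A$ independent of $\xi$. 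A weighted Cauchy--Schwarz in $\eta$ with weight $\langle\xi-\eta\rangle^{s_0}$, which is summable in $\xi-\eta$ precisely because $s_0>d/2$, followed by summation in $\xi$ and Fubini yields
\[
\Vert U\Vert_{L^{2}}^{2}
\les \Bigl(\sum_{\theta}\langle\theta\rangle^{2s_0}A(\theta)^{2}\Bigr)\Vert g\Vert_{\ell^{2}}^{2}.
\]
By Plancherel in $x$ and the very definition of $\mathcal{N}_{s_0}$, the first factor on the right is $\mathcal{N}_{s_0}(p_m)^{2}$, while $\Vert g\Vert_{\ell^{2}}=\Vert u\Vert_{\mathcal{H}^{\sigma,s+m}}$, which closes the estimate for the principal term.

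The main technical obstacle is exactly the $\omega_\xi$-dependence of the symbol factor, which breaks any naive convolution structure of the pointwise bound: the sphere Sobolev embedding is precisely the tool that restores it, at the cost of $k$ angular derivatives, which is why the operator $(1-\Delta_\omega)^{k}$ with $k>d/2$ appears in the very definition of the seminorm $\mathcal{N}_{s_0}$.
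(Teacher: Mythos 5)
Your proof is correct and takes a genuinely different route from the paper for the crucial step that handles the angular dependence of the symbol. The paper's argument first proves the estimate for a separable symbol $p(x,\xi)=a(x)h(\xi)$ with $h$ homogeneous (which is the same computation as your pointwise bound), and then passes to general $p$ by expanding $p(x,\omega)=\sum_\nu a_\nu(x)\widetilde{h}_\nu(\omega)$ in spherical harmonics, using the eigenfunction growth bound $\Vert \widetilde{h}_\nu\Vert_{L^\infty(\xS^{d-1})}\les \lambda_\nu^{(d-1)/2+\varepsilon}$ together with the decay $\Vert a_\nu\Vert_{\mathcal{H}^{\sigma,s_0}}\les \lambda_\nu^{-2k}$, and then summing a geometric-type series in $\nu$ that converges because $k$ is large enough. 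You instead keep everything in one piece: you pull $\sup_{\omega}\vert\widehat{p}(\theta,\omega)\vert$ inside the Fourier-side sum via Sobolev embedding on the sphere, which restores the convolution structure and lets a single weighted Cauchy--Schwarz finish the job, with the seminorm $\mathcal{N}_{s_0}(p)$ emerging directly from Plancherel and Fubini. Both arguments are Sobolev-on-the-sphere in disguise, but yours avoids the spherical harmonic decomposition and the series summation entirely, and in fact it uses a slightly weaker condition on $k$ (you only need $2k>(d-1)/2$, while the paper's summation needs $k>d/2-1/4$; both are subsumed by the definition's $k>d/2$).

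Two small points of precision you should fix. First, you write the embedding as $H^{k}(\xS^{d-1})\hookrightarrow L^{\infty}$, but the operator appearing in the seminorm is $(1-\Delta_\omega)^k$, whose $L^2$-bound controls the $H^{2k}(\xS^{d-1})$ norm; so the relevant embedding is $H^{2k}(\xS^{d-1})\hookrightarrow L^\infty$, valid since $2k>d>(d-1)/2$. Second, your reduction to the principal symbol $p_m$ glosses over the fact that the seminorm $\mathcal{N}_{s_0}(p)$ is defined for the full symbol on $\xT^d\times\xS^{d-1}$, not term by term; to make the reduction rigorous one would need to argue that the lower-order pieces are controlled by the same seminorm (the paper sidesteps this, and in practice the argument is applied to homogeneous symbols anyway), so it is worth saying this explicitly rather than leaving it as "the terms with $j\ge1$ are no harder."
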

\begin{proof}
(i) Suppose $p(x,\xi) = a(x) h(\xi)$ where $h$ 
is homogeneous of degree $m$, so that 
$h(\xi) = \vert \xi \vert^m \widetilde{h}\big( \frac{\xi}{ \vert \xi \vert}\big)$.

In the formula \eqref{para} on the support of $\chi$ we have 
$\vert \xi \vert \sim \vert \eta \vert $ so that we can write
$$
e^{\sigma \vert \xi \vert} \langle \xi \rangle^s \vert \widehat{T_pu}(\xi)\vert \leq C\sum_{\eta \in \xZ^d}
e^{\sigma \vert \xi -\eta\vert} \vert \widehat{a}(\xi-\eta)\vert  \psi(\eta)  \langle \eta \rangle^{s+m}   \vert \widetilde{h}\big( \frac{\eta}{ \vert \eta \vert}\big) \vert e^{\sigma \vert \eta\vert} \vert \widehat{u}(\eta)\vert .
$$
Since the right-hand side is a convolution, we have
$$
\Vert e^{\sigma \vert \xi \vert} \langle \xi \rangle^s   \widehat{T_pu} \Vert_{\ell^2} \leq \Vert e^{\sigma \vert \cdot \vert}     \widehat{a} \Vert_{\ell^1}
\Vert \widetilde{h} \Vert_{L^\infty(\xS^{d-1})} \Vert u \Vert_{\mathcal{H}^{\sigma,s+m}}.
$$
For $s_0 > \frac{d}{2}$, it follows from the Cauchy-Schwarz inequality that
$$
\Vert e^{\sigma \vert \cdot \vert}     \widehat{a} \Vert_{\ell^1} \leq C\Vert e^{\sigma \vert \cdot \vert}   \langle \cdot\rangle^{s_0}    \widehat{a} \Vert_{\ell^2},
$$
so
\begin{equation}\label{cont2}
\Vert e^{\sigma \vert \xi \vert} \langle \xi \rangle^s   \widehat{T_pu}(\xi)\vert \Vert_{\ell^2} \leq C\Vert e^{\sigma \vert \xi \vert}   \langle \xi \rangle^{s_0}    \widehat{a} \Vert_{\ell^2} \Vert \widetilde{h} \Vert_{L^\infty(\xS^{d-1})} \Vert u \Vert_{\mathcal{H}^{\sigma,s+m}}. 
\end{equation}

(ii) Let $(\widetilde{h}_\nu)_{\nu \geq 1}$ be an orthonormal basis of 
$L^2(\xS^{d-1})$ with 
$-\Delta_\omega \widetilde{h}_\nu = \lambda_\nu \widetilde{h}_\nu.$   
Let $p (x,\omega) = \sum_\nu a_\nu(x) \widetilde{h}_\nu(\omega)$. Then 
$$a_\nu(x) = \int_{\xS^{d-1}} p(x, \omega) \overline{\widetilde{h}_\nu(\omega)}\, d\omega.$$ 
As a result, for $k \in \xN,$ we have
\begin{align*}
 \lambda_\nu^{2k} \langle D\rangle^{s_0} e^{\sigma\vert D\vert}a_\nu(x) &= \int_{\xS^{d-1}} \langle D\rangle^{s_0} e^{\sigma\vert D\vert}p(x, \omega)(- \Delta_\omega)^k \overline{\widetilde{h}_\nu(\omega)}\, d\omega\\
 &=\int_{\xS^{d-1}}  (- \Delta_\omega)^k\langle D\rangle^{s_0} e^{\sigma\vert D\vert}p(x, \omega) \overline{ \widetilde{h}_\nu(\omega)}\, d\omega,
 \end{align*}
whence
\begin{equation}\label{a-nu}
 \lambda_\nu^{2k}  \Vert \langle D\rangle^{s_0} e^{\sigma\vert D\vert} a_\nu\Vert_{L^2}
 \leq \Vert \Delta_\omega^k \langle D\rangle^{s_0} e^{\sigma\vert D\vert}p(x, \cdot)\Vert_{L^2(\xT^d \times\xS^{d-1})}.
 \end{equation}
On the other hand,  
 \begin{equation}\label{h-nu}
 \Vert \widetilde{h}_\nu\Vert_{L^\infty(\xS^{d-1})} \leq C\Vert \widetilde{h}_\nu\Vert_{H^{s_1}(\xS^{d-1})} \leq C'\lambda_\nu^{s_1}, \quad s_1= \frac{d-1}{2} + \eps.
 \end{equation}
By using \eqref{cont2}, \eqref{a-nu} and \eqref{h-nu}, we conclude that
\begin{align*}
 \Vert T_pu \Vert_{\mathcal{H}^{\sigma,s}} &\leq \sum_{\nu=1}^{+\infty} \Vert T_{a_\nu {h}_\nu}u \Vert_{\mathcal{H}^{\sigma,s}} \leq C \sum_{\nu=1}^{+\infty} \Vert a_\nu \Vert_{\mathcal{H}^{\sigma, s_0}} \Vert \widetilde{h}_\nu \Vert_{L^\infty(\xS^{d-1})} \Vert u \Vert_{\mathcal{H}^{\sigma,s+m}},\\
 & \leq C \sum_{\nu=1}^{+\infty} \lambda_\nu^{-2k} \Vert \Delta_\omega^k \langle D\rangle^{s_0} e^{\sigma\vert D\vert}p(x, \cdot)\Vert_{L^2(\xT^d \times\xS^{d-1})}\lambda_\nu^{s_1}  \Vert u \Vert_{\mathcal{H}^{\sigma,s+m}}.
 \end{align*}
As $\lambda_\nu \sim C_d \nu^{\frac{2}{d}} $, we have 
$\lambda_\nu^{s_1-2k} \sim C'_d \nu^{\frac{2}{d}(s_1-2k)}.$ If 
$k> \frac{d}{2}- \frac 1 4$ (and $\varepsilon>0$ in~\eqref{h-nu} is small enough), we have $2(s_1-2k)<-d $ and the series in $\nu$ converges. 
Eventually, we obtain that
$$
\Vert T_pu \Vert_{\mathcal{H}^{\sigma,s}} \leq C_d  \Vert \Delta_\omega^k \langle D\rangle^{s_0} e^{\sigma\vert D\vert}p(x, \cdot)\Vert_{L^2(\xT^d \times\xS^{d-1})}\Vert u \Vert_{\mathcal{H}^{\sigma,s+m}}.
$$
\end{proof}

In the sequel, we will also use the Littlewood-Paley decomposition (see \cite{MePise}) and consider the 
paraproduct. Notice that when $p=a$ is a function (thus independent of $\xi$), the paradifferential operator $T_a$ with symbol $a$ is called a paraproduct. Modulo a regularizing operator it can be defined as follows.

Consider a partition of unity, 
$$ 1 = \sum_{j\geq -1} \chi_j(\xi), \qquad \forall j \geq 1, \chi_j (x) = \chi_0(2^{-j} \xi ),$$
with 
$ \chi_0 \in C^\infty_0 ( \{ \frac 1 2\leq \vert \xi\vert \leq 2\} )$.  Recall that if $a, u$ are two functions, 
then 
$$ \Delta_j u = \chi_j (D_x) u, \qquad S_j (u) = \sum_{-1\leq k \leq j-1} \Delta _k u, \text{ and } T_a u = \sum_{j\geq 2} S_{j-2}(a) \Delta_j u.$$  
\begin{prop}\label{prod1}
Consider three real numbers $\alpha, \beta, \gamma$ such that
$$
\alpha \leq \gamma, \quad \alpha < \beta + \gamma - \frac{d}{2}.
$$
There exists $C>0$ such that for all $a\in \mathcal{H}^{\sigma,\beta}, u \in \mathcal{H}^{\sigma, \gamma}$, we have
$$
\Vert T_a u \Vert_{\mathcal{H}^{\sigma, \alpha}} \leq C\Vert  a   \Vert_{\mathcal{H}^{\sigma, \beta}}\Vert   u \Vert_{\mathcal{H}^{\sigma, \gamma}}.
$$
\end{prop}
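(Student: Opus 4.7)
The plan is to reduce the analytic-space estimate to the classical paraproduct estimate in standard Sobolev spaces via a pointwise Fourier-side domination argument. The key tool will be the subadditivity $|\xi| \leq |\xi-\eta| + |\eta|$, which allows us to absorb the exponential weight $e^{\sigma|\xi|}$ into factors $e^{\sigma|\xi-\eta|}e^{\sigma|\eta|}$ in the spectral expression \eqref{para}. This is the same trick used in the proof of Lemma~\ref{produit}, extended here to a paradifferential kernel.

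First I would introduce the auxiliary functions $A$ and $U$ on $\xT^d$ defined by
$$\widehat{A}(\xi) = e^{\sigma|\xi|}|\widehat{a}(\xi)|,\qquad \widehat{U}(\xi) = e^{\sigma|\xi|}|\widehat{u}(\xi)|,$$
so that by construction $\|A\|_{H^\beta} = \|a\|_{\mathcal{H}^{\sigma,\beta}}$ and $\|U\|_{H^\gamma} = \|u\|_{\mathcal{H}^{\sigma,\gamma}}$. Choosing (as is standard) the cutoffs $\chi$ and $\psi$ in \eqref{para} to be non-negative, the triangle inequality applied termwise to \eqref{para} yields the pointwise bound
$$
e^{\sigma|\xi|}\,|\widehat{T_a u}(\xi)| \leq \sum_{\eta \in \xZ^d} \chi(\xi-\eta,\eta)\,\widehat{A}(\xi-\eta)\,\psi(\eta)\,\widehat{U}(\eta) = \widehat{T_A U}(\xi).
$$

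From this pointwise Fourier-side estimate I would derive the norm inequality
$$
\|T_a u\|_{\mathcal{H}^{\sigma,\alpha}}^2 = \sum_{\xi \in \xZ^d} \langle\xi\rangle^{2\alpha}\,e^{2\sigma|\xi|}\,|\widehat{T_a u}(\xi)|^2 \leq \sum_{\xi \in \xZ^d} \langle\xi\rangle^{2\alpha}\,|\widehat{T_A U}(\xi)|^2 = \|T_A U\|_{H^\alpha}^2,
$$
and the proof would be completed by invoking the classical paraproduct estimate in ordinary Sobolev spaces, which asserts that $\|T_A U\|_{H^\alpha} \leq C\|A\|_{H^\beta}\|U\|_{H^\gamma}$ under precisely the two hypotheses $\alpha \leq \gamma$ and $\alpha < \beta + \gamma - d/2$. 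This substitution then produces the desired bound.

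The main obstacle in this approach is actually mild: verifying that the cutoffs $\chi,\psi$ can be taken non-negative, which follows from the standard constructions (plateau functions or convolutions of indicator functions). As an alternative one could proceed dyadically, writing $T_a u = \sum_{j\geq 2} S_{j-2}(a)\Delta_j u$, exploiting the almost-orthogonality of the blocks (which have Fourier support in rings of size $2^j$) to reduce to estimating each summand in $\mathcal{H}^{\sigma,0}$, and then applying \eqref{produit2} in Lemma~\ref{produit} together with Bernstein-type bounds for $S_{j-2}(a)$. The case $\beta>0$ in that direct approach forces one to reprove a Bernstein-type $L^\infty$ inequality adapted to the spaces $\mathcal{H}^{\sigma,\beta}$, which is why the reduction to the classical Sobolev case above is the more efficient route.
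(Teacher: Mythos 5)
Your proof is correct, and it takes a genuinely different and more streamlined route than the paper's. You exploit the subadditivity $|\xi|\le|\xi-\eta|+|\eta|$ to dominate $e^{\sigma|\xi|}|\widehat{T_au}(\xi)|$ by $\widehat{T_AU}(\xi)$, where $A,U$ are the non-negative Fourier envelopes with $\|A\|_{H^\beta}=\|a\|_{\mathcal{H}^{\sigma,\beta}}$ and $\|U\|_{H^\gamma}=\|u\|_{\mathcal{H}^{\sigma,\gamma}}$, thereby reducing the whole statement to the classical Sobolev paraproduct estimate. This is the same ``de-exponentiation'' mechanism the paper already uses in the proof of Lemma~\ref{produit} for ordinary products, so your extension to the paradifferential kernel is natural; and the classical estimate $\|T_AU\|_{H^\alpha}\le C\|A\|_{H^\beta}\|U\|_{H^\gamma}$ under $\alpha\le\gamma$ and $\alpha<\beta+\gamma-\tfrac{d}{2}$ does hold (for $\beta>d/2$ it follows from $H^\beta\hookrightarrow L^\infty$, for $\beta<d/2$ from the Zygmund embedding $H^\beta\hookrightarrow C^{\beta-d/2}_*$, and for $\beta=d/2$ from the strict inequality). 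The non-negativity of $\chi,\psi$ in \eqref{para} can indeed be arranged, and the fact that \eqref{para} differs from the Littlewood--Paley definition only by a smoothing operator is harmless. By contrast the paper proves the result internally in the analytic spaces, splitting into the two cases $\beta>d/2$ (where it invokes the abstract continuity Theorem~\ref{continuite}, which it needs elsewhere anyway) and $\beta\le d/2$ (where it runs a dyadic argument based on \eqref{produit2}). Your approach buys brevity and avoids the case split; the paper's avoids quoting the exact form of the classical estimate and stays consistent with the toolkit it is building. One small point worth being explicit about: your argument requires the paraproduct estimate in the form with the \emph{strict} inequality $\alpha<\beta+\gamma-\tfrac d2$, which is the only available form at $\beta=d/2$; if you instead quoted the $\beta<d/2$ endpoint version one would still need to handle $\beta=d/2$ separately, so it is worth noting that the strict hypothesis covers all three regimes uniformly.
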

\begin{proof} 
Case 1: $\beta >\frac{d}{2}.$ 

We apply Theorem~\ref{continuite} with $s_0 = \beta, p = a, m=0.$ Since 
$\alpha \leq \gamma$, it follows that
$$
\Vert T_a u \Vert_{\mathcal{H}^{\sigma, \alpha}} \leq C\Vert  a   \Vert_{\mathcal{H}^{\sigma, \beta}}\Vert   u \Vert_{\mathcal{H}^{\sigma, \alpha}} \leq C\Vert  a   \Vert_{\mathcal{H}^{\sigma, \beta}}\Vert   u \Vert_{\mathcal{H}^{\sigma, \gamma}} $$
 
Case  2: $ \beta \leq \frac{d}{2}.$ 

Let $\eps>0$ be such that $ \alpha + \eps <  \beta + \gamma - \frac{d}{2}.$ We write 
as above 
$$T_a u = \sum_q S_{q-2}(a) \Delta_q u:= \sum_q v_q.$$ 
It follows from Lemma~\ref{produit} that
$$
2^{q \alpha}\Vert v_q \Vert_{\mathcal{H}^{\sigma,0}} \leq 2^{q \alpha}\Vert   S_{q-2}(a) \Vert_{\mathcal{H}^{\sigma, \frac{d}{2} + \eps}} \Vert \Delta_qu  \Vert_{\mathcal{H}^{\sigma, 0}} \leq 2^{q( \alpha - \gamma)} c_q \Vert  u  \Vert_{\mathcal{H}^{\sigma, \gamma}}.
$$
We next notice that
\begin{align*}
 \Vert S_{q-2}(a) \Vert_{\mathcal{H}^{\sigma, \frac{d}{2} + \eps}}  &\leq  \sum_{p=-1}^{q-2} \Vert \Delta_p a\Vert_{\mathcal{H}^{\sigma, \frac{d}{2} + \eps}} \leq  \sum_{p=-1}^{q-2} 2^{p(\frac{d}{2} + \eps)} \Vert \Delta_p a\Vert_{\mathcal{H}^{\sigma, 0}},\\
  & \leq \Big(\sum_{p=-1}^{q-2} 2^{2p(\frac{d}{2} + \eps - \beta)}\Big)^\mez\Big(\sum_{p=-1}^{q-2} 2^{2p\beta} \Vert \Delta_p a\Vert^2_{\mathcal{H}^{\sigma, 0}}\Big)^\mez  \\
  & \leq C 2^{q(\frac{d}{2} + \eps - \beta)}\Vert a \Vert_{\mathcal{H}^{\sigma, \beta}}.
\end{align*}
This implies that 
$$
2^{q \alpha}\Vert v_q \Vert_{\mathcal{H}^{\sigma,0}} \leq C2^{q( \alpha + \eps -(\beta + \gamma - \frac{d}{2}))} \Vert a \Vert_{\mathcal{H}^{\sigma, \beta}}  \Vert  u  \Vert_{\mathcal{H}^{\sigma, \gamma}},
$$
and the wanted result follows using that $\alpha + \eps -(\beta + \gamma - \frac{d}{2}) < 0.$
\end{proof}

We are in position to state the bilinear estimates.


\begin{prop}\label{uv}
\begin{itemize}
\item[\rm{(i)}] \label{HorNLi)}
Consider three real numbers $s_1,s_2,s_3$ such that
$$
s_1+s_2\ge 0,\quad s_3\le \min\{s_1,s_2\}, \quad s_3 < s_1+s_2-\frac{d}{2}.
$$
Then there exists $C>0$ such that for all $\sigma\ge 0$,
\be\label{piH}
\Vert u_1u_2 \Vert_{\mathcal{H}^{\sigma,s_3}} \leq C \Vert u_1 \Vert_{\mathcal{H}^{\sigma,s_1}}\Vert  u_2 \Vert_{\mathcal{H}^{\sigma,s_2}}.
\ee
\item[\rm{(ii)}] \label{HorNLii)} For all $s>d/2$, there exists $C>0$ such that for all $\sigma\ge 0$, 
$$
\Vert u_1u_2 \Vert_{\mathcal{H}^{\sigma,s}} \leq C \Vert u_1 \Vert_{\mathcal{H}^{\sigma,s}}
\Vert  u_2 \Vert_{\mathcal{H}^{\sigma,s}}.
$$
\item[\rm{(iii)}] \label{HorNLiii)} For all $s_0>d/2$ and all $t\ge 0$, there exists $C>0$ such that for all $\sigma\ge 0$, 
$$
\Vert u_1u_2 \Vert_{\mathcal{H}^{\sigma,t}} \leq C \Vert u_1 \Vert_{\mathcal{H}^{\sigma,s_0}}
\Vert  u_2 \Vert_{\mathcal{H}^{\sigma,t}}
+C \Vert u_2 \Vert_{\mathcal{H}^{\sigma,s_0}}
\Vert  u_1 \Vert_{\mathcal{H}^{\sigma,t}}.
$$
\end{itemize}
\end{prop}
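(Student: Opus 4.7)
The strategy is to reduce the three estimates on the analytic spaces $\mathcal{H}^{\sigma,s}$ to their classical Sobolev counterparts via a simple change of unknown that absorbs the exponential weight. Given $v\in \mathcal{H}^{\sigma,s}$, associate the function $\widetilde{v}$ defined by its Fourier coefficients
$$
\widehat{\widetilde v}(\xi) \defn e^{\sigma|\xi|}\,\la \widehat v(\xi)\ra, \qquad \xi\in\xZ^d,
$$
so that $\lA \widetilde v\rA_{H^s(\xT^d)}=\lA v\rA_{\mathcal{H}^{\sigma,s}}$. The point is that, combining the elementary bound $e^{\sigma|\xi|}\le e^{\sigma|\xi-\eta|}e^{\sigma|\eta|}$ with the convolution formula $\widehat{u_1u_2}(\xi)=(2\pi)^{-d}\sum_{\eta}\widehat{u_1}(\xi-\eta)\widehat{u_2}(\eta)$ on $\xT^d$, one obtains the pointwise Fourier-side comparison
$$
e^{\sigma|\xi|}\,|\widehat{u_1u_2}(\xi)| \;\le\; \widehat{\widetilde u_1\,\widetilde u_2}(\xi)\qquad (\xi\in\xZ^d).
$$
Multiplying by $\langle\xi\rangle^{s_3}$ and taking $\ell^2$ norms in $\xi$ yields the master inequality
$$
\lA u_1u_2\rA_{\mathcal{H}^{\sigma,s_3}}\;\le\; \lA \widetilde u_1\,\widetilde u_2\rA_{H^{s_3}(\xT^d)}.
$$
This is the sole ``analytic'' step of the proof; everything else is Sobolev.

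For part~(i), I apply Lemma~\ref{Theo:Hormander} (H\"ormander's product lemma) exactly as in the classical proof that $H^{s_1}\cdot H^{s_2}\hookrightarrow H^{s_3}$ under the stated indices: writing
$$
\langle\xi\rangle^{s_3}\,|\widehat{\widetilde u_1\widetilde u_2}(\xi)|\les \sum_{\zeta}F(\xi,\zeta)\,\bigl[\langle\xi-\zeta\rangle^{s_1}|\widehat{\widetilde u_1}(\xi-\zeta)|\bigr]\bigl[\langle\zeta\rangle^{s_2}|\widehat{\widetilde u_2}(\zeta)|\bigr]
$$
with $F$ as in that lemma, one reads off $\lA \widetilde u_1\widetilde u_2\rA_{H^{s_3}}\les \lA \widetilde u_1\rA_{H^{s_1}}\lA \widetilde u_2\rA_{H^{s_2}}=\lA u_1\rA_{\mathcal{H}^{\sigma,s_1}}\lA u_2\rA_{\mathcal{H}^{\sigma,s_2}}$. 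Statement (ii) is then the special case $s_1=s_2=s_3=s$, which is permissible since $s>d/2$ forces $s<2s-\tfrac{d}{2}$.

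For part~(iii), the master inequality again reduces matters to the classical tame Sobolev estimate
$$
\lA fg\rA_{H^{t}(\xT^d)}\les \lA f\rA_{H^{s_0}}\lA g\rA_{H^{t}}+\lA g\rA_{H^{s_0}}\lA f\rA_{H^{t}}\qquad (t\ge 0,\ s_0>d/2),
$$
applied to $f=\widetilde u_1$ and $g=\widetilde u_2$. The \emph{a priori} subtlety here is that, when $t>s_0$, one cannot conclude directly from part~(i) alone because the condition $s_3\le \min\{s_1,s_2\}$ fails; this is the only mildly non-trivial point. It is overcome by the standard Bony paraproduct splitting $fg=T_fg+T_gf+R(f,g)$, whose first two pieces are handled by Proposition~\ref{prod1} (with $\alpha=t,\beta=s_0,\gamma=t$, legitimate because $s_0>d/2$) and whose resonant part $R(f,g)$ can be treated by Lemma~\ref{Theo:Hormander} applied to the high-high blocks (using $s_0>d/2$). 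Alternatively one invokes the tame Sobolev estimate as a black box, since it is classical; in either case, coming back to $u_1,u_2$ via $\lA \widetilde u_j\rA_{H^{\mu}}=\lA u_j\rA_{\mathcal{H}^{\sigma,\mu}}$ gives precisely the claimed inequality. The only step that might require a touch of care is verifying that the constants are uniform in $\sigma\ge 0$, but this is automatic because the whole argument takes place in a Sobolev space independent of~$\sigma$ after the change of unknown.
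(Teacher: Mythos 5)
Your proof is correct, and for part~(i) it takes a genuinely different and simpler route than the paper. The paper proves~(i) by Bony's decomposition $u_1u_2 = T_{u_1}u_2 + T_{u_2}u_1 + R(u_1,u_2)$, estimating the two paraproduct pieces with Proposition~\ref{prod1} and then running a separate dyadic-block argument to control the remainder $R(u_1,u_2)$. Your approach instead conjugates by the exponential weight (the ``master inequality'' $\Vert u_1u_2\Vert_{\mathcal{H}^{\sigma,s_3}}\le \Vert \widetilde u_1\widetilde u_2\Vert_{H^{s_3}}$, whose proof is a two-line triangle-inequality computation on the Fourier side) and then invokes Lemma~\ref{Theo:Hormander} directly; since the hypotheses of~(i) imply those of Lemma~\ref{Theo:Hormander}, this closes the argument in one step without any Littlewood--Paley bookkeeping. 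This is in fact exactly the pattern the paper itself uses in the proof of Lemma~\ref{lemme1}, so it fits naturally in the paper's toolbox; the authors likely chose the paraproduct route for~(i) only because they want Proposition~\ref{prod1} and Proposition~\ref{a-Ta} for other purposes. For part~(iii), your ``black box'' alternative (master inequality plus the classical tame Sobolev estimate, itself proved by the split $\langle\xi\rangle^t\lesssim \langle\xi-\eta\rangle^t+\langle\eta\rangle^t$ followed by $\ell^1\star\ell^2$) is essentially identical to what the paper writes; the Bony alternative you mention would also work but is more than needed. Part~(ii) is, in both treatments, the $s_1=s_2=s_3=s$ special case of~(i), and your observation that $s>d/2$ guarantees $s<2s-d/2$ is the relevant check. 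Finally, your remark that $\sigma$-uniformity is automatic is right: the master inequality holds with constant $1$, and the subsequent estimates live entirely in $\sigma$-independent Sobolev spaces.
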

 \begin{proof}
(i) One writes 
$ u_1 u_2 = T_{u_1} u_2 + T_{u_2} u_1 + R(u_1, u_2)$. 
The first two terms of the right-hand side are estimated by the Proposition \ref{prod1} 
by taking $\alpha = s_1$ (resp. $\alpha = s_2$), $\beta = s_2$ (resp. $\alpha = s_1$). 
It remains to estimate $R(u_1, u_2).$ We claim that 
\begin{equation*}
\Vert R(u_1, u_2) \Vert_{\mathcal{H}^{\sigma, s_3}}  \leq \Vert R(u_1, u_2) \Vert_{\mathcal{H}^{\sigma, s_1 + s_2 - \frac{d}{2}}} \leq C\Vert u_1 \Vert_{\mathcal{H}^{\sigma, s_1}}\Vert u_2 \Vert_{\mathcal{H}^{\sigma, s_2}}.
\end{equation*}
To see this, recall that
$$
R(u_1,u_2) = \sum_{\vert r-q \vert\leq 2} \Delta_r u_1 \Delta_q u_2 = \sum_{q} R_q, \quad R_q = \sum_{\vert r-q\vert \leq 2} \Delta_r u_1 \Delta_q u_2 .
$$
As the spectrum of $R_q$ is contained in a ball of radius $C2^q$ and not in an annulus, 
it is not enough to directly estimate $\Vert R_q \Vert_{L^2}$ 
because that would require that $s_1+s_2-\frac{d}{2}$ be positive (which is not necessarily the case). So we further decompose 
by writing
$$
R = \sum_{p\geq -1} \Delta_p R = \sum_{p\geq -1} \sum_{q} \Delta_p R_q =\sum_{p\geq -1} \sum_{q\geq p-n_0} \Delta_p R_q,
$$
since for $p> q+ n_0$ we have $\Delta_p R_q =0$. 
All the terms of $R_q$ are bounded in the same way, 
so that it is enough to consider the one where $r=q$. 
We will prove that
\begin{equation}\label{Rq}
\Vert \Delta_p (\Delta_q u_1 \Delta_q u_2)\Vert_{\mathcal{H}^{\sigma,0}} \leq C 2^{p\frac{d}{2}}\Vert \Delta_q u_1\Vert _{\mathcal{H}^{\sigma,0}} \Vert  \Delta_q u_2  \Vert_{\mathcal{H}^{\sigma,0}}.
\end{equation}
Indeed, set $U = \Delta_pe^{\sigma \vert D \vert}(fg).$ We have 
$$\vert \widehat{U}(\xi)\vert \leq \varphi(2^{-p} \xi) \vert (\widehat{e^{\sigma \vert D \vert}f})\vert \star \vert (\widehat{e^{\sigma \vert D \vert}g})(\xi)\vert.$$ Therefore, 
\begin{align*}
\Vert U \Vert_{L^2} &\leq C \Vert \varphi(2^{-p} \cdot)\Vert_{\ell^2} \Vert \vert (\widehat{e^{\sigma \vert D \vert}f})\vert \star \vert (\widehat{e^{\sigma \vert D \vert}g})(\xi)\vert \Vert_{\ell^\infty},\\
& \leq C 2^{p\frac{d}{2}}  \Vert  \widehat{e^{\sigma \vert D \vert}f} \Vert_{\ell^2}  \Vert  \widehat{e^{\sigma \vert D \vert}g}  \Vert_{\ell^2} \leq C'  2^{p\frac{d}{2}}\Vert  f \Vert_{\mathcal{H}^{\sigma,0}}\Vert   g\Vert_{\mathcal{H}^{\sigma,0}}.
\end{align*}
We deduce from \eqref{Rq} that
$$
\Vert \Delta_p R_q\Vert_{\mathcal{H}^{\sigma,0}} \leq C 2^{p\frac{d}{2}} 2^{-q(s_1 + s_2)}   \big(2^{ q s_1 } \Vert \Delta_q u_1\Vert _{\mathcal{H}^{\sigma,0}}\big)  \big(2^{ q s_2 } \Vert  \Delta_q u_2  \Vert_{\mathcal{H}^{\sigma,0}}\big).
$$

Now, set $r_p = 2^{-p\frac{d}{2}} 2^{p(s_1 +s_2)} \Vert \Delta_p R \Vert _{L^2} $ 
so that, using the previous inequality,
$$
r_p \leq C_1\sum_{q \geq p-n_0} 2^{(p-q)(s_1 +s_2)}g_q, \quad g_q = 2^{qs_1}\Vert \Delta_q u_1 \Vert_{\mathcal{H}^{\sigma,0}} 2^{q s_2}\Vert \Delta_q u_2 \Vert_{\mathcal{H}^{\sigma,0}} .
$$
Notice that
$$
g_q \leq C c_q d_q \Vert u_1 \Vert_{\mathcal{H}^{\sigma,s_1}}\Vert u_2 \Vert_{\mathcal{H}^{\sigma,s_2}}, \quad (c_q), (d_q) \in l^2.
$$
We infer that
$$
r_p^2 \leq C_1 \sum_{q \geq p-n_0} 2^{(p-q)(s_1 +s_2)}\sum_{q \geq p-n_0} 2^{(p-q)(s_1 + s_2)}g_q^2 \leq C_2 \sum_{q \geq p-n_0} 2^{(p-q)(s_1 + s_2)}g_q^2$$
since $s_1 +s_2 >0.$ 
Then, 
\begin{align*}
  \sum_{p\geq -1} r_p^2 &\leq C_2 \sum_{q \geq -1} \Big( \sum_{p \leq q+n_0}2^{-(q-p)(s_1 + s_2)}\Big) g_q^2 \leq C_3 \sum_{q\geq -1} g_q^2,\\
  & \leq C_4 \Big(\sum_{q\geq -1}  c^2_q d^2_q \Big)\Vert \Delta_q u_1 \Vert^2_{\mathcal{H}^{\sigma,0}}  \Vert \Delta_q u_2 \Vert^2_{\mathcal{H}^{\sigma,0}}.
  \end{align*}
We conclude that
$$ \Vert \Delta_p R \Vert _{L^2} =   2^{-p(s_1 + s_2 - \frac{d}{2})} r_p,  \quad \sum_{p\geq -1} r_p^2 \leq C_5\Vert \Delta_q u_1 \Vert^2_{\mathcal{H}^{\sigma,0}} 2^{q s_2}\Vert \Delta_q u_2 \Vert^2_{\mathcal{H}^{\sigma,0}}. 
$$
Since the spectrum of $\Delta_p R$ is contained in an annulus, this proves the inequality and 
completes  the proof of the claim (i).

(ii) follows from (i) setting $ s_1 = s_2 = s_3=s.$

To prove (iii)   we use the following facts: 
$$ \langle \xi \rangle^t \les  \langle \xi-\eta \rangle^t + \langle \eta \rangle^t, e^{\sigma\vert \xi \vert} \leq  e^{\sigma\vert \xi-\eta \vert} e^{\sigma\vert \eta \vert}, \Vert \varphi\star \theta\Vert_{L^2} \leq \Vert \varphi   \Vert_{L^1}\Vert  \theta\Vert_{L^2}$$ and eventually $\Vert \varphi\Vert_{L^1} \les  \Vert \langle \xi \rangle^{s_0}\varphi\Vert_{L^2}$ if $s_0> \frac{d}{2}.$
\end{proof}

As a byproduct, we obtain the following result.
\begin{prop}\label{a-Ta}
Let $ \alpha_0, \alpha_1, \alpha_2$ be real numbers such that 
$$
\alpha_1 + \alpha_2 >0, \quad \alpha_0 \leq \alpha_1, \quad \alpha_0 < \alpha_1 + \alpha_2 - \frac{d}{2}.
$$
There exists $C>0$ such that for all $a \in {\mathcal{H}^{\sigma, \alpha_1}}$ and for 
all 
$u \in {\mathcal{H}^{\sigma, \alpha_2}}$, 
$$
\Vert au-T_a u \Vert_{\mathcal{H}^{\sigma, \alpha_0}} \leq C \Vert a \Vert_{\mathcal{H}^{\sigma, \alpha_1}} \Vert u \Vert_{\mathcal{H}^{\sigma, \alpha_2}}.
$$
\end{prop}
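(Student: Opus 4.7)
The approach is to exploit Bony's paraproduct decomposition
\[
au = T_a u + T_u a + R(a,u),
\]
so that $au - T_a u = T_u a + R(a,u)$, and then to estimate each of the two remaining pieces separately, reusing the estimates already established in Proposition~\ref{prod1} and in the proof of Proposition~\ref{uv}~(i).

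For the symmetrized paraproduct $T_u a$, I would apply Proposition~\ref{prod1} directly, but with the roles of the function and the symbol interchanged: taking $\alpha=\alpha_0$, $\beta=\alpha_2$ and $\gamma=\alpha_1$, the two hypotheses $\alpha\le\gamma$ and $\alpha<\beta+\gamma-\frac{d}{2}$ reduce exactly to the two assumptions $\alpha_0\le\alpha_1$ and $\alpha_0<\alpha_1+\alpha_2-\frac{d}{2}$. This yields
\[
\lA T_u a\rA_{\mathcal{H}^{\sigma,\alpha_0}} \le C \lA u\rA_{\mathcal{H}^{\sigma,\alpha_2}}\lA a\rA_{\mathcal{H}^{\sigma,\alpha_1}}.
\]

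For the remainder $R(a,u)$, I would invoke the estimate on $R$ that was actually proved (as an intermediate step) inside the proof of Proposition~\ref{uv}~(i), namely
\[
\lA R(a,u)\rA_{\mathcal{H}^{\sigma,\alpha_1+\alpha_2-\frac{d}{2}}}\le C\lA a\rA_{\mathcal{H}^{\sigma,\alpha_1}}\lA u\rA_{\mathcal{H}^{\sigma,\alpha_2}},
\]
whose derivation only uses the condition $\alpha_1+\alpha_2>0$ (to sum the geometric series indexed by $q-p$) and does not require a sign condition on $\alpha_1$ or $\alpha_2$ individually. The hypothesis $\alpha_0<\alpha_1+\alpha_2-\frac{d}{2}$ then gives the continuous embedding $\mathcal{H}^{\sigma,\alpha_1+\alpha_2-\frac{d}{2}}\hookrightarrow \mathcal{H}^{\sigma,\alpha_0}$, hence the desired bound on $R(a,u)$ in $\mathcal{H}^{\sigma,\alpha_0}$.

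Summing the two contributions gives the claimed inequality. There is essentially no obstacle: the only point to pay attention to is that the condition $\alpha_0\le \alpha_1$ is used \emph{only} to control $T_u a$ via Proposition~\ref{prod1} (no analogous condition $\alpha_0\le \alpha_2$ appears because the symmetric paraproduct $T_a u$ has been removed on the left-hand side), so the asymmetry between the roles of $a$ and $u$ in the hypotheses is exactly what the decomposition requires.
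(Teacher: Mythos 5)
Your proposal is correct and matches the paper's argument exactly: the paper's proof of Proposition~\ref{a-Ta} consists precisely of the remark that $au - T_a u = T_u a + R(a,u)$ and that the two pieces are then estimated as in the proof of Proposition~\ref{uv}~(i), namely by Proposition~\ref{prod1} for $T_u a$ and by the remainder estimate (with exponent $\alpha_1+\alpha_2-\frac{d}{2}$) for $R(a,u)$. Your observation about the role of the hypothesis $\alpha_0\le\alpha_1$ (needed only for $T_u a$, with no counterpart $\alpha_0\le\alpha_2$ because $T_a u$ has been subtracted off) is exactly the right reading of the asymmetry in the statement.
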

The proof is identical to that of the above proposition, 
for it is sufficient to notice that $au -T_au = T_ua + R(a,u).$

\begin{coro}
Consider two real numbers $s_0>d/2$ and $s\geq 0$. Let  $P$ be a polynomial of 
degree $m \geq 2$ such that $P(0)=0 $ and let 
$u \in \mathcal{H}^{\sigma,s}  \cap \mathcal{H}^{\sigma,s_0}$. 
Then $P(u) \in  \mathcal{H}^{\sigma,s}$ and
$$\Vert P(u)\Vert_{\mathcal{H}^{\sigma,s}}
\leq Q( \Vert u \Vert_{\mathcal{H}^{\sigma,s_0}}) \Vert u \Vert_{\mathcal{H}^{\sigma,s}}$$
where $Q$ is a polynomial of    degree $m-1.$
\end{coro}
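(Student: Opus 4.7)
My plan is to reduce the statement to a tame product estimate for powers $u^k$, $1\le k\le m$, and then conclude by linearity. Writing $P(X)=\sum_{k=1}^{m} a_k X^k$ (the constant term vanishes by assumption $P(0)=0$), one has $\Vert P(u)\Vert_{\mathcal{H}^{\sigma,s}}\le \sum_{k=1}^m |a_k|\,\Vert u^k\Vert_{\mathcal{H}^{\sigma,s}}$, so it suffices to prove, for each $k\ge 1$, the tame bound
\begin{equation*}
\Vert u^k\Vert_{\mathcal{H}^{\sigma,s}}\le C_k \Vert u\Vert_{\mathcal{H}^{\sigma,s_0}}^{k-1}\Vert u\Vert_{\mathcal{H}^{\sigma,s}}.
\end{equation*}
Once this is established, summing against $|a_k|$ produces a polynomial $Q(r)=\sum_{k=1}^m C_k|a_k|r^{k-1}$ of degree $m-1$, as required.

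The proof of the tame bound proceeds by induction on $k$. The base case $k=1$ is trivial. For the inductive step, I would apply statement (iii) of Proposition~\ref{uv} with $t=s$ to the product $u^k=u\cdot u^{k-1}$, obtaining
\begin{equation*}
\Vert u^k\Vert_{\mathcal{H}^{\sigma,s}}\le C\Vert u\Vert_{\mathcal{H}^{\sigma,s_0}}\Vert u^{k-1}\Vert_{\mathcal{H}^{\sigma,s}}+C\Vert u^{k-1}\Vert_{\mathcal{H}^{\sigma,s_0}}\Vert u\Vert_{\mathcal{H}^{\sigma,s}}.
\end{equation*}
The first summand is controlled by the inductive hypothesis. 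For the second summand, I would control $\Vert u^{k-1}\Vert_{\mathcal{H}^{\sigma,s_0}}$ by $C^{k-2}\Vert u\Vert_{\mathcal{H}^{\sigma,s_0}}^{k-1}$, iterating the algebra estimate of statement (ii) of Proposition~\ref{uv} (which applies since $s_0>d/2$). Combining the two contributions yields the desired bound with $C_k$ depending only on $C$, $k$, $s_0$ and $d$.

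There is no real obstacle here: both facts needed from Proposition~\ref{uv} have already been proved, and the induction is routine. The only minor point to be careful about is making sure both pieces of the inductive argument rely on the right statement of Proposition~\ref{uv}: the algebra property (ii) used for the $\mathcal{H}^{\sigma,s_0}$-norm (low-regularity index, valid because $s_0>d/2$), and the tame estimate (iii) used for the $\mathcal{H}^{\sigma,s}$-norm (valid for any $s\ge 0$ thanks to $s_0>d/2$). Once this is set up, collecting terms and relabelling gives the polynomial $Q$ of degree $m-1$ announced in the statement.
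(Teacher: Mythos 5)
Your proof is correct. The paper disposes of this corollary in one line, saying it is ``an immediate consequence of statement~(i) in Proposition~\ref{uv},'' and does not write out the induction. Your argument makes the induction explicit and, more interestingly, routes it through parts (iii) (tame product) and (ii) (algebra) of Proposition~\ref{uv} rather than part~(i). This is actually the safer choice: part~(i) requires $s_3\le\min\{s_1,s_2\}$, and when $s>s_0$ one cannot take $(s_1,s_2,s_3)=(s_0,s,s)$, so~(i) does not \emph{directly} give $\Vert u\,u^{k-1}\Vert_{\mathcal{H}^{\sigma,s}}\les \Vert u\Vert_{\mathcal{H}^{\sigma,s_0}}\Vert u^{k-1}\Vert_{\mathcal{H}^{\sigma,s}}$ in that regime without a further observation. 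Part~(iii), which holds for all $t\ge 0$ and $s_0>d/2$, gives exactly the needed tame estimate without case distinctions, and your use of~(ii) (valid since $s_0>d/2$) to control $\Vert u^{k-1}\Vert_{\mathcal{H}^{\sigma,s_0}}$ closes the induction cleanly. Your recursion $C_k=CC_{k-1}+C\,C'^{\,k-2}$, $C_1=1$, yields finite constants, and summing against the coefficients of $P$ produces a polynomial $Q$ of degree $m-1$ as claimed. This is precisely the iteration the paper itself performs later in inequality~\eqref{u^n}, so your route is entirely consistent with the authors' intent while being more self-contained than their one-line justification.
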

\begin{proof}
This corollary is an immediate consequence of statement (i) in Proposition~\ref{uv}.
\end{proof}

\begin{prop}\label{est-f(u)bis}
Consider three real numbers $s_0>d/2$, $s \geq 0$, $M_0>0$ and let $f$ be a holomorphic function in the ball 
$\{z \in \xC: \vert z \vert< M_0\}$, such that $f(0)=0$. There exists $\eps_0>0$ such that for all  $\sigma>0$, if 
$u \in \mathcal{H}^{\sigma,s} \cap \mathcal{H}^{\sigma,s_0}$ satisfies  
$\Vert u \Vert_{\mathcal{H}^{\sigma, s_0}} \leq \eps_0$, then 
$f(u)$ belongs to $\mathcal{H}^{\sigma, s}$. Moreover, there exists   $C>0$ depending only on $f, s,s_0,  \eps_0$ such that
$$
\Vert f(u) \Vert_{\mathcal{H}^{\sigma,s}} \leq C \Vert u \Vert_{\mathcal{H}^{\sigma, s}} .
$$
\end{prop}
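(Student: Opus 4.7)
The plan is to expand $f$ in Taylor series at the origin, $f(z)=\sum_{n\geq 1}a_n z^n$, and to sum the series term by term after establishing a tame estimate on $\lA u^n\rA_{\mathcal{H}^{\sigma,s}}$. Since $s_0>d/2$, a standard Cauchy--Schwarz argument gives the embedding $\mathcal{H}^{\sigma,s_0}\hookrightarrow L^\infty$ with $\lA u\rA_{L^\infty}\leq C\lA u\rA_{\mathcal{H}^{\sigma,s_0}}\leq C\eps_0$, so that for $\eps_0$ small enough $u$ takes values in the disk of convergence of $f$ and $f(u)$ is well-defined pointwise. The Cauchy estimates moreover give $|a_n|\leq M_r/r^n$ for any $r<M_0$, where $M_r=\sup_{|z|=r}|f(z)|$.

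The core step will be to prove the tame bound
\begin{equation*}
\lA u^n\rA_{\mathcal{H}^{\sigma,s}}\leq n\,C_1^{\,n}\,\lA u\rA_{\mathcal{H}^{\sigma,s_0}}^{\,n-1}\lA u\rA_{\mathcal{H}^{\sigma,s}},
\end{equation*}
for some constant $C_1=C_1(d,s,s_0)$. Setting $A=\lA u\rA_{\mathcal{H}^{\sigma,s_0}}$ and $B_n=\lA u^n\rA_{\mathcal{H}^{\sigma,s}}$, I would first invoke the algebra property of Proposition~\ref{uv}\,(ii) at index $s_0>d/2$ to get $\lA u^n\rA_{\mathcal{H}^{\sigma,s_0}}\leq C_0^{\,n-1}A^n$, and then apply the tame product rule (iii) of that proposition with $t=s$ to $u^{n+1}=u\cdot u^n$, which yields the recurrence
\begin{equation*}
B_{n+1}\leq C A\, B_n+C C_0^{\,n-1}A^n B_1.
\end{equation*}
The tame bound then follows by a straightforward induction on $n$, provided $C_1$ is chosen larger than $\max(C,C_0,1)$.

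With this estimate in hand, summing the Taylor series gives
\begin{equation*}
\lA f(u)\rA_{\mathcal{H}^{\sigma,s}}\leq \sum_{n\geq 1}|a_n|\,B_n\leq M_r\,B_1\,\frac{C_1}{r}\sum_{n\geq 1}n\Big(\frac{C_1 A}{r}\Big)^{n-1}.
\end{equation*}
Choosing for instance $r=M_0/2$ and $\eps_0$ so small that $C_1\eps_0/r\leq 1/2$, the remaining series is a convergent geometric-type series bounded by $4$, and one obtains $\lA f(u)\rA_{\mathcal{H}^{\sigma,s}}\leq C\lA u\rA_{\mathcal{H}^{\sigma,s}}$ with a constant $C$ depending only on $f,s,s_0,\eps_0$.

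The only genuinely delicate step in this plan is the tame estimate for $\lA u^n\rA_{\mathcal{H}^{\sigma,s}}$: the factor $n$ (in place of an exponential $C^n$) is decisive for the convergence of the Taylor series, and it is produced by the fact that, thanks to Proposition~\ref{uv}\,(iii), the $\mathcal{H}^{\sigma,s}$-norm enters only linearly in the recurrence, all other factors being absorbed by the small $\mathcal{H}^{\sigma,s_0}$-norm.
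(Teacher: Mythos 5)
Your proposal is correct and follows essentially the same route as the paper's proof: establish the tame power estimate $\Vert u^n\Vert_{\mathcal{H}^{\sigma,s}}\leq C^{\,n-1}\Vert u\Vert_{\mathcal{H}^{\sigma,s_0}}^{n-1}\Vert u\Vert_{\mathcal{H}^{\sigma,s}}$ from Proposition~\ref{uv} and sum the Taylor series, taking $\eps_0$ small. A small caveat on your closing remark: what is decisive is not the prefactor $n$ in place of $C^n$ (the paper's own bound carries an exponential prefactor and works equally well once $\eps_0$ is small), but the tame feature that $\Vert u\Vert_{\mathcal{H}^{\sigma,s}}$ enters only linearly, with all exponential-in-$n$ growth absorbed by the small $\Vert u\Vert_{\mathcal{H}^{\sigma,s_0}}^{n-1}$ factor.
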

\begin{proof} 
Set $C_s =  2^s C(d,s_0)$. It follows from  Proposition~\ref{uv} an from an induction that    for all $n \geq 1,$
\begin{equation}\label{u^n}
  \Vert u^n \Vert_{ \mathcal{H}^{\sigma,s}} \leq ( 2 C_{\text{max}(s,s_0)})^{n-1}\Vert u  \Vert^{n-1}_{\mathcal{H}^{\sigma,s_0}}\Vert  u \Vert_{\mathcal{H}^{\sigma,s}}.
  \end{equation}
For $\vert z \vert <M_0$ we can write $f(z) = \sum_{n=1}^{+ \infty} a_n z^n$ where $a_n$ is such that $\vert a_n \vert \leq K^n, K>0$. We shall show that the   series   $\sum a_n u^n$ is normaly convergent  in $\mathcal{H}^{\sigma,s}$. Indeed according to   \eqref{u^n} and the hypothesis we have
$$
\Vert a_n u^n \Vert_{\mathcal{H}^{\sigma,s}} \leq K(2 C_{\text{max}(s,s_0)}K\eps_0)^{n-1}\Vert u \Vert_{\mathcal{H}^{\sigma, s}}.
$$
We have just to take $\eps_0$ small enough, so that 
$2 C_{\text{max}(s,s_0)}K \eps_0<1$. 
Since, on the other hand,  $\Vert u \Vert_{L^\infty(\xR^d)} \leq C(d,s_0)\Vert u \Vert_{\mathcal{H}^{\sigma, s_0}}$, taking moreover  $\eps_0$ such that $C(d,s_0) \eps_0<M_0$ we will have,
$$
\Vert f(u) \Vert_{ \mathcal{H}^{\sigma,s}} \leq K\Big(\sum_{n=1}^{+ \infty}(2K C_{\text{max}(s,s_0)})^{n-1}\Vert u  \Vert^{n-1}_{\mathcal{H}^{\sigma,s_0}} \Big)\Vert  u \Vert_{\mathcal{H}^{\sigma,s}}.
$$
This completes the proof.
\end{proof}
\begin{coro}\label{est-f(u)ter}
Consider three real numbers $s_0>d/2$, $t \geq 0$, $M_0>0$ and let $f$ be a holomorphic function in the ball $\{z \in \xC: \vert z \vert< M_0\}.$ There exists $\eps_0>0, C>0$  such that for all  $\sigma>0$, if $u_1, u_2 \in \mathcal{H}^{\sigma,t} \cap \mathcal{H}^{\sigma,s_0}$ satisfy  
$\Vert u_j \Vert_{\mathcal{H}^{\sigma, s_0}} \leq \eps_0$, for $j=1,2$  then, 
$$
\Vert f(u_1)-f(u_2)\Vert_{\mathcal{H}^{\sigma, t}} \leq     C\Big(\Vert  u_1 - u_2 \Vert_{\mathcal{H}^{\sigma, t}} + (\Vert u_1\Vert_{\mathcal{H}^{\sigma, t}}+\Vert u_2\Vert_{\mathcal{H}^{\sigma, t}})\Vert  u_1 - u_2 \Vert_{\mathcal{H}^{\sigma, s_0}}\Big).
$$ 
\end{coro}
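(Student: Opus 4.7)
The identity $f(u_1)-f(u_2)$ is unchanged upon replacing $f$ by $f-f(0)$, so we may assume $f(0)=0$. Expand $f(z)=\sum_{n\geq 1}a_n z^n$, convergent on $\{\la z\ra<M_0\}$, with $\la a_n\ra\leq K^n$ for some $K=K(f,M_0)$. The key algebraic identity
\[
u_1^n-u_2^n=(u_1-u_2)\sum_{k=0}^{n-1}u_1^k u_2^{n-1-k}
\]
gives the factorization
\[
f(u_1)-f(u_2)=(u_1-u_2)\,G(u_1,u_2),\qquad G(u_1,u_2)\defn \sum_{n\geq 1}a_n\sum_{k=0}^{n-1}u_1^k u_2^{n-1-k}.
\]
The plan is to apply the tame product estimate of Proposition~\ref{uv}(iii) (with indices $s_0$ and $t$) to $(u_1-u_2)\cdot G$, which yields
\[
\Vert f(u_1)-f(u_2)\Vert_{\mathcal{H}^{\sigma,t}}\les \Vert u_1-u_2\Vert_{\mathcal{H}^{\sigma,s_0}}\Vert G\Vert_{\mathcal{H}^{\sigma,t}}+\Vert u_1-u_2\Vert_{\mathcal{H}^{\sigma,t}}\Vert G\Vert_{\mathcal{H}^{\sigma,s_0}},
\]
so it suffices to show that, provided $\eps_0$ is small enough,
\[
\Vert G\Vert_{\mathcal{H}^{\sigma,s_0}}\leq C\quad\text{and}\quad \Vert G\Vert_{\mathcal{H}^{\sigma,t}}\leq C\bigl(\Vert u_1\Vert_{\mathcal{H}^{\sigma,t}}+\Vert u_2\Vert_{\mathcal{H}^{\sigma,t}}\bigr).
\]

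For the first bound, since $s_0>d/2$ the space $\mathcal{H}^{\sigma,s_0}$ is an algebra by Proposition~\ref{uv}(ii); iterating gives $\Vert u_1^k u_2^{n-1-k}\Vert_{\mathcal{H}^{\sigma,s_0}}\leq (C_{s_0}\eps_0)^{n-1}$, so that summing over $k\in\{0,\dots,n-1\}$ and $n\geq 1$ produces the convergent majorant $K\sum_{n\geq 1}n(KC_{s_0}\eps_0)^{n-1}$, finite as soon as $KC_{s_0}\eps_0<1$. For the second (tame) bound, I would run an induction on the number of factors using Proposition~\ref{uv}(iii) to establish
\[
\Vert u_1^k u_2^{n-1-k}\Vert_{\mathcal{H}^{\sigma,t}}\leq C^{n-1}(C_{s_0}\eps_0)^{n-2}\bigl(k\,\Vert u_1\Vert_{\mathcal{H}^{\sigma,t}}+(n-1-k)\,\Vert u_2\Vert_{\mathcal{H}^{\sigma,t}}\bigr),
\]
so that summing in $k$ yields a combinatorial factor of order $n^2$, and summing in $n\geq 1$ against $\la a_n\ra\leq K^n$ gives a geometric series in $KC(C_{s_0}\eps_0)$, which converges once $\eps_0$ is small enough. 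Plugging the two bounds into the displayed product estimate yields exactly the inequality claimed in the corollary.

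The main obstacle is the second step: we need an estimate on $G$ that is \emph{linear} in the $\mathcal{H}^{\sigma,t}$-norms of $u_1,u_2$ (and not, say, growing as $n$-th powers), because $t$ may be arbitrarily large while $s_0$ is only fixed above $d/2$. The tame form of Proposition~\ref{uv}(iii) is exactly what makes this possible: each time one peels off a factor via the product rule, the high-norm $\mathcal{H}^{\sigma,t}$ appears on only one factor while all the remaining factors are controlled in the algebra norm $\mathcal{H}^{\sigma,s_0}$ with the small constant $\eps_0$. Care is needed to verify by induction that the constants remain polynomial in $n$, so that convergence of the power series can be ensured by a single smallness condition on $\eps_0$ (taken further small, if necessary, than the one required by Proposition~\ref{est-f(u)bis}).
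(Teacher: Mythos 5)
Your proof is correct but takes a genuinely different route from the paper. The paper uses the integral (fundamental theorem of calculus) representation
\[
f(u_1)-f(u_2)=f'(0)(u_1-u_2)+(u_1-u_2)\int_0^1 g(\lambda u_1+(1-\lambda)u_2)\,d\lambda,\qquad g\defn f'-f'(0),
\]
notices that $g$ is holomorphic with $g(0)=0$ and that the convex combinations $\lambda u_1+(1-\lambda)u_2$ stay in the small ball in $\mathcal{H}^{\sigma,s_0}$, and then bounds the integrand directly by invoking the already-established Proposition~\ref{est-f(u)bis} with $s=t$ and $s=s_0$; the final step is the same tame product estimate (Proposition~\ref{uv}) that you use. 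You instead expand $f$ in a power series and use the algebraic factorization $u_1^n-u_2^n=(u_1-u_2)\sum_{k}u_1^ku_2^{n-1-k}$, which in effect re-proves the convergence content of Proposition~\ref{est-f(u)bis} inside the corollary rather than applying it as a black box. Both arguments are sound; the paper's is shorter because it reuses prior work and avoids the double sum with the $n(n-1)/2$ combinatorial factor, while yours is more self-contained and gives the factor $G$ explicitly. A minor bookkeeping point in your write-up: for $n-1$ factors the algebra constant should appear to the power $n-2$, not $n-1$; since $C_{s_0}\geq 1$ and $\eps_0\leq 1$ this is harmless, but it is worth being precise when summing the series to pin down the single smallness condition on $\eps_0$.
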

\begin{proof}
Set $g(z)= f'(z)- f'(0).$ Then $g$ is holomorphic in the set $\{z\in \xC: \vert z \vert < M_0\}$ and $g(0)=0.$
Then one can write
$$f(u_1) - f(u_2) = f'(0)(u_1 - u_2) + (u_1 - u_2)\int_0^1 g(\lambda u_1 + (1- \lambda) u_2)\, d \lambda.$$
Since 
$$\Vert  \lambda u_1 +  (1-\lambda)u_2\Vert_{\mathcal{H}^{\sigma, s_0}}\leq \lambda \Vert  u_1\Vert_{\mathcal{H}^{\sigma, s_0}}+ (1-\lambda)\Vert  u_2\Vert_{\mathcal{H}^{\sigma, s_0}}\leq \eps_0,$$
we can apply proposition~\ref{est-f(u)bis} with $s=t$ and $s=s_0$ to $g$ and write
\begin{align*}
&\Vert  g(\lambda u_1 + (1- \lambda) u_2)\Vert_{\mathcal{H}^{\sigma, t}}\leq C(\Vert  u_1\Vert_{\mathcal{H}^{\sigma, t}}+ \Vert  u_2\Vert_{\mathcal{H}^{\sigma, t}}),\\
&\Vert  g(\lambda u_1 + (1- \lambda) u_2)\Vert_{\mathcal{H}^{\sigma, s_0}}\leq C.
\end{align*}
Using Proposition~\ref{uv} and the above inequalities, we get
\begin{multline*}
 \Vert f(u_1) - f(u_2)\Vert_{\mathcal{H}^{\sigma,t}} \\
 \leq \vert f'(0)\vert \Vert u_1-u_2\Vert_{\mathcal{H}^{\sigma,t}} + C\big(\Vert u_1-u_2\Vert_{\mathcal{H}^{\sigma,t}} + (\Vert u_1\Vert_{\mathcal{H}^{\sigma,t}}\Vert+ u_2\Vert_{\mathcal{H}^{\sigma,t}}) \Vert u_1-u_2\Vert_{\mathcal{H}^{\sigma,s_0}}\big).
\end{multline*}
This completes the proof.
\end{proof}

Recall (see Definition~\ref{def-EF}) that  for  $\lambda\in [0,1]$ and $\mu \in \xR$, we have introduced the spaces
\begin{equation*}
\begin{aligned}
E^{\lambda,\mu} &= \{u: e^{\lambda z\vert D_x \vert} 
u \in C^0([-h,0], \mathcal{H}^{\lambda h,\mu})\},\\
F^{\lambda,\mu} &=  \{u: e^{\lambda  z\vert D_x \vert} u \in 
L^2(I_h, \mathcal{H}^{\lambda h,\mu})\},\quad I_h =(-h,0),\\
\mathcal{X}^{\lambda ,\mu} &= E^{\lambda,\mu} \cap F^{\lambda,\mu+ \mez},
\end{aligned}
\end{equation*}
equipped with their natural  norms. \\
 
We have several bilinear estimates by using these norms.

 \begin{lemm}\label{uvF}
\begin{itemize}
\item[\rm{(i)}] \label{uvFi} Consider three real numbers $s_1,s_2,s_3$ such that
$$
s_1+s_2\ge 0,\quad s_3\le \min \{s_1,s_2\},\quad   s_3 < s_1+s_2-\frac{d}{2}.
$$
 Then there exists $C>0$ such that for any $\lambda\in [0,1]$, 
$$
\Vert u_1u_2 \Vert_{F^{\lambda,s_3}} \leq C \Vert u_1 \Vert_{E^{\lambda,s_1}}\Vert  u_2 \Vert_{F^{\lambda,s_2}}.
$$
\item[\rm{(ii)}] \label{uvFii} For all $s>d/2$, there exists $C>0$ such that for any $\lambda\in [0,1]$, 
\begin{align*}
&\Vert u_1u_2 \Vert_{F^{\lambda,s}} \leq C \Vert u_1 \Vert_{E^{\lambda,s}}\Vert  u_2 \Vert_{F^{\lambda,s}},\\
&\Vert u_1u_2 \Vert_{E^{\lambda,s}} \leq C \Vert u_1 \Vert_{E^{\lambda,s}}\Vert  u_2 \Vert_{E^{\lambda,s}}.
\end{align*}
\item [\rm{(iii)}] \label{uvFiii}
Let $s_0>d/2$ and $t \geq 0$. There exists  $C>0$ such that
\be\label{tame-EF}
\Vert u_1u_2 \Vert_{F^{\lambda,t}} \leq C \Vert u_1 \Vert_{E^{\lambda,s_0}}\Vert  u_2 \Vert_{F^{\lambda,t}} + C\Vert u_1  \Vert_{F^{\lambda,t}}\Vert  u_2\Vert_{E^{\lambda,s_0}}.
\ee
\item [\rm{(iv)}] \label{uvFiv} For any $\mu\in \xR$ and any $\lambda\in [0,1]$, there exists a constant $C>0$ such that
$$
\lA u\rA_{E^{\lambda,\mu}}\le C \lA \partial_z u\rA_{F^{\lambda,\mu-\mez}}+\lA u\rA_{F^{\lambda,\mu+\mez}}.
$$
\end{itemize}
 \end{lemm}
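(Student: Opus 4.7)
The plan is to reduce each claim to the corresponding bilinear estimate for the $\mathcal{H}^{\sigma,s}$ scale. A direct Fourier computation, using $\widehat{e^{\lambda z|D_x|}u}(\xi,z)=e^{\lambda z|\xi|}\widehat{u}(\xi,z)$ and the defining weight $e^{2\lambda h|\xi|}$ of $\mathcal{H}^{\lambda h,\mu}$, yields the identities
\[
\Vert u \Vert_{F^{\lambda,\mu}}^{2} = \int_{-h}^{0} \Vert u(\cdot,z)\Vert_{\mathcal{H}^{\lambda(z+h),\mu}}^{2}\,dz,
\qquad
\Vert u \Vert_{E^{\lambda,\mu}} = \sup_{z\in [-h,0]} \Vert u(\cdot,z)\Vert_{\mathcal{H}^{\lambda(z+h),\mu}}.
\]
Since $\lambda(z+h)\in [0,\lambda h]$ when $z\in [-h,0]$, the product bounds of Proposition~\ref{uv} apply at each height $z$ with a constant independent of $z$ (their constants are uniform in $\sigma\ge 0$, as is visible from the paradifferential proofs above).

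Granted this observation, parts (i), (ii) and (iii) follow simply by integrating the pointwise estimate. For (i), Proposition~\ref{uv}(i) gives
\[
\Vert u_{1}(\cdot,z)\, u_{2}(\cdot,z)\Vert_{\mathcal{H}^{\lambda(z+h),s_{3}}} \le C\,\Vert u_{1}(\cdot,z)\Vert_{\mathcal{H}^{\lambda(z+h),s_{1}}}\Vert u_{2}(\cdot,z)\Vert_{\mathcal{H}^{\lambda(z+h),s_{2}}},
\]
and controlling the first factor in $L^{\infty}_{z}$ (which produces the $E^{\lambda,s_{1}}$ norm) and the second in $L^{2}_{z}$ (which produces the $F^{\lambda,s_{2}}$ norm) yields the desired bound. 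The two inequalities in (ii) are obtained identically from Proposition~\ref{uv}(ii): integrate in $z$ to get the $F$-bound, or take the supremum to get the $E$-bound. Claim (iii) is handled similarly from Proposition~\ref{uv}(iii), the two resulting terms being controlled via an $L^{\infty}_{z}\cdot L^{2}_{z}$ split on each product of norms.

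For (iv) I would conjugate by the exponential weight. Setting $v(\cdot,z)=e^{\lambda z|D_{x}|}u(\cdot,z)$, one has
\[
\Vert u\Vert_{E^{\lambda,\mu}} = \Vert v\Vert_{C^{0}([-h,0];\mathcal{H}^{\lambda h,\mu})},
\qquad
\Vert u\Vert_{F^{\lambda,\mu+\mez}}=\Vert v\Vert_{L^{2}(I_{h};\mathcal{H}^{\lambda h,\mu+\mez})}.
\]
Differentiating in $z$ gives $\partial_{z}v=\lambda|D_{x}|v+e^{\lambda z|D_{x}|}\partial_{z}u$, whence
\[
\Vert \partial_{z}v\Vert_{L^{2}(I_{h};\mathcal{H}^{\lambda h,\mu-\mez})}
\le \lambda\,\Vert u\Vert_{F^{\lambda,\mu+\mez}}+\Vert \partial_{z}u\Vert_{F^{\lambda,\mu-\mez}}.
\]
Applying Lemma~\ref{lions} to $v$ with the fixed parameter $\sigma=\lambda h$ then delivers (iv).

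The arguments are essentially routine once the $F$ and $E$ norms are recognized as $L^{2}_{z}$ and $L^{\infty}_{z}$ avatars of $\mathcal{H}^{\lambda(z+h),\mu}$. The only mild technical point is in (iv), where the commutator term $\lambda|D_{x}|v$ arising from differentiating $v$ must be absorbed into the half-derivative gain encoded by the $F^{\lambda,\mu+\mez}$ norm; this is the structural reason for the loss of a half derivative when an $E$-norm is expressed via $F$-norms, exactly as in the standard trace/interpolation statement of Lemma~\ref{lions}.
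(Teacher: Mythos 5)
Your proof is correct and follows the same strategy as the paper's: apply the $\mathcal{H}^{\sigma,s}$ product estimates of Proposition~\ref{uv} pointwise in $z$ with $\sigma=\lambda(z+h)$, then split the resulting product of $z$-dependent norms between $L^\infty_z$ (yielding $E^{\lambda,\cdot}$) and $L^2_z$ (yielding $F^{\lambda,\cdot}$), and for (iv) conjugate by the weight and invoke the interpolation Lemma~\ref{lions}. Your version simply spells out the details that the paper leaves implicit, including the useful identity $\Vert e^{\lambda z|D_x|}u(\cdot,z)\Vert_{\mathcal{H}^{\lambda h,\mu}}=\Vert u(\cdot,z)\Vert_{\mathcal{H}^{\lambda(z+h),\mu}}$ and the commutator bookkeeping $\partial_z v=\lambda|D_x|v+e^{\lambda z|D_x|}\partial_z u$.
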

\begin{proof} 
(i) We  first use  Proposition~\ref{uv} with fixed  $z$ and $\sigma = \lambda(z+h)$, then we we bound   the  $L^2$ norm  in $z$ of the  products by the   $L^\infty$ and $L^2$ norms. The proof of statement (ii) and (iii) are similar and (iv) follows directly from Lemma~\ref{lions}.
\end{proof}

\subsection{Commutators.}
In this section we study the commutators between  a Fourier multiplier and the multiplication by a function $a\in \mathcal{H}^{\sigma, s}$. In our applications, the Fourier multiplier will be $$b(D_x) = J_n = \chi \left(\frac{|D_x|} n\right).$$ 
Recall that   $S^m_{1,0}$ is the  class of symbols $p$   for which the seminorms
  $$\sup_{(x,\xi)\in \xR^d \times \xR^d} \langle \xi \rangle^{\vert \alpha \vert-m}\vert\partial_x^\beta \partial_{\xi}^\alpha p(\xi)\vert  $$ 
 are finite for every $(\alpha, \beta) \in \xN^d\times \xN^d.$
\begin{prop}\label{commutat}
    Let $ \nu ,s \in \xR$ be such that $\nu \geq 0$, $2s+ \nu> 1 $ and $\nu +s > 1+ \frac {d}{2}.$  Let $b(\xi) \in S^\nu _{1,0}.$ There exists $C>0$ {\rm(}depending only on   $m$ and on a finite number on seminorms of   $b${\rm )}  such that for all $\sigma>0$    and all  $a\in \mathcal{H}^{\sigma, \nu + s }$   we have
\begin{equation}\label{est-commut3}
 \Vert \big[b(D), a\big] u \Vert_{\mathcal{H}^{\sigma,s}} \leq C \Vert  a \Vert_{\mathcal{H}^{\sigma, \nu +s}} \Vert u \Vert_{\mathcal{H}^{\sigma, \nu +s-1}}, \quad \forall u \in \mathcal{H}^{\sigma,\nu +s-1}. 
 \end{equation}
\end{prop}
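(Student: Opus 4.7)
My first step is to reduce the analytic-weighted estimate to a standard Sobolev one. Using the Fourier representation
\[
\widehat{[b(D),a]u}(\xi)=\sum_{\eta\in\xZ^d}\bigl(b(\xi)-b(\eta)\bigr)\widehat{a}(\xi-\eta)\widehat{u}(\eta)
\]
together with the subadditivity $e^{\sigma|\xi|}\le e^{\sigma|\xi-\eta|}e^{\sigma|\eta|}$ arising from the triangle inequality, the desired bound \eqref{est-commut3} follows from the corresponding pointwise estimate applied to the nonnegative sequences $A(\zeta):=e^{\sigma|\zeta|}|\widehat a(\zeta)|$ and $U(\zeta):=e^{\sigma|\zeta|}|\widehat u(\zeta)|$, whose $\ell^2$ norms with weights $\langle\cdot\rangle^{\nu+s}$ and $\langle\cdot\rangle^{\nu+s-1}$ reproduce $\|a\|_{\mathcal{H}^{\sigma,\nu+s}}$ and $\|u\|_{\mathcal{H}^{\sigma,\nu+s-1}}$. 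After this reduction the exponential weight plays no further role, and it suffices to prove the classical Sobolev commutator estimate.

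\textbf{Bony-type splitting and the easy regions.} I would then decompose the sum over $\eta$ using a paradifferential partition of unity into three regions:
\begin{itemize}
\item[(I)] $|\xi-\eta|\le\varepsilon|\eta|$ (paraproduct region);
\item[(II)] $|\eta|\le\varepsilon|\xi-\eta|$ (conjugate paraproduct region);
\item[(III)] $\varepsilon|\eta|<|\xi-\eta|<\varepsilon^{-1}|\eta|$ (remainder region).
\end{itemize}
On region (I) one has $|\xi|\sim|\eta|$, and Taylor's formula yields the crucial gain
$|b(\xi)-b(\eta)|\le C|\xi-\eta|\langle\eta\rangle^{\nu-1}$; the one-derivative gain matches the one-derivative loss in the target norm, and Young's convolution inequality combined with $\nu+s>1+\frac d2$ (which makes $\langle\cdot\rangle^{1-\nu-s}$ square-summable on $\xZ^d$) closes the estimate. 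On region (II), $|\xi|\sim|\xi-\eta|$ and the crude bound $|b(\xi)-b(\eta)|\le C\langle\xi-\eta\rangle^\nu$ absorbs the factor $\langle\xi\rangle^{\nu+s}$ into $A$, while the embedding $\langle\cdot\rangle^{\nu+s-1}\ell^2\hookrightarrow\ell^1$, again granted by $\nu+s>1+\frac d2$, gives the desired convolution bound.

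\textbf{Remainder and main obstacle.} The delicate case is region (III), where $|\xi-\eta|\sim|\eta|$ can both be much larger than $|\xi|$, so Taylor provides no gain and $\langle\xi\rangle^s$ even fails to provide a useful majoration when $s<0$. Here I would exploit the equivalence $\langle\xi-\eta\rangle\sim\langle\eta\rangle$ in this region to symmetrize the bound $|b(\xi)-b(\eta)|\lesssim\langle\xi-\eta\rangle^{\nu}+\langle\eta\rangle^\nu$ and freely redistribute the powers between the two factors, bringing the remaining bilinear sum into the form of H\"ormander's multiplier lemma (Lemma~\ref{Theo:Hormander}) with $s_3=s$ (or $s_3=0$ when $s<0$) and exponents $(s_1,s_2)$ satisfying $s_1+s_2=\nu+2s-1$. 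The condition $2s+\nu>1$ is then exactly what guarantees $s_1+s_2\ge 0$, while $\nu+s>1+\frac d2$ provides the strict inequality $s_3<s_1+s_2-\frac d2$. This is the main obstacle of the proof: the two hypotheses must be invoked simultaneously and the distribution parameter $\alpha$ (in $s_1=\nu+s-\alpha$, $s_2=s-1+\alpha$) has to be tuned to also verify $s_3\le\min(s_1,s_2)$, which is the subtle point when $s$ approaches $0$ from below. This explains the otherwise curious joint appearance of $2s+\nu>1$ and $\nu+s>1+\frac d2$ in the statement.
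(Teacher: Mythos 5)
Your route --- passing to $\ell^2$ sequences via the exponential weight, splitting the frequency sum into paraproduct, conjugate paraproduct and remainder regions, and closing the remainder with Lemma~\ref{Theo:Hormander} --- is organized quite differently from the paper's. The paper uses the identity $\langle D\rangle^s[b(D),a]=[\langle D\rangle^s b(D),a]-[\langle D\rangle^s,a]b(D)$ to reduce to two commutator estimates in $\mathcal{H}^{\sigma,0}$, and then invokes Lemma~\ref{commut1}, itself proved through the Bony decomposition together with Propositions~\ref{prod1} and~\ref{a-Ta}. Your regions (I) and (II) are handled correctly.

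Your treatment of region (III), however, has a real gap, and it occurs precisely in the regime the paper needs. With $s_3=s$, $s_1=\nu+s-\alpha$, $s_2=s-1+\alpha$, the H\"ormander condition $s_3\le\min\{s_1,s_2\}$ forces $1\le\alpha\le\nu$, hence requires $\nu\ge1$; yet only $\nu\ge0$ is assumed, and every application of Proposition~\ref{commutat} in the paper takes $\nu=0$. Your fallback $s_3=0$ ``when $s<0$'' does not repair this: it is legitimate only for $s\le0$, and even there the third H\"ormander condition becomes $0<\nu+2s-1-\tfrac d2$, that is $\nu+2s>1+\tfrac d2$, which is strictly stronger than the stated hypothesis $\nu+2s>1$. (In fact, when $s<0$ the hypothesis $\nu+s>1+\tfrac d2$ already forces $\nu>1+\tfrac d2>1$, so $s_3=s$ works there and the $s_3=0$ variant is both unnecessary and insufficient.) The correct repair for $s\ge0$, $\nu<1$ is to also exploit $\langle\xi\rangle\lesssim\langle\eta\rangle$ on region (III) to absorb the numerator $\langle\xi\rangle^s$ into $\langle\eta\rangle^s$; this brings the multiplier to the form with $s_3=0$ and $s_1+s_2=\nu+s-1$, and the symmetric choice $s_1=s_2=(\nu+s-1)/2>\tfrac d4>0$ then satisfies all the conditions of Lemma~\ref{Theo:Hormander}, using precisely $\nu+s>1+\tfrac d2$. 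Until this case is filled in, your proof does not cover the parameters the paper actually uses.
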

The proof is based on the following lemma where as before $T_a$ denotes the paraproduct.

\begin{lemm}\label{commut1}
Let $ s_0> \frac{d}{2}, \, m \in \xR, $ such that $-s_0< m \leq s_0+1$ and $p(\xi) \in S^m_{1,0}.$ There exists $C>0$ 
{\rm (}depending only on   $s_0,m$ and on a finite number on seminorms of $p${\rm )} such that for all $\sigma>0$ and  all  $a$ satisfying $\nabla_x a \in \mathcal{H}^{\sigma, s_0}$ we have
\begin{align*}
&\text{\rm {(i)}}\quad \Vert \big[p(D), T_a\big] u \Vert_{\mathcal{H}^{\sigma,0}} \leq C \Vert \nabla_x a \Vert_{\mathcal{H}^{\sigma, s_0}} \Vert u \Vert_{\mathcal{H}^{\sigma, m-1}}, \quad \forall u \in \mathcal{H}^{\sigma,m-1},\\
 &\text{\rm {(ii)}}\quad \Vert \big[p(D), a\big] u \Vert_{\mathcal{H}^{\sigma,0}} \leq C \Vert  a \Vert_{\mathcal{H}^{\sigma, s_0 +1}} \Vert u \Vert_{\mathcal{H}^{\sigma, m-1}}, \quad \forall u \in \mathcal{H}^{\sigma,m-1}.
\end{align*}
 \end{lemm}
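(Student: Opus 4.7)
The proof of (i) is via a dyadic (Littlewood-Paley) decomposition of the paraproduct. Writing $T_a u = \sum_{j \geq 2} m_j v_j$ with $m_j = S_{j-2}(a)$ and $v_j = \Delta_j u$, we have
\[
[p(D), T_a] u = \sum_{j \geq 2} [p(D), m_j] v_j.
\]
Since the Fourier support of each $m_j v_j$ (and hence of the commutator) lies in a dyadic annulus $|\zeta| \sim 2^j$, the summands are almost orthogonal in $\mathcal{H}^{\sigma, 0}$, reducing the problem to the single-dyadic estimate
\[
\|[p(D), m_j] v_j\|_{\mathcal{H}^{\sigma, 0}} \lesssim 2^{j(m-1)} \|\nabla a\|_{\mathcal{H}^{\sigma, s_0}} \|v_j\|_{\mathcal{H}^{\sigma, 0}}.
\]

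To prove this single-dyadic bound I would compute
\[
\widehat{[p(D), m_j] v_j}(\xi) = \sum_{\eta} \bigl(p(\xi) - p(\eta)\bigr) \widehat{m_j}(\xi - \eta) \widehat{v_j}(\eta),
\]
and observe that on the Fourier supports $|\xi|, |\eta| \sim 2^j$ and $|\xi - \eta| \lesssim 2^{j-2}$, so the mean-value theorem applied to $p \in S^m_{1,0}$ gives $|p(\xi) - p(\eta)| \lesssim |\xi - \eta| \cdot 2^{j(m-1)}$. The key step is then to weight by the analytic factor: using $e^{\sigma|\xi|} \leq e^{\sigma|\xi - \eta|} e^{\sigma|\eta|}$ one distributes the weight between the two factors and recognizes a discrete convolution. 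Young's inequality ($\ell^1 \ast \ell^2 \to \ell^2$) then reduces the estimate to bounding $\|\,|\cdot| e^{\sigma|\cdot|} \widehat{m_j}\,\|_{\ell^1}$, which by Cauchy--Schwarz (using $s_0 > d/2$) is controlled by $\|\nabla m_j\|_{\mathcal{H}^{\sigma, s_0}} \leq \|\nabla a\|_{\mathcal{H}^{\sigma, s_0}}$. Summing the resulting bound in $j$ and using the Littlewood-Paley characterization of $\mathcal{H}^{\sigma, m-1}$ completes (i).

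For (ii), I would use Bony's decomposition $au = T_a u + T_u a + R(a, u)$ (applied both to $au$ and to $a \cdot p(D) u$), which yields
\[
[p(D), a] u = [p(D), T_a] u + \bigl(p(D) T_u a - T_{p(D) u} a\bigr) + \bigl(p(D) R(a, u) - R(a, p(D) u)\bigr).
\]
The first piece is handled by (i), using $\|\nabla a\|_{\mathcal{H}^{\sigma, s_0}} \leq \|a\|_{\mathcal{H}^{\sigma, s_0 + 1}}$. For the paraproduct pieces, Proposition~\ref{prod1} yields $\|T_u a\|_{\mathcal{H}^{\sigma, m}} \lesssim \|u\|_{\mathcal{H}^{\sigma, m-1}} \|a\|_{\mathcal{H}^{\sigma, s_0+1}}$ (exploiting $m \leq s_0 + 1$ and $s_0 > d/2$), and similarly $\|T_{p(D) u} a\|_{\mathcal{H}^{\sigma, 0}} \lesssim \|u\|_{\mathcal{H}^{\sigma, m-1}} \|a\|_{\mathcal{H}^{\sigma, s_0+1}}$. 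For the remainder pieces I would invoke the sharper estimate established inside the proof of Proposition~\ref{uv}(i), namely $\|R(f, g)\|_{\mathcal{H}^{\sigma, s_1 + s_2 - d/2}} \lesssim \|f\|_{\mathcal{H}^{\sigma, s_1}} \|g\|_{\mathcal{H}^{\sigma, s_2}}$ whenever $s_1 + s_2 > 0$. Combined with $m > -s_0$, this controls both $p(D) R(a, u)$ and $R(a, p(D) u)$ in $\mathcal{H}^{\sigma, 0}$ by $\|a\|_{\mathcal{H}^{\sigma, s_0 + 1}} \|u\|_{\mathcal{H}^{\sigma, m-1}}$.

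The main obstacle will be the commutator analysis for (i): specifically, the preservation of the analytic weight $e^{\sigma|\xi|}$ through the commutator. In the classical Sobolev setting one is content to pair the symbol $p$ with an $L^\infty$ bound on $\nabla m_j$, but here the weight must travel with the estimate at every step. The decisive observation is the multiplicative triangle inequality $e^{\sigma|\xi|} \leq e^{\sigma|\xi - \eta|} e^{\sigma|\eta|}$, which cleanly splits the analytic weight between the symbol factor $\widehat{m_j}(\xi - \eta)$ and the function factor $\widehat{v_j}(\eta)$, thereby transforming the weighted commutator into a genuine convolution bounded by the product of the corresponding weighted norms.
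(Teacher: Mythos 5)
Your proof is correct and follows essentially the same route as the paper's: for (i) the same dyadic decomposition, mean-value estimate on the symbol (made valid by the Fourier-support constraints that keep the segment $[\eta,\xi]$ inside the annulus $|\cdot|\sim 2^j$), distribution of the analytic weight via $e^{\sigma|\xi|}\le e^{\sigma|\xi-\eta|}e^{\sigma|\eta|}$, and Young's inequality followed by Cauchy--Schwarz. For (ii) you split $a-T_a$ into its Bony pieces $T_u a+R(a,u)$ and estimate each separately, while the paper handles both at once by citing Proposition~\ref{a-Ta}; since that proposition is proved by precisely the decomposition you carry out, the content is the same.
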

\begin{proof} 
(i)\, We have
\begin{align*}
\big[p(D), T_a\big] u =&\, \sum_{j \geq 1} \big[ p(D), S_{j-3}(a)\big] \Delta_j u \\
=&\,  \sum_{j \geq 1} \big[ p(D)\varphi_1(2^{-j}D), S_{j-3}(a)\big] \Delta_j u\\
 : =&\,  \sum_{j \geq1} w_j,
\end{align*}
where  $\text{supp} \varphi_1 \subset \{\xi: \frac{1}{4} \leq \vert \xi \vert \leq 4\}.$ Set $p_j(\xi) = p(\xi) \varphi_1( 2^{-j}\xi).$ 
Opening the bracket we have
$$
e^{\sigma \vert \xi \vert} \widehat{w_j}(\xi) = e^{\sigma \vert \xi \vert}
\sum_{\zeta \in \xZ^d}\widehat{S_{j-3}(a)}( \xi- \zeta) (p_j(\xi) - p_j(\zeta)) \widehat{\Delta_j u }(\zeta).
$$
Since on the support of $\varphi_1(2^{-j}\xi)$ we have $\vert \xi \vert \sim 2^j$   there exists   $C>0$ such that $\vert \nabla_\xi p_j(\xi) \vert \leq C 2^{j(m-1)}$ for all $\xi \in \xR^d$ and all  $j \geq -1.$ It follows that
$$
\vert p_j(\xi) - p_j(\zeta) \vert \leq C 2^{j(m-1)}\vert \xi- \zeta \vert
$$
for all  $\xi, \zeta \in \xR^d$. Then
$$
e^{\sigma \vert \xi \vert} \vert \widehat{w_j}(\xi)\vert \leq C \sum_{\zeta \in \xZ^d}
e^{\sigma \vert \xi - \zeta \vert} \vert \widehat{S_{j-3}(a)}( \xi- \zeta) \vert \,  \vert \xi - \zeta \vert\, 2^{j(m-1)}
\vert  e^{\sigma \vert \zeta \vert}\Delta_j u(\zeta) \vert .
$$
The right hand side  being a  convolution we can write,
\begin{align*}
  \Vert e^{\sigma \vert D_x\vert}   w_j \Vert_{L^2(\xR^d)} &\leq 
  C \Big(\sum_{\xi \in \xZ^d}e^{\sigma \vert \xi \vert} \vert \widehat{S_{j-3}(\vert D_x \vert a)}\vert \Big) 2^{j(m-1)}\Vert e^{\sigma \vert D_x \vert} \Delta_j u \Vert_{L^2},\\
  & \leq C' c_j \,  \Vert \nabla_x a \Vert_{\mathcal{H}^{\sigma, s_0}} \Vert u \Vert_{\mathcal{H}^{\sigma, m-1}}
  \end{align*}
  where   $ \sum_j c_j^2 < + \infty.$ This proves (i).  To prove (ii) we write
   \begin{align*}
 & \Vert \big[p(D), a\big] u \Vert_{\mathcal{H}^{\sigma,0}} \\
 \leq &\, C \Vert \big[p(D), T_a\big] u \Vert_{\mathcal{H}^{\sigma,0}} + \Vert p(D)(T_a-a)u\Vert_{\mathcal{H}^{\sigma,0}} + \Vert  (T_a-a) p(D)u\Vert_{\mathcal{H}^{\sigma,0}},\\
\leq   &\,  A_1 + A_2 + A_3.
  \end{align*}
  The   term $A_1$ is bounded by the right hand side of (ii) according to (i).  To estimate the term $A_2$ we use Lemma~\ref{a-Ta} with $\alpha_0= m, \alpha_1= s_0+1, \alpha_2= m-1.$ The same lemma with $\alpha_0 = 0, \alpha_1= s_0+1, \alpha_2= -1$ gives the estimate of $A_3.$
\end{proof}
\begin{proof}[Proof of Proposition~\ref{commutat}]
We write
$$\Vert [b(D),a]u\Vert_{\mathcal{H}^{\sigma, s}}\leq \Vert [\langle D\rangle^s b(D),a]u\Vert_{\mathcal{H}^{\sigma, 0}}+
\Vert [\langle D\rangle^s,a]b(D)u\Vert_{\mathcal{H}^{\sigma, 0}}.$$ 
Then we apply (ii) in Lemma~\ref{commut1} to the operators $p(D)= \langle D\rangle^s b(D)$ with $m= s + \nu, s_0= s+\nu-1$ and to $p(D)= \langle D\rangle^s$ with $m=s,s_0= s+\nu-1. $ We obtain \eqref{est-commut3}.
\end{proof}
 
\subsection{Estimates on the coefficients.}
In this paragraph, we prove some elementary estimates for the 
the derivatives of the functions $\rho$ and $\underline{\psi}$ introduced in Section~\ref{defiCOV}.  
 \begin{lemm}\label{est-rho}
 For all $t\in \xR$ there exists  $C>0$ such that  
 \begin{align*}
 &\Vert  \partial_z  \rho-1  \Vert_{L^\infty(I_h, \mathcal{H}^{\lambda h, t})} +  \Vert \nabla_x \rho \Vert_{L^\infty(I_h, \mathcal{H}^{\lambda h, t})} 
 +\lA \nabla_{x,z}^2 \rho \rA_{L^\infty(I_h, \mathcal{H}^{\lambda h, t-1})} \\
 &\qquad \leq C \Vert \eta \Vert_{ \mathcal{H}^{\lambda h, t +1}},\\
 &   \Vert  \partial_z  \rho-1  \Vert_{L^2(I_h, \mathcal{H}^{\lambda h, t+\mez})} +  \Vert \nabla_x \rho \Vert_{L^2(I_h, \mathcal{H}^{\lambda h, t+\mez})}
 +\lA \nabla_{x,z}^2 \rho \rA_{L^2(I_h, \mathcal{H}^{\lambda h, t-\mez})}\\
 &\qquad \leq C \Vert \eta \Vert_{ \mathcal{H}^{\lambda h, t +1}}.
 \end{align*}
\end{lemm}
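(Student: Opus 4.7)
The plan is to reduce everything to Plancherel estimates, exploiting two features: the Fourier multiplier $e^{z|D_x|}$ is bounded (by $1$) on every $\mathcal{H}^{\lambda h,\mu}$ when $z\le 0$, and its Schwartz-like decay in $\xi$ yields a gain of half a derivative after $L^2$-integration in $z\in I_h$. No further tools than the Fourier characterization of the $\mathcal{H}^{\lambda h,\mu}$-norm are required.

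First I would compute the derivatives explicitly. From $\rho(x,z)=\tfrac{1}{h}(z+h)e^{z|D_x|}\eta+z$ one gets
\[
\partial_z\rho-1=\tfrac{1}{h}\,e^{z|D_x|}\eta+\tfrac{z+h}{h}\,e^{z|D_x|}|D_x|\eta,\qquad
\nabla_x\rho=\tfrac{z+h}{h}\,e^{z|D_x|}\nabla_x\eta,
\]
and, after one more differentiation,
\[
\partial_z^2\rho=\tfrac{2}{h}\,e^{z|D_x|}|D_x|\eta+\tfrac{z+h}{h}\,e^{z|D_x|}|D_x|^2\eta,\quad
\nabla_x\partial_z\rho,\ \nabla_x^2\rho\ \text{of analogous form.}
\]
In every case the output is of the shape $c(z)\,e^{z|D_x|}|D_x|^k\eta$ with $k\in\{0,1,2\}$ and $|c(z)|\le 1$ on $I_h$.

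For the $L^\infty(I_h,\mathcal{H}^{\lambda h,t})$ estimates, fix $z\in I_h$ and use the Fourier characterization
\[
\lA e^{z|D_x|}|D_x|^k\eta\rA_{\mathcal{H}^{\lambda h,t}}^2
=\sum_{\xi\in\xZ^d}e^{2\lambda h|\xi|}\langle\xi\rangle^{2t}e^{2z|\xi|}|\xi|^{2k}|\widehat{\eta}(\xi)|^2.
\]
Since $z\le 0$, $e^{2z|\xi|}\le 1$, and $|\xi|^{2k}\le \langle\xi\rangle^{2k}$. For the first-order terms ($k\le 1$), this is bounded by $\lA\eta\rA_{\mathcal{H}^{\lambda h,t+1}}^2$; for the second-order terms ($k=2$), replacing $t$ by $t-1$ yields the bound $\lA\eta\rA_{\mathcal{H}^{\lambda h,t+1}}^2$. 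This handles the first line of the lemma.

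For the $L^2(I_h,\mathcal{H}^{\lambda h,t+\mez})$ estimates, the same computation gives
\[
\lA e^{z|D_x|}|D_x|^k\eta\rA_{L^2(I_h,\mathcal{H}^{\lambda h,t+\mez})}^2
=\sum_{\xi}e^{2\lambda h|\xi|}\langle\xi\rangle^{2t+1}|\xi|^{2k}|\widehat{\eta}(\xi)|^2\int_{-h}^{0}e^{2z|\xi|}\,dz.
\]
The key integral is
\[
\int_{-h}^{0}e^{2z|\xi|}\,dz=\frac{1-e^{-2h|\xi|}}{2|\xi|}\le \min\!\Big\{h,\frac{1}{2|\xi|}\Big\}\lesssim \langle\xi\rangle^{-1},
\]
which converts the weight $\langle\xi\rangle^{2t+1}$ into $\langle\xi\rangle^{2t}$ at the cost of an absolute constant. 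Combined with $|\xi|^{2k}\le\langle\xi\rangle^{2k}$ this gives $\lA\eta\rA_{\mathcal{H}^{\lambda h,t+1}}^2$ for $k\le 1$, and the estimate with $t$ replaced by $t-1$ for $k=2$. This yields the second line of the lemma. The factor $(z+h)/h$ is uniformly bounded by $1$ on $I_h$ and causes no trouble in either case.

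Since no commutator or nonlinear estimate is involved, there is no real obstacle; the only point requiring care is that the gain of $\mez$ derivative in the $L^2_z$ estimate comes \emph{solely} from the integration in $z$, not from the $e^{z|D_x|}$ factor, so one must not discard $e^{2z|\xi|}$ before integrating.
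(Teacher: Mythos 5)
Your proof is correct and follows essentially the same route as the paper: explicit Fourier computation of the derivatives of $\rho$, the bound $e^{2z|\xi|}\le 1$ for the $L^\infty_z$ estimates, and Fubini plus the integral $\int_{-h}^0 e^{2z|\xi|}\,dz\lesssim\langle\xi\rangle^{-1}$ for the half-derivative gain in the $L^2_z$ estimates. The only cosmetic slip is the claim $|c(z)|\le 1$, which fails for the factor $2/h$ appearing in $\partial_z^2\rho$, but since the constant $C$ is allowed to depend on $h$ this is immaterial.
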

\begin{proof}
We have
\begin{equation}\label{drho}
\begin{aligned}
\partial_z \rho(x,z)  &= 1 + \frac{1}{h} e^{z \vert D_{x} \vert} \eta(x) + \frac{1}{h}(z+h) e^{z \vert D_{x} \vert} \vert D_{x} \vert \eta(x),\\
\nabla_{x} \rho(x,z)  &= \frac{1}{h}(z+h) e^{z \vert D_{x}\vert} \nabla_{x} \eta(x). 
\end{aligned}
\end{equation}
Then the first set of estimates follow from the fact that, since $z\le 0$, we have 
$e^{z\la \xi\ra}\le 1$ so 
the Fourier multiplier $e^{z\la D_x\ra}$ 
is bounded from $\mathcal{H}^{\lambda h,s}$ to itself for any   $s$. To prove the second set of estimates, we use the special choice for $\rho$ involving the operator $e^{z\la D_x\ra}$. 
Notice that, by using Fubini's lemma,
$$
\int_{I_h}\sum_{\xi \in \xZ^d} \la \xi\ra e^{2z\la \xi\ra}\big\vert \widehat{f}(\xi)\big\vert^2 dz
=\sum_{\xi \in \xZ^d}\left(\int_{I_h} \la \xi\ra e^{2z\la \xi\ra}\, dz\right)\big\vert \widehat{f}(\xi)\big\vert^2
\le \sum_{\xi \in \xZ^d}\mez \big\vert \widehat{f}(\xi)\big\vert^2.
$$
Consequently, by applying this result with $f=\langle D_x\rangle^{s}e^{\lambda h\la D_x\ra}u$ and 
using the Plancherel identity, we get
$$
\lA \la D_x\ra e^{z\la D_x\ra} u\rA_{L^2(I_h, \mathcal{H}^{\lambda h,s})}\les \lA u\rA_{ \mathcal{H}^{\lambda h,s+ \mez}}.
$$
Using again \eqref{drho}, we complete the proof of the lemma.
\end{proof}

\begin{defi}\label{Erond}
For $k=1,2$  let $\mathcal{E}_k$ be the set of functions $\varphi(z, \eta)$ defined on,
 $$ \mathcal{A}=  I_h\times \{\eta \in \mathcal{H}^{\lambda h, s+ \mez}:\Vert \eta \Vert_{\mathcal{H}^{\lambda h, s}} \leq \overline{\eps} \leq 1\},$$
 such that for every $\frac{d}{2}< t \leq s-k$ there exists $C>0$ satisfying
$$
\Vert \varphi(\cdot, \eta)\Vert_{L^\infty(I_h, \mathcal{H}^{\lambda h,t})}
\leq C \Vert\eta\Vert_{\mathcal{H}^{\lambda h, t+k}}
$$
and 
$$
\Vert \varphi(\cdot, \eta)
\Vert_{L^2(I_h, \mathcal{H}^{\lambda h,t+1})} \leq C \Vert \eta\Vert_{\mathcal{H}^{\lambda h, t +k+ \frac{1}{2}}}.$$
\end{defi}

\begin{lemm}\label{algebre0}
\begin{itemize}
\item[\rm{(1)}] \label{item:(1)} $\mathcal{E}_k$ is an algebra and $\mathcal{E}_1 \subset \mathcal{E}_2$.
\item[\rm{(2)}] \label{(2)} If $f$ is a holomorphic function in a ball $\{Z\in \xC: \vert Z \vert < M_0\}$ such that $f(0)=0$ and if $\varphi\in \mathcal{E}_k$, where $\overline{\eps}$ is small enough then $f(\varphi) \in \mathcal{E}_k.$
\item[\rm{(3)}] \label{(3)} If $\overline{\eps}$ is small enough we have
$$\nabla_x \rho \in \mathcal{E}_1, \quad \partial_z \rho -1 \in \mathcal{E}_1, \quad \nabla_{x,z}^2 \rho \in \mathcal{E}_2, \quad \frac{1+ \vert \nabla_x \rho\vert^2- \partial_z \rho}{\partial_z \rho} \in \mathcal{E}_1.$$
\end{itemize}
\end{lemm}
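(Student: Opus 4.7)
The plan is to prove the three assertions in the order (1), (2), (3), each one building on the preceding. Throughout, the key mechanism is the bilinear product estimate in Proposition~\ref{uv} combined with the tame composition result of Proposition~\ref{est-f(u)bis}; the smallness hypothesis $\lA \eta\rA_{\mathcal{H}^{\lambda h,s}}\le \overline{\eps}\le 1$ is what allows quadratic products of elements of $\mathcal{E}_k$ to stay linear in $\lA \eta\rA$.

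For (1), given $\varphi_1,\varphi_2\in \mathcal{E}_k$ and $d/2<t\le s-k$, I would apply Proposition~\ref{uv} (ii) at a.e.\ fixed $z$ with $\sigma=\lambda(z+h)$ (so $\mathcal{H}^{\sigma,t}$ is an algebra since $t>d/2$), then take $L^\infty_z$ or $L^2_z$ of the resulting pointwise estimate, to obtain
\[
\lA \varphi_1\varphi_2\rA_{L^\infty(I_h,\mathcal{H}^{\lambda h,t})}\les \lA \varphi_1\rA_{L^\infty(I_h,\mathcal{H}^{\lambda h,t})}\lA \varphi_2\rA_{L^\infty(I_h,\mathcal{H}^{\lambda h,t})}\les \lA \eta\rA_{\mathcal{H}^{\lambda h,t+k}}^{2},
\]
and similarly for the $L^2_z(\mathcal{H}^{\lambda h,t+1})$ norm (using the tame estimate~\eqref{tame-EF} in Lemma~\ref{uvF} (iii)). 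Since $\lA \eta\rA_{\mathcal{H}^{\lambda h,t+k}}\le \lA \eta\rA_{\mathcal{H}^{\lambda h,s}}\le \overline{\eps}\le 1$, one factor can be bounded by $\overline{\eps}$ and I recover the linear bound defining $\mathcal{E}_k$. The inclusion $\mathcal{E}_1\subset \mathcal{E}_2$ is immediate from the monotonicity $\lA \eta\rA_{\mathcal{H}^{\lambda h,t+1}}\le \lA \eta\rA_{\mathcal{H}^{\lambda h,t+2}}$ and the fact that the range $d/2<t\le s-2$ allowed for $\mathcal{E}_2$ is smaller than the one allowed for $\mathcal{E}_1$.

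For (2), I fix $\varphi\in \mathcal{E}_k$ and $f$ holomorphic near $0$ with $f(0)=0$. For any $z\in I_h$ and any admissible $t$, the embedding $\mathcal{H}^{\lambda h,t}\hookrightarrow L^\infty$ (valid for $t>d/2$) and the definition of $\mathcal{E}_k$ give $\lA \varphi(\cdot,z)\rA_{\mathcal{H}^{\lambda h,t_0}}\le C\overline{\eps}$ for a fixed $t_0>d/2$ satisfying $t_0\le s-k$. Taking $\overline{\eps}$ smaller than the threshold $\eps_0$ in Proposition~\ref{est-f(u)bis} (applied with $s_0=t_0$), the composition estimate yields
\[
\lA f(\varphi(\cdot,z))\rA_{\mathcal{H}^{\lambda h,t}}\le C \lA \varphi(\cdot,z)\rA_{\mathcal{H}^{\lambda h,t}},
\]
uniformly in $z$. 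Taking $L^\infty_z$, resp.\ $L^2_z$, of this inequality and using $\varphi\in \mathcal{E}_k$ gives the two estimates defining $\mathcal{E}_k$, so $f(\varphi)\in \mathcal{E}_k$.

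For (3), the statements on $\nabla_x\rho$, $\partial_z\rho-1$ and $\nabla_{x,z}^{2}\rho$ are direct consequences of the identities~\eqref{drho} and the estimates of Lemma~\ref{est-rho}; one simply checks that the $L^\infty_z(\mathcal{H}^{\lambda h,t})$ and $L^2_z(\mathcal{H}^{\lambda h,t+1})$ bounds stated there match the linear bound in $\lA \eta\rA$ required by $\mathcal{E}_1$ (with one derivative lost in the first two, and two in the Hessian, so it lives in $\mathcal{E}_2$). For the last term, I write
\[
\frac{1+|\nabla_x \rho|^2-\partial_z\rho}{\partial_z\rho}=\Big(\frac{1}{\partial_z\rho}-1\Big)+\frac{|\nabla_x \rho|^2}{\partial_z\rho}.
\]
Since $\partial_z\rho-1\in \mathcal{E}_1$ with $\lA \partial_z\rho-1\rA_{L^\infty}$ small (Sobolev embedding combined with the $\mathcal{E}_1$ bound and the smallness of $\overline{\eps}$), the function $f(X)=\frac{1}{1+X}-1$ is holomorphic near $0$ with $f(0)=0$, so by (2) applied to $\varphi=\partial_z\rho-1$, we get $\frac{1}{\partial_z\rho}-1\in \mathcal{E}_1$; in particular $\frac{1}{\partial_z\rho}$ is bounded in $L^\infty_z(\mathcal{H}^{\lambda h,t})$. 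Then the algebra property from (1) applied to the product $|\nabla_x\rho|^2\cdot \frac{1}{\partial_z\rho}$ (with $\nabla_x\rho\in\mathcal{E}_1$ and $\frac{1}{\partial_z\rho}=1+\mathcal{E}_1$) finishes the proof.

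The main technical point to verify carefully is that the composition result of Proposition~\ref{est-f(u)bis} can be applied \emph{pointwise in $z$} with constants uniform in $z$, so that taking $L^\infty_z$ or $L^2_z$ on both sides preserves the structure. Once this is granted, all three parts reduce to bookkeeping with the product and composition estimates already established for $\mathcal{H}^{\sigma,s}$-spaces.
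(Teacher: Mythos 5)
Your argument follows the paper's proof essentially verbatim: tame product rules at fixed $z$ for part (1), Proposition~\ref{est-f(u)bis} for part (2), Lemma~\ref{est-rho} together with the decomposition of $(1+|\nabla_x\rho|^2-\partial_z\rho)/\partial_z\rho$ through $q/(1+q)$ with $q=\partial_z\rho-1$ for part (3). One minor slip worth flagging: the $\mathcal{E}_k$-norms are taken in $\mathcal{H}^{\lambda h,t}$ with the \emph{fixed} weight $\lambda h$, not the $z$-dependent weight built into $E^{\lambda,\mu}$, $F^{\lambda,\mu}$, so for the $L^2_z(\mathcal{H}^{\lambda h,t+1})$ product bound the correct tool is Proposition~\ref{uv}\,(iii) applied at each fixed $z$ with $\sigma=\lambda h$ and then a H\"older estimate in $z$, rather than the $E,F$-space estimate~\eqref{tame-EF} of Lemma~\ref{uvF}; likewise the parenthetical ``$\sigma=\lambda(z+h)$'' should read $\sigma=\lambda h$.
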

\begin{proof}
Statement (1) follows  from point~(ii) in Proposition~\ref{uv} 
applied with $ s= t$  and from point~(iii) applied 
$s_0= t$ and $t$ replaced by $t +1$. 
Statement (2) is a consequence of  Proposition~\ref{est-f(u)bis}. 
The first three claims in point (3) follow directly from Lemma~\ref{est-rho}. 
For the last one   we notice that,
$$
q\defn\partial_z \rho -1  = \frac{1}{h}e^{z\vert D_x \vert}\eta + \frac{1}{h}(z+h)e^{z\vert D_x \vert}\vert D_x \vert\eta \in \mathcal{E}_1.
$$
Then we can write
$$
\frac{1+|\nabla_x\rho|^2 }{\partial_z\rho}= 1 - \frac{q}{1+q}+ \vert\nabla_x \rho\vert^2 - \vert\nabla_x \rho\vert^2  \frac{q}{1+q},
$$
and hence, the desired result follows from the previous statements.
\end{proof}

We shall need the following  extension of Lemma~\ref{algebre0}. We begin by a definition.
  
\begin{defi}\label{Frond}
For $k=1,2$   let $\mathcal{F}_k $ be the set of functions $\varphi(z, \eta)$ defined on,
$$
\mathcal{A}=  I_h\times \{\eta \in \mathcal{H}^{\lambda h, s+ \mez}:\Vert \eta \Vert_{\mathcal{H}^{\lambda h, s}} \leq \overline{\eps} \leq 1\},
$$
such that  $\varphi(z, 0)=0$ and such that 
for all\,  $\frac{d}{2}<t\leq s-k$ there exists $C>0$ such that 
the function $\Phi(z, \eta_1, \eta_2) = \varphi(z, \eta_1) -\varphi(z, \eta_2)$ satisfies the 
two following estimates
\begin{align*}
& \Vert \Phi(\cdot, \eta_1, \eta_2)\Vert_{L^2(I_h, \mathcal{H}^{\lambda h,t+1})}\\
\leq &\, C\Big(\Vert \eta_1- \eta_2\Vert_{\mathcal{H}^{\lambda h, t+k+\frac{1}{2}}}  
+ \sum_{j=1}^2\Vert \eta_j \Vert_{\mathcal{H}^{\lambda h, t+k+\frac{1}{2}}}
\Vert \eta_1- \eta_2\Vert_{\mathcal{H}^{\lambda h, t+k}}\Big),\\
& \Vert\Phi(\cdot, \eta_1, \eta_2)\Vert_{L^\infty(I_h, \mathcal{H}^{\lambda h,t})}
\leq C \Vert \eta_1- \eta_2\Vert_{\mathcal{H}^{\lambda h, t +k}}.
\end{align*}
\end{defi}
\begin{lemm}\label{algebre1}
\begin{itemize}
\item[\rm{(1)}] \label{(1i)}  For $k=1,2,$ $\mathcal{F}_k$ is an algebra 
and $\mathcal{F}_1 \subset \mathcal{F}_2.$   
\item[\rm{(2)}] \label{(2i)} If $\overline{\eps}$ is small enough, as functions of $(z, \eta)$, the functions
$$
\nabla_x \rho ,  \quad \vert \nabla_x \rho\vert^2, \quad q= \partial_z \rho -1, \quad f(q) = \frac{q}{1+q}, \quad \frac{1+ \vert \nabla_x \rho\vert^2}{\partial_z \rho}-1,
$$
belong to $\mathcal{F}_1,$ and the functions
$$
\nabla^2_x \rho, \quad \nabla_x\partial_z \rho, \quad \partial^2_z \rho
$$
belong to $\mathcal{F}_2.$ 
\end{itemize}
\end{lemm}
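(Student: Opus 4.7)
The plan is to mirror closely the proof of Lemma~\ref{algebre0}, but now carefully tracking the Lipschitz dependence on $\eta$. The inclusion $\mathcal{F}_1\subset \mathcal{F}_2$ should be almost immediate: if $\varphi\in \mathcal{F}_1$, then for every $d/2<t\le s-2$ (hence $\le s-1$) the $\mathcal{F}_1$ estimates apply, and $\|\eta_1-\eta_2\|_{\mathcal{H}^{\lambda h,t+1}}\le \|\eta_1-\eta_2\|_{\mathcal{H}^{\lambda h,t+2}}$, $\|\eta_1-\eta_2\|_{\mathcal{H}^{\lambda h,t+3/2}}\le\|\eta_1-\eta_2\|_{\mathcal{H}^{\lambda h,t+5/2}}$; the $\mathcal{F}_2$ estimates follow.

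The algebra property is where the paper's product rules do the work. Writing
\[
\varphi_1(\eta_1)\varphi_2(\eta_1)-\varphi_1(\eta_2)\varphi_2(\eta_2)
=\bigl(\varphi_1(\eta_1)-\varphi_1(\eta_2)\bigr)\varphi_2(\eta_1)+\varphi_1(\eta_2)\bigl(\varphi_2(\eta_1)-\varphi_2(\eta_2)\bigr),
\]
I would estimate each piece in two ways. For the $L^\infty_z\mathcal{H}^{\lambda h,t}$-norm, the algebra property of $\mathcal{H}^{\lambda h,t}$ for $t>d/2$ (Proposition~\ref{uv}(ii)) applies pointwise in $z$; combining with the $\mathcal{F}_k$ bounds for the two factors and using that $\|\varphi_i(\eta_j)\|_{L^\infty \mathcal{H}^{\lambda h,t}}\le C\|\eta_j\|_{\mathcal{H}^{\lambda h,t+k}}\le C\overline{\eps}$ (take the second argument $=0$ in the $\mathcal{F}_k$-estimate, using $\varphi_i(z,0)=0$), the extra factor is bounded by $1$. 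For the $L^2_z\mathcal{H}^{\lambda h,t+1}$-norm, I would instead use the tame product~\e{tame-EF} of Lemma~\ref{uvF}(iii): this distributes the half-derivative on one factor and puts the other in $L^\infty_z\mathcal{H}^{\lambda h,s_0}$ with $s_0=t>d/2$, producing exactly the sum
\[
\|\eta_1-\eta_2\|_{\mathcal{H}^{\lambda h,t+k+1/2}}+\sum_j\|\eta_j\|_{\mathcal{H}^{\lambda h,t+k+1/2}}\|\eta_1-\eta_2\|_{\mathcal{H}^{\lambda h,t+k}}
\]
allowed by the definition of $\mathcal{F}_k$.

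For part~(2), the linear-in-$\eta$ functions $\nabla_x\rho$, $\partial_z\rho-1$, $\nabla_x^2\rho$, $\nabla_x\partial_z\rho$, $\partial_z^2\rho$ can be treated by directly applying Lemma~\ref{est-rho} to $\eta_1-\eta_2$: the first two give $\mathcal{F}_1$ membership, the last three $\mathcal{F}_2$ membership (one extra derivative falls on $\eta$). The quadratic $|\nabla_x\rho|^2$ lies in $\mathcal{F}_1$ by the algebra property. To handle $f(q)=q/(1+q)$, more generally any holomorphic $f$ with $f(0)=0$ composed with $q\in\mathcal{F}_1$ of sufficiently small norm, I would write
\[
f(q(\eta_1))-f(q(\eta_2))=\bigl(q(\eta_1)-q(\eta_2)\bigr)\int_0^1 f'\bigl(\tau q(\eta_1)+(1-\tau)q(\eta_2)\bigr)\,d\tau;
\]
bound the integral factor in $L^\infty_z\mathcal{H}^{\lambda h,t}$ by a constant and in $L^2_z\mathcal{H}^{\lambda h,t+1}$ by $C(\|\eta_1\|+\|\eta_2\|)_{\mathcal{H}^{\lambda h,t+1+1/2}}$ using Proposition~\ref{est-f(u)bis} (after a smallness assumption $\|\eta_j\|_{\mathcal{H}^{\lambda h,s}}\le \overline{\eps}$ to stay in the holomorphy ball); and then apply the tame product~\e{tame-EF} to the factorization. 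Finally, $(1+|\nabla_x\rho|^2)/\partial_z\rho-1$ is an algebraic combination of $\nabla_x\rho$ and $f(q)$ with the rational function $1/(1+q)$ treated as above, so membership in $\mathcal{F}_1$ follows from the algebra property.

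The main technical obstacle will be the $L^2_z\mathcal{H}^{\lambda h,t+1}$ bound for the composition $f(q)$: one must arrange the tame product so that the half-derivative goes onto $q(\eta_1)-q(\eta_2)$ (producing the linear $\|\eta_1-\eta_2\|_{\mathcal{H}^{\lambda h,t+k+1/2}}$) or onto the integral factor (producing the semilinear term), without leaking a $\|\eta_j\|_{\mathcal{H}^{\lambda h,t+k+1/2}}$ multiplying the linear term. Concretely, the key is that Corollary~\ref{est-f(u)ter} controls differences of compositions in $\mathcal{H}^{\lambda h,t+1}$ with exactly the two-term tame structure we need, so the proof reduces to combining that corollary with the $\mathcal{F}_1$ estimates of $q$ themselves.
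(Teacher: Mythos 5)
Your proposal is correct and follows essentially the same route as the paper: the same Leibniz decomposition of the product difference, the same use of the tame product rules (Proposition~\ref{uv}(iii), equivalently Lemma~\ref{uvF}(iii)) for the $L^2_z$ estimate and the plain algebra rule for the $L^\infty_z$ estimate, and, for $f(q)$, you ultimately land on Corollary~\ref{est-f(u)ter} just as the paper does (your integral-remainder computation is in fact the proof of that corollary, so invoking it directly is cleaner and avoids the $f'(0)$ bookkeeping you flag as a potential obstacle).
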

\begin{proof}
\begin{enumerate}[(1)]
\item It is obvious that $\mathcal{F}_1\subset \mathcal{F}_2$ so 
it is sufficient to prove that 
if $\varphi, \theta \in \mathcal{F}_k$ then, $\varphi \,\theta \in \mathcal{F}_k.$ 
Taking $\eta_1=\eta$ and $\eta_2=0$, 
we first notice that the hypotheses imply that, 
for every $ \eta \in \mathcal{H}^{\lambda h, s+ \mez}$ such that $\Vert \eta \Vert_{\mathcal{H}^{\lambda h, s}}\leq\overline{\eps}\leq 1, $  every $\frac{d}{2}<t\leq s-k$, we have
\begin{equation}\label{alg1}
 \Vert \theta(\cdot, \eta )  \Vert_{L^2(I_h, \mathcal{H}^{\lambda h,t +1})}\les  \Vert \eta   \Vert_{\mathcal{H}^{\lambda h, t+k+\frac{1}{2}}}, \quad \Vert \theta(\cdot, \eta )  \Vert_{L^\infty(I_h, \mathcal{H}^{\lambda h,t})}\les 1,
 \end{equation}
together with similar estimates for $\varphi.$ Then we write
\begin{align*}
 &(\varphi \,\theta)(z, \eta_1)-(\varphi \,\theta)(z, \eta_2)\\
 =&\, (\varphi  (z, \eta_1)- \varphi (z, \eta_2))\theta(z, \eta_1) + (\theta (z, \eta_1)- \theta (z, \eta_2))\varphi(z, \eta_2)\\
 &\qquad = A+B.
 \end{align*}
The   terms $A$ and $B$  are handled in the same way. Using point~(iii) 
in Proposition~\ref{uv} with $t=t +1, s_0= t>\frac{d}{2},$ we can write
\begin{align*}
&\Vert A\Vert_{L^2(I_h, \mathcal{H}^{\lambda h, t +1})} \\
\les &\,  \Vert\varphi  (\cdot, \eta_1)- \varphi (\cdot, \eta_2) \Vert_{L^2(I_h, \mathcal{H}^{\lambda h, t +1})} \Vert \theta(\cdot, \eta_1)\Vert_{L^\infty(I_h, \mathcal{H}^{\lambda h, t})}\\
&\qquad + \Vert\varphi(\cdot, \eta_1)- \varphi (\cdot, \eta_2)\Vert_{L^\infty(I_h, \mathcal{H}^{\lambda h, t})}
\Vert \theta(\cdot, \eta_2)\Vert_{L^2(I_h, \mathcal{H}^{\lambda h, t +1})}.
\end{align*}
Then using the hypotheses and \eqref{alg1} we can write
$$\Vert A\Vert_{L^2(I_h, \mathcal{H}^{\lambda h, t +1})}\les     \Vert \eta_1- \eta_2\Vert_{\mathcal{H}^{\lambda h, t+k+\frac{1}{2}}} + \sum_{j=1}^2\Vert \eta_j   \Vert_{\mathcal{H}^{\lambda h, t+k+\frac{1}{2}}}\Vert \eta_1- \eta_2\Vert_{\mathcal{H}^{\lambda h, t+k}} .$$
On the other hand, using point~(ii) in Proposition~\ref{uv} with $s$ replaced by $t$, we obtain
\begin{align*}
 \Vert A\Vert_{L^\infty(I_h, \mathcal{H}^{\lambda h, t})}
 &\les  \Vert\varphi  (z, \eta_1)- \varphi (z, \eta_2)\Vert_{L^\infty(I_h, \mathcal{H}^{\lambda h, t})} \Vert \theta(z, \eta_1)\Vert_{L^\infty(I_h, \mathcal{H}^{\lambda h, t})}\\
 &\les \Vert \eta_1- \eta_2\Vert_{\mathcal{H}^{\lambda h, t+k}}.
 \end{align*}
This completes the proof of the first statement.
 \item For the three first functions this follows from Lemma~\ref{est-rho} and statement~(1). For $f(q)$  this follows from   Corollary~\ref{est-f(u)ter} applied for fixed $z \in I_h$. Now one can write
 \begin{equation*}
  \frac{1+ \vert \nabla_x \rho\vert^2}{\partial_z \rho}    = 1+ A,\quad A = - f(q) - \vert\nabla_x \rho\vert^2 f(q) +  \vert\nabla_x \rho\vert^2 \in \mathcal{F}_1. 
  \end{equation*}
  The last claim follows from Lemma~\ref{est-rho}.
 \end{enumerate}
\end{proof} 
 \begin{lemm}\label{est-relevappendix}
 For all $\mu \in \xR$, 
 there exists $C>0$ such that for all $\sigma \geq 0$ and all  $\psi$ such that $a(D_x) ^\mez \psi \in \mathcal{H}^{\sigma,\mu}$,  there holds
\begin{align*}
  &\Vert \nabla_{x,z} \underline{\psi}\Vert_{L^2(I_h, \mathcal{H}^{\sigma,\mu})} \leq C \Vert  a(D_x)^\mez \psi\Vert_{ \mathcal{H}^{\sigma,\mu}},\\
  & \Vert \partial_z^2 \underline{\psi}\Vert_{L^2(I_h, \mathcal{H}^{\sigma,\mu-1})} \leq C \Vert  a(D_x)^\mez \psi\Vert_{ \mathcal{H}^{\sigma,\mu}},\\
  &\Vert \nabla_{x,z} \underline{\psi}\Vert_{L^\infty(I_h, \mathcal{H}^{\sigma,\mu-\mez})} \leq C \Vert  a(D_x)^\mez \psi\Vert_{ \mathcal{H}^{\sigma,\mu}}.
  \end{align*}
 \end{lemm}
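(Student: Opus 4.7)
Since $\underline{\psi}$ is given by the explicit Fourier formula~\e{Fourierpsi}, the natural strategy is to proceed entirely in Fourier: differentiating in $z$ or $x$ multiplies $\widehat{\psi}(\xi)$ by an explicit function of $(z,\xi)$, and the three inequalities of the lemma reduce by Parseval to three scalar inequalities on these multipliers, uniform in $\xi\in\xZ^d$.

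Concretely, I would set
$$
K(z,\xi):=\frac{e^{z|\xi|}+e^{-(z+2h)|\xi|}}{1+e^{-2h|\xi|}},\qquad K^-(z,\xi):=\frac{e^{z|\xi|}-e^{-(z+2h)|\xi|}}{1+e^{-2h|\xi|}},
$$
so that $\widehat{\underline{\psi}}=K\,\widehat{\psi}$, $\widehat{\partial_z\underline{\psi}}=|\xi|\,K^-\,\widehat{\psi}$, $\widehat{\nabla_x\underline{\psi}}=i\xi\,K\,\widehat{\psi}$ and $\widehat{\partial_z^2\underline{\psi}}=|\xi|^2\,K\,\widehat{\psi}$. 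After multiplying by $e^{2\sigma|\xi|}\langle\xi\rangle^{2\mu}|\widehat{\psi}(\xi)|^2$ (with the exponent on $\langle\xi\rangle$ shifted by $-2$ or $-1$ in the second and third estimates, respectively) and summing in $\xi$, the three bounds of the lemma follow from the pointwise-in-$\xi$ estimates
$$
\int_{-h}^0|\xi|^2\bigl(K(z,\xi)^2+K^-(z,\xi)^2\bigr)\,dz\leq C\,a(\xi),\qquad \int_{-h}^0|\xi|^4 K(z,\xi)^2\,dz\leq C\langle\xi\rangle^2 a(\xi),
$$
together with
$$
\sup_{z\in[-h,0]}|\xi|^2\bigl(K(z,\xi)^2+K^-(z,\xi)^2\bigr)\leq C\langle\xi\rangle\,a(\xi).
$$

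The verification of these scalar bounds is essentially immediate. The first actually holds as an exact identity: expanding the squares, the cross terms integrate to zero, and one computes $\int_{-h}^0|\xi|^2(K^2+K^{-2})\,dz=|\xi|\tanh(h|\xi|)=a(\xi)$. The second is a trivial consequence of the first via $|\xi|\leq\langle\xi\rangle$. The third follows from the elementary observation that $K,K^-\in[0,1]$ on $[-h,0]\times\xR^d$ (with $\sup_zK=K(0,\xi)=1$ and $\sup_zK^-=K^-(0,\xi)=\tanh(h|\xi|)\leq 1$), combined with the already-quoted inequality $|\xi|\leq C\langle\xi\rangle\tanh(h|\xi|)$ from Remark~\e{nabla-G}. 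There is no real obstacle in this argument; the only genuine computation is the short integration in the first identity, and everything else is routine.
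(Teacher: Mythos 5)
Your proof is correct, and the Fourier-multiplier strategy is the natural one here; the calculations check out, in particular the clean identity
\[
\int_{-h}^0|\xi|^2\big(K(z,\xi)^2+K^-(z,\xi)^2\big)\,dz=|\xi|\tanh(h|\xi|)=a(\xi)
\]
(note that the cross terms \emph{cancel} in the sum $K^2+K^{-2}$ rather than ``integrate to zero'', but the computation is the same). Compared with the paper's argument there is a small but genuine difference in how the $L^\infty_z$ bound is obtained. The paper proves the first $L^2_z$ estimate on the Fourier side (citing the same type of kernel computation as in Lemma~\ref{est-rho}), deduces the $\partial_z^2$ bound from $\partial_z^2\underline{\psi}=-\Delta_x\underline{\psi}$, and then gets the $L^\infty_z$ estimate by invoking the interpolation Lemma~\ref{lions}. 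You instead deduce the $L^\infty_z$ estimate directly from the pointwise bounds $0\le K\le 1$, $0\le K^-\le\tanh(h|\xi|)$, together with $|\xi|\le C\langle\xi\rangle\tanh(h|\xi|)$. This is more elementary and entirely self-contained (no abstract interpolation needed), at the cost of being less modular; the paper's route via Lemma~\ref{lions} is the pattern reused elsewhere for the variable-coefficient problem, where no explicit kernel is available. Both are valid, and your observation that the $L^2_z$ bound is an \emph{identity} is a nice sharpening of what the paper asserts.
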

\begin{proof} 
To obtain the first  estimates, we use the Fourier transform formula to express 
$\underline{\psi}$ in terms of $\psi$ (see \e{Fourierpsi}), and then the required estimates follow from arguments similar to the ones used 
in the proof of Lemma~\ref{est-rho}. The second one follows, since
$$\Vert \partial_z^2 \underline{\psi}\Vert_{F^{\lambda,\mu-1} } = \Vert \Delta_x \underline{\psi}\Vert_{F^{\lambda,\mu-1} }\leq \Vert \nabla_x \underline{\psi}\Vert_{F^{\lambda,\mu}} \leq C \Vert  a(D_x)^\mez\psi\Vert_{\mathcal{H}^{\sigma,\mu}}.$$
The third estimate is then 
a consequence of the interpolation argument given by Lemma~\ref{lions}.
  \end{proof}
 \subsection{Some estimates on the remainder.}
Consider a function $\eta=\eta(x)$ and defined $\rho=\rho(x,z)$ 
as in Section~\ref{defiCOV}. In this paragraph, we prove several estimates 
for the action of the operator $R$ defined by
\begin{equation}
\left\{
\begin{aligned}
&R =a\partial_z^2 +b\Delta_x  
+c\cdot\nabla_x\partial_z  -d\partial_z,  
\quad\text{where}\\
&a=\frac{1+\vert\nabla_x\rho\vert^2}{\partial_z\rho}-1,\quad 
b=\partial_z\rho-1,\quad c=-2\nabla_x\rho,\\
&d=\frac{1+\vert\nabla_x\rho\vert^2}{\partial_z\rho}\partial_z^2\rho+\partial_z\rho \Delta_x \rho -2\nabla_x\rho
\cdot\nabla_x\partial_z\rho.
  \end{aligned}
\right.
  \end{equation}
\begin{prop}\label{lemm:HFI0}
Consider two real numbers $s>d/2+2$ and $\lambda \in [0,1)$. 
There exist $\overline{\eps}>0$ and a constant $C>0$ such that 
for all $\eta \in \mathcal{H}^{\lambda h, s+\mez}$  satisfying $\Vert \eta \Vert_{\mathcal{H}^{\lambda h,s}} \leq \overline{\eps}$, the two following properties hold{\rm :} 
\begin{align}
\Vert R u\Vert_{F^{\lambda,s-2}}&\le 
C \overline{\eps} \big( \lA \nabla_{x,z}u\rA_{F^{\lambda,s-1}}+\lA \partial_z^2u\rA_{F^{\lambda,s-2}}\big),\label{estR:s-2}\\
\Vert Ru\Vert_{F^{\lambda,s-1}}&\le 
C \overline{\eps} \big( \lA \nabla_{x,z}u\rA_{F^{\lambda,s}}+\lA \partial_z^2u\rA_{F^{\lambda,s-1}}\big)+
C \Vert \eta \Vert_{  \mathcal{H}^{\lambda h, s+ \mez}}\lA \partial_z u\rA_{E^{\lambda,s-1}},\label{HF2-u2}\\
\Vert Ru\Vert_{F^{\lambda,s-1}}&\le 
C \overline{\eps} \big( \lA \nabla_{x,z}u\rA_{F^{\lambda,s}}+\lA \partial_z^2u\rA_{F^{\lambda,s-1}}\big)\label{HF2-u3}\\
&\quad+
C  \Vert \eta \Vert_{\mathcal{H}^{\lambda h, s+\mez}}\big(\Vert \partial_z^2 u\Vert_{F^{\lambda, s-\frac{5}{2}}} + \lA   \partial_z u\rA_{F^{\lambda,s-\frac{3}{2}}}\big).\notag
\end{align}
\end{prop}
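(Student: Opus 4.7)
The plan is to expand $Ru=a\partial_z^2u+b\Delta_xu+c\cdot\nabla_x\partial_zu-d\partial_zu$ as a sum of four products ``coefficient $\times$ derivative of $u$'' and estimate each one using the bilinear machinery of Lemma~\ref{uvF} together with the coefficient bounds of Lemma~\ref{algebre0}. The rough balance is that $a,b,c$ lie in $\mathcal{E}_1$ and are paired with second-order derivatives of $u$, while $d\in\mathcal{E}_2$ is paired with only a first-order derivative $\partial_zu$: in both cases the ``total weight'' on $\eta$ versus $u$ is the same, so the three estimates follow by choosing which bilinear estimate to apply.

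For~\eqref{estR:s-2}, since $s-2>d/2$ we may apply Lemma~\ref{uvF}(ii) to each product. For the first three terms this gives
\[
\blA \alpha\,\beta\brA_{F^{\lambda,s-2}}\le C\lA\alpha\rA_{E^{\lambda,s-2}}\lA\beta\rA_{F^{\lambda,s-2}},\qquad \alpha\in\{a,b,c\},\ \beta\in\{\partial_z^2u,\Delta_xu,\nabla_x\partial_zu\},
\]
and Lemma~\ref{algebre0} (together with $a,b,c\in\mathcal{E}_1$ applied with $t=s-2$) bounds $\lA\alpha\rA_{E^{\lambda,s-2}}$ by $C\lA\eta\rA_{\mathcal{H}^{\lambda h,s-1}}\le C\overline{\eps}$. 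The $d$ term is treated identically, but now invoking $d\in\mathcal{E}_2$ at the same level $t=s-2$, so that $\lA d\rA_{E^{\lambda,s-2}}\le C\lA\eta\rA_{\mathcal{H}^{\lambda h,s}}\le C\overline{\eps}$ and $\lA\partial_zu\rA_{F^{\lambda,s-2}}\le \lA\nabla_{x,z}u\rA_{F^{\lambda,s-1}}$. Summing yields~\eqref{estR:s-2}.

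For \eqref{HF2-u2}, the same strategy works for the $a,b,c$ contributions, now with Lemma~\ref{uvF}(ii) taken at level $s-1>d/2$: using $a,b,c\in\mathcal{E}_1$ at $t=s-1$ again gives a factor $\lA\eta\rA_{\mathcal{H}^{\lambda h,s}}\le C\overline{\eps}$, absorbing into the first bracket on the right of~\eqref{HF2-u2}. The main obstacle is the $d\partial_zu$ contribution, because we cannot afford $\lA d\rA_{E^{\lambda,s-1}}$ (this would cost $\lA\eta\rA_{\mathcal{H}^{\lambda h,s+1}}$, which is not at our disposal). The fix is the tame product estimate Lemma~\ref{uvF}(iii) with $s_0=s-2>d/2$ and $t=s-1$:
\[
\blA d\,\partial_zu\brA_{F^{\lambda,s-1}}\le C\lA d\rA_{E^{\lambda,s-2}}\lA\partial_zu\rA_{F^{\lambda,s-1}}+C\lA d\rA_{F^{\lambda,s-1}}\lA\partial_zu\rA_{E^{\lambda,s-2}}.
\]
By Lemma~\ref{algebre0} (with $d\in\mathcal{E}_2$), $\lA d\rA_{E^{\lambda,s-2}}\le C\overline{\eps}$ and $\lA d\rA_{F^{\lambda,s-1}}\le C\lA\eta\rA_{\mathcal{H}^{\lambda h,s+\mez}}$; the first term is absorbed into $C\overline{\eps}\lA\nabla_{x,z}u\rA_{F^{\lambda,s}}$, while embedding $\lA\partial_zu\rA_{E^{\lambda,s-2}}\le\lA\partial_zu\rA_{E^{\lambda,s-1}}$ in the second term gives~\eqref{HF2-u2}.

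Finally, \eqref{HF2-u3} is obtained from the same split as~\eqref{HF2-u2}, except that in place of the embedding $E^{\lambda,s-2}\hookrightarrow E^{\lambda,s-1}$ we invoke the interpolation of Lemma~\ref{uvF}(iv) applied to $\partial_zu$ at level $\mu=s-2$,
\[
\blA\partial_zu\brA_{E^{\lambda,s-2}}\le C\big(\blA\partial_z^2u\brA_{F^{\lambda,s-\frac{5}{2}}}+\blA\partial_zu\brA_{F^{\lambda,s-\frac{3}{2}}}\big),
\]
which produces exactly the second bracket of~\eqref{HF2-u3}. The main technical point of the whole argument, therefore, is the tame splitting of the $d\partial_zu$ term: everything else is a routine product-rule and coefficient estimate using the tools already assembled.
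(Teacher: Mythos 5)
Your proof is correct and follows essentially the same approach as the paper: expand $Ru$ as the four products $a\partial_z^2u$, $b\Delta_x u$, $c\cdot\nabla_x\partial_z u$, $-d\partial_z u$, estimate the first three by Lemma~\ref{uvF}(ii) with coefficient bounds from $a,b,c\in\mathcal{E}_1$, and handle the $d\partial_zu$ term using $d\in\mathcal{E}_2$ and, for \eqref{HF2-u3}, the interpolation Lemma~\ref{uvF}(iv). The only cosmetic difference is that for \eqref{HF2-u2} the paper estimates $d\partial_zu$ by a single application of Lemma~\ref{uvF}(ii) in the form $\lA d\partial_zu\rA_{F^{\lambda,s-1}}\le C\lA d\rA_{F^{\lambda,s-1}}\lA\partial_zu\rA_{E^{\lambda,s-1}}$, whereas you invoke the tame estimate Lemma~\ref{uvF}(iii) and absorb the extra $C\overline{\eps}\lA\partial_zu\rA_{F^{\lambda,s-1}}$ term — both are valid.
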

 
\begin{proof}

We deduce from Lemma~\ref{algebre0} that
$$a, b, c \in \mathcal{E}_1,\quad  d \in \mathcal{E}_2.$$
 
 {\textbf{Step 1.}} We begin by studying $a\partial_z^2u+b\Delta_x u +c\cdot\nabla_x\partial_z u$. We will prove an estimate which holds for 
both cases. Namely, we claim that  for all $s-1\le \nu\le s$  we have
$$
\lA a\partial_z^2u+b\Delta_x u
+c\cdot\nabla_x\partial_z u\rA_{F^{\lambda,\nu-1}}\le C \overline{\eps}\lA \nabla_{x,z}^2u\rA_{F^{s,\nu-1}}.
$$
 Consider the first term $a\partial_z^2u$. Since $\frac{d}{2}<\nu-1\leq s-1$    the product rule given by statement~ (ii)  in Lemma~\ref{uvF}   implies that
\be\label{HF7-u}
\begin{aligned}
\lA a\partial_z^2u\rA_{F^{\lambda,\nu-1}}
&\leq C\lA a\rA_{E^{\lambda,s-1}}\lA\partial_z^2u\rA_{F^{\lambda,\nu-1}}\\
&\leq C\lA a\rA_{L^\infty(I_h,\mathcal{H}^{\lambda h,s-1})}\lA\partial_z^2u\rA_{F^{\lambda,\nu-1}},\\
&\leq C\overline{\eps}\lA\partial_z^2u\rA_{F^{\lambda,\nu-1}},
\end{aligned}
\ee 
since $a \in \mathcal{E}_1.$
By the same way,
\be\label{est-betaDelta} 
\begin{aligned}
\lA b \Delta_x u\rA_{F^{\lambda,\nu-1}}
&\leq C\lA b\rA_{E^{\lambda,s-1}}\lA\Delta_x u\rA_{F^{\lambda,\nu-1}}\\
&\leq C\lA b\rA_{L^\infty(I_h,\mathcal{H}^{\lambda h,s-1})}\lA \Delta_x u\rA_{F^{\lambda,\nu-1}},\\
&\leq C\overline{\eps}\lA \nabla_x u \rA_{F^{\lambda,\nu}},
\end{aligned}
\ee
since $b\in \mathcal{E}_1$
 and similarly, 
$$
\lA c\cdot\nabla_x\partial_z u\rA_{F^{\lambda,s-1}}\leq C  \overline{\eps}  \lA \partial_z u\rA_{F^{\lambda,\nu}}.
$$

 {\textbf{Step 2.}} We now estimate the $F^{\lambda,s-2}$ norm of $d \partial_z u$, 
using parallel arguments to those used above. 
Firstly, since $s-1>d/2$ and $2s-3\ge 0$, the product rule given by Lemma~\ref{uvF},  with $s_3= s-2, s_1= s-2, s_2 = s-1$ implies that
$$
\lA d \partial_z u\rA_{F^{\lambda,s-2}}\leq C 
\lA d \rA_{E^{\lambda,s-2}}\lA \partial_z u\rA_{F^{\lambda,s-1}}\leq C \overline{\eps}\lA \partial_z u\rA_{F^{\lambda,s-1}},
$$
since $d\in \mathcal{E}_2.$
Taking $\nu= s-1$ we obtain \eqref{estR:s-2}. 

{\textbf{Step 3.}} We now estimate the $F^{\lambda,s-1}$ norm of $d \partial_z u$. 
We proceed as in the previous step, but 
now the term $d $ is estimated in $F^{\lambda,s-1}$. This is here the only place where 
we are taking advantage of the special definition of $\rho$ (the so-called smoothing diffeomorphism). 
Namely, we write
\be\label{HF12-u-bis}
\begin{aligned}
\lA d \partial_z u\rA_{F^{\lambda,s-1}}&\leq C 
\lA d \rA_{F^{\lambda,s-1}}
\lA \partial_z u\rA_{E^{\lambda,s-1}}\leq C \lA d\rA_{L^2(I_h, \mathcal{H}^{\lambda,s-1})}
\lA \partial_z u\rA_{E^{\lambda,s-1}}\\
&\leq C\lA \eta \rA_{\mathcal{H}^{\lambda h,s+\mez}}\lA \partial_z u\rA_{E^{\lambda,s-1}}.
\end{aligned}
\ee
Using~\e{HF7-u}, \eqref{est-betaDelta}, \eqref{HF12-u-bis} with $\nu = s$, we complete the proof of   \e{HF2-u2}.

To obtain \eqref{HF2-u3} we use Lemma~\ref{uvF} (iii) with $s_0= s-2.$ We obtain
$$\lA d \partial_z u\rA_{F^{\lambda,s-1}}\leq C(\lA d  \rA_{E^{\lambda,s-2}} \lA   \partial_z u\rA_{F^{\lambda,s-1}} + \lA d \rA_{F^{\lambda,s-1}}\lA   \partial_z u\rA_{E^{\lambda,s-2}}).$$
 Since $d\in \mathcal{E}_2$ we have
 \begin{align*}
 \lA d  \rA_{E^{\lambda,s-2}}&\leq C \Vert \eta \Vert_{\mathcal{H}^{\lambda h, s}} \leq C \overline{\eps},\\
 \lA d  \rA_{F^{\lambda,s-1}}&\leq C \Vert \eta \Vert_{\mathcal{H}^{\lambda h, s+\mez}}.
 \end{align*}
On the other hand, by the interpolation lemma (see Lemma~\ref{uvF} (iv)) we have
$$
\lA   \partial_z u\rA_{E^{\lambda,s-2}} \leq C (\Vert \partial_z^2 u\Vert_{F^{\lambda, s-\frac{5}{2}}} + \lA   \partial_z u\rA_{F^{\lambda,s-\frac{3}{2}}}).
$$
Therefore,
$$
\lA d \partial_z u\rA_{F^{\lambda,s-1}}\leq C\overline{\eps}\lA   \partial_z u\rA_{F^{\lambda,s-1}}+ C\Vert \eta \Vert_{\mathcal{H}^{\lambda h, s+\mez}}(\Vert \partial_z^2 u\Vert_{F^{\lambda, s-\frac{5}{2}}} + \lA   \partial_z u\rA_{F^{\lambda,s-\frac{3}{2}}}).
$$
This completes the proof of \eqref{HF2-u3}.  
\end{proof}

\begin{coro}
Let $\underline{\psi}$ be  the lifting of the function $\psi$ defined in \eqref{relev}.
Under the assumptions of Lemma~{\rm \ref{lemm:HFI0}}, there exists a constant $C>0$ such that
\be\label{HF2(s-2)}
\Vert R\underline{\psi}\Vert_{F^{\lambda,s-2}}\le C\overline{\eps}\blA a(D_x)^\mez \psi\brA_{\mathcal{H}^{\lambda h,s-1}},
\ee
and
\be\label{HF2}
\Vert R\underline{\psi}\Vert_{F^{\lambda,s-1}}\le 
C \overline{\eps}\Vert a(D_x)^\mez \psi \Vert_{\mathcal{H}^{\lambda h,s}}+
C \Vert \eta \Vert_{  \mathcal{H}^{\lambda h, s+ \mez}}\Vert a(D_x)^\mez \psi \Vert_{\mathcal{H}^{\lambda h,s-\mez}}.
\ee
\end{coro}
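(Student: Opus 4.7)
The plan is to apply Proposition~\ref{lemm:HFI0} directly to $u=\underline{\psi}$. Invoking~\eqref{estR:s-2} with this choice yields
\[
\Vert R\underline{\psi}\Vert_{F^{\lambda,s-2}}\le C\overline{\eps}\bigl(\Vert \nabla_{x,z}\underline{\psi}\Vert_{F^{\lambda,s-1}}+\Vert \partial_z^2\underline{\psi}\Vert_{F^{\lambda,s-2}}\bigr),
\]
and invoking~\eqref{HF2-u3} yields
\[
\Vert R\underline{\psi}\Vert_{F^{\lambda,s-1}}\le C\overline{\eps}\bigl(\Vert\nabla_{x,z}\underline{\psi}\Vert_{F^{\lambda,s}}+\Vert\partial_z^{2}\underline{\psi}\Vert_{F^{\lambda,s-1}}\bigr)+C\Vert\eta\Vert_{\mathcal{H}^{\lambda h,s+\mez}}\bigl(\Vert\partial_z^{2}\underline{\psi}\Vert_{F^{\lambda,s-\frac{5}{2}}}+\Vert\partial_z\underline{\psi}\Vert_{F^{\lambda,s-\frac{3}{2}}}\bigr).
\]
The whole proof then reduces to establishing the $F^{\lambda}$-counterpart of Lemma~\ref{est-relevappendix}: for every real $\mu$,
\begin{equation}\label{plan:lift}
\Vert\nabla_{x,z}\underline{\psi}\Vert_{F^{\lambda,\mu}}+\Vert\partial_z^{2}\underline{\psi}\Vert_{F^{\lambda,\mu-1}}\le C\Vert a(D_x)^\mez\psi\Vert_{\mathcal{H}^{\lambda h,\mu}}.
\end{equation}
Indeed,~\eqref{plan:lift} with $\mu=s-1$ produces~\eqref{HF2(s-2)}; applying it with $\mu=s$ and with $\mu=s-\tfrac{3}{2}$, combined with the trivial inclusion $\Vert a(D_x)^\mez\psi\Vert_{\mathcal{H}^{\lambda h,s-\frac{3}{2}}}\le\Vert a(D_x)^\mez\psi\Vert_{\mathcal{H}^{\lambda h,s-\mez}}$, produces~\eqref{HF2}.

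The substance lies in~\eqref{plan:lift}, which is slightly stronger than what Lemma~\ref{est-relevappendix} provides because the $F^{\lambda,\mu}$-norm carries the $z$-dependent weight $e^{\lambda(z+h)|D_x|}$, pointwise larger than the constant weight $e^{\lambda h|D_x|}$. I would establish it by a short Fourier computation, starting from the identity $\widehat{\underline{\psi}}(\xi,z)=\cosh((z+h)|\xi|)\widehat{\psi}(\xi)/\cosh(h|\xi|)$, which follows from~\eqref{Fourierpsi}. Differentiating, one is led to evaluate
\[
\int_{-h}^{0}e^{2\lambda(z+h)|\xi|}|\xi|^{2}\frac{\sinh^{2}((z+h)|\xi|)}{\cosh^{2}(h|\xi|)}\,dz,
\]
and after the substitution $u=(z+h)|\xi|$ this becomes $|\xi|\cosh^{-2}(h|\xi|)\int_{0}^{h|\xi|}e^{2\lambda u}\sinh^{2}u\,du$. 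Bounding $\sinh^{2}u\le e^{2u}/4$ and using $\cosh^{2}(h|\xi|)\gtrsim e^{2h|\xi|}/4$ for $|\xi|$ large yields a bound by a constant (uniform in $\xi$ and in $\lambda\in[0,\lambda_0]$) times $|\xi|\tanh(h|\xi|)=a(|\xi|)$. This is exactly the gain of one factor of $a$ needed to pass from $\psi$ to $a(D_x)^\mez\psi$. The estimates for $\nabla_x\underline{\psi}$ and $\partial_z^{2}\underline{\psi}$ are obtained by identical computations, replacing $\sinh$ by $\cosh$ and adjusting the prefactor power of $|\xi|$.

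The main obstacle is really only bookkeeping in this Fourier computation: one has to check that the constant in~\eqref{plan:lift} stays bounded uniformly as $\lambda$ approaches $\lambda_0<1$, which reduces to the elementary fact that $\int_{0}^{h|\xi|}e^{2(\lambda+1)u}\,du$ remains of the same exponential order as $\cosh^{2}(h|\xi|)$ provided $\lambda+1<2$, i.e., $\lambda<1$. Everything else in the argument is a mechanical application of Proposition~\ref{lemm:HFI0}.
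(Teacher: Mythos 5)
Your proposal reaches the correct conclusion and follows the same skeleton as the paper (apply Proposition~\ref{lemm:HFI0} to $u=\underline{\psi}$, then plug in the lifting estimates from Lemma~\ref{est-relevappendix}). However, there is a mistake in your justification of \eqref{plan:lift}. You claim that the $F^{\lambda,\mu}$-norm ``carries the $z$-dependent weight $e^{\lambda(z+h)|D_x|}$, pointwise larger than the constant weight $e^{\lambda h|D_x|}$'', and that \eqref{plan:lift} is therefore ``slightly stronger'' than Lemma~\ref{est-relevappendix}. This is backwards: since $z\in I_h=(-h,0)$ we have $z\le 0$, hence $\lambda(z+h)\le \lambda h$ and the Fourier weight $e^{\lambda(z+h)|\xi|}$ is pointwise \emph{smaller} than $e^{\lambda h|\xi|}$. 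Consequently $\Vert v\Vert_{F^{\lambda,\mu}}\le \Vert v\Vert_{L^2(I_h,\mathcal{H}^{\lambda h,\mu})}$ for any $v$, and \eqref{plan:lift} follows immediately from Lemma~\ref{est-relevappendix} with $\sigma=\lambda h$ -- no new Fourier computation is needed. (This one-line observation is precisely what the paper's proof makes.) Your subsequent Fourier computation is not wrong in substance, but it re-derives the lemma rather than extending it, and the explanation attributing the constraint to ``$\lambda+1<2$'' is misleading: the lifting estimate itself holds for every $\sigma\ge 0$ (as Lemma~\ref{est-relevappendix} is stated), and the restriction $\lambda<1$ is inherited solely from Proposition~\ref{lemm:HFI0}, whose coercivity argument in Proposition~\ref{regul0} degenerates as $\lambda\to 1$. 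Finally, note that you use \eqref{HF2-u3} whereas the paper uses \eqref{HF2-u2} together with the $E^{\lambda,\mu-\mez}$ bound on $\nabla_{x,z}\underline{\psi}$; both routes close the estimate, so this is a harmless variation.
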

\begin{proof}
Notice that $$\blA \nabla_{x,z}\underline{\psi}\brA_{F^{\lambda,\mu}}\le \blA \nabla_{x,z}\underline{\psi}\brA_{L^2_z(I_h,\mathcal{H}^{\lambda h,\mu})}$$ for any $\mu$ in $\xR$. By Lemma~\ref{est-relevappendix} we have, for any real number $\mu$,  
$$
\Vert \nabla_{x,z} \underline{\psi}\Vert_{E^{\lambda,\mu-\mez}}+ \Vert \partial_z^2\underline{\psi}\Vert_{F^{\lambda,\mu-1}}+ \blA \nabla_{x,z}\underline{\psi}\brA_{F^{\lambda,\mu}}\les \blA a(D_x)^\mez\psi\brA_{\mathcal{H}^{\lambda h,\mu}}.
$$
  Then \e{HF2(s-2)} and \e{HF2} follow from Proposition~\ref{lemm:HFI0}.
\end{proof}

\textbf{Acknowledgements} 
We would like to warmly thank the anonymous referee for his comments and extraordinarily 
attentive reading, which allowed us to significantly improve the presentation of the article.

\vspace{3mm}

\begin{flushleft}
\textbf{Thomas Alazard}\\
Universit{\'e} Paris-Saclay, ENS Paris-Saclay, CNRS,\\
Centre Borelli UMR9010, Avenue des Sciences,
F-91190 Gif-sur-Yvette, France,\\
and \\
Department of Mathematics, University of California, Berkeley\\
CA 94720, USA

\vspace{5mm}

\textbf{Nicolas Burq}\\
{Laboratoire de Math\'ematiques d'Orsay, CNRS,\\
Universit\'e Paris-Saclay,\\
B\^atiment 307, 
F-91405 Orsay\\
France,\\
and Institut Universitaire de France.}

\vspace{5mm}

\textbf{Claude Zuily}\\
{Laboratoire de Math\'ematiques d'Orsay, CNRS,\\
Universit\'e Paris-Saclay,\\
B\^atiment 307, 
F-91405 Orsay\\
France}

\end{flushleft}


\begin{thebibliography}{10}

\bibitem{ABHK}
Thomas Alazard, Pietro Baldi, and Daniel Han-Kwan.
\newblock Control of water waves.
\newblock {\em J. Eur. Math. Soc. (JEMS)}, 20(3):657--745, 2018.

\bibitem{ABZ1}
Thomas Alazard, Nicolas Burq, and Claude Zuily.
\newblock On the water-wave equations with surface tension.
\newblock {\em Duke Math. J.}, 158(3):413--499, 2011.

\bibitem{Bertinoro}
Thomas Alazard, Nicolas Burq, and Claude Zuily.
\newblock The water-wave equations: from {Z}akharov to {E}uler.
\newblock In {\em Studies in phase space analysis with applications to {PDE}s},
  volume~84 of {\em Progr. Nonlinear Differential Equations Appl.}, pages
  1--20. Birkh\"auser/Springer, New York, 2013.

\bibitem{ABZ3}
Thomas Alazard, Nicolas Burq, and Claude Zuily.
\newblock On the {C}auchy problem for gravity water waves.
\newblock {\em Invent. Math.}, 198(1):71--163, 2014.

\bibitem{AlDe}
Thomas Alazard and Jean-Marc Delort.
\newblock Global solutions and asymptotic behavior for two dimensional gravity
  water waves.
\newblock {\em Ann. Sci. \'Ec. Norm. Sup\'er. (4)}, 48(5):1149--1238, 2015.

\bibitem{AM}
Thomas Alazard and Guy M{\'e}tivier.
\newblock Paralinearization of the {D}irichlet to {N}eumann operator, and
  regularity of three-dimensional water waves.
\newblock {\em Comm. Partial Differential Equations}, 34(10-12):1632--1704,
  2009.

\bibitem{Alinhac-Metivier-1984}
Serge Alinhac and Guy M\'{e}tivier.
\newblock Propagation de l'analyticit\'{e} des solutions de syst\`emes
  hyperboliques non-lin\'{e}aires.
\newblock {\em Invent. Math.}, 75(2):189--204, 1984.

\bibitem{ASL}
Borys Alvarez-Samaniego and David Lannes.
\newblock Large time existence for 3-{D} water-waves and asymptotics.
\newblock {\em Invent. Math.}, 171(3):485--541, 2008.

\bibitem{BaouendiGoulaouic-1977}
Mohamed-Salah Baouendi and Charles Goulaouic.
\newblock Remarks on the abstract form of nonlinear {C}auchy-{K}ovalevsky
  theorems.
\newblock {\em Comm. Partial Differential Equations}, 2(11):1151--1162, 1977.

\bibitem{BeMaMo}
Jacob Bedrossian, Nader Masmoudi, and Cl\'{e}ment Mouhot.
\newblock Landau damping: paraproducts and {G}evrey regularity.
\newblock {\em Ann. PDE}, 2(1):Art. 4, 71, 2016.

\bibitem{Bona-al}
Jerry~L. Bona, Zoran Gruji\'{c}, and Henrik Kalisch.
\newblock Global solutions of the derivative {S}chr\"{o}dinger equation in a
  class of functions analytic in a strip.
\newblock {\em J. Differential Equations}, 229(1):186--203, 2006.

\bibitem{BLS}
Jerry~L. Bona, David Lannes, and Jean-Claude. Saut.
\newblock Asymptotic models for internal waves.
\newblock {\em J. Math. Pures Appl. (9)}, 89(6):538--566, 2008.

\bibitem{CCFGS-Annals}
Angel Castro, Diego C\'{o}rdoba, Charles Fefferman, Francisco Gancedo, and
  Javier G\'{o}mez-Serrano.
\newblock Finite time singularities for the free boundary incompressible
  {E}uler equations.
\newblock {\em Ann. of Math. (2)}, 178(3):1061--1134, 2013.

\bibitem{Craig1985}
Walter Craig.
\newblock An existence theory for water waves and the {B}oussinesq and
  {K}orteweg-de {V}ries scaling limits.
\newblock {\em Comm. Partial Differential Equations}, 10(8):787--1003, 1985.

\bibitem{CrSu}
Walter Craig and Catherine Sulem.
\newblock Numerical simulation of gravity waves.
\newblock {\em J. Comput. Phys.}, 108(1):73--83, 1993.

\bibitem{GanBelScr-2019}
Francisco Gancedo, Rafael Granero-Belinch{\'o}n, and Stefano Scrobogna.
\newblock Surface tension stabilization of the rayleigh-taylor instability for
  a fluid layer in a porous medium.
\newblock {\em arXiv: Analysis of PDEs}, 2019.

\bibitem{GMS}
Pierre Germain, Nader Masmoudi, and Jalal Shatah.
\newblock Global solutions for the gravity water waves equation in dimension 3.
\newblock {\em Ann. of Math. (2)}, 175(2):691--754, 2012.

\bibitem{GrNgRo}
Emmanuel Grenier, Toan~Trong Nguyen, and Igor Rodnianski.
\newblock Landau damping for analytic and {G}evrey data.
\newblock {\em arXiv:2004.05979}, 2020.

\bibitem{Haya}
Nakao Hayashi.
\newblock Analyticity of solutions of the {K}orteweg-de {V}ries equation.
\newblock {\em SIAM J. Math. Anal.}, 22(6):1738--1743, 1991.

\bibitem{Him-Petr}
A.~Alexandrou Himonas and Gerson Petronilho.
\newblock Evolution of the radius of spatial analyticity for the periodic {BBM}
  equation.
\newblock {\em Proc. Amer. Math. Soc.}, 148(7):2953--2967, 2020.

\bibitem{HormanderL}
Lars H{\"o}rmander.
\newblock {\em Lectures on nonlinear hyperbolic differential equations},
  volume~26 of {\em Math\'ematiques \& Applications (Berlin) [Mathematics \&
  Applications]}.
\newblock Springer-Verlag, Berlin, 1997.

\bibitem{HIT}
John Hunter, Mihaela Ifrim, and Daniel Tataru.
\newblock Two dimensional water waves in holomorphic coordinates.
\newblock {\em Comm. Math. Phys.}, 346(2):483--552, 2016.

\bibitem{IguchiCPDE}
Tatsuo Iguchi.
\newblock A long wave approximation for capillary-gravity waves and an effect
  of the bottom.
\newblock {\em Comm. Partial Differential Equations}, 32(1-3):37--85, 2007.

\bibitem{Iguchi-JDE2018}
Tatsuo Iguchi.
\newblock Isobe-{K}akinuma model for water waves as a higher order shallow
  water approximation.
\newblock {\em J. Differential Equations}, 265(3):935--962, 2018.

\bibitem{IonescuPusateri}
Alexandru~D. Ionescu and Fabio Pusateri.
\newblock Global solutions for the gravity water waves system in 2-{D}.
\newblock {\em Invent. Math.}, 199(3):653--804, 2015.

\bibitem{Kano1986}
Tadayoshi Kano.
\newblock Une th\'{e}orie trois-dimensionnelle des ondes de surface de l'eau et
  le d\'{e}veloppement de {F}riedrichs. {II}.
\newblock {\em J. Math. Kyoto Univ.}, 26(2):157--175, 1986.

\bibitem{Kano-Nishida1}
Tadayoshi Kano and Takaaki Nishida.
\newblock Sur les ondes de surface de l'eau avec une justification
  math\'{e}matique des \'{e}quations des ondes en eau peu profonde.
\newblock {\em J. Math. Kyoto Univ.}, 19(2):335--370, 1979.

\bibitem{KanoNishida0}
Tadayoshi Kano and Takaaki Nishida.
\newblock Water waves and {F}riedrichs expansion.
\newblock In {\em Recent topics in nonlinear {PDE} ({H}iroshima, 1983)},
  volume~98 of {\em North-Holland Math. Stud.}, pages 39--57. North-Holland,
  Amsterdam, 1984.

\bibitem{Kano-Nishida2}
Tadayoshi Kano and Takaaki Nishida.
\newblock A mathematical justification for {K}orteweg-de {V}ries equation and
  {B}oussinesq equation of water surface waves.
\newblock {\em Osaka J. Math.}, 23(2):389--413, 1986.

\bibitem{Kato-Masuda-1986}
Tosio Kato and Ky\={u}ya Masuda.
\newblock Nonlinear evolution equations and analyticity. {I}.
\newblock {\em Ann. Inst. H. Poincar\'{e} Anal. Non Lin\'{e}aire},
  3(6):455--467, 1986.

\bibitem{Kuca-Vic}
Igor Kukavica and Vlad Vicol.
\newblock On the radius of analyticity of solutions to the three-dimensional
  {E}uler equations.
\newblock {\em Proc. Amer. Math. Soc.}, 137(2):669--677, 2009.

\bibitem{KuksinNadirashvili}
Sergei Kuksin and Nikolai Nadirashvili.
\newblock Analyticity of solutions for quasilinear wave equations and other
  quasilinear systems.
\newblock {\em Proc. Roy. Soc. Edinburgh Sect. A}, 144(6):1155--1169, 2014.

\bibitem{LannesJAMS}
David Lannes.
\newblock Well-posedness of the water-waves equations.
\newblock {\em J. Amer. Math. Soc.}, 18(3):605--654 (electronic), 2005.

\bibitem{LannesLivre}
David Lannes.
\newblock {\em Water waves: mathematical analysis and asymptotics}, volume 188
  of {\em Mathematical Surveys and Monographs}.
\newblock American Mathematical Society, Providence, RI, 2013.

\bibitem{Matsuyama-2019}
Tokio Matsuyama and Michael Ruzhansky.
\newblock On the {G}evrey well-posedness of the {K}irchhoff equation.
\newblock {\em J. Anal. Math.}, 137(1):449--468, 2019.

\bibitem{MePise}
Guy M{\'e}tivier.
\newblock {\em Para-differential calculus and applications to the {C}auchy
  problem for nonlinear systems}, volume~5 of {\em Centro di Ricerca Matematica
  Ennio De Giorgi (CRM) Series}.
\newblock Edizioni della Normale, Pisa, 2008.

\bibitem{MouhotVillani}
Cl\'{e}ment Mouhot and C\'{e}dric Villani.
\newblock On {L}andau damping.
\newblock {\em Acta Math.}, 207(1):29--201, 2011.

\bibitem{Nirenberg-1972}
Louis Nirenberg.
\newblock An abstract form of the nonlinear {C}auchy-{K}owalewski theorem.
\newblock {\em J. Differential Geometry}, 6:561--576, 1972.

\bibitem{Nishida-1977}
Takaaki Nishida.
\newblock A note on a theorem of {N}irenberg.
\newblock {\em J. Differential Geometry}, 12(4):629--633 (1978), 1977.

\bibitem{Ovsjannikov-1973}
Lev~Vasil{'}evich Ovsjannikov.
\newblock A nonlinear {C}auchy problem in a scale of {B}anach spaces.
\newblock {\em Dokl. Akad. Nauk SSSR}, 200:789--792, 1971.

\bibitem{Ovsjannikov-1974}
Lev~Vasil{'}evich Ovsjannikov.
\newblock To the shallow water theory foundation.
\newblock {\em Arch. Mech. (Arch. Mech. Stos.)}, 26:407--422, 1974.

\bibitem{Ovsjannikov-1976}
Lev~Vasil{'}evich Ovsjannikov.
\newblock Cauchy problem in a scale of {B}anach spaces and its application to
  the shallow water theory justification.
\newblock In {\em Applications of methods of functional analysis to problems in
  mechanics ({J}oint {S}ympos., {IUTAM}/{IMU}, {M}arseille, 1975)}, pages
  426--437. Lecture Notes in Math., 503. 1976.

\bibitem{Pierre-2018}
Olivier Pierre.
\newblock Analytic current-vortex sheets in incompressible
  magnetohydrodynamics.
\newblock {\em J. Math. Fluid Mech.}, 20(3):1269--1315, 2018.

\bibitem{Saut-IMPA}
Jean-Claude Saut.
\newblock {\em Asymptotic models for surface and internal waves}.
\newblock Publica\c{c}\~{o}es Matem\'{a}ticas do IMPA. [IMPA Mathematical
  Publications]. Instituto Nacional de Matem\'{a}tica Pura e Aplicada (IMPA),
  Rio de Janeiro, 2013.
\newblock 29${^{{}}{\rm{o}}}$ Col\'{o}quio Brasileiro de Matem\'{a}tica. [29th
  Brazilian Mathematics Colloquium].

\bibitem{Schneider-Wayne-2002}
Guido Schneider and C.~Eugene Wayne.
\newblock On the validity of 2{D}-surface water wave models.
\newblock {\em GAMM Mitt. Ges. Angew. Math. Mech.}, 25(1-2):127--151, 2002.

\bibitem{Sel-Da}
Sigmund Selberg and Daniel~Oliveira da~Silva.
\newblock Lower bounds on the radius of spatial analyticity for the {K}d{V}
  equation.
\newblock {\em Ann. Henri Poincar\'{e}}, 18(3):1009--1023, 2017.

\bibitem{SulemSulem-1985}
Catherine Sulem and Pierre-Louis Sulem.
\newblock Finite time analyticity for the two- and three-dimensional
  {R}ayleigh-{T}aylor instability.
\newblock {\em Trans. Amer. Math. Soc.}, 287(1):127--160, 1985.

\bibitem{Tesfa}
Achenef Tesfahun.
\newblock Asymptotic lower bound for the radius of spatial analyticity to
  solutions of {K}d{V} equation.
\newblock {\em Commun. Contemp. Math.}, 21(8):1850061, 33, 2019.

\bibitem{XuechengWang-FD2}
Xuecheng Wang.
\newblock Global solution for the 3-{D} gravity water waves system above a flat
  bottom.
\newblock {\em Adv. Math.}, 346:805--886, 2019.

\bibitem{Wu09}
Sijue Wu.
\newblock Almost global wellposedness of the 2-{D} full water wave problem.
\newblock {\em Invent. Math.}, 177(1):45--135, 2009.

\bibitem{Wu10}
Sijue Wu.
\newblock Global wellposedness of the 3-{D} full water wave problem.
\newblock {\em Invent. Math.}, 184(1):125--220, 2011.

\bibitem{Zakharov1968}
Vladimir~E. Zakharov.
\newblock Stability of periodic waves of finite amplitude on the surface of a
  deep fluid.
\newblock {\em Journal of Applied Mechanics and Technical Physics},
  9(2):190--194, 1968.

\bibitem{Zheng2019longterm}
Fan Zheng.
\newblock Long-term regularity of 3-{D} gravity water waves.
\newblock {\em arXiv:1910.01912}, 2019.

\bibitem{Zhu1}
Hui Zhu.
\newblock Control of three dimensional water waves.
\newblock {\em Arch. Ration. Mech. Anal.}, 236(2):893--966, 2020.

\end{thebibliography}
 \end{document}